\theoremstyle{definition}
\newtheorem{theorem}{Theorem}[section]
\newtheorem{setup}[theorem]{Setup}
\newtheorem{example}[theorem]{Example}
\newtheorem{examples}[theorem]{Examples}
\newtheorem{remark}[theorem]{Remark}
\newtheorem{definition}[theorem]{Definition}
\newtheorem{proposition}[theorem]{Proposition}
\newtheorem{corollary}[theorem]{Corollary}
\newtheorem{lemma}[theorem]{Lemma}
\numberwithin{equation}{section}
\newcommand{\la}{\langle}
\newcommand{\ra}{\rangle}
\newcommand{\Z}{\mathbb{Z}}
\newcommand{\R}{\mathbb{R}}
\newcommand{\C}{\mathbb{C}}
\newcommand{\Om}{\Omega}
\newcommand{\del}{\partial}
\newcommand{\Del}{\Delta}
\newcommand{\unDel}{\ul{\Delta}}
\newcommand{\ot}{\otimes}
\newcommand{\ul}{\underline}
\newcommand{\un}{\underline}
\newcommand{\bu}{\bullet}
\newcommand{\mc}{\mathcal }
\newcommand{\al}{\alpha}
\newcommand{\be}{\beta}
\newcommand{\ga}{\gamma}
\newcommand{\wt}{\widetilde}
\newcommand{\wh}{\widehat}
\newcommand{\unbr}{\underbrace}
\newcommand{\xbb}{\{x_{\bu\bu}\}}
\newcommand{\ybb}{\{y_{\bu\bu}\}}
\newcommand{\xbp}{\{x'_{\bu\bu}\}}
\newcommand{\ybp}{\{y'_{\bu\bu}\}}
\newcommand{\xbbt}{\{\wt x_{\bu\bu}\}}
\newcommand{\xbpt}{\{\wt x'_{\bu\bu}\}}
\newcommand{\begintik}{ \begin{tikzpicture} \draw [black, line width=2pt] (0,0)--(8,8)--(0,8)--(0,0);}
\newcommand{\btik}[2]{ \begin{tikzpicture} \draw [black, line width=2pt] (#1,#1)--(#2,#2)--(#1,#2)--(#1,#1);}
\newcommand{\dblue}[2]{\draw [blue, line width=3pt] (#1,#1)--(#1,#2)--(#2,#2); }
\newcommand{\dred}[2]{\draw [red, line width=3pt] (#1,#1)--(#1,#2)--(#2,#2); }
\newcommand{\dblack}[2]{\draw [white, line width=4pt] (#1,#1)--(#1,#2)--(#2,#2); \draw [black, line width=2pt] (#1,#1)--(#1,#2)--(#2,#2);}
\newcommand{\scb}[1]{\scalebox{0.7}{#1}}
\newcommand{\St}[2]{{\mathcal S^{#2}(#1)}}
\newcommand{\StIP}[1]{{\mathcal I(#1)}}
\newcommand{\StC}[1]{{\mathcal C(#1)}}
\newcommand{\ceco}{\cellcolor{gray!20}}
\newcommand{\fcirc}[2]{\filldraw[color=black, fill=white] #1 circle (.3); \draw #1 node {$#2$};}
\newcommand{\cA}{\bm{A}}
\newcommand{\cB}{\bm{B}}
\newcommand{\ca}{\bm{a}}
\newcommand{\cb}{\bm{b}}
\newcommand{\cc}{\bm{c}}
\newcommand{\cal}{\bm{\alpha}}
\newcommand{\cbe}{\bm{\beta}}
\newcommand{\cs}{\bm{s}}
\newcommand{\cE}{\bm{E}}
\newcommand{\cX}{\bm{X}}
\newcommand{\cY}{\bm{Y}}
\newcommand{\cZ}{\bm{Z}}
\newcommand{\uA}{\ul{\bm{A}}}
\newcommand{\uB}{\ul{\bm{B}}}
\newcommand{\ua}{\ul{\bm{a}}}
\newcommand{\ub}{\ul{\bm{b}}}
\newcommand{\uc}{\ul{\bm{c}}}
\newcommand{\ue}{\ul{\bm{e}}}
\newcommand{\ual}{\ul{\bm{\alpha}}}
\newcommand{\ube}{\ul{\bm{\beta}}}
\newcommand{\us}{\ul{\bm{s}}}
\newcommand{\wuy}{\wh{\,\,\ul{\bm{y}}\,\,}}
\newcommand{\uE}{\ul{\bm{E}}}
\newcommand{\uX}{\ul{\bm{X}}}
\newcommand{\uY}{\ul{\bm{Y}}}
\newcommand{\uZ}{\ul{\bm{Z}}}
\newcommand{\dH}{\mathcal{H}}
\newcommand{\dDel}{\Theta}
\newcommand{\undDel}{\ul{\Theta}}
\newcommand{\ur}{\ul{r}}
\newcommand{\dE}{\bm{\mathcal E}}
\newcommand{\dX}{\bm{\mathcal X}}
\newcommand{\dY}{\bm{\mathcal Y}}
\newcommand{\dZ}{\bm{\mathcal Z}}
\newcommand{\vE}{\ul{\bm{\mathcal E}}}
\newcommand{\vX}{\ul{\bm{\mathcal X}}}
\newcommand{\vY}{\ul{\bm{\mathcal Y}}}
\newcommand{\vZ}{\ul{\bm{\mathcal Z}}}
\title{Massey products for homotopy inner products}
\author[K.~Poirier]{Kate~Poirier}
  \address{Kate Poirier,
  Department of Mathematics, New York City College of Technology, City University of New York, 300 Jay Street, Brooklyn, NY 11201}
  \email{kpoirier@citytech.cuny.edu}
\author[T.~Tradler]{Thomas~Tradler}
  \address{Thomas Tradler,
  Department of Mathematics, New York City College of Technology, City University of New York, 300 Jay Street, Brooklyn, NY 11201}
  \email{ttradler@citytech.cuny.edu}
\author[S.O.~Wilson]{Scott O.~Wilson}
  \address{Scott O. Wilson,
  Department of Mathematics, Queens College, City University of New York, 65-30 Kissena Blvd, Flushing, NY 11367}
  \email{scott.wilson@qc.cuny.edu}
\keywords{Massey product, $A_\infty$ algebra, Homotopy inner product}
\subjclass[2020]{55S30, 46C99 (primary), 08A65, 57K99 (secondary)}
\begin{document}
\maketitle
\begin{abstract} 
This paper defines Massey-type products for a homotopy inner product on an $A_\infty$ algebra, called \emph{Massey inner products}. We include an explicit description of ordinary Massey products for $A_\infty$ algebras, and for $A_\infty$ modules, and show that Massey product sets can have interesting internal structure. We give non-trivial applications from  lens spaces, links, and low dimensional manifolds, showing Massey inner products contain information beyond ordinary Massey products.
\end{abstract}

\setcounter{tocdepth}{1} \tableofcontents

\section{Introduction}

Massey products are higher order operations on cohomology that detect complexities in a differential graded ring beyond the induced ring structure in cohomology. Since Massey's introduction \cite{Mas58} these have seen numerous applications such as: the study of links \cite{Mas68}; formality in complex and K\"ahler geometry \cite{DGMS}, as well as symplectic topology \cite{BT}, \cite{FM04}; the homotopy theory of configuration spaces \cite{LS}; and more recently, the study of compact $G2$-manifolds \cite{MM1,MM2}. 

The existence of Massey products stems from the associativity condition and the possibility of solving various exactness equations in a differential graded ring in more than one way. The most basic algebraic example has as its geometric counterpart the Borromean rings:
\noindent\begin{minipage}{0.5\textwidth}
\hspace{3cm}
\begin{align*}
dz&= 0, \\
\textrm{if}\,\,\,  z & = xc \pm ay \\
  \textrm{with}\,\,\, dx & = ab \\ 
 \textrm{and}\,\,\,  dy & =bc 
\end{align*}
\end{minipage}%
\hspace{-1cm} 
\begin{minipage}{0.5\textwidth}\raggedleft
\[
\begin{tikzpicture}\begin{knot}[clip width=5,clip radius=8pt]
\strand [very thick,red] (0,0) to [out=right, in=down] (1,1)
to [out=up, in=right] (0,2) to [out=left, in=up] (-1,1) to [out=down, in=left] (0,0);
\strand [very thick,Green] (1,0) to [out=right, in=down] (2,1)
to [out=up, in=right] (1,2) to [out=left, in=up] (0,1) to [out=down, in=left] (1,0);
\strand [very thick,blue] (.5,-1) to [out=right, in=down] (1.5,0)
to [out=up, in=right] (.5,1) to [out=left, in=up] (-.5,0) to [out=down, in=left] (.5,-1);
\flipcrossings{3,4}
\end{knot}\end{tikzpicture}
\]
\end{minipage}

\noindent

\vspace{1em}

\noindent
Here, the algebraic expressions on the left detect the triple linking on the right, even though no two circles are linked.

In such cases, Massey products create new cocycles that are never cohomologous to zero, regardless of how the exactness equations are solved. The set of all such solutions requires a  framework of so-called \emph{defining systems}, and establishing well-definedness requires some care. Fortunately, the resulting theory is natural with respect to ring maps and often easy to compute and work with.

In this paper we introduce higher order operations on cohomology classes for inner products. In its it most primitive form, these ``products'' appear again due to the ability to solve exactness equations, but in this case it is combined with the module compatibility of the cap product
\[
(\alpha \cup \beta) \cap x = \alpha \cap (\beta \cap x).
\]
Said differently, the evaluation of cochains on a closed chain $x$ yields a morphism of modules over the ring $C$ of cochains, $C \to C^*$, which is one way to formulate the structure of an inner product $C \otimes C \to R$ that is compatible with a ring structure on $C$.

This yields new higher order invariants of spaces, that are different than traditional Massey products, even on low degree classes with very few inputs. We call these \emph{Massey inner products} to indicate that they arise from the higher-homotopical nature of inner products. 

In the heart of the paper we describe the most general situation: Massey inner products for $A_\infty$ inner products on an $A_\infty$ algebra, where the latter notion was introduced in \cite{T}.  This includes as special cases the so-called cyclic $A_\infty$ algebras (which have a strict inner product) and the most elementary case of a commutative dga with a compatible pairing. 

In the full generality of $A_\infty$ structures, we achieve the following:
\begin{itemize}[label={-}]
\item introduce \emph{defining systems} for inner products (\ref{EQU:dx=-Sx,dx'=-Sx'})
\item define a \emph{staircase inner product} which outputs  \emph{Massey inner products} (\ref{EQU:F(x**,x'**)})
\item prove well-definedness of the Massey inner products (\ref{lem;MIPwelldef})
\item prove naturality of \emph{Massey inner products} with respect to $A_\infty$-morphisms (\ref{COR:equiv-AAIP-gives-equal-Massey}, \ref{PROP:MIP-under-morphism}) and pullback via the homotopy transfer theorem (\ref{thm;pullbacknat})
\item formulate and deduce results on the formality of $A_\infty$ inner products (\ref{DEF:AAIP-formal}, 
 \ref{cor;formalMIP})
 \end{itemize}

With the theory established, we give a few select examples, showing Massey inner products contain information beyond the ordinary Massey products. An illustrative example comes from the following $4$-link,
\[
\scalebox{0.4}{
\begin{tikzpicture}
\begin{knot}[clip width=8pt,clip radius=8pt, 
]
\strand [line width=2.5pt,blue]
(-.5,3) to [out=up, in=left] (1,6) to [out=right, in=left] (3,3) to [out=right, in=left] (5,6)
 to [out=right, in=left] (7,3)  to [out=right, in=left] (9,6)  to [out=right, in=left] (11,3)
 to [out=right, in=left] (13,6) to [out=right, in=up] (14.5,3) to [out=down, in=right] (7.5,-.8)
 to [out=left, in=down] (-.5,3);
 \strand [line width=2.5pt,red]
(-.5,7.5) to [out=down, in=left] (1,4.5) to [out=right, in=left] (3,7.5) to [out=right, in=left] (5,4.5)
 to [out=right, in=left] (7,7.5)  to [out=right, in=left] (9,4.5)  to [out=right, in=left] (11,7.5)
 to [out=right, in=left] (13,4.5) to [out=right, in=down] (14.5,7.5) to [out=up, in=right] (7.5,10.5)
  to [out=left, in=up] (-.5,7.5);
 \strand [line width=2.5pt,Green]
(7,6.5) to [out=right, in=right] (7,11.5) to [out=left, in=left] (7,6.5) ;
 \strand [line width=2.5pt,Yellow!70!black]
(3.2,3.8) to [out=right, in=left] (7,2) to [out=right, in=left] (10.8,4) to [out=right, in=up] (11.6,2.8)
to [out=down, in=right] (7,.4) to [out=left, in=down] (2.4,2.8) to [out=up, in=left] (3.2,3.8);
\flipcrossings{2,4, 6, 8, 9, 12, 13, 16}
\end{knot}
\end{tikzpicture}}
\]
which has a nontrivial Massey inner product (Example \ref{EXA:4-component-link}), even while there are no nontrivial quadruple Massey products (over $\Z$).

As with all $A_\infty$ structures, one often has pictures and diagrams in mind which are easier to understand than explicit formulas. For this reason we choose to review the entire theory of ordinary Massey products first, in the setting of $A_\infty$ algebras. This is implicit, as the special case of the $A_\infty$ operad, from the work of \cite{FM}. Here we introduce a graphical calculus for understanding and computing Massey products in the $A_\infty$ context, which carries over well to the case of Massey inner products, the main new conceptual point of this work. As an application of the case of ordinary Massey products, we show the lens spaces $L(p,q)$ have non-trivial $p$-fold Massey products
\[
\la c\cdot x,\dots,c\cdot x\ra=\{c\cdot y\}\subseteq H(L(p,q), \Z_p) 
\]
for any constant $c\in \Z_p-\{0\}$. This yields a new proof of the classical result that if there is a homotopy equivalence of lens spaces $L(p,q)\to L(p,q')$ then $q$ and $q'$ must satisfy 
\[
q\cdot q' \equiv \pm n^2  \mod p \quad \textrm{for some} \quad n\in \Z_p-\{0\}.
\]

The paper gives rise to many potential applications, which include obstructions to pseudo-manifolds being Poincar\'e duality spaces; obstructions to morphisms between spaces and more generally $A_\infty$ algebras with inner products, and various applications to formality and geometry. In particular,  Massey product sets may have internal structure themselves (such as an interesting topology) as they are solutions to polynomial equations in several variables (\ref{EXA:general-minimal}, \ref{COR:minimal-Massey}). As an illustration, in Example \ref{EXA:Massey-circle} the Massey product set is a circle in $\R^2$. This internal structure of the Massey product set together with naturality properties may also prove useful for determining obstructions mentioned above. Thus, there are many new avenues to explore.

Finally, we point out the recent emphasis on bigraded notions of formality and so-called $ABC$-Massey products \cite{ATo,SM}, defined in the context of commutative bi-graded bi-differential algebras, such as complex manifolds. It is expected  there is a parallel version of the theory presented here, of Massey inner products in the bi-graded context, as well. This is left for future work.

This paper is organized as follows. In Section \ref{SEC:Massey-A-infty} we provide a through and graphical overview of  Massey products for $A_\infty$ algebras, establishing basic properties with regard to naturality and the transfer theory of $A_\infty$ algebras. We describe the general structure of Massey products for a minimal $A_\infty$ algebra, and give the application to lens spaces, using technical details from Appendix \ref{APP:L(p,q)-mimimal} to compute the minimal $A_\infty$ structure for $L(p,q)$ over $\Z_p$.
In Section \ref{SEC:Massey-modules} we very briefly review the notions of $A_\infty$ modules over $A_\infty$ algebras, establish the notion of Massey products in this context, and give one example of a nontrivial Massey product coming from the Hopf fibration. 
Section \ref{SEC:Massey-inner-product} contains all the material on Massey inner products, for $A_\infty$ inner products on $A_\infty$ algebras, as well as their properties.
Section \ref{SEC:Massey-cdga} is the special case of cyclic commutative algebras, i.e. commutative dgas with evaluation on a dual element, which could be read in advance in case this most interests the reader. Here, we provide examples of nontrivial Massey inner products, including the example of the $4$-link above.

\textbf{Acknowledgements:} 
The authors would like to thank Aleksandar Milivojevi\'c for several illuminating discussions on Massey products, and for pointing out that the Hopf fibration is not a formal map. The third author acknowledges support provided by a PSC-CUNY Award, jointly funded by The
Professional Staff Congress and The City University of New York.

\section{Massey Products for $A_\infty$ algebras}\label{SEC:Massey-A-infty}

\subsection{Background on $A_\infty$ algebras}\label{SEC:background-A-infty}
Let $A=\bigoplus_{k\geq 0}A_k$ denote a graded module over a commutative and unital ring $R$; in particular, $R$ could be a field of characteristic zero or finite characteristic. An \emph{$A_\infty$ algebra} consists of a sequence of maps $\mu_k:A^{\otimes k}\to A$ for $k=1, 2, 3, \dots$, where the degree is $|\mu_k|=2-k$,  such that for any $a_i\in A$:
\begin{equation}\label{EQU:mu_mu}
\sum_{k+\ell-1=n} \sum_{i} \pm \mu_k(a_1,\dots, \mu_\ell(a_{i+1},\dots,a_{i+\ell}),\dots,a_{n})=0, \quad \forall n\geq 1.
\end{equation}
This can also be stated as $\mu^2=0$ when lifting $\mu$ as a coalgebra map on the tensor coalgebra; see \cite{S, T}. We will sometimes also write $d$ instead of $\mu_1$. The $A_\infty$ algebra is called minimal, if $\mu_1=0$.

To state the correct signs in Equation \eqref{EQU:mu_mu}, we shift $A$ down by one, denoted by $\ul{A}$, 
so that the shifted degrees of elements are $|\ul{a}|= |a| -1$, and similarly for operators.
In the shifted degree, each $\ul{\mu_k}:\ul{A}^{\otimes k}\to \ul{A}$ is of degree $|\ul{\mu_k}|=1$. Then, the sign in \eqref{EQU:mu_mu} is simply given by the Koszul sign rule, i.e., $``\pm''=(-1)^{|\ul{\mu_\ell}|\cdot (|\ul{a_1}|+\dots+|\ul{a_i}|)}$, 
so that \eqref{EQU:mu_mu} becomes
\begin{equation}\label{EQU:ULmu_ULmu}
\sum_{k+\ell-1=n}\,\, \sum_{i+j+1=k} \ul{\mu_k}\circ (\underbrace{id\otimes \dots \otimes id}_{i\text{ many}}  \otimes\ul{\mu_\ell}\otimes \underbrace{id\otimes \dots \otimes id}_{j\text{ many}})=0, \quad \forall n\geq 1
\end{equation}

The relation between the shifted and the unshifted $\mu$'s can be expressed via the shift operator $\mathfrak{s}:A\to \un{A}, a \mapsto \un{a},$ by taking $\ul{\mu_k}=\mathfrak{s}\circ \mu_k\circ (\mathfrak{s}^{-1}\ot \dots\ot \mathfrak{s}^{-1})$. Then (c.f. \cite[Proposition 1.4]{T}):
\begin{align}\label{EQU:muk-vs-ulmuk}
\ul{\mu_k}(\ul{a_1},\dots,\ul{a_k})=&\mathfrak{s}\circ \mu_k\circ (\mathfrak{s}^{-1}\ot \dots\ot \mathfrak{s}^{-1})(\ul{a_1},\dots,\ul{a_k})
\\ \nonumber
=&(-1)^{|\ul{a_1}|\cdot (k-1)+|\ul{a_2}|\cdot (k-2)+\dots +|\ul{a_{k-1}}|\cdot 1} \cdot  \ul{\mu_k(a_1,\dots,a_k)}.
\end{align}
For example, $\ul{d}(\ul{a})=\ul{\mu_1}(\ul{a})=\ul{\mu_1(a)}=\ul{d(a)}$ and $\ul{\mu_2}(\ul{a_1},\ul{a_2})=(-1)^{|\ul{a_1}|}\cdot \ul{\mu_2(a_1,a_2)}$. Thus, if for example $\mu_3=0$, Equation \eqref{EQU:ULmu_ULmu} for $n=3$ is $\ul{\mu_2}(\ul{\mu_2}(\ul{a_1},\ul{a_2}),\ul{a_3})+(-1)^{|\un{a_1}|}\cdot\ul{\mu_2}(\ul{a_1},\ul{\mu_2}(\ul{a_2},\ul{a_3}))=0$, which simply becomes the associativity condition, i.e., $\mu_2(\mu_2(a_1,a_2),a_3)=\mu_2(a_1,\mu_2(a_2,a_3))$.

An \emph{$A_\infty$ algebra morphism} $f:A\to B$ between $A_\infty$ algebras $(A,\mu)$ and $(B,\nu)$ consists of maps $f_k:A^{\otimes k}\to B$ for $k=1,2,3, \dots$ of degree $|f_k|=1-k$, such that the shifted maps $\ul{f_k}:\ul{A}^{\otimes k}\to \ul{B}$  (of degree $|\ul{f_k}|=0$) satisfy
\begin{multline}\label{EQU:f_f}
\sum_{k+\ell-1=n}\,\, \sum_{i+j+1=k} \ul{f_k}\circ (\underbrace{id\otimes \dots \otimes id}_{i\text{ many}}  \otimes\ul{\mu_\ell}\otimes \underbrace{id\otimes \dots \otimes id}_{j\text{ many}})
\\
=\sum_{\ell_1+\dots+\ell_k=n}   \ul{\nu_k}\circ (\ul{f_{\ell_1}}\ot\dots\ot\ul{f_{\ell_k}})=0, \quad \forall n\geq 1.
\end{multline}
We also state this equation as $f\circ \mu=\nu\circ f$, where we lift $f$ to the tensor coalgebra as a coalgebra map. As before, we can relate the shifted and the unshifted maps via  $\ul{f_k}=\mathfrak{s}_B\circ f_k\circ (\mathfrak{s}_A^{-1}\ot \dots\ot \mathfrak{s}_A^{-1})$, where $\mathfrak{s}_A$ and $\mathfrak{s}_B$ are the shifts for $A$ and $B$, respectively. We have a relation that is similar to \eqref{EQU:muk-vs-ulmuk}:
\begin{align}\label{EQU:fk-vs-ulfk}
\ul{f_k}(\ul{a_1},\dots,\ul{a_k})=&\mathfrak{s}_B\circ f_k\circ (\mathfrak{s}_A^{-1}\ot \dots\ot \mathfrak{s}_A^{-1})(\ul{a_1},\dots,\ul{a_k})
\\ \nonumber
=&(-1)^{|\ul{a_1}|\cdot (k-1)+|\ul{a_2}|\cdot (k-2)+\dots +|\ul{a_{k-1}}|\cdot 1} \cdot  \ul{f_k(a_1,\dots,a_k)}.
\end{align}
In particular, $\ul{f_1}(\ul{a})=\ul{f_1(a)}$ and $\ul{f_2}(\ul{a_1},\ul{a_2})=(-1)^{|\ul{a_1}|}\cdot \ul{f_2(a_1,a_2)}$.
\subsection{Defining systems and Massey products for $A_\infty$ algebras}\label{SEC:Def-system-Massey-A-infty}

In this section we introduce several definitions, ultimately defining higher Massey products in $A_\infty$ algebras and establishing naturality with respect to $A_\infty$ morphisms. This is implicit in the operad setup of \cite{FM}, though we wish to make it explicit in the $A_\infty$ case and introduce a graphical calculus as it illuminates the introduction of Massey inner products in later sections.

\begin{definition}  \label{defn:triangsys}
For any graded module $A$, a \emph{triangular system}, denoted it by $\xbb_{p,q}$ for $p \leq q$, is a collection of elements $x_{i,j}\in A$ for all $p\leq i\leq j\leq q$ with the possible exception of $x_{p,q}$, such that the degrees of the elements satisfy 
\[
|x_{i,j}|=|x_{i,i}|+|x_{i+1,i+1}|+\dots +|x_{j,j}|+i-j \quad \textrm{for all $i,j$}.
\]
Equivalently, in the shifted degrees, $|\ul{x_{i,j}}|=|\ul{x_{i,i}}|+|\ul{x_{i+1,i+1}}|+\dots +|\ul{x_{j,j}}|$ for all $i,j$. We may denote $\xbb_{p,q}$ by $\xbb$ if the $p$ and $q$ are clear.

 For any $A_\infty$ structure $(A, \mu)$ on $A$, each triangular system $\xbb$ determines an element in $A$ via the  \emph{staircase product of $\xbb$}, which is defined to be the element $\St{\xbb}{p,q}\in A$ given by
\begin{equation}\label{EQU:x**}
\ul{\St{\xbb}{p,q}}\\
:=\sum_{\tiny\begin{matrix}{r\geq 2}\\{p=j_0\leq j_1< j_2<\dots<j_r=q}\end{matrix}}
\ul{\mu_{r}}(\ul{x_{j_0,j_1}},\ul{x_{j_1+1,j_2}},\dots, \ul{x_{j_{r-2}+1,j_{r-1}}},\ul{x_{j_{r-1}+1,j_r}})
\end{equation}

\end{definition}

Several remarks are in order regarding the definition.  First, note that the index set for the sum has no \emph{gap} and no \emph{overlap} when going from $p=j_0$ to $j_1$, then from $j_1+1$ to $j_2$, from $j_2+1$ to $j_3$, all the way to $j_r=q$. Secondly, 
we explain a graphical calculus by means of example. We depict a triangular system, for example $\xbb_{2,5}$, on the grid as follows:
\[
\scalebox{0.9}{
\begin{tikzpicture}
\begin{axis}[axis lines = center, grid=both, xmin=-1, xmax=7, ymin=-1, ymax=6, xtick={-1,...,6},  ytick={-1,...,6}, xticklabels={}, yticklabels={}];
\end{axis}
 \node[anchor=north] at (-1,4) {$\xbb_{2,5}=$};
 \node[anchor=north] at (2.5,2.7) {$x_{2,2}$};
 \node[anchor=north] at (2.5,3.5) {$x_{2,3}$};
 \node[anchor=north] at (2.5,4.35) {$x_{2,4}$};
 \node[anchor=north] at (3.4,3.5) {$x_{3,3}$};
 \node[anchor=north] at (3.4,4.35) {$x_{3,4}$};
 \node[anchor=north] at (3.4,5.2) {$x_{3,5}$};
 \node[anchor=north] at (4.3,4.35) {$x_{4,4}$};
 \node[anchor=north] at (4.3,5.2) {$x_{4,5}$};
 \node[anchor=north] at (5.2,5.2) {$x_{5,5}$};
\end{tikzpicture}}
\]
The term $\ul{\mu_4}(\ul{x_{2,4}},\ul{x_{5,8}},\ul{x_{9,9}},\ul{x_{10,12}})$, which appears in the sum for $\St{\xbb_{2,12}}{2,12}$ is shown in the following ``staircase'' diagram:
\[
\scalebox{0.5}{
\btik{2}{12}
\foreach \x in {2,3,4,5,6,7,8,9,10,11,12} 
{\draw[dashed, line width=.005pt] (\x,2)--(\x,12); \node at (\x,1.5){$\x$};
\draw[dashed] (2,\x)--(12,\x); \node at (1.5, \x){$\x$};}
\dblue{2}{12}
\dblack{2}{4}  \node[anchor=east] at (2,4.5) {\huge $\ul{x_{2,4}}$}; 
\dblack{5}{8}  \node[anchor=east] at (5,8) {\huge $\ul{x_{5,8}}$}; 
\draw [fill, black] (9,9) circle [radius=.1];  \node[anchor=east] at (8.8,9) {\huge $\ul{x_{9,9}}$}; 
\dblack{10}{12}  \node[anchor=north east] at (10,12) {\huge $\ul{x_{10,12}}$}; 
\end{tikzpicture} }
\]
Here the {\color{blue}color blue} indicates the application of a product $\mu_r$. In short we sum over all these terms, which we may write as sum over ``staircases'':
\[
\scalebox{0.45}{
\begintik
 \node[anchor=east] at (-1,5) {\huge $\ul{\St{\xbb}{p,q}}=\sum$}; 
\dblue{0}{8}
\dblack{0}{3} \dblack{3.3}{5} \dblack{5.3}{6} \dblack{6.3}{8}
\end{tikzpicture} }
\]

In many instances, we may be given some triangular system $\xbb_{p,q}$, and, for some given $p\leq p'\leq q'\leq q$, we restrict $\xbb_{p,q}$ to $\xbb_{p',q'}$. Thus, it makes sense to write $\St\xbb{p',q'}$ without referring to the index set (as long as the $x$'s are defined for the needed expression). Note that in particular, $ \St\xbb{\ell,\ell}=0$, and for small values of $p-q$, the expressions $\St\xbb{p,q}$ are given by (c.f. \eqref{EQU:muk-vs-ulmuk} for signs):
\begin{align*}
 \St\xbb{1,1}=&0  \\
 \St\xbb{1,2}=&(-1)^{|\ul{x_{1,1}}|}\cdot \mu_2(x_{1,1},x_{2,2})  \\
 \St \xbb{1,3}=&(-1)^{|\ul{x_{1,1}}|}\cdot \mu_2(x_{1,1},x_{2,3})+(-1)^{|\ul{x_{1,2}}|}\cdot \mu_2(x_{1,2},x_{3,3})\\\
 &+(-1)^{|\ul{x_{2,2}}|}\cdot \mu_3(x_{1,1},x_{2,2},x_{3,3})  \\
  \St\xbb{1,4}=&(-1)^{|\ul{x_{1,1}}|}\cdot\mu_2(x_{1,1},x_{2,4})+(-1)^{|\ul{x_{1,2}}|}\cdot\mu_2(x_{1,2},x_{3,4})
 +(-1)^{|\ul{x_{1,3}}|}\cdot\mu_2(x_{1,3},x_{4,4})
  \\& 
 +(-1)^{|\ul{x_{2,2}}|}\cdot\mu_3(x_{1,1},x_{2,2},x_{3,4})  +(-1)^{|\ul{x_{2,3}}|}\cdot\mu_3(x_{1,1},x_{2,3},x_{4,4})\\
 & +(-1)^{|\ul{x_{3,3}}|}\cdot\mu_3(x_{1,2},x_{3,3},x_{4,4}) \\
 & +(-1)^{|\ul{x_{1,1}}|+|\ul{x_{3,3}}|}\cdot\mu_4(x_{1,1},x_{2,2},x_{3,3},x_{4,4}).
\end{align*}

Next, we impose a natural condition on triangular systems in order for higher Massey products to be defined.

\begin{definition} \label{defn;defnsys}
A triangular system $\xbb = \xbb_{1,n}$ is called a \emph{defining system} if 
\[
d(x_{i,j})=- \St\xbb{i,j}
\]
for all  indices $(i,j) \neq (1,n)$, where  $1\leq i\leq j\leq n$.
\end{definition}

In particular, the definition implies that $d(x_{\ell,\ell})=- \St\xbb{\ell,\ell}=0$ for all $\ell$, i.e. the diagonal elements $x_{\ell,\ell}$ are cocycles. Informally, the higher conditions say that each element $x_{i,j}$ expresses the cocycle $ \St\xbb{i,j}$ as a coboundary.

We remark that in shifted degrees the condition for a defining system is that 
\begin{equation}\label{EQU:dxij=summuxxx}
\ul{d(x_{i,j})}=-\ul{ \St\xbb{i,j}}= \hspace{1em}-
\sum_{\tiny\begin{matrix}r\geq 2\\ i=j_0\leq j_1< j_2<\dots<j_r=j\end{matrix}}
\ul{\mu_{r}}(\ul{x_{j_0,j_1}},\dots, \ul{x_{j_{r-1}+1,j_r}}).
\end{equation}

We can now state our first result on the staircase product of defining systems:

\begin{proposition}\label{PROP:x**-closed}
If $\xbb$ is a defining system, then 
\[
d(  \St\xbb{1,n})=0.
\] 
Thus, the staircase product induces a map from the set of defining systems to the cohomology of $A$,
\[
\big\{\text{defining systems } \xbb \big\}\to H(A), \quad \quad \xbb\mapsto [ \St\xbb{1,n}].
\]
\end{proposition}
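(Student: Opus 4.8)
The plan is to pass to shifted degrees and prove the equivalent statement $\ul{\mu_1}(\ul{\St\xbb{1,n}})=0$, since $\ul{d}=\ul{\mu_1}$ and the shift $\mathfrak{s}$ is an isomorphism. First I would expand $\ul{\St\xbb{1,n}}$ via \eqref{EQU:x**} as a sum of staircase terms $\ul{\mu_r}(\ul{x_{j_0,j_1}},\dots,\ul{x_{j_{r-1}+1,j_r}})$ with $r\geq 2$, and push $\ul{\mu_1}$ inside. To each composite $\ul{\mu_1}\circ\ul{\mu_r}$ I would apply the quadratic $A_\infty$ relation \eqref{EQU:ULmu_ULmu} of total arity $r$, solving for $\ul{\mu_1}\circ\ul{\mu_r}$ as (minus) the sum of the remaining summands. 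These split into two families: the family (i) terms $\ul{\mu_r}\circ(\mathrm{id}^{\otimes i}\otimes\ul{\mu_1}\otimes\mathrm{id}^{\otimes j})$, in which $\ul{\mu_1}=\ul{d}$ hits a single block, and the family (ii) terms $\ul{\mu_k}\circ(\mathrm{id}^{\otimes i}\otimes\ul{\mu_\ell}\otimes\mathrm{id}^{\otimes j})$ with $k,\ell\geq 2$.

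Because $r\geq 2$, every block $x_{a,b}$ occurring in a staircase of $[1,n]$ is a proper sub-block with $(a,b)\neq(1,n)$, so the defining-system condition \eqref{EQU:dxij=summuxxx} applies to it and gives $\ul{d(x_{a,b})}=-\ul{\St\xbb{a,b}}=-\sum_{s\geq 2}\ul{\mu_s}(\text{sub-blocks of }[a,b])$. Substituting this into each family (i) term, the two minus signs cancel and family (i) becomes a sum of nested products $\ul{\mu_r}(\dots,\ul{\mu_s}(\dots),\dots)$ in which the inner $\ul{\mu_s}$ subdivides exactly one block of the outer staircase; note $s\geq 2$ and the subdivided block is off-diagonal, so the diagonal blocks $x_{\ell,\ell}$ contribute nothing, consistent with $\St\xbb{\ell,\ell}=0$.

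The heart of the proof is a bijection showing that family (i) (after this substitution) and family (ii) are the \emph{same} collection of nested products with opposite signs. Both are indexed by a staircase partition of $[1,n]$ together with a distinguished window of $\ell=s\geq 2$ consecutive pieces grouped under an inner product, the remaining pieces plus this group being combined by an outer product of arity $k\geq 2$. Family (ii) reads this datum as first partitioning $[1,n]$ into $k+\ell-1$ pieces and then grouping the window; family (i) reads the same datum as first partitioning into $k$ outer blocks, with the window collapsed to the single block $x_{a,b}$ (legitimate precisely because $k\geq 2$ forces $(a,b)\neq(1,n)$), and then subdividing that block into its $s=\ell$ pieces. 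I would verify that this is an exhaustive bijection in both directions under the identification of arities $r=k$ and $s=\ell$.

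The main obstacle is matching the Koszul signs term-by-term across the bijection, and this is exactly what motivates the shifted formalism. All operators in play, $\ul{\mu_1}$ in family (i) and $\ul{\mu_\ell}$ in family (ii), have shifted degree $1$; moved into the same slot past the same preceding blocks they produce identical Koszul signs, and \eqref{EQU:ULmu_ULmu} assigns the two families opposite overall signs once one solves for $\ul{\mu_1}\circ\ul{\mu_r}$ (after the defining-system substitution absorbs its own sign). I would check this sign accounting carefully; granting it, the paired terms cancel and $\ul{\mu_1}(\ul{\St\xbb{1,n}})=0$, hence $d(\St\xbb{1,n})=0$. The final assertion is then immediate: $\St\xbb{1,n}$ is a cocycle, so $[\St\xbb{1,n}]\in H(A)$ is defined and $\xbb\mapsto[\St\xbb{1,n}]$ is the desired map. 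Conceptually, this whole computation is the Bianchi/Maurer--Cartan identity for the induced matrix $A_\infty$ structure on upper-triangular matrices over $A$, with the defining system playing the role of a Maurer--Cartan element and $\St\xbb{1,n}$ its curvature.
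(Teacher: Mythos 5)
Your proposal is correct and takes essentially the same approach as the paper: the paper's proof likewise expands $\ul{d}(\ul{\St\xbb{1,n}})$ over staircases, applies the $A_\infty$ relation $\mu^2=0$ to each composite $\ul{\mu_1}\circ\ul{\mu_r}$ to produce your two families (a lone $\ul{\mu_1}$ hitting one block, versus an inner $\ul{\mu_s}$ with $s\geq 2$), and then substitutes the defining-system condition $\ul{d}(\ul{x_{j_\ell+1,j_{\ell+1}}})=-\ul{\St\xbb{j_\ell+1,j_{\ell+1}}}$ so the two sums cancel, with the same Koszul sign $(-1)^{|\ul{x_{j_0,j_1}}|+\dots+|\ul{x_{j_{\ell-1}+1,j_\ell}}|}$ that your shifted-degree argument produces. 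Your additional observations (that $r\geq 2$ forces every block to be a proper sub-block so the defining-system condition applies, and the closing Maurer--Cartan interpretation) are correct refinements of details the paper leaves implicit, not a different route.
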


\begin{proof}
Evaluating $\ul{d(  \St\xbb{1,n})}$ gives
\begin{multline*}
\ul{d}( \ul{ \St\xbb{1,n}}) = \sum_{r\geq 2}\sum \ul{\mu_1}(\ul{\mu_r}(\ul{x_{j_0,j_1}},\dots,\ul{x_{j_{r-1}+1,j_r}}))\\
=- \sum_{r\geq 2}\sum(-1)^\varepsilon\cdot \ul{\mu_r}(\ul{x_{j_0,j_1}},\dots,\ul{\mu_1}(\ul{x_{j_{\ell}+1,j_{\ell+1}}}),\dots,\ul{x_{j_{r-1}+1,j_r}})\hspace{1.4cm}\\
 - \sum_{r,s\geq 2}\sum(-1)^\varepsilon\cdot \ul{\mu_r}(\ul{x_{j_0,j_1}},\dots,\ul{\mu_s}(\ul{x_{j_{\ell}+1,j_{\ell+1}}},\dots),\dots,\ul{x_{j_{r-1}+1,j_r}})=0,
\end{multline*}
where $(-1)^\varepsilon=(-1)^{|\ul{x_{j_{0},j_{1}}}|+\dots+|\ul{x_{j_{\ell-1}+1,j_\ell}}|}$. The second equality comes from the $A_\infty$ relations $\mu^2=0$. The last equality follows, because $\xbb_{1,n}$ is a defining system, and so $\ul{d}(\ul{x_{j_{\ell}+1,j_{\ell+1}}})=-\ul{ \St\xbb{j_{\ell}+1,j_{\ell+1}}}$, and thus the two sums cancel.
\end{proof}

\begin{definition}
For $x_{1,1},x_{2,2},x_{3,3},\dots,x_{n,n}\in A$, $n \geq 3$, we define the \emph{$n^{th}$ Massey product} (or \emph{$n^{th}$ Massey product set}) to be the subset of the cohomology $H(A)$ of $A$ given by
\begin{equation*} 
\la x_{1,1},x_{2,2},\dots,x_{n,n} \ra 
 :=\Big\{ [ \St\xbb{1,n}]\in H(A) \Big| \xbb_{1,n} \text{ is a defining system} \Big\} \subseteq H(A).
\end{equation*}
Note that this is well defined, since $\St\xbb{1,n}$ is closed by the previous proposition.

We say that the Massey product is \emph{trivial} if $\la x_{1,1},\dots,x_{n,n} \ra$ is either the empty set or contains $0\in H(A)$.
\end{definition}

The Massey product $\la x_{1,1},x_{2,2},\dots,x_{n,n} \ra$ depends only on the cohomology classes of the $x_{i,i}$'s, as we now explain.

\begin{lemma}\label{LEM:Zs}
Let $\xbb_{1,n}$ be a defining system. For a fixed $1\leq k\leq \ell\leq n$ with $(k,\ell)\neq(1,n)$, and $z\in A$ with $|z|=|x_{k,\ell}|-1$, let
\begin{align*} 
\wt{x_{k,\ell}}&:= x_{k,\ell}+dz,\quad \text{and let}\\
\wt{x_{i,j}}&:= x_{i,j}\quad\text{ for any $i,j$ for which either $k<i$ or $j<\ell$.}
\end{align*}
Then these $\wt{x_{i,j}}$ can be completed to a defining system $\{\wt{x_{\bu\bu}}\}$ for which
\[
[  \St{\{ \wt{x_{\bu\bu}} \}}{p,q} ]= [  \St{\{ {x_{\bu\bu} }\}}{p,q} ] \in H(A),
\]
for all $1\leq p\leq q\leq n$.
\end{lemma}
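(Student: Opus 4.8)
The plan is to build an explicit completion $\xbbt$ of the given partial data and to show that each staircase product changes only by an explicit coboundary. I first record two reductions. If an interval $[i,j]$ does not contain $[k,\ell]$ (that is, $k<i$ or $j<\ell$), then no sub-interval occurring in $\St\xbbt{i,j}$ contains $[k,\ell]$ either, so every entry there is unchanged and $\St\xbbt{i,j}=\St\xbb{i,j}$; thus the prescribed values $\wt{x_{i,j}}=x_{i,j}$ on such intervals automatically satisfy their defining-system equations. Furthermore, once $\xbbt$ is known to be a defining system, for every $(p,q)\neq(1,n)$ both $\St\xbb{p,q}=-d(x_{p,q})$ and $\St\xbbt{p,q}=-d(\wt{x_{p,q}})$ are coboundaries, so $[\St\xbbt{p,q}]=0=[\St\xbb{p,q}]$ automatically. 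Hence the real work is to define $\wt{x_{i,j}}$ on the intervals $[i,j]\supsetneq[k,\ell]$ and to establish the single cohomology equality at $(1,n)$.

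For each $[i,j]\supseteq[k,\ell]$ I would define a correction $w_{i,j}\in A$ as a \emph{$z$-inserted staircase}: sum, with the Koszul signs of \eqref{EQU:x**}, over all staircase decompositions of $[i,j]$ in which $[k,\ell]$ occurs as one of the steps, inserting $\ul z$ in that step and $\ul{x_{\bu\bu}}$ elsewhere,
\[
\ul{w_{i,j}}:=\sum_{\substack{i=j_0\le j_1<\dots<j_r=j\\ [j_{m-1}+1,\,j_m]=[k,\ell]\ \text{for some }m}} \pm\,\ul{\mu_r}\big(\ul{x_{j_0,j_1}},\dots,\underbrace{\ul z}_{m\text{-th slot}},\dots,\ul{x_{j_{r-1}+1,j_r}}\big),
\]
with the convention that the one-step term ($r=1$, forced only when $[i,j]=[k,\ell]$) is $\ul{\mu_1}(\ul z)=\ul{dz}$, so $w_{k,\ell}=dz$. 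Since $|z|=|x_{k,\ell}|-1$, a degree count gives $|w_{i,j}|=|x_{i,j}|$, so setting $\wt{x_{i,j}}:=x_{i,j}+w_{i,j}$ preserves the triangular-system degree conditions.

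The crux is the master identity
\[
\St\xbbt{i,j}-\St\xbb{i,j}=-\,d(w_{i,j}),\qquad [i,j]\supseteq[k,\ell],
\]
which I would prove by induction on $j-i$. Since each monomial of a staircase product is a single $\mu_r$ on entries supported on disjoint intervals, at most one argument lies on an interval containing $[k,\ell]$; by multilinearity the left-hand side is therefore the sum over decompositions of $[i,j]$ with exactly one step $[a,b]\supseteq[k,\ell]$ of the terms $\mu_r(\dots,w_{a,b},\dots)$, using $w_{k,\ell}=dz$ for the minimal step. Expanding $d(w_{i,j})=\mu_1(w_{i,j})$ by $\mu^2=0$, the terms where $\mu_1$ hits a genuine $x$-entry are rewritten via the defining equations $d(x_{a,b})=-\St\xbb{a,b}$ and cancel against the terms where $\mu_1$ yields a $\mu_s$-composition of adjacent $x$-entries, exactly the cancellation in the proof of Proposition \ref{PROP:x**-closed}. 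What survives are precisely the contributions touching the distinguished $z$-step, and these reassemble into the multilinear sum above. Granting the identity, for $(i,j)\neq(1,n)$ it gives $d(\wt{x_{i,j}})=-\St\xbb{i,j}+d(w_{i,j})=-\St\xbbt{i,j}$, so $\xbbt$ is a defining system, and at $(1,n)$ it gives $\St\xbbt{1,n}-\St\xbb{1,n}=-d(w_{1,n})$, hence $[\St\xbbt{1,n}]=[\St\xbb{1,n}]$.

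I expect the main obstacle to be the sign bookkeeping in the master identity: one must check that the Koszul signs coming through the shift \eqref{EQU:muk-vs-ulmuk} make the identity hold with a single uniform sign, so that one family $\{w_{i,j}\}$ simultaneously solves every coboundary equation for the super-intervals and provides the primitive at the top. The cancellations not involving $z$ are identical to those in Proposition \ref{PROP:x**-closed}, so the genuinely new effort is confined to tracking the signs of the terms adjacent to the inserted $z$-step.
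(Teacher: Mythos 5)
Your proposal is correct and follows essentially the same route as the paper: your $z$-inserted staircases $w_{i,j}$ coincide with the paper's correction terms $Z_{i,j}$ from \eqref{EQU:Def-of-Z-explicit} (with the $(k,\ell)$-entry getting $dz$ directly), and your master identity is exactly the paper's Equation \eqref{EQU:xtilde=x+dZ}, established by the same mechanism of expanding $d$ via $\mu^2=0$ and cancelling against the defining-system relations as in Proposition \ref{PROP:x**-closed}. The only cosmetic difference is that the induction on $j-i$ you announce is never actually used -- the multilinear expansion you describe proves the identity directly for each interval, which is how the paper argues.
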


In the case where we vary elements on the diagonal $\wt{x_{\ell,\ell}}=x_{\ell,\ell}+dz$, and we take $(p,q)=(1,n)$, the above lemma shows that all of the cohomology classes in $\la x_{1,1},\dots, x_{\ell,\ell},\dots,x_{n,n}\ra$ and those in $\la x_{1,1},\dots, x_{\ell,\ell}+dz,\dots,x_{n,n}\ra$ coincide. Thus we obtain:
\begin{theorem} \label{thm;MPsetmap}
The $n^{th}$ Massey product induces a well-defined set map
\[
\la -,\dots,- \ra :H(A)\times\dots\times H(A)\to \mathcal P(H(A)),
 \quad \la [x_{1,1}],\dots,[x_{n,n}] \ra:=\la x_{1,1},\dots,x_{n,n} \ra
\]
where  $\mathcal P(H(A))$ is the power set of $H(A)$. For any given collection of cohomology classes, we refer to the output as the Massey product set.
\end{theorem}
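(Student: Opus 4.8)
The plan is to notice that the right-hand side $\la x_{1,1},\dots,x_{n,n}\ra$ is already defined as an honest subset of $H(A)$ (each $\St\xbb{1,n}$ is a cocycle by Proposition~\ref{PROP:x**-closed}), so the only content of the theorem is \emph{well-definedness}: the set must be unchanged when each representative cocycle $x_{i,i}$ is replaced by another representative of the same class $[x_{i,i}]\in H(A)$. I would reduce this to the case of altering a single diagonal entry by a coboundary. If $(x_{i,i})_{i=1}^n$ and $(x'_{i,i})_{i=1}^n$ are two tuples of cocycles with $[x_{i,i}]=[x'_{i,i}]$ for all $i$, then $x'_{i,i}=x_{i,i}+dz_i$ for suitable $z_i$ with $|z_i|=|x_{i,i}|-1$, and one interpolates through the tuples $y^{(m)}=(x'_{1,1},\dots,x'_{m,m},x_{m+1,m+1},\dots,x_{n,n})$, so that $y^{(m)}$ and $y^{(m-1)}$ differ in exactly one diagonal slot and only by a coboundary. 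Each $y^{(m)}$ again consists of cocycles, so every intermediate Massey product set is itself defined.

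For a single such change I would invoke Lemma~\ref{LEM:Zs} with $(k,\ell)=(\ell,\ell)$; this is legitimate because $(\ell,\ell)\neq(1,n)$ whenever $n\geq 3$. Given any defining system $\xbb$ for the tuple $(\dots,x_{\ell,\ell},\dots)$, the lemma produces a completed defining system $\{\wt{x_{\bu\bu}}\}$ for $(\dots,x_{\ell,\ell}+dz,\dots)$ with $[\St{\{\wt{x_{\bu\bu}}\}}{1,n}]=[\St\xbb{1,n}]$, which yields the inclusion $\la\dots,x_{\ell,\ell},\dots\ra\subseteq\la\dots,x_{\ell,\ell}+dz,\dots\ra$. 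Applying the lemma a second time, now starting from a defining system for $(\dots,x_{\ell,\ell}+dz,\dots)$ and using the coboundary $d(-z)$ to recover $x_{\ell,\ell}$, gives the reverse inclusion, hence equality of the two sets. This is precisely the consequence recorded in the paragraph preceding the theorem.

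Chaining these equalities along the interpolation from $y^{(0)}=(x_{1,1},\dots,x_{n,n})$ to $y^{(n)}=(x'_{1,1},\dots,x'_{n,n})$, where consecutive tuples agree except for one coboundary change on the diagonal, gives $\la x_{1,1},\dots,x_{n,n}\ra=\la x'_{1,1},\dots,x'_{n,n}\ra$. Therefore the assignment depends only on the classes $[x_{i,i}]$ and descends to a well-defined map $H(A)\times\dots\times H(A)\to\mathcal P(H(A))$, as claimed.

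The theorem is essentially a formal corollary of Lemma~\ref{LEM:Zs}, so I do not expect a genuine obstacle. The only points needing care are that the lemma supplies a single inclusion per application and so must be used in both directions to upgrade to an equality, and that the degree condition $|z_i|=|x_{i,i}|-1$ is respected and the cocycle property is preserved at every stage of the interpolation. All of the real difficulty is absorbed into the proof of Lemma~\ref{LEM:Zs} itself, namely completing a partially modified triangular system to a full defining system without changing any staircase cohomology class.
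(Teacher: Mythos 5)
Your proposal is correct and follows essentially the same route as the paper, which likewise obtains the theorem as an immediate consequence of Lemma~\ref{LEM:Zs} applied to diagonal entries $\wt{x_{\ell,\ell}}=x_{\ell,\ell}+dz$ with $(p,q)=(1,n)$. The only difference is that you spell out two details the paper leaves implicit---applying the lemma in both directions (with $z$ and with $-z$) to upgrade the one-sided inclusion to an equality, and chaining single-entry changes across all $n$ diagonal slots---which is a faithful expansion rather than a different argument.
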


Now we give:

\begin{proof}[Proof of Lemma \ref{LEM:Zs}] 
We modify the given defining system $\xbb$ appropriately by letting
\begin{equation}\label{EQU:xtilde=x+Z}
\wt{x_{p,q}}:=x_{p,q}+Z_{p,q},
\end{equation}
for all $p,q$ with $1\leq p\leq k$ and $\ell\leq q\leq n$ but $(p,q)\neq (k,\ell)$, where $Z_{p,q}$ is defined by
\begin{equation}\label{EQU:Def-of-Z-explicit}
\ul{Z_{p,q}}:=\sum_{r\geq 2} \sum_{\tiny\begin{matrix}{s,t\geq 0, \quad s+t+1=r}\\{p\leq j_1< j_2<\dots<j_s=k-1}\\{\ell+1\leq j'_1< j'_2<\dots<j'_t=q} \end{matrix}}
(-1)^{\varepsilon}\cdot 
\ul{\mu_{r}}(\ul{x_{p,j_1}},\dots, \ul{x_{j_{s-1}+1,j_s}}, \ul{z},\ul{x_{\ell+1,j'_1}}, \dots , \ul{x_{j'_{t-1}+1,j'_t}}).
\end{equation}
Here, the sign is $(-1)^{\varepsilon}=(-1)^{|\ul{x_{p,j_1}}|+\dots+|\ul{x_{j_{s-1}+1,j_s}}|}$. Moreover, the second sum is a sum over all indices in the staircase product \eqref{EQU:x**} that include the index $(k,\ell)$, and there we replace the $x_{k,\ell}$ by $z$. We note that, when $s=0$ (i.e., $k=p$), the middle condition under the sum is considered to be vacuous, and, when $t=0$ (i.e., $\ell=q$), the bottom equation under the sum considered to be vacuous. In pictures this is depicted as: 
\[
\scalebox{0.45}{
\begintik
 \node[anchor=east] at (-1.5,5) {\huge $\ul{Z_{p,q}}=\sum$}; 
\dblue{0}{8}
\dblack{0}{3} \dblack{3.3}{4.7} \dblack{5}{6} \dblack{6.3}{8}
\node at (5,6.4) {\huge $\ul{z}$};  
\draw [dotted, line width=1pt] (5,-0.5)--(5,6)--(-0.5,6); 
\node at (5,-0.5) {\huge $k$};  \node at (-.5,6) {\huge $\ell$}; 
\draw [dotted, line width=1pt] (0,-0.5)--(0,0)--(-0.5,0); 
\node at (0,-0.5) {\huge $p$};  \node at (-.5,0) {\huge $p$}; 
\draw [dotted, line width=1pt] (8,0)--(8,8)--(-0.5,8); 
\node at (8,-0.5) {\huge $q$};  \node at (-.5,8) {\huge $q$}; 
\draw [decorate, line width=1.5pt, decoration = {brace, raise=7pt, amplitude=10pt}] (4.7,4.7) --  (0,0);
\end{tikzpicture} }
\]
where we keep track of the sign by putting the brace on the elements whose degrees contribute to $(-1)^\varepsilon$. 

It suffices to show that for all $1\leq p\leq q\leq n$,
\begin{equation}\label{EQU:xtilde=x+dZ}
 \St{\{\wt{x_{\bu\bu}}\}}{p,q}= \St{\{{x_{\bu\bu}}\}}{p,q}-d(Z_{p,q}),
\end{equation}
since then $\{\wt{x_{\bu\bu}}\}$ is a defining system, because for all $1\leq i\leq j\leq n$ with $(i,j) \neq (1,n)$,
\begin{equation}\label{EQU:dxtilde=-sxtilde}
d(\wt{x_{i,j}})=d(x_{i,j})+d(Z_{i,j})=- \St\xbb{i,j}+d(Z_{i,j})=- \St{\{\wt{x_{\bu\bu}}\}}{i,j}.
\end{equation}
Towards this goal, we compute $-\ul{d(Z_{p,q})}$ as follows:
\begin{align*}
&\scalebox{0.4}{\begintik
 \node[anchor=east] at (-1.5,5) {\scalebox{2.3}{$-\ul{d(Z_{p,q})}=-\sum \, \ul{d}$}};  \dblue{0}{8}
\dblack{0}{1.6} \dblack{1.9}{3} \dblack{3.3}{4.7} \dblack{5}{6} \dblack{6.3}{7.2} \dblack{7.5}{8}
\node at (5,6.4) {\huge $\ul{z}$};  
\draw [decorate, line width=1.5pt, decoration = {brace, raise=7pt, amplitude=10pt}] (4.7,4.7) --  (0,0);
\end{tikzpicture} }
\\
&\scalebox{0.4}{\begintik
 \node[anchor=east] at (-.5,5) {\huge $=\sum$};  \dblue{0}{8}
\dblack{0}{1.6} \dblack{1.9}{3} \dblack{3.3}{4.7} \dblack{5}{6} \dblack{6.3}{7.2} \dblack{7.5}{8}
\node at (5,6.4) {\huge $\ul{z}$};  
\node at (2,2.6) {\scalebox{3.6}{$\ul{d}(\,\,\,\,\,)$}};
\draw [decorate, line width=1.5pt, decoration = {brace, raise=7pt, amplitude=10pt}] (4.7,4.7) --  (1.9,1.9);
\end{tikzpicture} }
\scalebox{0.4}{\begintik
 \node[anchor=east] at (-.5,5) {\huge $+\sum$};    \dblue{0}{8}
\dblack{0}{1.6} \dblack{1.9}{3} \dblack{3.3}{4.7} \dblack{5}{6} \dblack{6.3}{7.2} \dblack{7.5}{8}
\node at (4.6,6.6) {\huge $\ul{d(z)}$};  
\end{tikzpicture} }
\scalebox{0.4}{\begintik
 \node[anchor=east] at (-.5,5) {\huge $+\sum$};    \dblue{0}{8}
\dblack{0}{1.6} \dblack{1.9}{3} \dblack{3.3}{4.7} \dblack{5}{6} \dblack{6.3}{7.2} \dblack{7.5}{8}
\node at (4.5,6) {\huge $\ul{z}$};  
\node at (6.4,6.9) {\scalebox{3.6}{$\ul{d}(\,\,\,\,)$}};
 \draw [decorate, line width=1.5pt, decoration = {brace, raise=7pt, amplitude=10pt}] (6,6) --  (5,5);
\end{tikzpicture} }
\\
&\scalebox{0.4}{\begintik
 \node[anchor=east] at (-.5,5) {\huge $+\sum$};  \dblue{0}{8} \dblue{1.9}{4.7}
\dblack{0}{1.6} \dblack{1.9}{3} \dblack{3.3}{4.7} \dblack{5}{6} \dblack{6.3}{7.2} \dblack{7.5}{8}
 \node at (5,6.4) {\huge $\ul{z}$};  
 \draw [decorate, line width=1.5pt, decoration = {brace, raise=7pt, amplitude=10pt}] (4.7,4.7) --  (1.9,1.9);
\end{tikzpicture} }
\scalebox{0.4}{\begintik
 \node[anchor=east] at (-.5,5) {\huge $+\sum$};    \dblue{0}{8}  \dblue{3.3}{7.2}
\dblack{0}{1.6} \dblack{1.9}{3} \dblack{3.3}{4.7} \dblack{5}{6} \dblack{6.3}{7.2} \dblack{7.5}{8}
\node at (5,6.4) {\huge $\ul{z}$};  
 \draw [decorate, line width=1.5pt, decoration = {brace, raise=7pt, amplitude=10pt}] (4.7,4.7) --  (3.3,3.3);
\end{tikzpicture} }
\scalebox{0.4}{\begintik
 \node[anchor=east] at (-.5,5) {\huge $+\sum$};    \dblue{0}{8}  \dblue{6.3}{8}
\dblack{0}{1.6} \dblack{1.9}{3} \dblack{3.3}{4.7} \dblack{5}{6} \dblack{6.3}{7.2} \dblack{7.5}{8}
 \node at (5,6.4) {\huge $\ul{z}$};  
 \draw [decorate, line width=1.5pt, decoration = {brace, raise=7pt, amplitude=10pt}] (6,6) --  (5,5);
\end{tikzpicture} }
\end{align*}
The last equality comes from $\mu^2=0$. Now, the first and the fourth terms cancel, because $\xbb$ is a defining system (see \eqref{EQU:dxij=summuxxx}), and similarly the third and the sixth terms cancel. We thus get
\begin{equation}\label{EQU:d(Z)}
\scalebox{0.4}{\begintik
 \node[anchor=east] at (-.5,5) {\scalebox{2.3}{$-\ul{d(Z_{p,q})}=\sum$}};    \dblue{0}{8}
\dblack{0}{1.6} \dblack{1.9}{3} \dblack{3.3}{4.7} \dblack{5}{6} \dblack{6.3}{7.2} \dblack{7.5}{8}
\node at (4.6,6.6) {\huge $\ul{d(z)}$};  
\end{tikzpicture} }
\scalebox{0.4}{\begintik
 \node[anchor=east] at (-.5,5) {\scalebox{2.3}{$+\sum$}};    \dblue{0}{8}  \dblue{3.3}{7.2}
\dblack{0}{1.6} \dblack{1.9}{3} \dblack{3.3}{4.7} \dblack{5}{6} \dblack{6.3}{7.2} \dblack{7.5}{8}
 \node at (5,6.4) {\huge $\ul{z}$};  
 \draw [decorate, line width=1.5pt, decoration = {brace, raise=7pt, amplitude=10pt}] (4.7,4.7) --  (3.3,3.3);
\end{tikzpicture} }
\end{equation}

Now, we compute $\ul{ \St{\{\wt{x_{\bu\bu}}\}}{p,q}}$ to be
\begin{multline}\label{EQU:S(xtilde)=...}
\hspace{-2mm}\scalebox{0.4}{\begintik
 \node[anchor=east] at (-.5,5) {\scalebox{2.3}{$\ul{ \St{\{\wt{x_{\bu\bu}}\}}{p,q}}=\sum$}};    \dblue{0}{8}
\dblack{0}{1.6} \dblack{1.9}{3} \dblack{3.3}{6} \dblack{6.3}{7.2} \dblack{7.5}{8} 
\node at (0,1.7) {\scalebox{3}{$\wt{\,\,}$}}; 
\node at (1.9,3.1) {\scalebox{3}{$\wt{\,\,}$}}; 
\node at (3.3,6.1) {\scalebox{3}{$\wt{\,\,}$}}; 
\node at (6.3,7.3) {\scalebox{3}{$\wt{\,\,}$}}; 
\node at (7.5,8.1) {\scalebox{3}{$\wt{\,\,}$}}; 
\end{tikzpicture} }
\\
\scalebox{0.4}{\begintik
 \node[anchor=east] at (-0.7,5) {\scalebox{2.3}{$=\sum$}};    \dblue{0}{8}
\dblack{0}{1.6} \dblack{1.9}{3} \dblack{3.3}{4.5} 
\dblack{4.8}{6.9} \dblack{7.2}{8} 
\draw [dotted, line width=1pt] (4.3,-0.5)--(4.3,5.5)--(-0.5,5.5); 
\node at (4.4,-0.5) {\huge $k$};  \node at (-.5,5.5) {\huge $\ell$}; 
\end{tikzpicture} }
\scalebox{0.4}{\begintik
 \node[anchor=east] at (-1.2,5) {\scalebox{2.3}{$+\sum$}};    \dblue{0}{8}
\dblack{0}{1.6} \dblack{1.9}{3} \dblack{3.3}{6}  \dblack{6.3}{7.2} \dblack{7.5}{8} 
\node at (3.3,6.1) {\scalebox{3}{$\wt{\,\,}$}}; 
\draw [dotted, line width=1pt] (4.4,-0.5)--(4.4,5.2)--(-0.5,5.2); 
\node at (4.4,-0.5) {\huge $k$};  \node at (-.5,5.2) {\huge $\ell$}; 
\draw [dotted, line width=1pt] (3.3,-0.5)--(3.3,3.3);  \node at (3.3,-0.5) {\huge $i$}; 
\draw [dotted, line width=1pt] (6,-0.5)--(6,6);  \node at (6,-0.5) {\huge $j$}; 
\end{tikzpicture} }
\scalebox{0.4}{\begintik
 \node[anchor=east] at (-0.7,5) {\scalebox{2.3}{$+\sum$}};    \dblue{0}{8}
\dblack{0}{1.6} \dblack{1.9}{3} \dblack{3.3}{4} \dblack{4.3}{5.5} 
\dblack{5.8}{7.1} \dblack{7.4}{8} 
\draw [dotted, line width=1pt] (4.3,-0.5)--(4.3,5.5)--(-0.5,5.5); 
\node at (4.4,-0.5) {\huge $k$};  \node at (-.5,5.5) {\huge $\ell$}; 
 \node at (4.4,5.6) {\scalebox{3}{$\wt{\,\,}$}};  
\end{tikzpicture} }
\\
\scalebox{0.4}{\begintik
 \node[anchor=east] at (-0.7,5) {\scalebox{2.3}{$=\sum$}};    \dblue{0}{8}
\dblack{0}{1.6} \dblack{1.9}{3} \dblack{3.3}{4.5} 
\dblack{4.8}{6.9} \dblack{7.2}{8} 
\draw [dotted, line width=1pt] (4.3,-0.5)--(4.3,5.5)--(-0.5,5.5); 
\node at (4.4,-0.5) {\huge $k$};  \node at (-.5,5.5) {\huge $\ell$}; 
\end{tikzpicture} }
\scalebox{0.4}{\begintik
 \node[anchor=east] at (-1.2,5) {\scalebox{2.3}{$+\sum$}};    \dblue{0}{8}
\dblack{0}{1.6} \dblack{1.9}{3} \dblack{3.3}{6}  \dblack{6.3}{7.2} \dblack{7.5}{8} 
\node at (3.3,6.5) {\huge{$\ul{x_{i,j}}$}}; 
\draw [dotted, line width=1pt] (4.4,-0.5)--(4.4,5.2)--(-0.5,5.2); 
\node at (4.4,-0.5) {\huge $k$};  \node at (-.5,5.2) {\huge $\ell$}; 
\draw [dotted, line width=1pt] (3.3,-0.5)--(3.3,3.3);  \node at (3.3,-0.5) {\huge $i$}; 
\draw [dotted, line width=1pt] (6,-0.5)--(6,6);  \node at (6,-0.5) {\huge $j$}; 
\end{tikzpicture} }\hspace{-5mm}
\scalebox{0.4}{\begintik
 \node[anchor=east] at (-0.7,5) {\scalebox{2.3}{$+\sum$}};    \dblue{0}{8}
\dblack{0}{1.6} \dblack{1.9}{3} \dblue{3.3}{6} \dblack{6.3}{7.2} \dblack{7.5}{8} 
\draw [dotted, line width=1pt] (4.3,-0.5)--(4.3,5.1)--(-0.5,5.1); 
\node at (4.3,-0.5) {\huge $k$};  \node at (-.5,5.1) {\huge $\ell$}; 
\node at (4.3,5.5) {\huge $\ul{z}$}; 
\dblack{3.3}{4} \dblack{4.3}{5.1} \dblack{5.4}{6}
\draw [dotted, line width=1pt] (3.3,-0.5)--(3.3,3.3);  \node at (3.3,-0.5) {\huge $i$}; 
\draw [dotted, line width=1pt] (6,-0.5)--(6,6);  \node at (6,-0.5) {\huge $j$}; 
 \draw [decorate, line width=1.5pt, decoration = {brace, raise=7pt, amplitude=10pt}] (4,4) --  (3.3,3.3);
\end{tikzpicture} }
\\
\scalebox{0.4}{\begintik
 \node[anchor=east] at (-0.7,5) {\scalebox{2.3}{$+\sum$}};    \dblue{0}{8}
\dblack{0}{1.6} \dblack{1.9}{3} \dblack{3.3}{4} \dblack{4.3}{5.5} 
\dblack{5.8}{7.1} \dblack{7.4}{8} 
\draw [dotted, line width=1pt] (4.3,-0.5)--(4.3,5.5)--(-0.5,5.5); 
\node at (4.4,-0.5) {\huge $k$};  \node at (-.5,5.5) {\huge $\ell$}; 
\node at (4.3,6.05) {\huge $\ul{x_{k,\ell}}$};  
\end{tikzpicture} }
\scalebox{0.4}{\begintik
 \node[anchor=east] at (-0.7,5) {\scalebox{2.3}{$+\sum$}};    \dblue{0}{8}
\dblack{0}{1.6} \dblack{1.9}{3} \dblack{3.3}{4} \dblack{4.3}{5.5} 
\dblack{5.8}{7.1} \dblack{7.4}{8} 
\draw [dotted, line width=1pt] (4.3,-0.5)--(4.3,5.5)--(-0.5,5.5); 
\node at (4.4,-0.5) {\huge $k$};  \node at (-.5,5.5) {\huge $\ell$}; 
\node at (4.3,6.1) {\huge $\ul{d(z)}$};  
\end{tikzpicture} }
\end{multline}
The second equality uses that $\ul{\wt{x_{i,j}}}=\ul{x_{i,j}}$ when $k<i$ or $j<\ell$, (and we distinguish the two cases whether $\ul{\wt{x_{k,\ell}}}$ does or does not get applied to some $\ul{\mu_r}$). The third equality uses that $\ul{\wt{x_{i,j}}}=\ul{x_{i,j}}+\ul{Z_{i,j}}$ when $i\leq k$ and $\ell\leq j$ but $(i,j)\neq (k,\ell)$, while $\ul{\wt{x_{k,\ell}}}=\ul{x_{k,\ell}}+\ul{d(z)}$.

In the five sums on the right hand side, the first, second and fourth sum are exactly $\ul{\St\xbb{p,q}}$, whereas the second and the fifth sum are $-\ul{d(Z_{p,q})}$ by \eqref{EQU:d(Z)}. We thus get the desired result \eqref{EQU:xtilde=x+dZ}, i.e., $\ul{\St{\{\wt{x_{\bu\bu}}\}}{p,q}}=\ul{\St{\{{x_{\bu\bu}}\}}{p,q}}-\ul{d(Z_{p,q})}$.
\end{proof}

Another consequence of Lemma \ref{LEM:Zs} is that when computing defining systems, we only need to consider all choices representing cohomology classes at each level.

\begin{definition}
Two defining systems $\xbb_{1,n}$ and $\{x'_{\bu,\bu}\}_{1,n}$ are called \emph{essentially different} if there exist some $i<j$, such that the elements $x_{i,j}$ and $x'_{i,j}$ do not differ by an exact element, i.e., 
\[
 x_{i,j}\neq x'_{i,j}+dz \quad \text{for any }z\in A
\]
\end{definition}

The proof of the following proposition is given by inductively applying Lemma \ref{LEM:Zs}, which we leave to the reader.

\begin{proposition}
There exists a (non-unique) set $\mathcal D$ of defining systems such that
\begin{enumerate}
\item
$\la x_{1,1},x_{2,2},\dots,x_{n,n} \ra 
 =\Big\{ [ \St\xbb{1,n}]\in H(A) \Big| \xbb_{1,n} \in \mathcal D \Big\} \subseteq H(A)$, and
\item
any two elements $\xbb_{1,n}\neq \{x'_{\bu,\bu}\}_{1,n}$ in $\mathcal D$ are essentially different.
\end{enumerate}
\end{proposition}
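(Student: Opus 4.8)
The plan is to realize $\mathcal D$ as the set of defining systems whose off-diagonal entries have been put into a fixed normal form, and to use Lemma~\ref{LEM:Zs} to normalize an arbitrary defining system without changing its output class. Concretely, fix the diagonal cocycles $x_{1,1},\dots,x_{n,n}$ and choose once and for all a set $S\subseteq A$ of representatives for $A$ modulo coboundaries, i.e.\ exactly one element in each coset $a+d(A)$. Call a defining system $\xbb_{1,n}$ (with this diagonal) \emph{reduced} if $x_{i,j}\in S$ for every off-diagonal index $(i,j)$ with $i<j$ and $(i,j)\neq(1,n)$, and let $\mathcal D$ be the set of all reduced defining systems. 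With this definition property~(2) is almost immediate: if $\xbb\neq\xbp$ are both reduced then they differ at some off-diagonal $(i,j)$, and since $x_{i,j},x'_{i,j}\in S$ lie in the same coset only when they are equal, the difference $x_{i,j}-x'_{i,j}$ is not a coboundary, which is precisely the statement that $\xbb$ and $\xbp$ are essentially different.

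The content of the proof is therefore property~(1), for which I would show that every defining system $\xbb$ can be reduced to some $\xbb^\ast\in\mathcal D$ with $[\St{\xbb^\ast}{1,n}]=[\St{\xbb}{1,n}]$. I would do this by induction on the width $w=j-i$ of the off-diagonal entries, normalizing all entries of width $\le w$ while preserving the output class. Given an entry $(k,\ell)$, its current value satisfies $d(x_{k,\ell})=-\St{\xbb}{k,\ell}$, a constraint depending only on entries of strictly smaller width; replacing $x_{k,\ell}$ by the representative in $S$ of its coset changes it by a coboundary $dz$, and Lemma~\ref{LEM:Zs} then completes this to a defining system $\xbbt$ with the same class $[\St{\xbbt}{1,n}]=[\St{\xbb}{1,n}]$.

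The main obstacle is that a single application of Lemma~\ref{LEM:Zs} at $(k,\ell)$ does not leave the remaining entries alone: it perturbs every entry $(p,q)$ that contains $(k,\ell)$ by the correction term $Z_{p,q}$ of \eqref{EQU:Def-of-Z-explicit}, which is in general \emph{not} a coboundary, so the entries cannot be normalized independently. The observation that resolves this is that each affected $(p,q)$ has width $q-p$ strictly larger than $\ell-k$ (equality forces $(p,q)=(k,\ell)$), whereas the entries left untouched are precisely those not containing $(k,\ell)$, which includes every entry of width $\le\ell-k$ other than $(k,\ell)$ itself. Hence, if I process the off-diagonal entries in order of increasing width — and, within a fixed width, one at a time — then normalizing an entry only ever disturbs entries of strictly greater width, all of which still remain to be processed. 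Already-normalized entries therefore stay in $S$, and after finitely many steps (the largest relevant width being $n-2$, since $(1,n)$ is excluded) every off-diagonal entry lies in $S$, giving the desired $\xbb^\ast\in\mathcal D$.

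Combining the two halves yields the proposition: every class $[\St{\xbb}{1,n}]$ in the Massey product set equals $[\St{\xbb^\ast}{1,n}]$ for some $\xbb^\ast\in\mathcal D$, so the set of outputs of $\mathcal D$ is all of $\la x_{1,1},\dots,x_{n,n}\ra$, giving~(1), while distinct elements of $\mathcal D$ are essentially different, giving~(2). The non-uniqueness of $\mathcal D$ reflects the freedom in the choice of the representative set $S$.
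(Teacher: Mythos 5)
Your proposal is correct and takes essentially the same route as the paper: the paper's proof is exactly ``inductively apply Lemma~\ref{LEM:Zs}'' (left to the reader), and your width-increasing induction, using the fact that normalizing the entry at $(k,\ell)$ only perturbs entries strictly containing $[k,\ell]$ (hence of strictly larger width), is a correct implementation of that induction. One cosmetic point: the set $S$ of coset representatives should be chosen degree by degree (representatives of $A^k/d(A^{k-1})$ in each $A^k$), so that the replacement $x_{i,j}+dz$ is homogeneous of the right degree as required by Definition~\ref{defn:triangsys} and Lemma~\ref{LEM:Zs}.
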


\subsection{Naturality of Massey product sets}
 The Massey products behave well under $A_\infty$ algebra morphisms.
 
\begin{definition}\label{DEF:f(x**)}
Let $f=f_1+f_2+\dots:A\to B$ be an $A_\infty$ algebra morphism. Let $\xbb_{1,n}$ be a triangular system of elements in $A$. The triangular system of elements $\{y_{\bu\bu}\}_{1,n}$ in $B$ induced by $\xbb_{1,n}$ and $\{f_i\}$ is defined by
\begin{equation}\label{EQU:y**=f(x**)}
\ul{y_{p,q}}:=\sum_{r\geq 1}\sum_{p=j_0\leq j_1< j_2<\dots<j_r=q}
 \ul{f_{r}}(\ul{x_{j_0,j_1}},\ul{x_{j_1+1,j_2}},\dots, \ul{x_{j_{r-2}+1,j_{r-1}}},\ul{x_{j_{r-1}+1,j_r}}).
\end{equation}
In particular, for $p\leq \ell\leq q$, we have $y_{\ell,\ell}=f_1(x_{\ell,\ell})$.
In short, we denote this by $f(\{x_{\bu\bu}\}_{1,n}):=\{y_{\bu\bu}\}_{1,n}$.
\end{definition}

Graphically we represent the application of an $f_r$ with the {\color{red}color red}:
\begin{equation}\label{EQU:ys}
\scalebox{0.45}{
\begintik
 \node[anchor=east] at (-1,5) {\huge $\ul{y_{p,q}}=\sum$}; 
\dred{0}{8}
\dblack{0}{3} \dblack{3.3}{5} \dblack{5.3}{6} \dblack{6.3}{8}
\end{tikzpicture} }
\end{equation}
and the terms $y_{p,q}$ can be represented as a sum over staircases.
Note, that in this sum we also include $f_1$, whereas we did not include $\mu_1$ in the sum for $\St\xbb{p,q}$ in \eqref{EQU:x**}.

The terms $y_{p,q}$ for small $p-q$ are given similarly to the following expressions:
\begin{align*}
y_{1,1}=&f_1(x_{1,1})  \\
y_{1,2}=&f_1(x_{1,2})+(-1)^{|\ul{x_{1,1}}|}\cdot f_2(x_{1,1},x_{2,2})  \\
y_{1,3}=&f_1(x_{1,3})+(-1)^{|\ul{x_{1,1}}|}\cdot f_2(x_{1,1},x_{2,3})+(-1)^{|\ul{x_{1,2}}|}\cdot f_2(x_{1,2},x_{3,3})\\
& +(-1)^{|\ul{x_{2,2}}|}\cdot f_3(x_{1,1},x_{2,2},x_{3,3}) \\
y_{1,4}=&f_1(x_{1,4})+(-1)^{|\ul{x_{1,1}}|}\cdot f_2(x_{1,1},x_{2,4})\\
&+(-1)^{|\ul{x_{1,2}}|}\cdot f_2(x_{1,2},x_{3,4}) +(-1)^{|\ul{x_{1,3}}|}\cdot f_2(x_{1,3},x_{4,4})  \\
&+(-1)^{|\ul{x_{2,2}}|}\cdot f_3(x_{1,1},x_{2,2},x_{3,4})+(-1)^{|\ul{x_{2,3}}|}\cdot f_3(x_{1,1},x_{2,3},x_{4,4})\\
&+(-1)^{|\ul{x_{3,3}}|}\cdot f_3(x_{1,2},x_{3,3},x_{4,4})\\
&+(-1)^{|\ul{x_{1,1}}|+|\ul{x_{3,3}}|}\cdot f_4(x_{1,1},x_{2,2},x_{3,3},x_{4,4})
 \end{align*}

\begin{lemma}\label{LEM:f(defining-system)}
Let $f:A\to B$ be an $A_\infty$ algebra morphism, and let $\xbb_{1,n}$ be a triangular system of elements in $A$.
\begin{enumerate}
\item
If $\xbb_{1,n}$ is a defining system, then $f(\{x_{\bu\bu}\}_{1,n})$ is also a defining system.
\item
If $g:B\to C$ is another $A_\infty$ algebra map, then $g(f(\{x_{\bu\bu}\}_{1,n}))=(g\circ f)(\{x_{\bu\bu}\}_{1,n})$.
\end{enumerate}
\end{lemma}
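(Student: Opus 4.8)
The plan is to phrase both parts in terms of the \emph{full staircase}: the sums \eqref{EQU:x**} and \eqref{EQU:y**=f(x**)}, but with the convention that the unary operation ($r=1$) is included. With this convention the defining-system condition \eqref{EQU:dxij=summuxxx} reads ``the full $\ul{\mu}$-staircase of $\xbb$ over every interval $[i,j]\neq(1,n)$ vanishes,'' while \eqref{EQU:y**=f(x**)} says precisely that $\ul{y_{p,q}}$ \emph{is} the full $\ul f$-staircase of $\xbb$ over $[p,q]$. The condition we must verify for $f(\xbb)$, namely $\ul{\nu_1}(\ul{y_{i,j}})=-\ul{\St{\ybb}{i,j}}$ for $(i,j)\neq(1,n)$, is then exactly the vanishing of the full $\ul\nu$-staircase of $\ybb$ over $[i,j]$.

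For part (1), I would substitute \eqref{EQU:y**=f(x**)} into that full $\ul\nu$-staircase. Since each $\ul{y}$ is itself a full $\ul f$-staircase, this yields a two-level sum over all refinements of $[i,j]$ into fine blocks (carrying the $\ul{x}$'s), grouped into coarse blocks on which a {\color{blue}blue} $\ul\nu_s$ is applied after the {\color{red}red} $\ul f$'s. For each fixed fine refinement $\ul{x_1}\ot\dots\ot\ul{x_N}$, the sum over coarse groupings is the single-output ($\ul B$-) component of $(\nu\circ f)(\ul{x_1}\ot\dots\ot\ul{x_N})$. By the morphism relation \eqref{EQU:f_f}, i.e.\ $f\circ\mu=\nu\circ f$, this equals the corresponding component of $(f\circ\mu)$, which applies one $\ul\mu_\ell$ to a consecutive run of fine blocks and then a single $\ul f_r$. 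Summing over all refinements and reorganizing by the outer $\ul f_r$-decomposition of $[i,j]$ into consecutive subintervals $J_1,\dots,J_r$, the slot carrying $\ul\mu$ over $J_t$ becomes exactly the full $\ul\mu$-staircase of $\xbb$ over $J_t$, while the other slots are the single inputs $\ul{x_{J_m}}$. The crux is that every $J_t\subseteq[i,j]\subsetneq[1,n]$ is a proper interval, so each such full $\ul\mu$-staircase vanishes because $\xbb$ is a defining system; hence the whole expression is $0$, which is the sought condition for $\ybb=f(\xbb)$.

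For part (2), I would run the same substitution with $g$ outside and $f$ inside. Writing $\ybb=f(\xbb)$, each entry of $g(\ybb)$ is the full $\ul g$-staircase of $\ybb$; substituting the full $\ul f$-staircases for the $\ul y$'s produces, for every fine refinement of $[p,q]$, the single-output component of $(g\circ f)$ on the fine inputs. But the structure maps of the composite morphism are \emph{defined} by exactly this regrouping, $\ul{(g\circ f)_r}=\sum_{r_1+\dots+r_s=r}\ul{g_s}\circ(\ul{f_{r_1}}\ot\dots\ot\ul{f_{r_s}})$, so resumming over refinements reassembles the full $\ul{(g\circ f)}$-staircase of $\xbb$, which is $(g\circ f)(\xbb)$ by \eqref{EQU:y**=f(x**)}. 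No defining-system hypothesis enters; this is just the associativity of composing the lifted coalgebra maps, read off on the single-output component.

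I expect the only real obstacle to be organizational: tracking the Koszul signs $(-1)^\varepsilon$ produced by the shifted operations on both sides of \eqref{EQU:f_f}, and verifying that ``which run of fine blocks carries $\ul\mu$'' recovers the full staircase over $J_t$ with the correct sign. These signs are forced by the shifted conventions \eqref{EQU:muk-vs-ulmuk} and \eqref{EQU:fk-vs-ulfk}, so beyond the two inputs \eqref{EQU:f_f} and \eqref{EQU:dxij=summuxxx} no genuine cancellation is needed, and part (2) is entirely formal once the reorganization in part (1) is in place.
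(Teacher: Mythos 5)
Your proposal is correct and follows essentially the same route as the paper: both parts rest on substituting the staircase form of the $y$'s, invoking the morphism relation \eqref{EQU:f_f} (i.e.\ ${f}\circ{\mu}={\nu}\circ{f}$), and then killing each inner full $\ul\mu$-staircase over a proper subinterval via the defining-system condition \eqref{EQU:dxij=summuxxx}, while part (2) is the same regrouping of $\ul{(g\circ f)_r}$ into $\ul{g_s}\circ(\ul{f_{r_1}}\ot\dots\ot\ul{f_{r_s}})$ that the paper performs. The only difference is presentational: you phrase the cancellation via the ``full staircase'' convention, where the paper runs the identical computation graphically starting from $d(\ul{y_{p,q}})$.
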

\begin{proof}
Since $\xbb$ is a defining system,  $d(x_{i,j})=-\St\xbb{i,j}$ for all $1\leq i\leq j\leq n$, $(i,j)\neq(1,n)$. To prove (1), we need to show that $\{y_{\bu\bu}\}_{1,n}=f(\{x_{\bu\bu}\}_{1,n})$ satisfies $d(y_{i,j})=-\St\ybb{i,j}$ for all $1\leq i\leq j\leq n$, $(i,j)\neq(1,n)$. We depict the $A_\infty$ algebra structure for $B$ by graphing it in {\color{ForestGreen}green}, while the $A_\infty$ structure on $A$ is, as before, in {\color{blue}blue}.
\begin{align*}
&\scalebox{0.4}{
\begintik
 \node[anchor=east] at (-1,5) {\scalebox{2.5}{${\color{ForestGreen}\ul{d}}(\ul{y_{p,q}})=\sum {\color{ForestGreen}\ul{d}}$}}; 
\dred{0}{8} 
\dblack{0}{1} \dblack{1.3}{3} \dblack{3.3}{5} \dblack{5.3}{6} \dblack{6.3}{7.2}  \dblack{7.5}{8}
\end{tikzpicture} }
\\
&
\scalebox{0.4}{
\begintik
 \node[anchor=east] at (-.5,5) {\huge $=-\sum$}; 
\draw [ForestGreen, line width=3pt] (0,0)--(0,8)--(8,8);
\dred{0}{3} \dred{3.3}{6} \dred{6.3}{8}
\dblack{0}{1} \dblack{1.3}{3} \dblack{3.3}{5} \dblack{5.3}{6}  \dblack{6.3}{7.2}  \dblack{7.5}{8}
\end{tikzpicture} }\hspace{-6mm}
\scalebox{0.4}{
\begintik
 \node[anchor=east] at (-.5,5) {\huge $+\sum$}; 
\dred{0}{8} \node at (3.7,4.5) {\scalebox{3.6}{${\color{blue}\ul{d}}(\hspace{5mm})$}};
\dblack{0}{1} \dblack{1.3}{3} \dblack{3.3}{5} \dblack{5.3}{6} \dblack{6.3}{7.2}  \dblack{7.5}{8}
\end{tikzpicture} }\hspace{-6mm}
\scalebox{0.4}{
\begintik
 \node[anchor=east] at (-.5,5) {\huge $+\sum$}; 
\dred{0}{8} \dblue{1.3}{6}
\dblack{0}{1} \dblack{1.3}{3} \dblack{3.3}{5} \dblack{5.3}{6} \dblack{6.3}{7.2}  \dblack{7.5}{8}
\end{tikzpicture} }
\\
&
\scalebox{0.4}{
\begintik
 \node[anchor=east] at (-.5,5) {\huge $=-\sum$}; 
\draw [ForestGreen, line width=3pt] (0,0)--(0,8)--(8,8);
\dred{0}{3} \dred{3.3}{6} \dred{6.3}{8}
\dblack{0}{1} \dblack{1.3}{3} \dblack{3.3}{5} \dblack{5.3}{6}  \dblack{6.3}{7.2}  \dblack{7.5}{8}
\node[anchor=west]  at (8,5) {\scalebox{2.5}{$=-\ul{\St\ybb{p,q}}$}};
\end{tikzpicture} }
\end{align*}
The second equality comes from the relation ${\color{ForestGreen}\nu}\circ {\color{red}f}={\color{red}f}\circ {\color{blue}\mu}$. The third equality comes from the fact that $\xbb$ is a defining system, so that the second and third terms cancel ${\color{blue}\ul{d}}(\ul{x_{i,j}})=-\ul{\St\xbb{i,j}}=-\sum {\color{blue}\ul{\mu_r}}(\ul{x_{i,j_1}}, \dots , \ul{x_{ j_{r-1}+1 ,j}})$. The last equality follows from the definition of the $y_{i,j}$s in \eqref{EQU:ys} and Equation \eqref{EQU:x**}.

For (2), let $\{y_{\bu\bu}\}_{1,n}=f(\{x_{\bu\bu}\}_{1,n})$, $\{z_{\bu\bu}\}_{1,n}=g (f(\{x_{\bu\bu}\}_{1,n}))$, and moreover, let $\{z'_{\bu\bu}\}_{1,n}=(g\circ f)(\{x_{\bu\bu}\}_{1,n}))$. Then, since all $f_s$ are of shifted degree $|\ul{f_s}|=0$,
\begin{align*}
\ul{z'_{p,q}}=&\sum_{r\geq 1}\sum_{p=j_0\leq j_1< j_2<\dots<j_r=q}
\ul{(g\circ f)_{r}}(\ul{x_{j_0,j_1}},\dots, \ul{x_{j_{r-1}+1,j_r}})
\\
=& \sum_{r,s_1,\dots, s_r\geq 1}\sum \ul{g_r}(\ul{f_{s_1}}(\ul{x_{j_0,j_1}},\dots),\dots,\ul{f_{s_r}}(\dots, \ul{x_{j_{r-1}+1,j_r}}))
\\
&\hspace{-5mm}\scalebox{0.45}{
\begintik
 \node[anchor=east] at (-1,5) {\huge $=\sum$}; 
\draw [RawSienna, line width=3pt] (0,0)--(0,8)--(8,8);
\dred{0}{3} \dred{3.3}{5} \dred{5.3}{6} \dred{6.3}{8}
\dblack{0}{.8}  \dblack{1}{1.6}  \dblack{1.8}{3} 
\dblack{3.3}{4} \dblack{4.2}{5}
 \dblack{5.3}{5.5}  \dblack{5.7}{6}
  \dblack{6.3}{6.9}   \dblack{7.1}{7.6}   \dblack{7.8}{8}
\end{tikzpicture} }
\\
=&\sum_{r\geq 1}\sum_{p=j_0\leq j_1< j_2<\dots<j_r=q}
 \ul{g_{r}}(\ul{y_{j_0,j_1}},\dots, \ul{y_{j_{r-1}+1,j_r}})= \ul{z_{p,q}}
\end{align*}
Here, we represented the map $g$ in {\color{RawSienna}brown}.
\end{proof}

\begin{proposition} \label{prop;MPnatural}
If $\xbb$ is a defining system in $A$, so that $\{y_{\bu\bu}\}_{1,n}=f(\{x_{\bu\bu}\}_{1,n})$ is a defining system in $B$ for $f:A\to B$ an $A_\infty$ algebra morphism, then the closed elements $\St\xbb{1,n}\in A$ and $\St\ybb{1,n}\in B$ satisfy
\begin{equation}\label{EQU:f(x**<>)=y**<>+exact}
[f_1(\St\xbb{1,n}) ] = [ \St\ybb{1,n} ]  \in H(B).
\end{equation}

In particular, the map $(f_1)_*:H(A)\to H(B)$ on cohomology restricts to a set map on the Massey products satisfying
\begin{equation}\label{MasseySetIsSubset}
(f_1)_*\big( \la[x_{1,1}],\dots,[x_{n,n}]\ra \big) \subseteq  \la[f_1(x_{1,1})],\dots,[f_1(x_{n,n})] \ra.
\end{equation}
\end{proposition}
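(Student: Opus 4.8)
The plan is to show that the two cocycles $f_1(\St\xbb{1,n})$ and $\St\ybb{1,n}$, where $\ybb:=f(\xbb)$, differ by an explicit coboundary in $B$. First I would record that both sides are genuinely closed, so that \eqref{EQU:f(x**<>)=y**<>+exact} is a meaningful statement in $H(B)$: by Lemma \ref{LEM:f(defining-system)}(1) the system $\ybb$ is a defining system, so $\St\ybb{1,n}$ is closed by Proposition \ref{PROP:x**-closed}, while $f_1(\St\xbb{1,n})$ is closed because $f_1$ is a chain map and $\St\xbb{1,n}$ is closed (again Proposition \ref{PROP:x**-closed}). It then suffices to produce a primitive for the difference.

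The candidate primitive is the ``top'' staircase-$f$ sum with at least two legs,
\[
\ul{w}:=\sum_{r\geq 2}\sum_{1=j_0\leq j_1<\dots<j_r=n}\ul{f_r}(\ul{x_{j_0,j_1}},\dots,\ul{x_{j_{r-1}+1,j_r}})\in \ul B,
\]
which is exactly the formula \eqref{EQU:y**=f(x**)} for $\ul{y_{1,n}}$ with its (undefined) $r=1$ term $\ul{f_1(x_{1,n})}$ deleted. It is well defined precisely because every block has $r\geq 2$, so no block is the forbidden index $(1,n)$. I would then compute $\ul{d}_B(\ul{w})$ by the same graphical manipulation as in the proof of Lemma \ref{LEM:f(defining-system)}: apply $\ul{\nu_1}$, use the morphism relation $\nu\circ f=f\circ \mu$ to trade it for outer $\nu$-operations acting on $f$-staircases plus an inner $\mu$ acting on one merged block, and invoke the defining-system relations $\ul{d(x_{i,j})}=-\ul{\St\xbb{i,j}}$ to cancel the inner contributions in pairs. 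The assertion is that this yields $\ul{d}_B(\ul{w})=\ul{f_1(\St\xbb{1,n})}-\ul{\St\ybb{1,n}}$, giving \eqref{EQU:f(x**<>)=y**<>+exact}.

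The main obstacle is the careful bookkeeping of the single inner term that fails to cancel. For each merged sub-interval $(i,j)$ the inner $\mu_1=d$ contribution and the inner $\mu_{\geq 2}$ contribution occur with identical outer configurations, so they cancel whenever the defining-system relation applies, i.e. whenever $(i,j)\neq(1,n)$; this is the same cancellation already carried out for $(p,q)\neq(1,n)$ in Lemma \ref{LEM:f(defining-system)}. The interval $(i,j)=(1,n)$ is special: a $\mu_1=d$ contribution there would require the single block $x_{1,n}$, namely the $r=1$ term that we deleted from $w$, so it is simply absent; only the $\mu_{\geq 2}$ contribution survives, and merging all $r\geq 2$ blocks beneath a residual outer $f_1$ assembles precisely $\ul{f_1(\St\xbb{1,n})}$. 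The outer-$\nu$ terms, on the other hand, assemble directly into $-\ul{\St\ybb{1,n}}$, since $\ul{\nu_t}$ applied to the $f$-staircases is by definition $\ul{\nu_t}$ applied to the $\ul{y_{\bu\bu}}$; note that this part of the identity is pure morphism relation and needs no defining-system hypothesis. Confirming that these are the only surviving contributions, and that they carry the correct Koszul signs, is the one genuinely delicate point.

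Finally, the displayed inclusion \eqref{MasseySetIsSubset} follows formally. Any element of $\la[x_{1,1}],\dots,[x_{n,n}]\ra$ is $[\St\xbb{1,n}]$ for some defining system $\xbb$ with diagonal representing the given classes; its image under $(f_1)_*$ is $[f_1(\St\xbb{1,n})]=[\St\ybb{1,n}]$ by \eqref{EQU:f(x**<>)=y**<>+exact}. Since $\ybb=f(\xbb)$ is a defining system with diagonal $y_{\ell,\ell}=f_1(x_{\ell,\ell})$, the class $[\St\ybb{1,n}]$ lies in $\la[f_1(x_{1,1})],\dots,[f_1(x_{n,n})]\ra$; ranging over all defining systems $\xbb$ then yields the asserted containment.
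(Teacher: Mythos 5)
Your proposal is correct and is essentially the paper's own argument: the paper manipulates $\ul{f_1}(\ul{\St\xbb{1,n}})$ via the morphism relation $f\circ\mu=\nu\circ f$, cancels the inner $\mu_{\geq 2}$ and $\mu_1$ contributions in pairs using the defining-system relations, identifies the outer $\nu_{t\geq 2}$-on-$f$-staircase terms with $\ul{\St\ybb{1,n}}$, and is left with $\ul{d}_B$ applied to exactly your element $w$ (the $r\geq 2$ part of the $f$-staircase sum for $y_{1,n}$). Your only difference is running the identity in reverse --- exhibiting $w$ upfront and computing $\ul{d}_B(\ul{w})=\ul{f_1(\St\xbb{1,n})}-\ul{\St\ybb{1,n}}$ --- which is the same decomposition, cancellations, and conclusion.
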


\begin{remark}
In general the subset indicated is not an equality, even if $f=f_1$ is dga map that is injective on cohomology with $B$ a formal dga, c.f. \cite{MSZ}[Example 2.4]; compare the claim of \cite{TO}[Lemma 5.3].
\end{remark}

\begin{proof}
We need to show Equation \eqref{EQU:f(x**<>)=y**<>+exact}, which we show in the shifted setting:
\begin{align*}
&\scalebox{0.40}{
\begintik
 \node[anchor=east] at (-1,5) {\scalebox{2.2}{${\color{red}\ul{f_1}}(\ul{\St\xbb{1,n}})=\sum {\color{red}\ul{f_1}}$}}; 
\dblue{0}{8}
\dblack{0}{1.8} \dblack{2}{3}  \dblack{3.3}{5} \dblack{5.3}{6} \dblack{6.3}{8}  \dblack{6.3}{8} 
\end{tikzpicture} }
\\
&\scalebox{0.4}{
\begintik
 \node[anchor=east] at (-1,5) {\scalebox{2.2}{$
 {=-
 \sum\limits_{r\geq 2} \sum }$}}; 
\dred{0}{8}  \node[anchor=east] at (-.3,8) {\huge ${\color{red}\ul{f_r}}$}; 
\dblue{2}{6}
\dblack{0}{1.8} \dblack{2}{3} \dblack{3.3}{5} \dblack{5.3}{6} \dblack{6.3}{8}
\end{tikzpicture} }\hspace{-3mm}
\scalebox{0.4}{
\begintik
 \node[anchor=east] at (-1,5) {\scalebox{2.2}{${-\sum\limits_{r\geq 2} \sum }$}}; 
\dred{0}{8}  \node[anchor=east] at (-.3,8) {\huge ${\color{red}\ul{f_r}}$}; 
 \node[anchor=east] at (5.6,4.5) {\scalebox{3}{${\color{blue}\ul{d}}(\hspace{6mm})$}}; 
\dblack{0}{1.8} \dblack{2}{3} \dblack{3.3}{5} \dblack{5.3}{6} \dblack{6.3}{8}
\end{tikzpicture} }
\\
&\scalebox{0.40}{
\begintik
 \node[anchor=east] at (-1,5) {\scalebox{2.2}{$+\sum\limits_{t\geq 2}\sum $}}; 
\dred{0}{8}  \node[anchor=east] at (-.3,8) {\huge ${\color{ForestGreen}\ul{\nu_t}}$}; 
\draw [ForestGreen, line width=3pt] (0,0)--(0,8)--(8,8);
\dred{0}{3} \dred{5.3}{8} 
\dblack{0}{1.8} \dblack{2}{3}  \dblack{3.3}{5} \dblack{5.3}{6} \dblack{6.3}{8}
\draw [fill, red] (3.3,5) circle [radius=.2];   
\end{tikzpicture} }
\scalebox{0.40}{
\begintik
 \node[anchor=east] at (-1,5) {\scalebox{2.2}{$+\sum {\color{ForestGreen}\ul{d}} $}}; 
\dred{0}{8} 
\dblack{0}{1.8} \dblack{2}{3}  \dblack{3.3}{5} \dblack{5.3}{6} \dblack{6.3}{8}
\end{tikzpicture} }
\\
&= \ul{\St\ybb{1,n}} + {\color{ForestGreen}\ul{d}}(\text{some terms})
\end{align*}
The second equality comes from the relation ${\color{red}f}\circ {\color{blue}\mu}={\color{ForestGreen}\nu}\circ {\color{red}f}$. The third equality follows since the first and second term cancel due to $\xbb$ being a defining system, and the third term is precisely $\ul{\St\ybb{1,n}}$, while the fourth term is exact, as claimed.
\end{proof}

\begin{corollary} \label{cor;MPsetbij}
Over a field (of any characteristic), if $f:A\to B$ is an $A_\infty$ quasi-isomorphism, there is an induced bijection of Massey product sets
\[
(f_1)_*\left(\la[x_{1,1}],\dots,[x_{n,n}]\ra\right)= \la[f_1(x_{1,1})],\dots,[f_1(x_{n,n})]\ra\quad\subseteq H(B).
\]

More generally, over any ring $R$, if $A$ and $B$ are $A_\infty$ algebras, and there are $A_\infty$ morphisms $f:A\to B$  and $g:B \to A$ such that $(f_1)_*: H(A) \to H(B)$ and $(g_1)_*:H(B) \to H(A)$ are inverses of each other, then  these maps induce bijections of the above Massey products sets.
\end{corollary}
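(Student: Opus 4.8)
The plan is to deduce the first (characteristic zero) statement from the second (general) statement, and to prove the latter by a two-sided containment argument built entirely out of Proposition \ref{prop;MPnatural}, together with the fact (Theorem \ref{thm;MPsetmap}) that a Massey product set depends only on the cohomology classes of its inputs.

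For the general statement, I would first apply Proposition \ref{prop;MPnatural} to $f$, which immediately gives one containment,
\[
(f_1)_*\big(\la[x_{1,1}],\dots,[x_{n,n}]\ra\big)\subseteq \la[f_1(x_{1,1})],\dots,[f_1(x_{n,n})]\ra.
\]
Next I apply the same proposition to $g$, now with the classes $[f_1(x_{i,i})]$ as inputs, obtaining
\[
(g_1)_*\big(\la[f_1(x_{1,1})],\dots,[f_1(x_{n,n})]\ra\big)\subseteq \la[g_1(f_1(x_{1,1}))],\dots,[g_1(f_1(x_{n,n}))]\ra.
\]
Since $(g_1)_*(f_1)_*=\mathrm{id}$ on cohomology we have $[g_1(f_1(x_{i,i}))]=[x_{i,i}]$, and because the Massey product set is a function of the cohomology classes of its inputs (Theorem \ref{thm;MPsetmap}), the right-hand side above is exactly $\la[x_{1,1}],\dots,[x_{n,n}]\ra$. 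Hence
\[
(g_1)_*\big(\la[f_1(x_{1,1})],\dots,[f_1(x_{n,n})]\ra\big)\subseteq \la[x_{1,1}],\dots,[x_{n,n}]\ra.
\]

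The key step is to combine these two containments. Applying the map $(f_1)_*$ to the last inclusion (applying a map to both sides of an inclusion preserves it) and using $(f_1)_*(g_1)_*=\mathrm{id}$ produces the reverse inclusion
\[
\la[f_1(x_{1,1})],\dots,[f_1(x_{n,n})]\ra\subseteq (f_1)_*\big(\la[x_{1,1}],\dots,[x_{n,n}]\ra\big).
\]
Together with the very first inclusion this yields equality of the two sets, and since $(f_1)_*$ is a bijection on all of cohomology it is in particular injective on the subset $\la[x_{1,1}],\dots,[x_{n,n}]\ra$, so the equality is realized by a bijection. This proves the general statement; the symmetric conclusion for $g$ follows by exchanging the roles of $f$ and $g$. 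Finally, for the characteristic-zero statement I would invoke the standard fact that over a field of characteristic zero any $A_\infty$ quasi-isomorphism $f$ admits an $A_\infty$ quasi-inverse $g\colon B\to A$ with $(g_1)_*=\big((f_1)_*\big)^{-1}$; this places us precisely in the hypotheses of the general statement, which then applies verbatim.

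The main obstacle I anticipate is not any single computation but the bookkeeping of the two containments in the correct direction: one must verify that the second application of Proposition \ref{prop;MPnatural} genuinely lands the $g$-image back inside the \emph{original} Massey product $\la[x_{1,1}],\dots,[x_{n,n}]\ra$, which hinges on combining the identity $(g_1)_*(f_1)_*=\mathrm{id}$ with the input-class-only dependence of the product from Theorem \ref{thm;MPsetmap}. Once this sandwiching is correctly arranged, everything else is formal.
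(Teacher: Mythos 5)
Your proof is correct and matches the paper's intended argument: the corollary is stated without a separate proof precisely because it follows from Proposition \ref{prop;MPnatural} by the sandwich argument you give---apply naturality to both $f$ and $g$, use that Massey product sets depend only on the cohomology classes of their inputs (Theorem \ref{thm;MPsetmap}), and use that $(f_1)_*$ and $(g_1)_*$ are mutually inverse bijections on cohomology. Your reduction of the characteristic-zero case to the general one, via the standard existence of an $A_\infty$ quasi-inverse with $(g_1)_*=\big((f_1)_*\big)^{-1}$, is likewise the intended route.
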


Thus, Massey product sets on $A$ can be computed using equivalent minimal $A_\infty$ structures on cohomology $H(A)$, as will be done in some applications below. In particular, this includes the well-known case that formal algebras have no non-trivial Massey products, since if there is a defining system, one can choose $x_{i,j}=0$ for all $1\leq i<j\leq n$ with $(i,j)\neq (1,n)$.

\subsection{Computations}

We consider Massey products on minimal $A_\infty$ algebras $(H,\mu=\mu_1+\mu_2+\dots)$, i.e., for the case $\mu_1=0$. We start by analyzing the general situation, before computing specific examples.

\begin{example}\label{EXA:general-minimal}
(Explicit form of Massey product sets for minimal $A_\infty$-algebras) 

Let $(H,\mu=\mu_1+\mu_2+\dots)$ with $\mu_1=0$ be a minimal $A_\infty$ algebra, where each $H^k$ is finitely generated over our ground ring $R$. We want to determine the Massey product for some cycles $x_{1,1},\dots, x_{n,n}$. We first consider the general form $\{x_{\bu\bu}\}$ of a defining system for these cycles.
\[
\begin{matrix}
  & x_{2,n} & \dots & x_{n-1,n} & x_{n,n} \\
 x_{1,n-1} & x_{2,n-1} & \dots & x_{n-1,n-1} \\
 \vdots & \vdots  & \iddots \\
 x_{1,2} & x_{2,2} \\
 x_{1,1}
\end{matrix}
\]
Recall that in shifted degrees, $\deg(i,j):=|\ul{x_{i,j}}|=\sum_{k=i}^j|\ul{x_{k,k}}|$, while each $|\ul{\mu_k}|=1$. In order for this to be a defining system, these elements need to satisfy $\mu_1(x_{p,q})=-\St\xbb{p,q}$ for all $1\leq p\leq q\leq n$ with $(p,q)\neq (1,n)$. Our assumption of $\mu_1=0$ implies that we therefore must have the condition $\St\xbb{p,q}=0$, and, in case the condition is satisfied, each $\ul{x_{p,q}}$ can be chosen to be an arbitrary element in $\ul{H}^{\deg(p,q)}$ (since any element in $\ul{H}^k$ is closed). If we pick a generating set $\{\ul{b^k_{i}}\}_i$ for $\ul{H}^k$, then the $\ul{x_{p,q}}$ are given by
\begin{equation}\label{EQU:d=0-cijl}
\ul{x_{p,q}}=\sum_\ell c_{p,q,\ell}\cdot \ul{b^{\deg(p,q)}_\ell}\quad\quad\text{for any }c_{p,q,\ell}\in R.
\end{equation}
The conditions $\St\xbb{p,q}=0\in \ul{H}^{\deg(p,q)+1}$ are polynomial conditions in the variables $c_{i,j,\ell}$ given by \eqref{EQU:d=0-cijl}:
\begin{equation}\label{EQU:d=0-condition}
0=\sum_{r\geq 2}\sum_{j_1,\dots,j_{r-1}}
\ul{\mu_{r}}(\ul{x_{p,j_1}},\ul{x_{j_1+1,j_2}},\dots, \ul{x_{j_{r-2}+1,j_{r-1}}},\ul{x_{j_{r-1}+1,q}})
\end{equation}
and the Massey product is given by the set of all output values of the polynomial in the variables $c_{i,j,\ell}$  satisfying the above conditions \eqref{EQU:d=0-condition}.
\begin{multline}\label{EQU:d=0-Massey}
\scalebox{.97}[1]{$\la x_{1,1},x_{2,2},\dots,x_{n,n} \ra
= \{[ \St{\{x_{\bu\bu}\}}{1,n}] | \,\, \forall \, c_{i,j,\ell} \, \text{ from \eqref{EQU:d=0-cijl} satisfying \eqref{EQU:d=0-condition}}\},$} \\
\text{where } \ul{\St{\{x_{\bu\bu}\}}{1,n}}
=\sum_{r\geq 2}\sum_{j_1,\dots,j_{r-1}}
\ul{\mu_{r}}(\ul{x_{1,j_1}},\dots,\ul{x_{j_{r-1}+1,n}}).
\end{multline}
\end{example}

\begin{corollary}\label{COR:minimal-Massey}
If $A$ is a dga over a field (of any characteristic), then there is an $A_\infty$ algebra structure on the cohomology $H(A)$ of $A$ which is quasi-isomorphic to $A$. Therefore, the Massey products are given by the set in \eqref{EQU:d=0-Massey} under the polynomial conditions \eqref{EQU:d=0-condition} from Example \ref{EXA:general-minimal} for the minimal $A_\infty$ structure on $H(A)$.
\end{corollary}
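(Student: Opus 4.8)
The plan is to decompose the statement into its two natural halves: the existence of a minimal $A_\infty$ model on $H(A)$, and the resulting computability of the Massey products via Example \ref{EXA:general-minimal}. The first half is precisely the Homotopy Transfer Theorem, in its original form due to Kadeishvili. Over a field every short exact sequence of vector spaces splits, so one may choose a complement to the coboundaries together with a splitting of the boundary map, producing a contraction (strong deformation retract) of the complex $(A,d)$ onto $(H(A),0)$: a projection $p\colon A\to H(A)$, an inclusion $i\colon H(A)\to A$ with $p\circ i=\mathrm{id}$, and a homotopy $h$ with $\mathrm{id}_A-i\circ p=dh+hd$. First I would invoke the transfer theorem to produce, from this contraction and the product $\mu_2$ of $A$, a minimal $A_\infty$ structure $\mu=\mu_2+\mu_3+\cdots$ on $H(A)$ together with an $A_\infty$ quasi-isomorphism $f\colon H(A)\to A$ whose linear part $f_1=i$ induces the identity on cohomology. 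Concretely, the transferred operations $\mu_k$ and the components $f_k$ are given by the standard sums over planar rooted binary trees, with leaves decorated by $i$, internal edges by the homotopy $h$, and vertices by $\mu_2$ (closing with $p$ to define $\mu_k$).

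The second step is to feed this into Corollary \ref{cor;MPsetbij}. Since the statement allows a field of arbitrary characteristic, I cannot use the characteristic-zero clause directly and must instead verify the hypotheses of the ``any ring'' clause, which requires $A_\infty$ morphisms in both directions inducing mutually inverse maps on cohomology. The transfer already supplies $f\colon H(A)\to A$ with $(f_1)_*=\mathrm{id}_{H(A)}$. Over a field an $A_\infty$ quasi-isomorphism admits an $A_\infty$ quasi-inverse, so I would take an $A_\infty$ morphism $g\colon A\to H(A)$ with $(g_1)_*=\big((f_1)_*\big)^{-1}=\mathrm{id}$; then $(f_1)_*$ and $(g_1)_*$ are inverse to one another, and Corollary \ref{cor;MPsetbij} yields a bijection
\[
\la [x_{1,1}],\dots,[x_{n,n}]\ra \;=\; \la [f_1(x_{1,1})],\dots,[f_1(x_{n,n})]\ra
\]
identifying the Massey product sets of $A$ with those computed in the minimal model $H(A)$.

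Finally, since $(H(A),\mu)$ is minimal ($\mu_1=0$), Example \ref{EXA:general-minimal} applies verbatim: choosing a basis $\{\ul{b^k_i}\}$ of each $\ul{H}^k$ as in \eqref{EQU:d=0-cijl}, a defining system is exactly a solution of the polynomial conditions \eqref{EQU:d=0-condition}, and the Massey product set is the corresponding set of output values \eqref{EQU:d=0-Massey}. The finite-generation hypothesis in Example \ref{EXA:general-minimal} is inessential here, since every element $x_{p,q}$ is a finite combination of basis vectors and the conditions remain polynomial even when $H(A)$ is infinite-dimensional. The main obstacle is the first step: the genuine content, and the only place the field hypothesis is used, are the transfer theorem and the invertibility of $A_\infty$ quasi-isomorphisms, both of which rely on splitting the complex $A$ over the field. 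Once these are in hand, the corollary is a formal assembly of Corollary \ref{cor;MPsetbij} and Example \ref{EXA:general-minimal}.
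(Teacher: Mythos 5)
Your proof is correct and follows essentially the same route as the paper's: the paper's own proof is a one-line invocation of Kadeishvili's transfer theorem (Theorem \ref{THM:transfer-theorem}), with Corollary \ref{cor;MPsetbij} and Example \ref{EXA:general-minimal} then doing the rest. The only difference is that you spell out details the paper leaves implicit, namely producing an $A_\infty$ quasi-inverse $g:A\to H(A)$ so that the any-ring clause of Corollary \ref{cor;MPsetbij} applies in positive characteristic, and observing that the finite-generation hypothesis of Example \ref{EXA:general-minimal} is inessential.
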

\begin{proof}
A theorem by Kadeishvili \cite{Kad82}, any $A_\infty$ algebra over a field can be transferred quasi-isomorphically to its cohomology; see \cite{M, LV} and Theorem \ref{THM:transfer-theorem}.
\end{proof}

We give two examples of minimal $A_\infty$ algebras (that are not dgas) to illustrate the close relationship between Massey products and solutions of polynomial equations. 

\begin{example}\label{EXA:non-triv-R-C}
Let $H=\bigoplus H^k$ with $\ul{H}^k$ generated by $\ul{b^k}$ in shifted degrees $k=2$, $4$, $6$, $8$, $12$, $15$, $16$, $24$, $29$, $31$.
We construct an example with a non-trivial quadruple Massey product
 $\la x_{1,1},x_{2,2},x_{3,3},x_{4,4}\ra$, where $|\ul{x_{i,i}}|=2^i$, i.e., we take 
\[
\ul{x_{1,1}}:=\ul{b^{2}}, \quad \ul{x_{2,2}}:=\ul{b^{4}},\quad \ul{x_{3,3}}:=\ul{b^{8}},\quad \ul{x_{4,4}}:=\ul{b^{16}}.
\]
 Then, a defining system $\{x_{\bu\bu}\}$ lives in the following (shifted) degrees:
\begin{equation}\label{EQU:EXA:24816-degrees}
\begin{matrix}
  & \ul{x_{2,4}} & \ul{x_{3,4}} & \ul{x_{4,4}}  &&    & 
       \ul{H}^{28} &  \ul{H}^{24} &  \ul{H}^{16}  \\
 \ul{x_{1,3}} & \ul{x_{2,3}}  & \ul{x_{3,3}}   &&\hspace{5mm}\in\hspace{5mm} &  
      \ul{H}^{14} &  \ul{H}^{12}  &  \ul{H}^{8} \\
 \ul{x_{1,2}} & \ul{x_{2,2}}                 &&&&   
     \ul{H}^{6} &  \ul{H}^{4}         \\
 \ul{x_{1,1}}                                &&&&&
     \ul{H}^{2}
\end{matrix}
\end{equation}

We set $\ul{\mu_2}(\ul{x_{i,i}}, \ul{x_{j,j}})=0$ for all $i,j$, then $\St{\{x_{\bu\bu}\}}{1,2}=0$, $\St{\{x_{\bu\bu}\}}{2,3}=0$, and $\St{\{x_{\bu\bu}\}}{3,4}=0$ are automatically true, and the $x_{i,i+1}$ are given by 
\[
\ul{x_{1,2}}=c_{1,2}\cdot \ul{b^{6}},\quad\quad
\ul{x_{2,3}}=c_{2,3}\cdot \ul{b^{12}},\quad\quad
\ul{x_{3,4}}=c_{3,4}\cdot \ul{b^{24}}.
\]
for some $c_{i,i+1}\in R$. Next, we set
\[
\ul{\mu_2}(\ul{b^{2}},\ul{b^{12}}):=\ul{\mu_2}(\ul{b^{6}},\ul{b^{8}}):=\ul{b^{15}}
\quad\text{ and }\quad
\ul{\mu_2}(\ul{b^{4}},\ul{b^{24}}):=\ul{\mu_2}(\ul{b^{12}},\ul{b^{16}}):=\ul{b^{29}}
\] 
and $\ul{\mu_3}=0$. Then, the conditions $\St{\{x_{\bu\bu}\}}{1,3}=0$ and $\St{\{x_{\bu\bu}\}}{2,4}=0$ give
\begin{align*}
0=\ul{\St{\{x_{\bu\bu}\}}{1,3}}=\ul{\mu_2}(\ul{x_{1,1}},\ul{x_{2,3}})+\ul{\mu_2}(\ul{x_{1,2}},\ul{x_{3,3}})=(c_{2,3}+c_{1,2})\cdot \ul{b^{15}}
\\
0=\ul{\St{\{x_{\bu\bu}\}}{2,4}}=\ul{\mu_2}(\ul{x_{2,2}},\ul{x_{3,4}})+\ul{\mu_2}(\ul{x_{2,3}},\ul{x_{4,4}})=(c_{3,4}+c_{2,3})\cdot \ul{b^{29}}
\end{align*}
Thus, any defining system must satisfy $c_{1,2}=-c_{2,3}=c_{3,4}$, and, since $\ul{H}^{14}\cong \ul{H}^{28}\cong \{0\}$, we have $\ul{x_{1,3}}=0$ and $\ul{x_{2,4}}=0$. Finally, if we set
\[
\ul{\mu_2}(\ul{b^{6}},\ul{b^{24}}):=\ul{\mu_4}(\ul{b^{2}},\ul{b^{4}},\ul{b^{8}},\ul{b^{16}}):=\ul{b^{31}}
\]
and all other products involving $\ul{\mu_k}$ to be zero. Note, that $\mu^2=0$ since $\mu$ is non-zero only for inputs in even (shifted) degrees, while all of the outputs are in odd (shifted) degrees. With this, we obtain the following Massey product:
\begin{align*}
&\ul{\St{\{x_{\bu\bu}\}}{1,4}}
=\ul{\mu_2}(\ul{x_{1,2}},\ul{x_{3,4}})+\ul{\mu_4}(\ul{x_{1,1}},\ul{x_{2,2}},\ul{x_{3,3}},\ul{x_{4,4}})=(c^2_{1,2}+1)\cdot \ul{b^{31}}\\
&\quad \implies  \la x_{1,1},x_{2,2},x_{3,3},x_{4,4}\ra=\{(c^2+1)\cdot [b^{31}]\,\, | \,\, c\in R\}
\end{align*}

Thus,  if $R=\R$ is the real numbers, the Massey product does not contain zero, since we cannot solve $c^2+1=0$ in $\R$, and thus we have a non-trivial Massey product given by $[1,\infty)\cdot  [b^{31}]\subseteq H^{32}$.

On the other hand, if $R=\C$ is the complex numbers, we can set $c=\sqrt{-1}$, and thus the Massey product is trivial as it contains the zero element. 
\end{example}

\begin{example}\label{EXA:non-op-triv-R-C}
Let $H'$ be similar to the previous Example \ref{EXA:non-triv-R-C}, but with two more generators $\ul{b^{32}}\in \ul{H}^{32}$ and $\ul{b^{63}}\in \ul{H}^{63}$, i.e., 
$\ul{H'}=\ul{H}\oplus R\langle \ul{b^{32}} \rangle \oplus R \langle \ul{b^{63}}\rangle $. 
We set all products involving $\ul{b^{32}}$ to be zero with the single exception that
\[
\ul{\mu_5}(\ul{b^{2}},\ul{b^{4}},\ul{b^{8}},\ul{b^{16}},\ul{b^{32}})=\ul{b^{63}}
\]
Then, to compute $\la x_{1,1},x_{2,2},x_{3,3},x_{4,4},x_{5,5}\ra$ with $\ul{x_{5,5}}:=\ul{b^{32}}$, we need to repeat the consideration from in Example \ref{EXA:non-triv-R-C} when determining all defining systems. However, in this case, if $R=\R$ is the real numbers, the condition $0=\ul{\St{\{x_{\bu\bu}\}}{1,4}}=(c^2_{1,2}+1)\cdot \ul{b^{31}}$ shows that there is no solution, i.e., the Massey product $\la x_{1,1},x_{2,2},x_{3,3},x_{4,4},x_{5,5}\ra$ is trivial. On the other hand, for $R=\C$, we can solve this, and we get a non-trivial Massey product:
\[
\la x_{1,1},x_{2,2},x_{3,3},x_{4,4},x_{5,5}\ra=\{ [b^{63}] \}\subseteq H^{64}
\]
\end{example}

\begin{remark}\label{RMK:Massey-poly-equ}
As in Examples \ref{EXA:non-triv-R-C} and \ref{EXA:non-op-triv-R-C} above, one can generate Massey products depending on any number of polynomial equations in any number of variables. Thus, the existence of non-trivial Massey products is closely related to the existence of solutions to these polynomial equations over the given ring. A thorough investigation of this for dgas, and the behavior under field extension, was given by Milivojevic in \cite{Mil}.
\end{remark}

The following example has a Massey product set given by a circle in $\R^2$. We emphasize that the naturality property of Equation \ref{MasseySetIsSubset} places non-trivial conditions on the existence of  $A_\infty$ morphisms, e.g., an injective linear map $f_1$ cannot map a Massey product set which is a line to a circle, etc.

\begin{example}\label{EXA:Massey-circle}
Let the ground ring $R=\R$ be the real numbers, and let $H$ be similar to the previous examples given by generators $\ul{b^{2^1}}, \ul{b^{2^2}}, \dots, \ul{b^{2^9}}$, as well as the additional even generators $\ul{b^{6}},\ul{b^{12}},\ul{b^{24}},\ul{b^{96}},\ul{b^{192}},\ul{b^{384}}$ and the odd generators $\ul{b^{15}},\ul{b^{29}},\ul{b^{225}},\ul{b^{449}},\ul{b^{511}}$ and two generators $\ul{b^{1023}}, \ul{\wt{b}^{1023}}$. Here each upper index denotes the shifted degree of the generator, i.e., $\ul{b^{k}}\in \ul{H}^k$. We impose that the only non-trivial multiplications are the following:
\begin{align}
&\label{EQU:circle-1}
\ul{\mu_2}(\ul{b^2},\ul{b^{12}})=\ul{\mu_2}(\ul{b^6},\ul{b^8})=\ul{b^{15}}, &&
\ul{\mu_2}(\ul{b^{12}},\ul{b^{16}})=\ul{\mu_2}(\ul{b^4},\ul{b^{24}})=\ul{b^{29}},
\\ &\label{EQU:circle-2}
\ul{\mu_2}(\ul{b^{32}},\ul{b^{192}})=\ul{\mu_2}(\ul{b^{96}},\ul{b^{128}})=\ul{b^{225}}, &&
\ul{\mu_2}(\ul{b^{64}},\ul{b^{384}})=\ul{\mu_2}(\ul{b^{192}},\ul{b^{256}})=\ul{b^{449}},
\\\label{EQU:circle-3}
 \ul{\mu_6}(\ul{b^{6}}&,\ul{b^{24}}, \ul{b^{32}},\ul{b^{64}},\ul{b^{128}},\ul{b^{256}}) =
\ul{\mu_6}(\ul{b^{2}},\ul{b^{4}},\ul{b^{8}},\ul{b^{16}},\ul{b^{96}},\ul{b^{384}})
= -\ul{\mu_8}(\ul{b^{2^1}},\dots,\ul{b^{2^8}})=\ul{b^{511}},\hspace{-6.8cm}
\\\label{EQU:circle-4}
& \ul{\mu_8}(\ul{b^{6}},\ul{b^{8}},\ul{b^{16}},\ul{b^{32}},\ul{b^{64}},\ul{b^{128}},\ul{b^{256}}, \ul{b^{512}})=\ul{b^{1023}}\hspace{-4cm}
\\\label{EQU:circle-5}
& \ul{\mu_8}(\ul{b^{2}},\ul{b^{4}},\ul{b^{8}},\ul{b^{16}},\ul{b^{32}},\ul{b^{64}},\ul{b^{384}},\ul{b^{512}})=\ul{\wt{b}^{1023}}\hspace{-4cm}
\end{align}
Note that $\mu^2=0$ since all inputs for non-trivial $\ul{\mu_k}$ are of even degree, and all outputs are of odd degree. We claim that the $9$-fold Massey product $\la {b^{2^1}}, {b^{2^2}}, \dots, {b^{2^9}} \ra$ is a circle.

Let $\xbb$ be a defining system for $\ul{b^{2^1}}, \ul{b^{2^2}}, \dots, \ul{b^{2^9}}$. By degree reasons, it has to be of the following form (for some coefficients $c_1,\dots, c_6\in R$):
\begin{equation*}
\begin{matrix}
    & 0 & 0 & 0 & 0 & 0 & 0 & 0 & \ul{b^{512}} \\
 0 & 0 & 0 & 0 & 0 & 0 & c_6\cdot \ul{b^{384}} & \ul{b^{256}} \\
 0 & 0 & 0 & 0 & 0 & c_5\cdot \ul{b^{192}} & \ul{b^{128}} \\
 0 & 0 & 0 & 0 & c_4\cdot \ul{b^{96}} & \ul{b^{64}} \\
 0 & 0 & 0 & 0 & \ul{b^{32}} \\
 0 & 0 & c_3\cdot \ul{b^{24}} & \ul{b^{16}} \\
 0 & c_2\cdot \ul{b^{12}} & \ul{b^{8}} \\
 c_1\cdot \ul{b^{6}} & \ul{b^{4}} \\
  \ul{b^{2}}
\end{matrix}
\end{equation*}
These have to satisfy certain conditions to be a defining system. For example, to get to the $(1,3)$-spot, we must have $\ul{\mu_2}(\ul{b^2},c_2\cdot \ul{b^{12}})+\ul{\mu_2}(c_1\cdot \ul{b^6},\ul{b^8})=0$, which by \eqref{EQU:circle-1} means that $c_1+c_2=0$. Similarly, for the $(2,3)$-spot, \eqref{EQU:circle-1} implies $c_2+c_3=0$. Similarly, \eqref{EQU:circle-2} implies for the $(5,7)$- and $(6,8)$-spots that $c_4+c_5=0$ and $c_5+c_6=0$, respectively. Now, for the $(1,8)$-spot, \eqref{EQU:circle-3} implies that
\[
(c_1\cdot c_3+c_4\cdot c_6-1)\cdot \ul{b^{511}}=0
\quad\text{ or, using }c_1=c_3, c_4=c_6:\quad
c_1^2+c_4^2=1.
\]
Finally, we compute the Massey product set using \eqref{EQU:circle-4} and \eqref{EQU:circle-5} to be:
\[
\la {b^{2^1}}, {b^{2^2}}, \dots, {b^{2^9}} \ra=
\{c_1\cdot [b^{1023}]+c_4\cdot [\wt{b}^{1023}]\,\, |\,\, c_1^2+c_4^2=1\}.
\]
\end{example}

In contrast the case of formal algebras having no non-zero Massey products, when only $\mu_2$ and $\mu_3$ are non-zero (i.e., when $A$ is a minimal ``$A_3$ algebra''), this does not imply that the higher Massey products vanish, as is shown in the next example.
Manifolds which admit Sasakian and Vaisman structures are both known to have de Rham forms 
that are quasi-isomorphic to  $A_3$ algebras, c.f. \cite{BFMT} and \cite{SW}, respectively.

\begin{example}
Let $\ul{H}=R\langle \ul{b^2}, \ul{b^4}, \ul{b^6}, \ul{b^8}, \ul{b^{15}}, \ul{b^{16}}, \ul{b^{31}}\rangle$; c.f. \eqref{EQU:EXA:24816-degrees} in Example \ref{EXA:non-triv-R-C}). 
We set $\mu_k$ to be zero, except for
\[
\ul{\mu_2}(\ul{b^6},\ul{b^8}):=\ul{\mu_3}(\ul{b^2},\ul{b^4},\ul{b^8}):=\ul{b^{15}}
\quad\text{ and }\quad
\ul{\mu_3}(\ul{b^6},\ul{b^8},\ul{b^{16}}):=\ul{b^{31}}.
\]
Then $\mu^2=0$, just as in Example \ref{EXA:non-triv-R-C}. Now, for degree reasons, any defining system for $({b^2},{b^4},{b^8},{b^{16}})$ is of the form
\begin{equation*}
\begin{matrix}
  & 0 & 0 & \ul{b^{16}}  \\
 0 & 0  & \ul{b^8}   \\
c \cdot \ul{b^6} & \ul{b^4}    \\
 \ul{b^2} 
\end{matrix}
\end{equation*}
for some $c\in R$, subject to the condition $0=\ul{\mu_2}(c\cdot \ul{b^6},\ul{b^8})+\ul{\mu_3}(\ul{b^2},\ul{b^4},\ul{b^8})=(c+1)\cdot \ul{b^{15}}$. Thus, $c=-1$, and, since $\ul{\mu_3}(-\ul{b^6},\ul{b^8},\ul{b^{16}})=-\ul{b^{31}}$, we compute that the following quadruple Massey product is non-trivial (i.e., non-empty and does not contain zero):
\[
\la {b^2},{b^4},{b^8},{b^{16}}\ra=\{[-b^{31}]\}.
\]
\end{example}

\subsection{Lens spaces}

The next proposition, proved in the Appendix \ref{APP:L(p,q)-mimimal},
gives the $A_\infty$ minimal model of lens spaces $L(p,q)$ over $\Z_p$, for $p$ prime.

\begin{proposition}\label{PROP:HL(p,q)-minimal-model}
Let $L(p,q)$ be the lens space, where $p$ is a prime with $p\neq 2$ and $1\leq q<p$. A minimal model for $L(p,q)$ with $\Z_p$ coefficients is given by the following minimal $A_\infty$ algebra, with generators $e, x, y, z$, in degrees $|e|=0$, $|x|=1$, $|y|=2$, and $|z|=3$,
\[
H=H^\bu(L(p,q),\Z_p)\cong\Z_p  \langle e,x,y,z \rangle.
\]
The only non-trivial products $\mu_k$ on the generators are given by:
\begin{align*}
&\mu_2(e, e)=e,&& \mu_2(e, x)= \mu_2(x, e)=x,\\
&\mu_2(e, y)= \mu_2(y, e)=y,&& \mu_2(e, z)= \mu_2(z, e)=z, \\
&  \mu_2(x, y)= \mu_2(y, x)=z, &&  \mu_p(x, x,\dots,x)= y.
\end{align*}

\end{proposition}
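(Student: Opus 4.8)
The plan is to obtain the stated structure by homotopy transfer (Theorem \ref{THM:transfer-theorem}) from an explicit cochain model, and then to cut the list of possibly-nonzero operations down to a single genuinely higher product by a degree count before computing it.

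First I would fix a small dga model $C$ for $L(p,q)$ over $\Z_p$ together with its cohomology $H=\Z_p\langle e,x,y,z\rangle$. The natural choice is the complex built from the minimal free resolution of the trivial module over the group ring $\Z_p[\Z_p]\cong \Z_p[s]/(s^p)$, with $s=t-1$, whose differentials alternate between multiplication by $s$ and by $s^{p-1}$, truncated at homological degree $3$ with the top attaching data recording $q$. Since $p\neq 2$ and $s$ acts as $0$ on the trivial module, $H^i(C)=\Z_p$ for $i=0,1,2,3$, reproducing the cohomology groups. I would then choose a contraction $(\iota,\pi,h)$ of $C$ onto $H$ respecting the unit $e$ and apply the transfer formula: because $C$ is an honest dga, only rooted binary trees (internal vertices $\mu_2^C$, internal edges $h$, leaves $\iota$, root $\pi$) contribute to the transferred $\mu_k$.

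The key simplification is a degree count on $H$. Writing $|\mu_k|=2-k$ and discarding inputs equal to the strict unit $e$, any nonzero $\mu_k(a_1,\dots,a_k)$ with $a_i\in\{x,y,z\}$ has output degree $\ge k+(2-k)=2$, hence lands in $H^2$ or $H^3$. Output degree $2$ forces every input to be $x$, so the only candidates are $\mu_k(x,\dots,x)\in\Z_p\,y$; output degree $3$ forces exactly one input to be $y$ and the rest $x$, giving operations valued in $\Z_p\,z$. The binary part is then pinned down classically: $\mu_2(x,x)=0$ by graded commutativity (here $p\neq 2$ is used), while $\mu_2(x,y)=\mu_2(y,x)$ is a nonzero multiple of $z$ because Poincar\'e duality makes the cup pairing $H^1\times H^2\to H^3\cong\Z_p$ perfect on the closed orientable $3$-manifold $L(p,q)$. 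Rescaling the generators by units of $\Z_p$ normalizes these constants to the stated values and also absorbs any $q$-dependence, so the abstract minimal model does not see $q$ (the $q$-sensitive data is the Poincar\'e pairing itself, addressed in later sections).

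The main obstacle is the remaining higher computation: showing $\mu_k(x,\dots,x)=0$ for $2\le k<p$ and $\mu_p(x,\dots,x)=y$, together with the vanishing of the degree-$3$ operations $\mu_k(x,\dots,y,\dots,x)$ for $k\ge 3$. This is where the alternating $s,\,s^{p-1}$ pattern does the work: the homotopy $h$ converts each cup square (a coboundary, since $x\cup x=0$) into the next entry of a staircase, and the composite assembles multiplications by $s$ that telescope as $s^{p}=0$ except for the single surviving term meeting the class dual to $w_2$, producing $y$ only at arity $p$. Equivalently this records the classical fact that the $p$-fold Massey product $\langle x,\dots,x\rangle$ is the Bockstein class $y$; by Example \ref{EXA:general-minimal} the minimal $A_\infty$ products compute exactly these Massey products, so their values force $\mu_p(x,\dots,x)=y$ and the lower vanishings, while uniqueness of the minimal model up to isomorphism (\ref{cor;MPsetbij}) guarantees this is \emph{the} model. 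The delicate points are tracking the precise unit through the staircase sum and checking the degree-$3$ operations vanish for the chosen $h$; both belong to Appendix \ref{APP:L(p,q)-mimimal}. Finally I would confirm $\mu^2=0$ for the normalized structure, a finite check: the only composites to examine are built from $\mu_2(x,y)=\mu_2(y,x)=z$, from $\mu_p(x,\dots,x)=y$, and from the unit, and since $\mu_1=0$ the $A_\infty$ relations reduce to the graded-commutativity and associativity identities already verified for $\mu_2$.
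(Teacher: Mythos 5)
Your overall architecture---a small explicit cochain model for $L(p,q)$, homotopy transfer, and a degree count isolating the candidate operations $\mu_k(x,\dots,x)\in\Z_p\, y$ and $\mu_k(x,\dots,y,\dots,x)\in\Z_p\, z$---is parallel in spirit to what the paper does, and several ingredients are correct: the degree analysis, the normalization of constants by rescaling generators (which is also how the paper absorbs the constant $c=q$ appearing in Proposition \ref{PROP:C-to-H'}), and the identification of $\mu_2$ via graded commutativity and Poincar\'e duality. The paper, however, does not invoke the tree formula of Theorem \ref{THM:transfer-theorem} at all: in Appendix \ref{APP:L(p,q)-mimimal} it works dually with $A_\infty$ coalgebras, takes the simplicial chains of the standard cell structure on $L(p,q)$ (with $2$ vertices, $p+2$ edges, $2p$ two-cells and $p$ three-cells, carrying the Alexander--Whitney diagonal), and directly constructs an explicit $A_\infty$ coalgebra quasi-isomorphism onto the stated structure by a recursive Ansatz, the key arithmetic input being $(1-x)^p\equiv 1-x^p \bmod p$ (Propositions \ref{PROP:H'-to-H} and \ref{PROP:C-to-H'}).

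There are two genuine gaps in your proposal. First, the model you choose---the $q$-twisted truncation of the minimal periodic resolution over $\Z_p[\Z_p]\cong\Z_p[s]/(s^p)$---is only a complex of modules; it is not ``an honest dga'' until you equip it with an associative product, i.e.\ a coassociative diagonal approximation compatible with the twisting that records $q$. Constructing precisely this multiplicative structure is the hard part of the proof, and it is exactly what the paper's choice of the simplicial model with its Alexander--Whitney diagonal provides for free. Without it, the transfer theorem has nothing to transfer, and all subsequent tree computations are vacuous. Second, your treatment of the operations not of the form $\mu_k(x,\dots,x)$ is circular. The Massey-product argument via Example \ref{EXA:general-minimal} together with Kraines' classical computation does legitimately force $\mu_k(x,\dots,x)=0$ for $2<k<p$ and $\mu_p(x,\dots,x)=y$ up to a unit, because in a minimal model with $\mu_2(x,x)=0$ those Massey product sets are singletons. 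But the mixed operations $\mu_k(x,\dots,y,\dots,x)$, $k\ge 3$, cannot be controlled this way: the corresponding Massey products are not even defined, since $x\cup y=z\neq 0$, and uniqueness of minimal models only holds up to $A_\infty$-isomorphism, so it does not let you conclude that the specific structure in the statement (with all mixed operations equal to zero) is actually realized by some choice of transfer data. You defer exactly these points---the choice of homotopy $h$ and the vanishing of the degree-$3$ operations---``to Appendix \ref{APP:L(p,q)-mimimal}'', which is the proof you were asked to supply; as written, the proposal assumes the claim it is meant to establish.
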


 As a corollary, there are non-trivial Massey products of such lens spaces, in terms of the generators above.
 
\begin{theorem}
The lens spaces $L(p,q)$ have non-trivial $p$-fold Massey products
\[
\la c\cdot x,\dots,c\cdot x\ra=\{c\cdot y\}\subseteq H.
\]
for any constant $c\in \Z_p-\{0\}$.
\end{theorem}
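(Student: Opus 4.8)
The plan is to compute the Massey product directly in the minimal $A_\infty$ algebra $H$ of Proposition~\ref{PROP:HL(p,q)-minimal-model}, following the general analysis of Massey products for minimal algebras in Example~\ref{EXA:general-minimal}. Since $H$ is minimal we have $H(H)=H$, so the $p$-fold Massey product set is exactly the collection of staircase products $\St{\xbb}{1,p}$ taken over all defining systems $\xbb=\xbb_{1,p}$ whose diagonal is fixed to $x_{i,i}=c\cdot x$ for $i=1,\dots,p$. First I would record the degree constraint from Definition~\ref{defn:triangsys}: since $|\ul{x_{i,i}}|=|x|-1=0$, every entry satisfies $|\ul{x_{i,j}}|=\sum_{k=i}^{j}|\ul{x_{k,k}}|=0$, i.e.\ $|x_{i,j}|=1$, so each $x_{i,j}$ lies in $H^1=\Z_p\langle x\rangle$ and is a scalar multiple of $x$.

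The key structural observation is that among all products $\mu_r$, the only one that is non-zero on scalar multiples of $x$ is $\mu_p$: indeed $\mu_2(x,x)=0$ (it is absent from the list in Proposition~\ref{PROP:HL(p,q)-minimal-model}), and $\mu_r=0$ for every $r\neq 2,p$. A staircase product $\St{\xbb}{i,j}$ is a sum of terms $\mu_r(\ldots)$ indexed by partitions of $\{i,\dots,j\}$ into $r$ consecutive blocks, and every block-entry is a multiple of $x$; hence the only possibly non-zero contribution comes from $r=p$, which requires a partition into $p$ non-empty blocks and therefore an interval of length at least $p$. For every proper subinterval $(i,j)\neq(1,p)$ the length $j-i+1$ is strictly less than $p$, so $\St{\xbb}{i,j}=0$ automatically. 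Since $\mu_1=0$, the defining-system conditions $\mu_1(x_{i,j})=-\St{\xbb}{i,j}$ then hold for \emph{any} choice whatsoever of the off-diagonal entries; thus the off-diagonal $x_{i,j}$ range over arbitrary multiples of $x$, and each such choice yields a valid defining system.

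It then remains to evaluate $\St{\xbb}{1,p}$. By the same reasoning only $r=p$ contributes, and the unique partition of $\{1,\dots,p\}$ into $p$ non-empty consecutive blocks is the all-singletons partition $\{1\},\dots,\{p\}$; in particular the off-diagonal entries never appear. Because all diagonal entries have shifted degree $0$, the Koszul signs in \eqref{EQU:muk-vs-ulmuk} are trivial, and I would compute $\ul{\St{\xbb}{1,p}}=\ul{\mu_p}(c\ul{x},\dots,c\ul{x})=c^{p}\cdot\ul{y}$, that is $\St{\xbb}{1,p}=c^{p}\cdot y$. Fermat's little theorem gives $c^{p}=c$ in $\Z_p$, so the staircase product equals $c\cdot y$ for every defining system; the output is constant in the off-diagonal choices, whence $\la c\cdot x,\dots,c\cdot x\ra=\{c\cdot y\}$ as a single-point set. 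Since $c\neq 0$ in the field $\Z_p$ we have $c\cdot y\neq 0$, so the Massey product is non-trivial. The only points requiring care — and the places where the argument would break if the model were different — are the vanishing $\mu_2(x,x)=0$, which frees all the intermediate obstructions to vanish and lets the off-diagonal entries be arbitrary, and the counting that shows $\mu_p$ contributes through exactly one partition; both are transparent in this model, so I expect no serious obstacle.
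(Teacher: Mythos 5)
Your proposal is correct and follows essentially the same route as the paper's proof: degree reasons force every entry of a defining system to be a multiple of $x$, the vanishing of $\mu_k$ on multiples of $x$ for $k<p$ makes all defining-system conditions automatic and kills all but the all-singletons $\mu_p$ term, and Fermat's little theorem gives $\St\xbb{1,p}=c^p\cdot y=c\cdot y$. Your write-up just makes explicit the details (degree bookkeeping, the partition count, and sign triviality) that the paper leaves implicit.
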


\begin{proof}
Any defining system $\xbb_{1,p}$ is given by elements which are multiples of $x$, i.e., $x_{i,j}=c_{i,j}\cdot x$ for some $c_{i,j}\in \Z_p$, and $\mu_k(x,\dots,x)=0$ for $k=1,\dots, p-1$, whereas $\mu_p(c\cdot x,\dots,c\cdot x)=c^p\cdot \mu_p( x,\dots, x)=c^p\cdot y\equiv c\cdot y$, so that $\St\xbb{1,p}=c\cdot y$.
\end{proof}

As a corollary we derive a new proof of the classical result giving necessary conditions on a homotopy equivalence $L(p,q) \cong L(p,q')$.

\begin{corollary}
If there is a homotopy equivalence of lens spaces $L(p,q)\to L(p,q')$ then $q$ and $q'$ must satisfy 
\[
q\cdot q' \equiv \pm n^2  \mod p \quad \textrm{for some} \quad n\in \Z_p-\{0\}.
\]
\end{corollary}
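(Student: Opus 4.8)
The plan is to turn a homotopy equivalence $\phi\colon L(p,q)\to L(p,q')$ into an isomorphism of the minimal $A_\infty$ models of Proposition \ref{PROP:HL(p,q)-minimal-model}, extract scaling data on the generators, and combine the cup-product relation, the Massey-product relation, and the fundamental-class pairing into the stated congruence.

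First I would record the induced map. Since $\phi$ is a homotopy equivalence, $\phi^*$ and the map induced by a homotopy inverse are mutually inverse ring isomorphisms on $H^\bullet(-;\Z_p)$, so by Corollary \ref{cor;MPsetbij} (the ``more generally, over any ring $R$'' clause, applied with $R=\Z_p$) $\phi^*$ carries Massey product sets bijectively to Massey product sets. Writing $x',y',z'$ for the generators of $H^\bullet(L(p,q');\Z_p)$ and $x,y,z$ for those of $H^\bullet(L(p,q);\Z_p)$, set $\phi^*(x')=a\,x$, $\phi^*(y')=b\,y$, $\phi^*(z')=c\,z$ with $a,b,c\in\Z_p\setminus\{0\}$ (units, since $\phi^*$ is an isomorphism). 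The cup-product relation $z'=\mu_2(x',y')$ and multiplicativity of $\phi^*$ give $c\,z=(a\,x)(b\,y)=ab\,z$, hence $c=ab$. For the second relation I would feed the preceding theorem into naturality: applying $\phi^*$ to $\la x',\dots,x'\ra=\{y'\}$ gives $\phi^*\{y'\}=\{b\,y\}$ on one side, while Proposition \ref{prop;MPnatural} (or the bijection of Corollary \ref{cor;MPsetbij}) identifies the image with $\la a\,x,\dots,a\,x\ra=\{a^p\,y\}=\{a\,y\}$, using $a^p\equiv a\bmod p$. Comparing the singletons forces $b=a$, and therefore $c=a^2$.

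Next I would bring in Poincar\'e duality, which is the ingredient that actually separates the lens spaces: the minimal $A_\infty$ algebra of Proposition \ref{PROP:HL(p,q)-minimal-model} is literally the same for every $q$, so no purely $A_\infty$-algebraic datum can distinguish $L(p,q)$ from $L(p,q')$. Let $\nu(q):=\langle z,[L(p,q)]\rangle\in\Z_p$ be the evaluation of the top class $z=\mu_2(x,\mu_p(x,\dots,x))$ on the fundamental class. Since a homotopy equivalence of closed oriented $3$-manifolds has degree $\pm1$, we have $\phi_*[L(p,q)]=\pm[L(p,q')]$, and the adjunction $\langle\phi^*z',[L(p,q)]\rangle=\pm\langle z',[L(p,q')]\rangle$ gives $c\,\nu(q)=\pm\,\nu(q')$. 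Substituting $c=a^2$ yields $\nu(q')=\pm a^2\,\nu(q)$, whence $\nu(q)\,\nu(q')=\pm\,(a\,\nu(q))^2$ is $\pm$ a square modulo $p$.

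The final and hardest step is to identify $\nu(q)$ with the classical linking-form invariant, namely to show $\nu(q)\equiv u\,q^{\pm1}\bmod p$ for a fixed unit $u$ independent of $q$ (with a definite sign and exponent fixed by the chosen orientation). Granting this, the relation $\nu(q)\,\nu(q')\equiv\pm n^2$ rewrites as $u^2\,(q q')^{\pm1}\equiv\pm n^2$, and since $\pm1$ and $u^2$ are harmless one concludes $q q'\equiv\pm m^2\bmod p$ for some $m\in\Z_p\setminus\{0\}$. I expect this identification to be the main obstacle: it is precisely the computation of the self-linking form of $L(p,q)$, and it is where all of the $q$-dependence resides, since everything upstream is formal. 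Concretely I would compute $\langle\mu_2(x,\mu_p(x,\dots,x)),[L(p,q)]\rangle$ from the cell structure used in Appendix \ref{APP:L(p,q)-mimimal}, relating the $p$-fold product $\mu_p(x,\dots,x)=y$ to the Bockstein and reading off the pairing of $x\cup y$ against the top cell, which produces the factor $q^{\pm1}$.
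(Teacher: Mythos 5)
Your algebraic skeleton matches the paper's proof almost exactly: the paper likewise reduces to an $A_\infty$ isomorphism $g$ between the minimal models of Proposition \ref{PROP:HL(p,q)-minimal-model}, writes $g_1(x)=n_1x$, $g_1(y)=n_2y$, $g_1(z)=n_3z$, uses naturality of the $p$-fold Massey product $\la x,\dots,x\ra=\{y\}$ together with $n_1^p\equiv n_1 \bmod p$ to force $n_1=n_2$, and uses $\mu_2$-compatibility to force $n_3=n_1^2$ --- precisely your chain $c=ab$, $b=a$, $c=a^2$. Your degree-$\pm 1$/adjunction step is also a legitimate (and arguably cleaner) repackaging of what the paper does by a diagram chase through singular and simplicial (co)chains, which produces an explicit $A_\infty$ isomorphism $H^\bu(L(p,q),\Z_p)\to H^\bu(L(p,q'),\Z_p)$ sending $z\mapsto \pm q\cdot (q')^{-1}\,z$.

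However, as written your argument has a genuine gap, and you name it yourself: the identification $\nu(q)=\langle z,[L(p,q)]\rangle\equiv u\,q^{\pm 1}\bmod p$ (with $u$ independent of $q$) is only ``granted,'' not proved, and it is exactly where all of the $q$-dependence resides --- as you correctly observe, the minimal $A_\infty$ algebra itself is literally the same for every $q$, so without this input the proof cannot distinguish $L(p,q)$ from $L(p,q')$. Your Bockstein/linking-form sketch is the classical route and could presumably be completed, but it is not carried out here, so the crucial arithmetic step is missing. The paper fills this hole with a result it already has in Appendix \ref{APP:L(p,q)-mimimal}: the $A_\infty$ coalgebra quasi-isomorphism $C_\bu(L(p,q),\Z_p)\to H$ of \eqref{EQU:C-H-H} carries the fundamental cycle $\sum_j \cs_j$ to $s\circ r(\sum_j\cs_j)=q\cdot\cZ$ --- this is precisely the constant $c=q$ of Proposition \ref{PROP:C-to-H'} --- which, dually, says $\langle z,[L(p,q)]\rangle = q$, i.e.\ your claim with $u=1$ and exponent $+1$. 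So your proposal becomes a complete proof the moment you cite that transfer computation in place of the unproven linking-form identification; without it, the step carrying the entire content of the theorem is an assumption rather than a deduction.
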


\begin{proof}
A homology equivalence $f:L(p,q')\to L(p,q)$ induces a co-dga map on the singular chains $f_\bu:S_\bu(L(p,q'),\Z_p)\to S_\bu(L(p,q),\Z_p)$, which maps the fundamental class $[L(p,q')]$ to $\pm[L(p,q)]$. The simplicial chains (from the appendix) include as co-dga maps into singular chains $C_\bu(L(p,q),\Z_p)\hookrightarrow S_\bu(L(p,q),\Z_p)$, and same for $L(p,q')$.  Moreover, from the appendix, we know that the map $C_\bu(L(p,q),\Z_p)=C \to H$ from \eqref{EQU:C-H-H} maps the fundamental cycle $\sum_j \cs_j$ to $s\circ r(\sum_j \cs_j)=s(\dZ)=q\cdot \cZ$, and similarly for $q'$ instead of $q$.

We dualize the simplicial and singular chains and the maps from the previous paragraph over $\Z_p$, to get the lower row of the following diagram, where the dga structures on cohomologies and the vertical maps come from transferring to cohomology the structure on cochains
\ref{APP:L(p,q)-mimimal}:
\[\begin{tikzcd}
   H^\bu(L(p,q),\Z_p) \arrow[d, shift right]   \arrow[r, shift right]
& H^\bu(L(p,q),\Z_p)  \arrow[d, shift right]  \arrow[l, shift right]
& H^\bu(L(p,q'),\Z_p) \arrow[d, shift right]  \arrow[r, shift right]
& H^\bu(L(p,q'),\Z_p) \arrow[d, shift right] \arrow[l, shift right]
\\
   C^\bu(L(p,q),\Z_p) \arrow[u, shift right] 
& S^\bu(L(p,q),\Z_p)  \arrow[u, shift right]\arrow[r, "f^\bu"] \arrow[l]  
& S^\bu(L(p,q'),\Z_p)  \arrow[u, shift right] \arrow[r]  
& C^\bu(L(p,q'),\Z_p) \arrow[u, shift right] 
\end{tikzcd}\]

The top left horizontal map is induced by making the diagram commute with the bottom map $S^\bu(L(p,q),\Z_p)\to C^\bu(L(p,q),\Z_p)$, and then observing that this gives an $A_\infty$ quasi-isomorphism of $H^\bu(L(p,q),\Z_p)$ to itself, which is thus an isomorphism, and so is invertible. The analogous statement holds for $L(p,q')$  on the top right. In particular, we get an induced $A_\infty$ quasi-isomorphism, and thus isomorphism, from the top left to the top right $H^\bu(L(p,q),\Z_p)\to H^\bu(L(p,q'),\Z_p)$ which maps $z\mapsto\pm q \cdot q'{}^{-1}\cdot z$.

Now, any $A_\infty$ quasi-isomorphism $g:H^\bu(L(p,q),\Z_p)\to H^\bu(L(p,q'),\Z_p)$ has an isomorphism $g_1$ in degree $1$, and thus must map $g_1(x)=n_1\cdot x$, $g_1(y)=n_2\cdot y$, and $g_1(z)=n_3\cdot z$ for some $n_1,n_2,n_3\in \Z_p-\{0\}$. Since the Massey product set gives $g_1(\la x,\dots, x\ra)=\la g_1(x),\dots, g_1(x) \ra=\la n_1 x,\dots, n_1 x\ra=\{n_1 \cdot y\}$, but also  $g_1(\la x,\dots, x\ra)=g_1(\{y\})=\{g_1(y)\}=\{n_2\cdot y\}$, we see that $n_1=n_2$. Also, since $g_1$ commutes with $\mu_2$, we have $g_1(z)=g_1(\mu_2(x,y))=\mu_2(g_1(x),g_1(y))=\mu_2(n_1\cdot x,n_1\cdot y)=n_1^2\cdot z$, which means that $n_3=n_1^2$.

Applying this to the map $H^\bu(L(p,q),\Z_p)\to H^\bu(L(p,q'),\Z_p)$ from above, which maps $z\mapsto g_1(z)=\pm q \cdot q'{}^{-1}\cdot z$, we see that $\pm q \cdot q'{}^{-1}=n_1^2$ (in $\Z_p$). Setting $n=n_1\cdot q'$, we get the desired equation $q\cdot q'=\pm n^2$ in $\Z_p$.
\end{proof}

\begin{remark}
The examples $L(p,q)$ serve to illustrate interesting contrasts to the behavior Massey products over fields of characteristic zero. By a recent theorem of Milivojevi\'c, Stelzig, and Zoller \cite{MSZ}, for any non-zero degree map $f:X \to Y$ of $n$-dimension Poincar\'e duality spaces, if $X$ is formal (rationally, as a cdga) then $Y$ is formal as well. In particular, this applies to covering spaces, and the formality of $X$ implies the vanishing of all Massey products on $Y$ (rationally).

Here, in contrast to the case of characteristic zero, the universal coverings $S^3 \to L(p,q)$ have (rationally) formal domain $S^3$, yet $L(p,q)$ have a non-zero $p^{th}$ Massey product (over $\Z_p$).

As a second noteworthy point, here the $p^{th}$ Massey product clearly pulls back trivially to $S^3$ for all $p\geq 3$, 
which is in contrast to Taylor's theorem \cite{Ta}, showing that over the real numbers, non-zero triple Massey products pull back non-trivially along maps of non-zero degree.

\end{remark}

\section{Massey Products for $A_\infty$ modules over $A_\infty$ algebras}\label{SEC:Massey-modules}

The previous section can be repeated, mutatis mutandis, for modules over algebras. Since a primary focus in this paper is \emph{Massey inner products} of the next section, which involve morphisms of modules over algebras, we will only give the most basic definitions, indicate what the interested reader can check, and give one example from the Hopf fibration.

\subsection{Background on $A_\infty$ modules}\label{SEC:background-A-infty-module}

An \emph{$A_\infty$ module over $M$} over the $A_\infty$ algebra $A$ consists of a sequence of maps $\be_{k,\ell}:A^{\ot k}\ot M\ot A^{\ot \ell}\to M$ for $k,\ell\geq 0$, where the degree is $|\be_{k,\ell}|=1-k-\ell$, satisfying
\begin{multline}\label{EQU:ULbe_ULbe}
\sum_{k+\ell-1=r}\,\, \sum_{i+j+1=k} \ul{\be_{k,s}}\circ (\underbrace{id\otimes \dots \otimes id}_{i\text{ many}}  \otimes\ul{\mu_\ell}\otimes \underbrace{id\otimes \dots \otimes id}_{j\text{ many}}\ot id_{\ul M}\ot \underbrace{id\otimes \dots \otimes id}_{s\text{ many}})
\\
+\sum_{k+i=r}\,\, \sum_{\ell+j=s} \ul{\be_{i,j}}\circ (\underbrace{id\otimes \dots \otimes id}_{i\text{ many}}  \otimes\ul{\be_{k,\ell}}\otimes \underbrace{id\otimes \dots \otimes id}_{j\text{ many}})
\\
+\sum_{k+\ell-1=s}\,\, \sum_{i+j+1=k} \ul{\be_{r,k}}\circ (  \underbrace{id\otimes \dots \otimes id}_{r\text{ many}}\ot id_{\ul M}\ot\underbrace{id\otimes \dots \otimes id}_{i\text{ many}}  \otimes\ul{\mu_\ell}\otimes \underbrace{id\otimes \dots \otimes id}_{j\text{ many}})=0, \quad \forall r,s\geq 0
\end{multline}
Here $id=id_{\ul A}$, and $\ul{\be_{k,\ell}}:\ul{A}^{\ot k}\ot \ul{M}\ot \ul{A}^{\ot \ell}\to \ul{M}$ is the shifted map of degree $|\ul{\be_{k,\ell}}|=1$; c.f. Section \ref{SEC:background-A-infty} and \cite[Proposition 2.7]{T}. In short, we also write $\be^2=0$ for \eqref{EQU:ULbe_ULbe}. Moreover, we sometimes may write $M_{/A}$ if we want to emphasize that $M$ is over $A$.

\begin{examples}\label{EXA:A-infty-modules}\quad
\begin{enumerate}
\item
A dga $A$ is an $A_\infty$ algebra, and a dg bimodule $M$ over a dga $A$ is an $A_\infty$ module over $A$ where $\be_{0,0}$ is the differential of $M$, and $\be_{1,0}$ and $\be_{0,1}$ are the left- and right-module structures; c.f. \cite[Example 2.8]{T}.
\item\label{ITEM:A=A/A}
Any $A_\infty$ algebra $A$ is an $A_\infty$ module $A_{/A}=A$ over itself, where $\be_{k,\ell}=\mu_{k+\ell+1}$; c.f. \cite[Lemma 4.1]{T}.
\item\label{ITEM:M/B=M/A}
If $f:A\to B$ is a morphism of $A_\infty$ algebras, and $M=M_{/B}$ is a $A_\infty$ module over $B$, then $M$ has an induced $A_\infty$ module structure $M_{/A}$ over $A$ given by:
\[
\ul{\be_{k,\ell}{}_{/A}}=\sum_{k_1+\dots k_i=k} \sum_{\ell_1+\dots \ell_j=\ell} 
\ul{\be_{i,j}}\circ (\ul{f_{k_1}}\ot\dots\ot \ul{f_{k_i}}\ot id_{\ul{M}}\ot \ul{f_{\ell_1}}\ot\dots\ot \ul{f_{\ell_j}})
\]
In particular, for a dga map $f:A\to B$, any dg bimodule $M$ over $B$ is also a dg bimodule over $A$.
\end{enumerate}
\end{examples}

Let $(A,\mu)$ and $(B,\nu)$ be two $A_\infty$ algebras, and let $(M,\be)$ be an $A_\infty$ module over $A$, and let $(N,\ga)$ be an $A_\infty$ module over $B$. Let $f:A\to B$ be a morphism of $A_\infty$ algebras. A \emph{morphism of $A_\infty$ modules $F:M\to N$ of degree $|F|$ over $f$} consists of maps $F_{k,\ell}:A^{\ot k}\ot M\ot A^{\ot \ell}\to N$ of degree $|F_{k,\ell}|=|F|-k-\ell$, satisfying
\begin{multline}\label{EQU:F_F}
\sum_{k+\ell-1=r}\,\, \sum_{i+j+1=k} \ul{F_{k,s}}\circ (\underbrace{id\otimes \dots \otimes id}_{i\text{ many}}  \otimes\ul{\mu_\ell}\otimes \underbrace{id\otimes \dots \otimes id}_{j\text{ many}}\ot id_{\ul M}\ot \underbrace{id\otimes \dots \otimes id}_{s\text{ many}})
\\
+\sum_{k+i=r}\,\, \sum_{\ell+j=s} \ul{F_{i,j}}\circ (\underbrace{id\otimes \dots \otimes id}_{i\text{ many}}  \otimes\ul{\be_{k,\ell}}\otimes \underbrace{id\otimes \dots \otimes id}_{j\text{ many}})
\\
+\sum_{k+\ell-1=s}\,\, \sum_{i+j+1=k} \ul{F_{r,k}}\circ (  \underbrace{id\otimes \dots \otimes id}_{r\text{ many}}\ot id_{\ul M}\ot\underbrace{id\otimes \dots \otimes id}_{i\text{ many}}  \otimes\ul{\mu_\ell}\otimes \underbrace{id\otimes \dots \otimes id}_{j\text{ many}})
\\
=(-1)^{|F|}\cdot \sum_{k+k_1+\dots+k_i=r} \sum_{\ell+\ell_1+\dots+\ell_j=s} \ul{\ga_{i,j}}\circ ( {\ul{f_{k_1}}\otimes \dots \otimes \ul{f_{k_i}}} \ot\ul{F_{k,\ell}}\ot  {\ul{f_{\ell_1}}\otimes \dots \otimes \ul{f_{\ell_j}}})
, \quad \forall r,s\geq 0
\end{multline}
Similar to before, $\ul{F_{k,\ell}}:\ul{A}^{\ot k}\ot \ul{M}\ot \ul{A}^{\ot \ell}\to \ul{N}$ are the shifted maps of degree $|\ul{F_{k,\ell}}|=|F|$. We will also write $F_{/f}$ is we want to emphasize that $F$ is over $f$. We note that condition \eqref{EQU:F_F} is equivalent to saying that $F=\{F_{k,\ell}\}_{k,\ell}$ is a morphism of $A_\infty$ modules $F:M\to N_{/A}$ over $id_A$, where $N_{/A}$ is the induced $A_\infty$ module over $A$ from Example \ref{EXA:A-infty-modules}\eqref{ITEM:M/B=M/A}.

\begin{definition}\label{DEF:exact-Inf-Morph}
For any $A_\infty$ algebra $(A,\mu)$ and $A_\infty$ modules $(M,\be)$ and $(N,\ga)$ be  over $A$, the space of all families of maps $F=\{F_{k,\ell}:A^{\ot k}\ot M\ot A^{\ot \ell} \to N$ of degree $|F|-k-\ell\}_{k,\ell\geq 0}$ has a differential given by
\[
\delta(F)=F\circ \be-(-1)^{|F|}\ga\circ F.
\]
Explicitly, the $(r,s)$-component of $\delta(F)$ is given by 
\[
\ul{\delta(F)_{r,s}}= \textrm{(the left-hand side of \eqref{EQU:F_F})} - \textrm{(the right-hand side of \eqref{EQU:F_F})}. 
\]
\end{definition}

One checks that $\delta^2=0$. Note that closed elements are precisely morphisms of $A_\infty$ modules $F:M\to N$ over $id_A$.

\begin{examples}\label{EXA:Morph-of-Ainfty-modules}
\quad
\begin{enumerate}
\item
For a dga map $f:A\to B$, and two given dg bimodules $M$ over $A$ and $N$ over $B$, a dg bimodule $F:M\to N$ map over $f$ is a morphism of $A_\infty$ modules in the above sense.
\item\label{ITEM:f-gives-F}
If $f:A\to B$ is a map of $A_\infty$ algebras, then $A$ is an $A_\infty$ module over $A$, and $B$ is an $A_\infty$ module $B$ (by Example \ref{EXA:A-infty-modules}\eqref{ITEM:A=A/A}), and there is an induced morphism of $A_\infty$ modules  $F:A\to B$ (of degree $0$) over $f$ given by $F_{k,\ell}=f_{k+\ell+1}$.
\item\label{ITEM:f-G-N-N'-gives-GA-NA-N'A}
If $f:A\to B$ is a morphism of $A_\infty$ algebras, and $M$ and $N$ are $A_\infty$ modules over $B$, and $F:M\to N$ is a morphism of $A_\infty$ modules over $id_B$, then there is an induced morphism of $A_\infty$ modules $F_{/A}:M_{/A}\to N_{/A}$ over $id_A$ given by
\[
\ul{F_{k,\ell}{}_{/A}}=\sum_{k_1+\dots k_i=k} \sum_{\ell_1+\dots \ell_j=\ell} 
\ul{F_{i,j}}\circ (\ul{f_{k_1}}\ot\dots\ot \ul{f_{k_i}}\ot id_{\ul{M}}\ot \ul{f_{\ell_1}}\ot\dots\ot \ul{f_{\ell_j}})
\]
\end{enumerate}
\end{examples}

\subsection{Massey products for $A_\infty$ modules}\label{SEC:Massey-modules}

The notions from Section \ref{SEC:Def-system-Massey-A-infty} carry over to $A_\infty$ modules. By appropriately ``replacing one copy of $A$ by $M$'' one can define
\begin{enumerate}
\item Triangular system and staircase product (c.f. Definition \ref{defn:triangsys}), which is analogous to \eqref{EQU:x**} but uses the $A_\infty$ module structure  $\be$, i.e.,
\begin{equation*}
\ul{\St{\xbb}{p,q}}\\
=\sum_{\tiny\begin{matrix}{r+s\geq 1}\\{p=j_0\leq j_1< j_2<\dots<j_{r+s+1}=q}\end{matrix}}
\ul{\be_{r,s}}(\ul{a_{j_0,j_1}},\dots, \ul{m_{j_{r}+1,j_{r+1}}},\dots,\ul{a_{j_{r+s}+1,j_{r+s+1}}})
\end{equation*}
\item Defining system (c.f. Definition \ref{defn;defnsys})
\end{enumerate} 
We note that the defining system has one element of the module $M$ on the diagonal, and then elements of $M$ on its ``cone'' to the left and up, while all other elements are in $A$.
\[
\begin{matrix}
             & \ceco m_{2,6} & \ceco m_{3,6} & \ceco m_{4,6} & a_{5,6} & a_{6,6} \\
\ceco m_{1,5} & \ceco m_{2,5} & \ceco m_{3,5} & \ceco m_{4,5} & a_{5,5} \\
\ceco m_{1,4} & \ceco m_{2,4} & \ceco m_{3,4} & \ceco m_{4,4} \\
 a_{1,3} & a_{2,3}  & a_{3,3}  \\
 a_{1,2} & a_{2,2} \\
 a_{1,1}
\end{matrix}
\]
The proofs from Section \ref{SEC:Def-system-Massey-A-infty} then establish the Massey product set with one module element $\la a,...,a,m,a,...,a\ra$, 
and the fact that the is a well defined Massey product set map 
\[
H(A)\times\dots \times H(A)\times H(M)\times H(A)\times\dots \times H(A)\to \mathcal P(H(M)).
\]

One may also verify the analogue of Proposition \ref{prop;MPnatural}:
\begin{proposition} (Naturality of module Massey products)
Let $f:A\to B$ be a morphism of $A_\infty$ algebras, and $F:M\to N$ a morphism of $A_\infty$ modules over $f$. If $\xbb$ is a defining system in $M$, then there is a defining system $\{y_{\bu\bu}\}_{1,n}=F(\{x_{\bu\bu}\}_{1,n})$ in $N$ defined similarly to \eqref{EQU:y**=f(x**)}, where we apply either $F_{i,j}$ or $f_k$ depending on the output being in $N$ or $A$. Then, the maps $(f_1)_*:H(A)\to H(B)$ and  $(F_{0,0})_*:H(M)\to H(N)$ on cohomology restricts to a set map on the Massey products satisfying:
\[
(F_{0,0})_*\big( \la[a_{1,1}],\dots,[m_{\ell,\ell}],\dots,[a_{n,n}]\ra \big) \subseteq  \la[f_1(a_{1,1})],\dots,[F_{0,0}(m_{\ell,\ell})],\dots, [f_1(a_{n,n})] \ra.
\]

Therefore, $A$-infinity quasi-isomorphisms in both directions yields bijective Massey product sets.
\end{proposition}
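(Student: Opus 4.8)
The plan is to transfer the proofs of Lemma \ref{LEM:f(defining-system)}, Proposition \ref{prop;MPnatural}, and Corollary \ref{cor;MPsetbij} to the module setting \emph{mutatis mutandis}. The one new structural input is that the algebra identity $f\circ\mu=\nu\circ f$ is supplemented by the morphism condition for $F$: since $F$ has degree $|F|=0$ (so that $F_{0,0}$ is a chain map inducing $(F_{0,0})_*$), Definition \ref{DEF:exact-Inf-Morph} gives $\delta(F)=F\circ\be-\ga\circ F=0$, i.e.\ $\ga\circ F=F\circ\be$ with no Koszul sign, where $\ga\circ F$ is understood to apply $f$ to the surrounding algebra inputs as well. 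First I would make $\ybb=F(\xbb)$ precise: for a position $(p,q)$ in the algebra region ($q<\ell$ or $p>\ell$) the entry $y_{p,q}\in B$ is defined by summing $\ul{f_r}$ over staircases exactly as in \eqref{EQU:y**=f(x**)}, while for a position in the module region ($p\leq\ell\leq q$) each staircase crosses the unique module entry, and $\ul{y_{p,q}}\in\ul N$ is formed by applying $\ul{F_{r,s}}$ to that module input and $\ul{f_k}$ to the surrounding algebra inputs. In particular $y_{i,i}=f_1(a_{i,i})$ off the module diagonal and $y_{\ell,\ell}=F_{0,0}(m_{\ell,\ell})$.

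Next I would check that $\ybb$ is again a defining system. For positions in the algebra region this is literally Lemma \ref{LEM:f(defining-system)}(1) applied to $f$ and the purely algebraic part of $\xbb$, so nothing new is needed. For positions in the module region I would repeat the graphical computation of that lemma: compute $\ga_{0,0}(y_{p,q})$, push the differential through $F$ using $\ga\circ F=F\circ\be$, and then invoke the defining-system relations in $M$ (namely $\be_{0,0}(x_{i,j})=-\St\xbb{i,j}$ on module entries, $d(x_{i,j})=-\St\xbb{i,j}$ on algebra entries) to cancel the interior-$\be_{0,0}$ terms against the higher $\be_{r,s}$ terms. What survives reassembles, by the definition of $\ybb$, into $-\St\ybb{p,q}$, giving $\ga_{0,0}(y_{p,q})=-\St\ybb{p,q}$ as required.

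The subset relation then follows as in Proposition \ref{prop;MPnatural}: I would compute $\ga_{0,0}\big(F_{0,0}(\St\xbb{1,n})\big)$ using $\ga\circ F=F\circ\be$ and the defining-system property to conclude
\begin{equation*}
[F_{0,0}(\St\xbb{1,n})]=[\St\ybb{1,n}]\in H(N).
\end{equation*}
Since $\St\xbb{1,n}$ is closed (the module analogue of Proposition \ref{PROP:x**-closed}), both classes are well defined, and combined with the identification of the diagonal entries above this yields
\begin{equation*}
(F_{0,0})_*\big(\la[a_{1,1}],\dots,[m_{\ell,\ell}],\dots,[a_{n,n}]\ra\big)\subseteq\la[f_1(a_{1,1})],\dots,[F_{0,0}(m_{\ell,\ell})],\dots,[f_1(a_{n,n})]\ra.
\end{equation*}

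For the final assertion, suppose module morphisms $F:M\to N$ over $f$ and $G:N\to M$ over $g$ are given, with $(f_1)_*,(g_1)_*$ and $(F_{0,0})_*,(G_{0,0})_*$ mutually inverse on cohomology. The inclusion above produces set maps $(F_{0,0})_*\colon S\to T$ and $(G_{0,0})_*\colon T\to S$ between the source and target Massey product sets; since these sets depend only on the cohomology classes of the diagonal entries (the module analogue of Theorem \ref{thm;MPsetmap}), the composites $(G_{0,0})_*(F_{0,0})_*$ and $(F_{0,0})_*(G_{0,0})_*$ act as the respective identities, so the two maps are mutually inverse bijections, exactly as in Corollary \ref{cor;MPsetbij}. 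The hard part will be the module-region computation in the two middle steps: one must track precisely which staircase input carries the module label and confirm that the interior-differential cancellation still closes once $F$ of positive ``width'' is inserted. Because $|F|=0$ removes the Koszul sign from $\ga\circ F=F\circ\be$ and the gap-free, overlap-free staircase combinatorics is identical to the algebra case, I expect this bookkeeping to be mechanical rather than conceptual.
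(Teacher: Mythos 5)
Your proposal is correct and is essentially the paper's own proof: the paper gives no separate argument for this proposition, saying only that one verifies it by transferring Lemma \ref{LEM:f(defining-system)}, Proposition \ref{prop;MPnatural}, and Corollary \ref{cor;MPsetbij} mutatis mutandis, with the module-morphism identity \eqref{EQU:F_F} (signless since $|F|=0$) replacing $f\circ\mu=\nu\circ f$ — exactly your plan, including the definition of $\ybb$ by applying $F_{r,s}$ to staircases containing the module entry and $f_k$ to the purely algebraic ones, and the standard mutual-inverse argument for the bijectivity claim. One wording slip to fix when executing: to obtain $[F_{0,0}(\St{\xbb}{1,n})]=[\St{\ybb}{1,n}]$ you should rewrite $F_{0,0}(\St{\xbb}{1,n})$ itself using $F\circ\be=\ga\circ(f\ot\dots\ot F\ot\dots\ot f)$ and the defining-system cancellations, as in the proof of Proposition \ref{prop;MPnatural} (the $\ga_{0,0}$-exact terms then appear as output), rather than compute its differential $\ga_{0,0}\big(F_{0,0}(\St{\xbb}{1,n})\big)$, which would only establish closedness.
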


We give one clarifying example from the Hopf fibration:
\begin{example}
Assume $R$ is a field of characteristic $0$. The Hopf map $f:S^3\to S^2$ induces a map of de Rham forms $f^*:\Om(S^2)\to \Om(S^3)$. The minimal models (in the sense of rational homotopy theory) are
\begin{align*}
\mathcal{M}_{S^2} &= \bigwedge \langle x,y\rangle, \quad |x|=2, |y|=3, dy = x^2 \\
\mathcal{M}_{S^3} &= \bigwedge \langle z\rangle, \quad |z|=3, 
\end{align*}
and induce a map, also denoted by $f^* : \mathcal{M}_{S^2} \to \mathcal{M}_{S^3} $, satisfying
$f^*(y)=z$ since $f$ represents the generator for $\pi_3(S^2)$.
The map $f^*$ makes $\mathcal{M}_{S^3}$ into an $\mathcal{M}_{S^2}$-module via
\[
a \bullet m = f^*(a) \cdot m.
\]

If we denote by $1\in \mc M_{S^3}$ the unit as an element in the dg module over the dga $\mc M_{S^2}$, then there is a non-trivial Massey product with respect to this module structure, given by
\[
\la x,x,1\ra=\{z\}\subseteq H^\bu(S^3),
\]
as the only choice for defining system is $dy=x^2$ and $d(0) = x \bullet 1 =0$. 

In particular, this non-trivial Massey product shows that the module structure of 
$\Om(S^3)$ over $\Om(S^2)$ is not equivalent to a dg-module and algebra structure with zero differentials, even though both spheres are formal as spaces. 
\end{example}

\section{Massey Products for $A_\infty$ inner products}\label{SEC:Massey-inner-product}

In this section we review $A_\infty$ inner products and introduce a notion of ``Massey products'', establishing all the desirable properties as in the previous discussions of  $A_\infty$ algebras and modules in the earlier sections. 
Briefly, this requires a new notion of ``cyclic defining system'', the ``cyclic straircase product'', and a few novel arguments to establish properties such as naturality with respect to morphisms and pullback of inner products. For the reader who is mainly interested in commutative dgas, these are spelled out explicitly and more simply in the subsequent section.

To begin, we view an inner product as a morphism from a module to its dual module. 

\subsection{Background on $A_\infty$ inner products}\label{SEC:background-A-infty-IP}
If $(M,\beta)$ is an $A_\infty$ module over the $A_\infty$ algebra $A$, then the dual $M^*=Hom(M,R)$ is also an $A_\infty$ module over $A$ whose module structure is given by $\be'_{k,\ell}:A^{\ot k}\ot M^*\ot A^{\ot \ell}\to M^*$,
\begin{equation}\label{EQU:dual-module-structure}
\ul{\be'_{k,\ell}}(\ul{a_1},\dots, \ul{a_k},\ul{m^*},\ul{\wt a_1},\dots, \ul{\wt a_\ell})(\ul{\wt m})=
-(-1)^{\varepsilon}\cdot 
\ul{m^*}\big(\ul{\be_{\ell,k}}(\ul{\wt a_1},\dots, \ul{\wt a_\ell},\ul{\wt m},\ul{a_1},\dots, \ul{a_k})\big)
\end{equation}
for $a_i,\wt a_i\in A$, $m^*\in M^*$, $\wt m\in M$, and the sign $(-1)^\varepsilon=(-1)^{(|\ul{a_1}|+\dots+| \ul{a_k}|)\cdot (|\ul{m^*}|+|\ul{\wt a_1}|+\dots+| \ul{\wt a_\ell}|+|\ul{\wt m}|)+|\ul{m^*}|}$ is given by the Koszul sign rule (the extra minus is needed for $\be^2=0$ to imply $\be'^2=0$).

With this, we define an \emph{$A_\infty$ inner product on $M$} (also called a \emph{homotopy inner product} or \emph{infinity inner product}) to be a morphism $I:M\to M^*$ of $A_\infty$ modules from $M$ to $M^*$ over $id_A$. By Equations \eqref{EQU:F_F} and \eqref{EQU:dual-module-structure}, this means that the functions $I_{k,\ell}:A^{\ot k}\ot M\ot A^{\ot \ell}\to M^*$ satisfy
\begin{align}\label{EQU:F=inner-product}
&\sum_{k+\ell-1=r}\,\, \sum_{i+j+1=k}(-1)^{\varepsilon_1}\cdot \ul{I_{k,s}}\circ (\underbrace{\ul{a_1}, \dots }_{i\text{ many}}  ,\ul{\mu_\ell}(\ul{a_{i+1}},\dots), \underbrace{ \dots,\ul{a_r}}_{j\text{ many}},\ul{m},\underbrace{\ul{\wt a_1},\dots,\ul{\wt a_s}}_{s\text{ many}})(\ul{\wt m})
\\ \nonumber
&+\sum_{k+i=r}\,\, \sum_{\ell+j=s}(-1)^{\varepsilon_2}\cdot \ul{I_{i,j}}\circ (\underbrace{\ul{a_1}, \dots}_{i\text{ many}}  ,\ul{\be_{k,\ell}}(\ul{a_{i+1}},\dots,\ul{a_r},\ul{m},\ul{\wt a_1},\dots, \ul{\wt a_{s-j}}), \underbrace{\dots,\ul{\wt a_s}}_{j\text{ many}})(\ul{\wt m})
\\ \nonumber
&+\sum_{k+\ell-1=s}\,\, \sum_{i+j+1=k} (-1)^{\varepsilon_3}\cdot\ul{I_{r,k}}\circ (  \underbrace{\ul{a_1},\dots,\ul{a_r}}_{r\text{ many}},\ul{m},\underbrace{\ul{\wt a_1},\dots}_{i\text{ many}},\ul{\mu_\ell}(\ul{\wt a_{i+1}},\dots), \underbrace{ \dots,\ul{\wt a_s}}_{j\text{ many}})(\ul{\wt m})
\\ \nonumber
&+\sum_{k+i=r} \sum_{\ell+j=s} (-1)^{\varepsilon_4}\cdot \ul{I_{k,\ell}}(\ul{a_{i+1}},\dots,\ul{a_r},\ul{m},\ul{\wt a_1},\dots, \ul{\wt a_{s-j}})
\Big(\ul{\be_{j,i}}(\underbrace{\dots,\ul{\wt a_s}}_{j\text{ many}},\ul{\wt m}, \underbrace{\ul{a_1}, \dots}_{i\text{ many}})\Big)
=0
\end{align}
for any $a_i,\wt a_i\in A$, $m,\wt m\in M$, where the signs $(-1)^{\varepsilon_j}$ are the Koszul signs when applying $(\ul{a_1},\dots,\ul{a_r}, \ul{m},\ul{\wt{a}_1},\dots,\ul{\wt{a}_s}, \ul{\wt m})$ to \eqref{EQU:F_F}.

\begin{examples}\label{EXA:cdga=>IP}
\quad
\begin{enumerate}
\item\label{ITM:cdga-example}
Our main examples will be of the following form. Let $A$ be a \emph{graded commutative} dga (and take $M=A$ as the dg bimodule over itself), and fix a closed element $x\in A^*$ in the dual of $A$. Then, we have the ($A_\infty$-)inner product $I_x:A\to A^*$ given by $(I_x(a))(\wt{a})=x(a\cdot \wt{a})$ for any $a,\wt{a}\in A$.
\item A generalization of the previous example is given by a \emph{cyclic} $A_\infty$ algebra; see, e.g.,  \cite{Kon93, Kaj07, KS09}. If $A$ is an $A_\infty$ algebra and $\langle , \rangle:A\ot A\to R$ is a graded symmetric map, then $(A,\langle , \rangle)$ is a cyclic $A_\infty$ algebra if and only if the map $I_{0,0}:A\to A^*$ given by $\langle , \rangle$ together with $I_{k,\ell}=0$ for $k+\ell>0$ is an $A_\infty$ inner product on $A$.
\end{enumerate}
\end{examples}

Now, let $A$ be an $A_\infty$ algebra and $M$ be an $A_\infty$ module over $A$. Moreover, let $B$ be an $A_\infty$ algebra, $N$ an $A_\infty$ module over $B$, and let $I:N\to N^*$ be an $A_\infty$ inner product on $N$, which we want to pull back to $M$. If $f:A\to B$ is a morphism of $A_\infty$ algebras, and $F:M\to N$ is a morphism of $A_\infty$ modules over $f$, then there is an induced $A_\infty$ inner product $F^\sharp(I):M\to M^*$ on $M$ given by
\begin{multline}\label{EQU:Fsharp(G)}
\ul{F^\sharp(I)_{k,\ell}}(\ul{a_1},\dots, \ul{a_k},\ul{m},\ul{\wt a_1},\dots, \ul{\wt a_\ell})(\ul{\wt m})=
\sum_{\tiny\begin{matrix}0\leq p\leq k\\ 0\leq q\leq \ell\end{matrix}} \sum_{\tiny\begin{matrix}r+r_1+\dots r_i=p\\s+s_1+\dots+s_j=q\end{matrix}} (-1)^\varepsilon
\\
\cdot\Big(\underbrace{\ul{I_{i,j}}\circ (\ul{f_{r_1}}\ot \dots \ot \ul{f_{r_i}}\ot \ul{F_{r,s}}\ot \ul{f_{s_1}}\ot \dots \ot \ul{f_{s_j}})(\ul{a_{k-p+1}},\dots, \ul{a_k},\ul{m},\ul{\wt a_1},\dots, \ul{\wt a_{q}})}_{\in \ul{N}^*}\Big)
\\
\Big(\underbrace{\ul{F_{\ell-q,k-p}} (\ul{\wt a_{q+1}},\dots, \ul{\wt a_\ell},\ul{\wt m},\ul{a_1},\dots, \ul{a_{k-p}})}_{\in \ul{N}}\Big)\quad \in R
\end{multline}
Here, the sign $(-1)^\varepsilon$ is given by the Koszul sign. An alternative description of $F^\sharp(I)$ is given by noting that it is the composition of morphisms $F^\sharp(I)=F^*\circ I_{/A}\circ F:M\to N_{/A}\to N^*_{/A}\to M^*$ over $id_A$. We also note that Equation \eqref{EQU:Fsharp(G)} may be pictured as follows:
\[
\begin{tikzpicture}[scale=.8]
\draw (-1.7,-1.7) -- (1.7,1.7); \draw (-1.7,1.7) -- (1.7,-1.7); \draw (0,2) -- (0,-2);
\draw (-.3,2) --(0,1) -- (0.3,2); \draw (-.3,-2) --(0,-1) -- (0.3,-2);
\draw (2,1.5) --(1,1) -- (1.5,2); \draw (2,-1.5) --(1,-1) -- (1.5,-2); 
\draw (-2,1.5) --(-1,1) -- (-1.5,2); \draw (-2,-1.5) --(-1,-1) -- (-1.5,-2); 
\draw (3,0.4) -- (2,0) -- (3,-0.4); \draw (3,0.8) -- (2,0) -- (3,-0.8); 
\draw (-3,0.4) -- (-2,0) -- (-3,-0.4); \draw (-3,0.8) -- (-2,0) -- (-3,-0.8); 
 \draw [very thick] (-3,0) -- (3,0);
 \fcirc{(0,0)}{I}  \fcirc{(-2,0)}{F}  \fcirc{(2,0)}{F}
 \fcirc{(0,1)}{f}  \fcirc{(-1,1)}{f}  \fcirc{(1,1)}{f}  \fcirc{(0,-1)}{f}  \fcirc{(-1,-1)}{f}  \fcirc{(1,-1)}{f} 
\end{tikzpicture}
\]
\begin{examples}
\quad
\begin{enumerate}
\item
If $f:A\to B$ is a map of graded commutative dgas, and we fix a closed element $y\in B^*$, then we have the inner product $I_y:A\to A^*$ given by $(I_y(a))(\wt{a}):=y(f(a)\cdot f(\wt{a}))$.
\item
If $f:A\to B$ is a map of $A_\infty$ algebras and we have an $A_\infty$ inner product $I:B\to B^*$ on $B$, then the induced morphism of $A_\infty$ modules $F:A\to B$ over $f$ from Example \ref{EXA:Morph-of-Ainfty-modules}\eqref{ITEM:f-gives-F}, yields and $A_\infty$ inner product $F^\sharp(I):A\to A^*$ on $A$.
\end{enumerate}
\end{examples}

\subsection{Massey products for $A_\infty$ inner products}

We now give an appropriate notion of triangular system, staircase product and defining system for $A_\infty$ inner products. Let $A$ be an $A_\infty$ algebra and $M$ an $A_\infty$ module with an $A_\infty$ inner product $I:M\to M^*$.

\begin{definition}
Fix two non-negative integers $k,\ell\geq 0$.
\begin{enumerate}
\item
A \emph{cyclic triangular system} for the $k+\ell+2$ elements
\begin{equation}\label{EQU:a11-mk+l+2k+l+2}
(\underbrace{a_{1,1},\dots, a_{k,k}}_{k\text{ many}},m_{k+1,k+1},\underbrace{a_{k+2,k+2},\dots,a_{k+\ell+1,k+\ell+1}}_{\ell\text{ many}}, m_{k+\ell+2,k+\ell+2}),
\end{equation}
 where $m_{j,j}\in M$ and $x_{j,j}\in A$, consists of two triangular systems
\begin{align*}
&\xbb \text{ for }(\underbrace{a_{1,1},\dots, a_{k,k}}_{k\text{ many}},m_{k+1,k+1},\underbrace{a_{k+2,k+2},\dots,a_{k+\ell+1,k+\ell+1}}_{\ell\text{ many}})
\\
\text{and }&\xbp \text{ for }(\underbrace{a_{k+2,k+2},\dots,a_{k+\ell+1,k+\ell+1}}_{\ell\text{ many}},m_{k+\ell+2,k+\ell+2},\underbrace{a_{1,1},\dots, a_{k,k}}_{k\text{ many}})
\end{align*}
both of which \emph{include} the top left corner points, and which coincide on their intersection triangular points.

It is convenient to index the elements in $\xbp$ in a cyclic way, so that, for example, in the upper left corner of $m_{k+\ell+2,k+\ell+2}$ and $a_{1,1}$, we have an element indexed by $m_{k+\ell+2,1}$. We picture the defining systems by placing $\xbb$ below $\xbp$, where their common $a_{i,j}$ elements overlap. For example, a cyclic triangular system for $(a_{1,1},a_{2,2},m_{3,3},a_{4,4},a_{5,5},a_{6,6},m_{7,7})$ can be written as follows:
\begin{equation}\label{EQU:1-7-triangular-system}
\begin{matrix}
&&& \ceco m_{4,2} & \ceco m_{5,2} &  \ceco m_{6,2} & \ceco m_{7,2} & a_{1,2} & a_{2,2} \\
&&& \ceco m_{4,1} & \ceco m_{5,1} &  \ceco m_{6,1} & \ceco m_{7,1} & a_{1,1} \\
&&&\ceco m_{4,7} &\ceco m_{5,7} &  \ceco m_{6,7}  & \ceco m_{7,7}  \\ 
 \ceco m_{1,6} &  \ceco m_{2,6} & \ceco  m_{3,6} &  a_{4,6} & a_{5,6} & a_{6,6}   \\
 \ceco m_{1,5} &  \ceco m_{2,6} & \ceco m_{3,5} &  a_{3,5} &  a_{5,5}&&& \xbp \\
 \ceco m_{1,4} & \ceco m_{2,4} & \ceco m_{3,4} & a_{4,4} \\
 \ceco m_{1,3} & \ceco m_{2,3} &  \ceco m_{3,3}\\
 a_{1,2}  &  a_{2,2} \\
 a_{1,1}   &&& \xbb
\end{matrix}
\end{equation}
Here the lower $a_{i,j}$ of $\xbb$ are also identified with the upper $a_{i,j}$ of $\xbp$, so that this could be thought of as a cyclic arrangement on a cylinder, where the diagonal elements $x_{j,j}$ are placed at the base of the cylinder.

\item
A cyclic triangular system is called a \emph{cyclic defining system} if it satisfies
\begin{equation}\label{EQU:dx=-Sx,dx'=-Sx'}
d(x_{i,j})=- \St\xbb{i,j}\quad\text{and}\quad d(x'_{i,j})=- \St\xbp{i,j}\quad\text{for all $i,j$},
\end{equation}
where $\mc S^{i,j}$ is the staircase product from Section \ref{SEC:Massey-modules}, which uses both the $A_\infty$ algebra structure and the $A_\infty$ module structure. We note that the above equations have to be true for all $i,j$, including the top left corner points of the triangular systems.

\item
Each cyclic defining system $\xbb,\xbp$ associated to elements $(a_{1,1},\dots, m_{k+\ell+2,k+\ell+2})$ as in \eqref{EQU:a11-mk+l+2k+l+2} determines an element in $R$ via the \emph{staircase inner product}, which is defined to be the element $\mc I =\StIP{\xbb,\xbp} \in R$ given by
\begin{multline}\label{EQU:F(x**,x'**)}
\mc I=
\sum_{\tiny\begin{matrix} r\geq 0\\ 1\leq i_0\leq i_1< \dots<i_r<k+1\end{matrix}} \quad
\sum_{\tiny\begin{matrix} s\geq 0\\ k+1\leq j_0< j_1<\dots<j_s<k+\ell+2\end{matrix}} \quad
\\
(-1)^\varepsilon \cdot \ul{I_{r,s}}\Big(
\underbrace{\ul{x_{i_0,i_1}},\dots, \ul{x_{i_{r-1}+1,i_r}}}_{r \text{ elements in }\ul A},
\underbrace{\ul{x_{i_r+1,j_0}}}_{\in \ul{M}},
\underbrace{\ul{x_{j_0+1,j_1}},\dots, \ul{x_{j_{s-1}+1,j_s}}}_{s \text{ elements in }\ul A}
\Big)\big(\underbrace{\ul{x'_{j_{s}+1,i_0-1}}}_{\in \ul{M}}\big),
\end{multline}
where $(-1)^\varepsilon$ is the Koszul sign coming from moving $\ul{a_{1,1}},\dots, \ul{a_{i_0-1,i_0-1}}$ to the end. 
If we wish to emphasize the dependence on the $A_\infty$ inner product $I$ we'll write $\mc I_I$.

For $r=0$ the first sum is understood to be vacuous, while for $s=0$ the second sum is understood to be vacuous, and in the last element, if $i_0=1$, then $i_0-1$ is understood as $k+\ell+2$. 

The above sum can be thought of as summing over ``cyclic staircases'', and all calculations could as well be done in a graphical way on a cylinder.
\[
\begin{tikzpicture}
\draw (0,0) ellipse (6 and .5);
\foreach \n in {1,...,11}{
\fill ({6*cos(18*\n)},{-.5*sin(18*\n)}) circle (.05);
\draw ({6.45*cos(18*\n)},{-.8*sin(18*\n)}) node {\scalebox{0.8}{$a_{\n,\n}$}};}
\fill ({6*cos(18*12)},{-.5*sin(18*12)}) circle (.05);
\draw ({6.45*cos(18*12)},{-.8*sin(18*12)}) node {\scalebox{0.8}{$m_{12,12}$}};
\foreach \n in {13,...,19}{
\fill ({6*cos(18*\n)},{-.5*sin(18*\n)}) circle (.05);
\draw ({6.45*cos(18*\n)},{-.8*sin(18*\n)}) node {\scalebox{0.8}{$a_{\n,\n}$}};}
\fill ({6*cos(18*20)},{-.5*sin(18*20)}) circle (.05);
\draw ({6.45*cos(18*20)},{-.8*sin(18*20)}) node {\scalebox{0.8}{$m_{20,20}$}};

\draw [thick] ({6*cos(18*3)},{-.5*sin(18*3)})  -- ({6*cos(18*20)},{2-.5*sin(18*20)}) -- ({6*cos(18*18)},{-.5*sin(18*18)});  \draw ({.6+6*cos(18*20)},{2.1-.5*sin(18*20)}) node {\scalebox{0.8}{$m_{18,3}$}};
\fill ({6.1*cos(18*20)},{2.1-.5*sin(18*20)})  circle (.09);

\draw [thick] ({6*cos(18*4)},{-.5*sin(18*4)})  -- ({6*cos(18*6)},{2.7-.5*sin(18*6)}) -- ({6*cos(18*8)},{-.5*sin(18*8)});  \draw ({.5+6*cos(18*6)},{2.7-.5*sin(18*6)}) node {\scalebox{0.8}{$a_{4,8}$}};
\fill ({6*cos(18*6)},{2.7-.5*sin(18*6)})  circle (.09);

\draw [thick] ({6*cos(18*9)},{-.5*sin(18*9)})  -- ({6*cos(18*10)},{1.5-.5*sin(18*10)}) -- ({6*cos(18*13)},{-.5*sin(18*13)});  \draw ({.6+6*cos(18*10)},{1.5-.5*sin(18*10)})  node {\scalebox{0.8}{$m_{9,13}$}};
\fill ({6*cos(18*10)},{1.5-.5*sin(18*10)})  circle (.09);

\fill ({6*cos(18*14)},{-.5*sin(18*14)})  circle (.09);

\draw [thick] ({6*cos(18*15)},{-.5*sin(18*15)})  -- ({6*cos(18*16)},{1-.5*sin(18*16)}) -- ({6*cos(18*17)},{-.5*sin(18*17)});  \draw ({.6+6*cos(18*16)},{1-.5*sin(18*16)})  node {\scalebox{0.8}{$a_{15,17}$}};
\fill  ({6*cos(18*16)},{1-.5*sin(18*16)})  circle (.09);

\end{tikzpicture}
\]
The diagram above represents the term $I_{1, 2}(a_{4,8}, m_{9,13}, a_{14, 14}, a_{15, 17})(m_{18,3})$ in a staircase inner product associated to  elements $(a_{1,1}, \dots, a_{11,11}, m_{12, 12}, a_{13, 13}, \dots, a_{19,19}, m_{20,20})$.
For the benefit of clarity, we will stick to the algebraic expressions.
\end{enumerate}
\end{definition}
\begin{example}
For example, when $k=\ell=1$, the staircase inner product for the elements $(a_{1,1},m_{2,2},a_{3,3},m_{4,4})$ with the cyclic defining system
\begin{equation}\label{EQU:11-staircaseIP}
\begin{matrix}
&  & \ceco m_{3,1} & \ceco m_{4,1} &  a_{1,1}\\
&  & \ceco m_{3,4} &\ceco m_{4,4} \\
 \ceco m_{1,3} & \ceco m_{2,3} &   a_{3,3}&&\xbp\\
 \ceco m_{1,2}  & \ceco m_{2,2} \\
 a_{1,1}  && \xbb
\end{matrix},
\end{equation}
is given by the following sum
\begin{align}\label{EQU:staircase-product-for-11}
\mc I =&  
\ul{I_{0,0}}(\ul{m_{1,3}})(\ul{m_{4,4}})+(-1)^\varepsilon \ul{I_{0,0}}(\ul{m_{2,2}})(\ul{m_{3,1}})
+\ul{I_{0,0}}(\ul{m_{1,2}})(\ul{m_{3,4}})+(-1)^\varepsilon \ul{I_{0,0}}(\ul{m_{2,3}})(\ul{m_{4,1}})
\\ \nonumber
&+\ul{I_{1,0}}(\ul{a_{1,1}},\ul{m_{2,2}})(\ul{m_{3,4}})+\ul{I_{1,0}}(\ul{a_{1,1}},\ul{m_{2,3}})(\ul{m_{4,4}})
\\ \nonumber
&+\ul{I_{0,1}}(\ul{m_{1,2}},\ul{a_{3,3}})(\ul{m_{4,4}})+(-1)^\varepsilon \ul{I_{0,1}}(\ul{m_{2,2}},\ul{a_{3,3}})(\ul{m_{4,1}})
\\ \nonumber
&+\ul{I_{1,1}}(\ul{a_{1,1}},\ul{m_{2,2}},\ul{a_{3,3}})(\ul{m_{4,4}})
\end{align}
 where $(-1)^\varepsilon=(-1)^{|\ul{a_{1,1}}|\cdot (|\ul{m_{2,2}}|+|\ul{a_{3,3}}|+|\ul{m_{4,4}}|)}$.
\end{example}
We then define the Massey inner product analogously to the Massey product in Section \ref{SEC:Massey-A-infty}.
\begin{definition}
For $(x_{1,1},\dots,x_{k+\ell+2,k+\ell+2})\in A^{\times k}\times M\times A^{\times \ell}\times M$, $k+\ell \geq 1$, we define the \emph{$(k,\ell)^{th}$ Massey inner product} (or \emph{$(k,\ell)^{th}$ Massey inner product set}) to be the subset of the ground ring $R$ given by 
\begin{equation*} 
\la x_{1,1},\dots,x_{k+\ell+2,x+\ell+2} \ra_I 
 :=\Big\{ \StIP{\xbb,\xbp} \Big| \xbb, \xbp \text{ is a cyclic defining system} \Big\} \subseteq R.
\end{equation*}
We say that the Massey inner product is trivial, if $\la x_{1,1},\dots,x_{k+\ell+2,k+\ell+2} \ra_I$ is either the empty set or contains $0\in R$.
\end{definition}

\begin{lemma} \label{lem;MIPwelldef}
Just as for the Massey product, this induces a well-defined map from the cohomologies of $A$ and $M$ respectively to the power set of $R$:
\begin{align}\label{EQU:Massey-IP-DEF}
H(A)\times\dots \times H(A)\times H(M)\times H(A)\times \dots \times H(A)\times H(M)\to \mathcal P(R)
\\ \nonumber
\la [x_{1,1}],\dots,[x_{k+\ell+2,k+\ell+2}]\ra_I:= \la x_{1,1},\dots,x_{k+\ell+2,x+\ell+2} \ra_I
\end{align}
\end{lemma}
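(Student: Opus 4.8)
The plan is to reproduce, in the cyclic setting, the two-step argument that gave well-definedness for ordinary Massey products, namely Lemma \ref{LEM:Zs} followed by the passage to cohomology classes in Theorem \ref{thm;MPsetmap}. The one genuinely new ingredient is that the two triangular systems $\xbb$ and $\xbp$ are now glued through the inner product $I$ along a ``seam'', and that the output $\mc I$ lives in the ground ring $R$, so it must be preserved \emph{exactly} under coboundary modifications, not merely up to a coboundary as in Lemma \ref{LEM:Zs}.

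First I would establish a cyclic analogue of Lemma \ref{LEM:Zs}: given a cyclic defining system $\{\xbb,\xbp\}$ for the tuple \eqref{EQU:a11-mk+l+2k+l+2}, a distinguished position $(k_0,\ell_0)$, and an element $z$ of the appropriate type ($A$ or $M$) with $|z|=|x_{k_0,\ell_0}|-1$, the replacement $x_{k_0,\ell_0}\mapsto x_{k_0,\ell_0}+dz$ (with $\wt{x_{i,j}}=x_{i,j}$ whenever $k_0<i$ or $j<\ell_0$, as in Lemma \ref{LEM:Zs}) extends to a cyclic defining system $\{\wt{\xbb},\wt{\xbp}\}$ with $\StIP{\wt{\xbb},\wt{\xbp}}=\StIP{\xbb,\xbp}\in R$. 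The extension is built exactly as in \eqref{EQU:xtilde=x+Z}--\eqref{EQU:Def-of-Z-explicit}, setting $\wt{x_{p,q}}=x_{p,q}+Z_{p,q}$ with $Z_{p,q}$ the sum over staircases through $(k_0,\ell_0)$ having $x_{k_0,\ell_0}$ replaced by $z$, carried out inside whichever of the two triangular systems contains that position. When $x_{k_0,\ell_0}$ is a shared algebra entry at the base of the cylinder, the two prescriptions agree on the overlap: every staircase contributing to $Z_{p,q}$ for a shared algebra $(p,q)$ decomposes an algebra interval into sub-intervals, each of which is again algebra and hence shared, so the $Z_{p,q}$ computed in $\xbb$ and in $\xbp$ use the same entries and structure maps. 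That $\{\wt{\xbb},\wt{\xbp}\}$ again satisfies the cyclic defining-system equations \eqref{EQU:dx=-Sx,dx'=-Sx'} is then precisely Lemma \ref{LEM:Zs} and its module counterpart from Section \ref{SEC:Massey-modules}, applied to $\xbb$ and to $\xbp$ separately.

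The new content is the exact equality $\StIP{\wt{\xbb},\wt{\xbp}}=\StIP{\xbb,\xbp}$. I would obtain it by substituting $\wt{x}=x+Z$ and $\wt{x_{k_0,\ell_0}}=x_{k_0,\ell_0}+dz$ into the staircase inner product \eqref{EQU:F(x**,x'**)} and expanding, in the manner of the pictorial computation \eqref{EQU:S(xtilde)=...}. This produces $\StIP{\xbb,\xbp}$ together with correction terms, each carrying either a $dz$ or a $Z$. These cancel by combining the defining-system equations \eqref{EQU:dx=-Sx,dx'=-Sx'} for $\xbb$ and $\xbp$ with the closedness of the inner product, $\delta(I)=I\circ\be-(-1)^{|I|}\be'\circ I=0$ from \eqref{EQU:F=inner-product}: the relation $I\circ\be=(-1)^{|I|}\be'\circ I$ lets a differential landing between the two $M$-inputs of $I$ be traded for module- or algebra-structure insertions on either side, which are then absorbed by the defining-system equations. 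This is the inner-product version of the cancellations driven by $\mu^2=0$ in Lemma \ref{LEM:Zs} and by $f\circ\mu=\nu\circ f$ in Proposition \ref{prop;MPnatural}. I expect the main obstacle to be the bookkeeping at the seam: tracking the cyclic wrap-around factor $\ul{x'_{j_s+1,\,i_0-1}}$ and the Koszul sign $(-1)^\varepsilon$ arising from moving $\ul{a_{1,1}},\dots,\ul{a_{i_0-1,i_0-1}}$ to the end, so that the two sides of $I$ pair up with matching signs.

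Finally I would deduce the lemma. Specializing the cyclic analogue to a diagonal modification $x_{\ell_0,\ell_0}\mapsto x_{\ell_0,\ell_0}+dz$ shows that every value in $\la x_{1,1},\dots,x_{k+\ell+2,k+\ell+2}\ra_I$ is realized by a cyclic defining system whose $\ell_0$-th diagonal entry is $x_{\ell_0,\ell_0}+dz$, giving one inclusion of Massey inner product sets; applying the analogue once more with $z$ replaced by $-z$ yields the reverse inclusion, so the two sets coincide. Since $[x_{\ell_0,\ell_0}+dz]=[x_{\ell_0,\ell_0}]$ and $\ell_0$ is arbitrary, iterating shows that $\la x_{1,1},\dots,x_{k+\ell+2,k+\ell+2}\ra_I$ depends only on the cohomology classes $[x_{1,1}],\dots,[x_{k+\ell+2,k+\ell+2}]$, which is exactly the assertion that the map \eqref{EQU:Massey-IP-DEF} is well defined.
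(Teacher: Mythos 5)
Your proposal is correct and follows essentially the same route as the paper's proof: extend Lemma \ref{LEM:Zs} to the cyclic setting by adding the correction terms $Z_{i,j}$ of \eqref{EQU:Def-of-Z-explicit} to both triangular systems, then show the staircase inner product is preserved \emph{exactly}, with the correction terms (each carrying one $dz$ or one $Z_{i,j}$) cancelling via the defining-system equations \eqref{EQU:dx=-Sx,dx'=-Sx'} combined with the $A_\infty$-morphism condition \eqref{EQU:F=inner-product} for $I$. The paper organizes this last cancellation by recognizing the entire correction sum as the left-hand side of \eqref{EQU:F=inner-product} applied to the cyclic staircases of the auxiliary system with $z$ inserted at the modified spot, which is the same computation you describe by direct expansion.
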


\begin{proof}[Proof of \eqref{EQU:Massey-IP-DEF}]
If we add $dz$ to a term $x_{p,q}$ (or $x'_{p,q}$) in a cyclic defining system $\xbb, \xbp$ then by Lemma \ref{LEM:Zs} we can add terms $Z_{i,j}$ to $\xbb, \xbp$ to get a cyclic new defining system $\xbbt, \xbpt$ (c.f. Equations \eqref{EQU:xtilde=x+Z} and \eqref{EQU:dxtilde=-sxtilde}). Note that $\xbb, \xbp$ and $\xbbt, \xbpt$ only differ in the cone to the left and up of the given $(p,q)$-spot (--for example adding $dz$ to $a_{3,3}$ in \eqref{EQU:11-staircaseIP} will add terms $Z_{i,j}$ to $m_{1,3}, m_{2,3}, m_{3,4}, m_{3,1}$). We will show that 
\begin{equation}\label{EQU:F(x,x')=F(wt-x,wt-x')}
\StIP{\xbb,\xbp}=\StIP{\xbbt,\xbpt}
\end{equation}
This shows that the Massey inner product is well-defined on cohomology.

The argument is similar to the one in \eqref{EQU:S(xtilde)=...}. First, we observe that $\StIP{\xbbt,\xbpt}$ is given by a sum over all cyclic staircases from \eqref{EQU:F(x**,x'**)}, which uses either elements purely from $\StIP{\xbb,\xbp}$ or include at most one of the terms $dz$ or $Z_{i,j}$. We need to check that this sum of terms with $dz$ or $Z_{i,j}$ is zero. This follows, because this sum is precisely the left-hand side of the condition \eqref{EQU:F=inner-product} of the morphism $I$ of $A_\infty$ modules applied to all cyclic staircases of a new system, which is given by adding $z$ at the $(p,q)$-spot to $\xbb,\xbp$. Indeed, applying \eqref{EQU:F=inner-product} to cyclic staircases of ``$z$ added to $\xbb,\xbp$ at the $(p,q)$-spot'' gives zero whenever $z$ is not inside a product $\mu_r$ or $\beta_{r,s}$ because both $\xbb$ and $\xbp$ are defining systems, and so, for example, a $\mu_1(x_{i,j})$ will cancel with the $\mu_r(\dots)$ over ``lower staircases'' spanning over $i,j$:
\[
\sum \pm I_{i,j}\big(\dots, z, \dots, \mu_r(\dots),\dots\big)(\dots)=0,\quad \text{etc.}
\]
The only remaining terms are where $z$ is inside a $\mu_r$ or $\beta_{r,s}$ (compare again with \eqref{EQU:S(xtilde)=...}):
\[
\sum \pm I_{i,j}\big(\dots, \mu_r(\dots,z,\dots),\dots\big)(\dots)=0,\quad \text{etc.}
\]
This is precisely the sum with one $\mu_1(z)$ or a $\mu_r(\dots, z,\dots)$ which is a $Z_{u,v}$. As argued above, this is also equal to the difference of $\StIP{\xbbt,\xbpt}-\StIP{\xbb,\xbp}$. By \eqref{EQU:F=inner-product} the sum over all these vanishes, and we get the claim.
\end{proof}

The next definition and lemma show that an $A_\infty$ inner product that is \emph{exact}, in an appropriate complex of maps for  given $A_\infty$ modules, always has trivial Massey inner products. The definition will also be used below to define morphisms of $A_\infty$ algebras with $A_\infty$ inner products, c.f. Definition \ref{DEF:morphism-of-Ainf+AIP}.

\begin{definition}\label{DEF:exact-HIP}
An $A_\infty$ algebra with $A_\infty$ inner product $I:M\to M^*$, for 
 $A_\infty$ module $M$ over $A$, is called \emph{exact} if $I = \delta(G)$, for some family of maps 
 $G=\{G_{k,\ell} \} :A^{\ot k}\ot M\ot A^{\ot \ell} \to M^*$, c.f. Definition \ref{DEF:exact-Inf-Morph}.
\end{definition}

\begin{lemma}\label{LEM:exactF=>0MIP}
Let $A$ be an $A_\infty$ algebra and let $M$ be an $A_\infty$ module over $A$. Then any Massey inner product associated to an \emph{exact} $A_\infty$ inner product $I:M\to M^*$ is either the empty set or the one-element set $\{0\}$.
\end{lemma}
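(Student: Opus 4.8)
The plan is to exploit that the staircase inner product $\StIP{\xbb,\xbp}$ of \eqref{EQU:F(x**,x'**)} is linear in the inner product $I$, so that for $I=\delta(G)$ it equals $\mc I_{\delta(G)}$, and then to run the telescoping mechanism already developed in Proposition \ref{PROP:x**-closed} and reused in Lemma \ref{lem;MIPwelldef}. Concretely, I fix an arbitrary cyclic defining system $\xbb,\xbp$; if none exists the Massey inner product set is empty and there is nothing to prove, so assume one does. It then suffices to show $\mc I_{\delta(G)}=0$, since this forces every element of the set to be $0$ and hence the set to equal $\{0\}$.

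First I would substitute $\delta(G)=G\circ\be-(-1)^{|G|}\be'\circ G$ from Definition \ref{DEF:exact-Inf-Morph} into \eqref{EQU:F(x**,x'**)}. Each term $\ul{I_{r,s}}(\dots)(\dots)$ of the cyclic staircase sum becomes $\ul{(\delta G)_{r,s}}(\dots)(\dots)$, and expanding $(\delta G)_{r,s}$ produces terms carrying exactly one inserted operation. The summand $G\circ\be$ inserts, among the top-level blocks fed to $G$, either a product $\mu_\ell$ acting on consecutive $A$-blocks or a module action $\be_{k,\ell}$ absorbing the distinguished $M$-block. The summand $\be'\circ G$ applies the dual action to the $M^*$-valued output of $G$; using the definition \eqref{EQU:dual-module-structure} of the dual module structure, such a term is rewritten as $G(\dots)$ evaluated on $\be(\dots)$ applied to the $x'$-block, i.e.\ as a $\be$ inserted on the $\xbp$ side of the cylinder. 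Thus $\mc I_{\delta(G)}$ is a sum over cyclic staircases of $G$, each equipped with a single inserted $\mu$ or $\be$.

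Next I would group these terms by the inserted operation. When it is a $\mu_1=d$ (equivalently $\be_{0,0}=d$) acting on a single block $x_{a,b}$ on the $\xbb$-side, I rewrite $d(x_{a,b})=-\St\xbb{a,b}$ using the cyclic defining-system equation \eqref{EQU:dx=-Sx,dx'=-Sx'}; likewise $d(x'_{a,b})=-\St\xbp{a,b}$ on the $\xbp$-side, which is exactly why both halves of \eqref{EQU:dx=-Sx,dx'=-Sx'} are needed. Each such rewritten term becomes a longer staircase identical, up to sign, to a term in which the inserted $\mu_{\ge2}$ or $\be$ instead straddles two adjacent blocks, and these cancel in pairs precisely as in the computation of Proposition \ref{PROP:x**-closed}. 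This is the same telescoping already carried out for the well-definedness identity \eqref{EQU:F(x,x')=F(wt-x,wt-x')} in Lemma \ref{lem;MIPwelldef}; the only structural difference is that here the \emph{entire} expression is a single $\delta(G)$-staircase rather than a difference of two defining systems, so no appeal to $\delta(I)=0$ is made and the cancellation is internal. After all pairs cancel no term survives, giving $\mc I_{\delta(G)}=0$.

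The main obstacle I anticipate is the sign- and seam-bookkeeping: one must verify that the Koszul signs $(-1)^\varepsilon$ of \eqref{EQU:F(x**,x'**)}, the sign $(-1)^{|G|}$, and the extra sign in the dual structure \eqref{EQU:dual-module-structure} conspire so that the $G\circ\be$ insertions on the $\xbb$-side and the $\be'\circ G$ insertions on the $\xbp$-side cancel completely across the cyclic arrangement, including at the seam where $\xbb$ and $\xbp$ are glued on the cylinder and where an inserted $\be$ may straddle one of the distinguished module blocks. The signs are forced by the same compatibility that makes $\delta^2=0$ and that makes $\be'^2=0$ follow from $\be^2=0$ (the content of the parenthetical remark after \eqref{EQU:dual-module-structure} and of \eqref{EQU:F=inner-product}); granting this compatibility, the cancellation is complete and the lemma follows.
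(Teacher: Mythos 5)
Your proposal is correct and takes essentially the same route as the paper's proof: plug an arbitrary cyclic defining system into the staircase inner product for $I=\delta(G)$, observe that each resulting term carries exactly one inserted $\mu_r$ or $\be_{r,s}$, and cancel all terms in pairs using the conditions $d(x_{i,j})=-\St\xbb{i,j}$ and $d(x'_{i,j})=-\St\xbp{i,j}$ from \eqref{EQU:dx=-Sx,dx'=-Sx'}. Your write-up merely makes explicit (via \eqref{EQU:dual-module-structure} and the telescoping of Proposition \ref{PROP:x**-closed}) the cancellation that the paper's shorter argument invokes in one step.
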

\begin{proof}
Assuming that a Massey inner product is not empty, let $\xbb, \xbp$ be a cyclic defining system. We plug this into the expression \eqref{EQU:F(x**,x'**)} for the Massey inner product for $I=\delta(G)$, which means we take $\delta(G)$ applied to all ``cyclic staircases''. Now $\delta(G)$ applies exactly one differential $\mu_r$ (with $r\geq 1$) or $\be_{r,s}$ (with $r,s\geq 0$) to these $x_{i,j}$ (c.f. \eqref{EQU:F=inner-product}), which all cancel due to the conditions $d(x_{i,j})+\St\xbb{i,j}=0$ and $d(x'_{i,j})+ \St\xbp{i,j}=0$ from \eqref{EQU:dx=-Sx,dx'=-Sx'} of the cyclic defining system $\xbb, \xbp$. Thus, the staircase inner product is $0$.
\end{proof}

\subsection{Morphisms of $A_\infty$ algebras with $A_\infty$ modules and $A_\infty$ inner products}

We now establish functoriality properties of Massey inner product sets.

\begin{definition}\label{DEF:morphism-of-Ainf+AIP}
For two $A_\infty$ algebras with $A_\infty$ modules and $A_\infty$ inner products $(A,M,I:M\to M^*)$ and $(B,N,J:N\to N^*)$, a \emph{morphism} $(f,F):(A,M,I)\to (B,N,J)$ consists of a morphism of $A_\infty$ algebras $f:A\to B$ and a morphism of $A_\infty$ module $F:M\to N$ over $f$, such that the difference $F^\sharp(J)-I$ is an exact $A_\infty$ inner product (see Definition \ref{DEF:exact-HIP}).
\end{definition}

We now check naturality of Massey inner products under these morphism.
\begin{proposition}\label{PROP:MIP-under-morphism}
Let $(f,F):(A,M,I)\to (B,N,J)$ be a morphism of $A_\infty$ algebras with $A_\infty$ modules and $A_\infty$ inner products. Then, we get an inclusion of Massey inner products; more precisely, for $(a_{1,1},\dots, m_{k+1,k+1},a_{k+2,k+2},\dots, m_{k+\ell+2,k+\ell+2})\in A^{\times k}\times M\times A^{\times \ell}\times M$, we have the inclusion
\begin{multline}\label{EQU:MIP-morph-inclusion}
 \la a_{1,1},\dots, m_{k+1,k+1},a_{k+2,k+2},\dots, m_{k+\ell+2,k+\ell+2}\ra_{I}
 \\ \subseteq 
 \la f_1(a_{1,1}),\dots, F_{0,0}(m_{k+1,k+1}),f_1(a_{k+2,k+2}),\dots, F_{0,0}(m_{k+\ell+2,k+\ell+2}) \ra_{J}\quad\subseteq R
\end{multline}
\end{proposition}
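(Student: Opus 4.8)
The plan is to take an arbitrary element of the left-hand side of \eqref{EQU:MIP-morph-inclusion}, realize it as the value of the staircase inner product of $I$ on some cyclic defining system $\xbb,\xbp$ associated to the elements $(a_{1,1},\dots,m_{k+\ell+2,k+\ell+2})$, and manufacture from it a cyclic defining system for the image elements $(f_1(a_{1,1}),\dots,F_{0,0}(m_{k+\ell+2,k+\ell+2}))$ whose staircase inner product with respect to $J$ equals the original value. Exhibiting the value this way shows it lies in the right-hand Massey inner product set. Throughout I write $\StIP{\xbb,\xbp}_{K}$ for the expression \eqref{EQU:F(x**,x'**)} evaluated with an inner product $K$ in place of $I$; note this is linear in $K$.

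First I would push the cyclic defining system forward along $(f,F)$. Using the module pushforward (Definition \ref{DEF:f(x**)} in the module form of Section \ref{SEC:Massey-modules}, applying $f_r$ on staircase slots whose output lies in $A$ and $F_{r,s}$ on those whose output lies in $M$), set $\ybb:=(f,F)(\xbb)$ and $\ybp:=(f,F)(\xbp)$. Since the cyclic defining system condition \eqref{EQU:dx=-Sx,dx'=-Sx'} involves only the $A_\infty$ algebra and module structures and not the inner product, the module analogue of Lemma \ref{LEM:f(defining-system)}(1) shows that each of $\ybb$ and $\ybp$ is again a defining system. Their overlapping entries are the purely algebraic staircases, computed by the same formula using $f$ alone, so $\ybb$ and $\ybp$ agree on their intersection and include the top-left corner points. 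Hence $\ybb,\ybp$ is a genuine cyclic defining system for the image elements, and on the diagonal it returns exactly $f_1(a_{i,i})$ and $F_{0,0}(m_{j,j})$.

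The heart of the argument is the identity
\[
\StIP{\ybb,\ybp}_{J}=\StIP{\xbb,\xbp}_{F^\sharp(J)}.
\]
To prove it I would expand the defining formula \eqref{EQU:F(x**,x'**)} for $J$ on the cyclic $y$-staircases: each algebraic entry $\ul{y}$ is itself a sum of $f$-staircases of the $\ul{x}$'s, the module entry occupying the input slot of $J$ is a sum of $F$-staircases, and the dual entry $\ul{y'}$ is a sum of $F$-staircases. Collating these expansions and reconciling the Koszul signs regroups the total sum precisely into the definition \eqref{EQU:Fsharp(G)} of the pulled-back inner product $F^\sharp(J)=F^*\circ J_{/A}\circ F$ applied to the cyclic $x$-staircases. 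This is the cyclic counterpart of the reindexing in the proof of Lemma \ref{LEM:f(defining-system)}(2) and of the nested-staircase pictures there. It is conceptually routine but is where all of the bookkeeping resides, so I expect this reorganization --- keeping track of which copies of $f$ and $F$ attach to which slot of $J$, together with the signs --- to be the main obstacle.

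Finally I would invoke the defining hypothesis that $(f,F)$ is a morphism, namely that $F^\sharp(J)-I=\delta(G)$ is an exact $A_\infty$ inner product (Definition \ref{DEF:exact-HIP}). By linearity of \eqref{EQU:F(x**,x'**)} in the inner product,
\[
\StIP{\xbb,\xbp}_{F^\sharp(J)}=\StIP{\xbb,\xbp}_{I}+\StIP{\xbb,\xbp}_{\delta(G)},
\]
and the last term vanishes by the proof of Lemma \ref{LEM:exactF=>0MIP}, which shows that the staircase inner product of an exact inner product on any cyclic defining system is zero. Chaining the three steps gives $\StIP{\ybb,\ybp}_{J}=\StIP{\xbb,\xbp}_{I}$, so the chosen value is attained by the cyclic defining system $\ybb,\ybp$ for the image elements and therefore belongs to the Massey inner product set on the right of \eqref{EQU:MIP-morph-inclusion}. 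This proves the claimed inclusion.
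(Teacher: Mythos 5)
Your proposal is correct and follows essentially the same route as the paper's proof: pushing the cyclic defining system forward via the module analogue of Lemma \ref{LEM:f(defining-system)}, establishing the key identity $\mc I_{F^\sharp(J)}(\xbb,\xbp)=\mc I_{J}(\ybb,\ybp)$ by regrouping the expanded staircases, and invoking exactness of $F^\sharp(J)-I$ together with Lemma \ref{LEM:exactF=>0MIP}. The only difference is organizational --- the paper first proves $\la\dots\ra_{I}=\la\dots\ra_{F^\sharp(J)}$ as sets and then the inclusion $\la\dots\ra_{F^\sharp(J)}\subseteq\la\dots\ra_{J}$, whereas you chain the same three facts at the level of a single element --- which is immaterial.
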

\begin{proof}
First, we note that we get the same Massey inner product set for $I$ and $F^\sharp(J)$. This follows since a cyclic defining system $\xbb,\xbp$ on $M$ is independent of the given $A_\infty$ inner product, so that any cyclic defining system can be applied to any $A_\infty$ inner product, showing that their Massey inner products are either all empty or are all non-empty. Now, the difference in the staircase inner products \eqref{EQU:F(x**,x'**)} for $I$ and $F^\sharp(J)$ is the staircase inner product for an exact $A_\infty$ inner product, which by Lemma \ref{LEM:exactF=>0MIP} is zero. Thus, any Massey inner product sets for $I$ and $F^\sharp(J)$ are equal, $\la\dots\ra_{I}=\la\dots\ra_{F^\sharp(J)}$.

Next, we show that there are inclusions of Massey inner product sets $\la\dots\ra_{F^\sharp(J)}\subseteq \la\dots\ra_{J}$. If $\xbb, \xbp$ is a cyclic defining system for $(a_{1,1},\dots, m_{k+\ell+2,k+\ell+2})\in A^{\times k}\times M\times A^{\times \ell}\times M$, then, Lemma \ref{LEM:f(defining-system)} shows that there is a cyclic defining system $\ybb, \ybp$ given by Equation \eqref{EQU:y**=f(x**)}, whose diagonal elements are $(f_1(a_{1,1}),\dots, F_{0,0}(m_{k+\ell+2,k+\ell+2}))\in  B^{\times k}\times N\times B^{\times \ell}\times N$. (Note, that we apply $f_r$ in \eqref{EQU:y**=f(x**)} if all $x_{i,j}$ are in $A$ or else $F_{r,s}$ is one of the $x_{i,j}$ is in $M$.) Now, we claim that $\mc I_{F^\sharp (J)}(\xbb,\xbp)=\mc I_{J}(\ybb,\ybp)$,  which follows since the cyclic staircases of $J$ evaluated at the $y_{i,j}$ are precisely the cyclic staircases of $F^\sharp(J)$ evaluated at the $x_{i,j}$, i.e., they are both equal to
\begin{multline*}
\sum (-1)^\varepsilon\cdot\Big(J_{i,j} \big(\ul{f_{r_1}}(\ul{x_{u_0,u_1}},\dots), \dots , \ul{f_{r_i}}(\dots), \ul{F_{r,s}}(\dots), \ul{f_{s_1}}(\dots), \dots , \ul{f_{s_j}}(\dots, \ul{x_{v_{n-1}+1,v_n}})\big)\Big)
\\
\Big(\ul{F_{\ell-q,k-p}} (\ul{x'_{v_{n+1},w_1}}, \dots, \ul{x'_{w_m,u_0-1}})\Big)
\end{multline*}
Here, the sum is over all cyclic staircases of the $x_{i,j}$, which, after applying $f_r$ and $F_{r,s}$, give the cyclic staircases of the $y_{i,j}$. Thus, we get that $\mc I_{F^\sharp (J)}(\xbb,\xbp)=\mc I_{J}(\ybb,\ybp)$, which shows that we have the inclusion $\la\dots\ra_{F^\sharp(J)}\subseteq \la\dots\ra_{J}$.
\end{proof}

\begin{definition}\label{DEF:AAIP-equivalence}
Two $A_\infty$ algebras with $A_\infty$ modules and $A_\infty$ inner products $(A,M,I:M\to M^*)$ and $(B,N,J:N\to N^*)$ are called \emph{equivalent}, if there are morphisms $(f,F):(A,M,I)\to (B,N,J)$ and  $(g,G):(B,N,J)\to (A,M,I)$ which are quasi-inverses of each other, i.e., their lowest components are inverses on homology,
$$(f_1)_*=(g_1)_*^{-1}:H(A)\to H(B)\quad \text{ and }\quad (F_{0,0})_*=(G_{0,0})_*^{-1}:H(M)\to H(N)$$
\end{definition}

Proposition \ref{PROP:MIP-under-morphism} implies the following corollary,  which gives a sufficient condition for a bijection of Massey inner product sets.

\begin{corollary}\label{COR:equiv-AAIP-gives-equal-Massey}
If $(A,M,I:M\to M^*)$ and $(B,N,J:N\to N^*)$ are equivalent, then their Massey inner product sets are equal, i.e. for all $(a_{1,1},\dots, m_{k+\ell+2,k+\ell+2})\in A^{\times k}\times M\times A^{\times \ell}\times M$,
\begin{equation}
 \la a_{1,1},\dots, m_{k+\ell+2,k+\ell+2}\ra_{I}= 
 \la f_1(a_{1,1}),\dots, F_{0,0}(m_{k+\ell+2,k+\ell+2}) \ra_{J}
\end{equation}
where $(f,F):(A,M,I)\to (B,N,J)$ is a map inducing the equivalence.
\end{corollary}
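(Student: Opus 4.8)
The plan is to obtain the equality of Massey inner product sets by combining two inclusions coming from Proposition \ref{PROP:MIP-under-morphism}, one for each of the mutually quasi-inverse morphisms $(f,F)$ and $(g,G)$, and then to close the loop using that the Massey inner product depends only on the cohomology classes of its inputs (Lemma \ref{lem;MIPwelldef}) together with the homology condition in the definition of equivalence.

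First I would apply Proposition \ref{PROP:MIP-under-morphism} directly to the morphism $(f,F):(A,M,I)\to (B,N,J)$, which immediately gives
\[
\la a_{1,1},\dots, m_{k+\ell+2,k+\ell+2}\ra_{I} \subseteq \la f_1(a_{1,1}),\dots, F_{0,0}(m_{k+\ell+2,k+\ell+2}) \ra_{J}.
\]
Next I would apply the same proposition to the quasi-inverse morphism $(g,G):(B,N,J)\to (A,M,I)$, taking as input tuple the images $f_1(a_{1,1}),\dots, F_{0,0}(m_{k+\ell+2,k+\ell+2})$ produced in the first step. This yields
\[
\la f_1(a_{1,1}),\dots, F_{0,0}(m_{k+\ell+2,k+\ell+2}) \ra_{J} \subseteq \la g_1 f_1(a_{1,1}),\dots, G_{0,0} F_{0,0}(m_{k+\ell+2,k+\ell+2}) \ra_{I}.
\]

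The crucial step is then to identify the right-hand side of the second inclusion with the original set. By Lemma \ref{lem;MIPwelldef}, the set $\la -,\dots,-\ra_{I}$ is a function of the cohomology classes of its arguments alone. The equivalence hypothesis of Definition \ref{DEF:AAIP-equivalence} gives $(g_1)_*\circ (f_1)_* = \mathrm{id}$ on $H(A)$ and $(G_{0,0})_*\circ (F_{0,0})_* = \mathrm{id}$ on $H(M)$, so $[g_1 f_1(a_{i,i})] = [a_{i,i}]$ and $[G_{0,0} F_{0,0}(m_{j,j})] = [m_{j,j}]$ for every diagonal input. Hence the right-hand side of the second inclusion equals $\la a_{1,1},\dots, m_{k+\ell+2,k+\ell+2}\ra_{I}$, and chaining the two inclusions sandwiches everything,
\[
\la \dots \ra_{I} \subseteq \la f_1(\dots),\dots \ra_{J} \subseteq \la \dots \ra_{I},
\]
forcing all inclusions to be equalities; in particular $\la \dots \ra_{I} = \la f_1(\dots),\dots \ra_{J}$, which is the claim.

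I expect the only genuine subtlety, rather than a real obstacle, to lie in this last step: the representatives $g_1 f_1(a_{i,i})$ obtained by composing the two naturality statements are generally not equal to $a_{i,i}$ on the nose but only cohomologous to them, so the argument depends essentially on the invariance of the Massey inner product set under change of representative. It is worth noting as well that both $(f,F)$ and $(g,G)$ are genuine morphisms of $A_\infty$ algebras with inner products by Definition \ref{DEF:AAIP-equivalence}, so Proposition \ref{PROP:MIP-under-morphism} applies to each, and that the sandwich argument also handles the degenerate case of an empty set correctly, so non-emptiness need not be treated separately.
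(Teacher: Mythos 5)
Your proposal is correct and is precisely the argument the paper intends: the corollary is stated as an immediate consequence of Proposition \ref{PROP:MIP-under-morphism}, applied once to $(f,F)$ and once to the quasi-inverse $(g,G)$, with the loop closed by Lemma \ref{lem;MIPwelldef} and the condition $(g_1)_*=(f_1)_*^{-1}$, $(G_{0,0})_*=(F_{0,0})_*^{-1}$ from Definition \ref{DEF:AAIP-equivalence}. Your attention to the two genuine subtleties --- that $g_1f_1(a_{i,i})$ is only cohomologous to $a_{i,i}$ (so invariance under change of representative is essential) and that the sandwich argument handles the empty-set case automatically --- is exactly right.
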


\begin{definition}\label{DEF:AAIP-formal}
An $A_\infty$ algebra with $A_\infty$ module and $A_\infty$ inner product is called \emph{formal}, if it is equivalent to one whose $A_\infty$ algebra has only non-vanishing product map $\mu_2$, the $A_\infty$ module has only non-vanishing bimodule maps $\be_{1,0}$ and $\be_{0,1}$, and the $A_\infty$ inner product  has only non-vanishing inner product $I_{0,0}$.
\end{definition}

\begin{corollary} \label{cor;formalMIP}
If $(A,M,I)$ is formal, then all Massey inner products are trivial (i.e. they are empty or contain $0$).
\end{corollary}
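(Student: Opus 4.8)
The plan is to reduce to a strict model and then exhibit $0$ as an element of every nonempty Massey inner product set. By Definition \ref{DEF:AAIP-formal}, a formal $(A,M,I)$ is equivalent to a triple $(B,N,J)$ in which the only nonzero structure maps are the single associative product $\nu_2$, the strict bimodule actions $\be_{1,0},\be_{0,1}$ on $N$, and the strict pairing $J_{0,0}$; in particular both differentials vanish. Corollary \ref{COR:equiv-AAIP-gives-equal-Massey} identifies the Massey inner product sets of $(A,M,I)$ with those of $(B,N,J)$, so it suffices to prove that every Massey inner product set of the strict triple $(B,N,J)$ is empty (hence trivial) or contains $0$.

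Fix the diagonal inputs and suppose the associated Massey inner product set is nonempty, so that there is a cyclic defining system $\xbb,\xbp$. Because $\nu_1=0$ and $N$ has zero differential, the defining equations \eqref{EQU:dx=-Sx,dx'=-Sx'} reduce to $\St\xbb{i,j}=0$ and $\St\xbp{i,j}=0$ at every spot $(i,j)$, including the corner spots that the cyclic definition explicitly demands. I would then pass to the system $\xbbt,\xbpt$ having the same diagonal but with all off-diagonal entries set to $0$, and verify that it is again a cyclic defining system. Since the surviving operations $\nu_2,\be_{1,0},\be_{0,1}$ are all binary, the staircase product of the zero system at a spot $(i,j)$ can only receive a contribution from a length-two splitting, and such a contribution is nonzero only when $j=i+1$, where it equals the binary product of the consecutive diagonal elements $x_{i,i},x_{i+1,i+1}$. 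But the corner-inclusive defining equations of the original system, read at each spot $(i,i+1)$, say precisely that this binary product vanishes. Hence $\St{\xbbt}{i,j}=\St{\xbpt}{i,j}=0$ at all $(i,j)$, so $\xbbt,\xbpt$ is a valid cyclic defining system.

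It then remains to evaluate the staircase inner product \eqref{EQU:F(x**,x'**)} on $\xbbt,\xbpt$. As $J_{0,0}$ is the only nonzero pairing, the sum collapses to terms $\pm J_{0,0}(\ul{x_{a,b}})(\ul{x'_{c,d}})$ in which the two module blocks $[a,b]$ and $[c,d]$ form a cyclic partition of the full index set $\{1,\dots,k+\ell+2\}$. Since $k+\ell\geq 1$, this set has at least three elements, so the two blocks cannot both be singletons; at least one of $x_{a,b},x'_{c,d}$ is therefore off-diagonal, hence zero in $\xbbt,\xbpt$, and every term vanishes. Thus $\StIP{\xbbt,\xbpt}=0$, so $0$ belongs to the Massey inner product set and the latter is trivial. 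I expect the middle step to be the main obstacle: confirming that zeroing the off-diagonal entries preserves the cyclic defining property, which hinges on correctly invoking the corner conditions of the cyclic system and on the fact that in the strict model every staircase product is assembled from binary operations alone.
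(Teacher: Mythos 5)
Your proposal is correct and takes essentially the same route as the paper's proof: reduce to the strict model via Corollary \ref{COR:equiv-AAIP-gives-equal-Massey}, then exhibit the all-zero off-diagonal cyclic defining system and observe that its staircase inner product is $0$. Your write-up merely makes explicit the two details the paper compresses into one sentence, namely that the vanishing differential together with the corner-inclusive defining equations of the given system forces the consecutive binary products to vanish (so the zero system is again a cyclic defining system), and that every $I_{0,0}$-term in \eqref{EQU:F(x**,x'**)} involves a block of length at least two and hence vanishes.
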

\begin{proof}
By Corollary \ref{COR:equiv-AAIP-gives-equal-Massey} we only need to show that if $(A,M,I)$ only has non-vanishing components $\mu_2$, $\be_{1,0}$, $\be_{0,1}$, and $I_{0,0}$, then it has trivial Massey inner product sets. Assume that a Massey inner product set is non-empty. Then it has a cyclic defining system. One can choose the cyclic defining system $\xbb,\xbp$ with $x_{i,j}=0$ and $x'_{i,j}=0$ for $i\neq j$, since there are no higher multiplications or module maps, and for this we get $\mc I(\xbb,\xbp)=0$, so that $\la x_{1,1},\dots,x_{k+\ell+2,k+\ell+2} \ra_{I}$ contains $0$.
\end{proof}

It is possible that a dga is formal, but the dga with inner product is not formal, i.e. there is in general more information in a homotopy inner product, beyond the information of the underlying algebra.

\begin{example}
Let $A$ be a graded module with $A_\infty$ structure $0$, and let $M=A$ be its $A_\infty$ module over $A$, i.e., all module maps are zero as well. For some fixed $k,\ell\geq 0$, let $I_{k,\ell}:A^{\ot k}\ot M\ot A^{\ot \ell}\to M^*$ be a non-zero map. Then $I$ is an $A_\infty$ inner product, as it satisfies \eqref{EQU:F=inner-product}.

Now, any $(x_{1,1},\dots, x_{k+\ell+2,k+\ell+2})$ has a cyclic defining system $\xbb,\xbp$ with $x_{i,j}=0$ and $x'_{i,j}=0$ for $i\neq j$, and with this $\mc I(\xbb,\xbp)=I_{k,\ell}(x_{1,1},\dots, x_{k+\ell+2,k+\ell+2})$. If we choose the $x_{j,j}$ in such degrees that this defining system is the only possible one according to the degrees of the $x_{i,j}$ and $x'_{i,j}$, then $\la x_{1,1},\dots, x_{k+\ell+2,k+\ell+2}\ra_I=\{I_{k,\ell}(x_{1,1},\dots)(x_{k+\ell+2,k+\ell+2})\}$. As $I_{k,\ell}$ is non-zero, there exists $x_{j,j}$ such that the Massey inner product set is non-trivial, and thus $(A,A,I)$ is not formal. However, the zero algebra (and module) is certainly formal as a dga (and dg module).
\end{example}

We will compute more interesting non-trivial Massey inner product sets in the next section.

\begin{remark}
The precise chain level version of the inner product coming from Poincar\'e duality plays an important role for computations in string topology. For example, $S^2$ is formal as a dga, but the inner product of $S^2$ is not formal, which has non-trivial implications when giving algebraic models for string topology operations; see \cite{Men, PT}. Nevertheless, it seems that the Massey inner products for $S^2$ are trivial. 

Another question is whether the lens spaces $L(p,q)$ have any non-trivial Massey inner products, as their string topology coproducts differ for different values of $q$; see \cite{Naef}. To see whether there are any interesting Massey inner products for $L(p,q)$, one needs to use a chain level version of the Poincar\'e duality $A_\infty$ inner product (e.g. using the construction from \cite{FRT}), which then could also be transferred to cohomology using the formulas from Appendix \ref{APP:L(p,q)-mimimal}, and the results of the following subsection.
\end{remark}

\subsection{Massey inner product sets and the homotopy transfer theorem}

In practice it may be difficult to check if two $A_\infty$ structures are equivalent in the sense of Definition \ref{DEF:AAIP-equivalence}. Fortunately, in some cases one can check directly that the Massey inner product sets are equal. One such case is given by the homotopy transfer theorem for $A_\infty$ algebras, see \cite{Kad82}.

We fix some notation for this section as we review the explicit formulas for the transfer theorem from \cite{M}.

\begin{setup}\label{SETUP:transfer-theorem}
Let $R$ be any ground ring. Let $(A,\mu_1)$ be a graded chain complex and let $(B,\nu)$ be an $A_\infty$ algebra, and we also write $d_A=\mu_1$ and $d_B=\nu_1$ for the differentials on $A$ and $B$, respectively. Assume there are chain maps $f:B\to A$ and $g:A\to B$ such that $g\circ f$ is chain homotopic to the identity via the homotopy $h:B\to B$, i.e., $g\circ f-id_B=d_B\circ h+h\circ d_B$. For example, if $(A,\mu_1)=(H(B),0)$ is the cohomology of $B$, then, over a field $R$ (of any characteristic), there are always such maps $f$, $g$, and $h$; see, for example, \cite[Lemma 9.4.4.]{LV}.

With this, define the p-kernel to be $p_n:B^{\ot n}\to B$ given by $p_2=\nu_2$, and for $n>2$,
\[
\ul{p_n}=\sum_{\tiny\begin{matrix} \ell\geq 2, r_1,\dots, r_\ell\geq 1\\ r_1+\dots+r_\ell=n\end{matrix}} \,\,
\ul{\nu_\ell}\Big((\ul{h}\circ \ul{p_{r_1}})\ot \dots \ot (\ul{h}\circ \ul{p_{r_\ell}})\Big),
\]
where, by formal convention, the expression $h\circ p_1$ is meant to be $h\circ p_1=id_B$. Note, that this means that $p_n$ is given by a sum over all planar rooted trees, where we apply multiplications $h\circ \nu_r$ at the internal vertices, except for the one close to the root which only has $\nu_r$; see \eqref{EQU:p-kernel-of-y}. Moreover,
\begin{align*}
&\text{we already have }&& \mu_1  &&\text{ and define} && \mu_n:=f\circ p_n\circ g^{\ot n}:A^{\ot n}\to A \text{ for }n>1,\\
&\text{as well as }  && k_1:=g && \text{ and}  && k_n:=h\circ p_n\circ g^{\ot n}:A^{\ot n}\to B\text{ for }n>1.
\end{align*}
\end{setup}
\begin{theorem}[\cite{K, Kad82, M}]\label{THM:transfer-theorem}
In the notation from Setup \ref{SETUP:transfer-theorem}, we get that $(A,\mu)=(A,\mu_1,\mu_2,\dots)$ is an $A_\infty$ algebra, and $k:(A,\mu)\to (B,\nu)$ is a morphism of $A_\infty$ algebras.
\end{theorem}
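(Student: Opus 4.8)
The plan is to verify the two assertions—that $(A,\mu)$ satisfies the $A_\infty$ relations \eqref{EQU:ULmu_ULmu} and that $k$ satisfies the morphism relations \eqref{EQU:f_f}—by lifting everything to the tensor coalgebras $T^c(\ul A)$ and $T^c(\ul B)$ and reducing to combinatorial identities among the tree sums $p_n$. A collection of maps $\{\mu_n\}$ assembles into a unique coderivation $D_\mu$ on $T^c(\ul A)$, and $\{k_n\}$ into a unique coalgebra morphism $\wh k\colon T^c(\ul A)\to T^c(\ul B)$; the two claims are then exactly $D_\mu^2=0$ and $\wh k\circ D_\mu=D_\nu\circ\wh k$, where $D_\nu^2=0$ encodes $\nu^2=0$. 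Since maps into a cofree coalgebra are determined by their corestriction to the cogenerators, I would check each identity after projecting to $\ul A$, respectively $\ul B$, turning both claims into equalities of tree sums evaluated on $\ul A^{\ot n}$.

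The engine is the defining recursion of Setup \ref{SETUP:transfer-theorem}: writing $P_1:=id_B$ and $P_n:=h\circ p_n$ for $n\geq 2$, it reads $\ul{p_n}=\sum_{\ell\geq 2}\sum_{r_1+\dots+r_\ell=n}\ul{\nu_\ell}\circ(\ul{P_{r_1}}\ot\dots\ot\ul{P_{r_\ell}})$, together with $\ul{\mu_n}=\ul f\circ\ul{p_n}\circ\ul g^{\ot n}$ and $\ul{k_n}=\ul{P_n}\circ\ul g^{\ot n}$. I would establish a master identity by induction on $n$, computing $\ul{d_B}\circ\ul{p_n}$ by letting the differential act through the tree. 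At the root vertex $\ul{\nu_\ell}$ the relation $\nu^2=0$ replaces $\ul{d_B}\,\ul{\nu_\ell}$ by $\pm\sum\ul{\nu_\ell}(\dots\ul{d_B}\dots)$ together with quadratic terms $\pm\sum\ul\nu(\dots\ul\nu(\dots)\dots)$; at each internal factor $\ul h\circ\ul{p_r}$ the homotopy relation $\ul{d_B}\,\ul h=\ul g\,\ul f-id-\ul h\,\ul{d_B}$ (the shifted form of $g\circ f-id_B=d_B\circ h+h\circ d_B$) splits the contribution into three pieces: an $id$-term that deletes the edge, a $\ul g\,\ul f$-term that will recombine into a transferred operation, and an $\ul h\,\ul{d_B}$-term absorbed by the inductive hypothesis.

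For the reassembly I would pre-compose with $\ul g^{\ot n}$ and post-compose with $\ul f$ (for the algebra relation) or $\ul h$ (for the morphism relation), using the chain-map identities $\ul f\,\ul{d_B}=\ul{d_A}\,\ul f$ and $\ul{d_B}\,\ul g=\ul g\,\ul{d_A}$ to convert surviving $\ul{d_B}$'s into the $\mu_1=d_A$ terms of \eqref{EQU:ULmu_ULmu} and the $k_1=g$ terms of \eqref{EQU:f_f}. The crucial point is that the $\ul g\,\ul f$-contributions occur precisely where the output $\ul f$ of an inner subtree meets the input $\ul g$ of the enclosing tree, so that they glue two transferred operations and account for the decomposable terms $\ul{\mu_k}(\dots\ul{\mu_\ell}\dots)$ and $\ul{\nu_k}(\ul{k_{\ell_1}}\ot\dots\ot\ul{k_{\ell_k}})$ appearing in the two relations. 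Running the induction simultaneously for $D_\mu^2=0$ and $\wh k D_\mu=D_\nu\wh k$ lets each identity feed the other; the base cases $n=1,2$ are the direct checks that $d_A$ is a differential and a derivation of $\mu_2=\ul f\,\ul{\nu_2}\,\ul g^{\ot 2}$, using only the chain-map property and $\nu^2=0$ in arity $\leq 2$. Note that only $g\circ f\simeq id_B$ is used, never $f\circ g\simeq id_A$, consistent with the theorem asserting merely that $k$ is a morphism rather than a quasi-isomorphism.

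The main obstacle will be purely bookkeeping: matching the Koszul signs produced by the three-term homotopy splitting against the signs in \eqref{EQU:ULmu_ULmu}–\eqref{EQU:f_f}, and confirming that the edge-deletion terms recombine with the correct multiplicities into complete subtrees $p_r$. Working consistently in shifted degrees—where $|\ul{\nu_\ell}|=|\ul{\mu_\ell}|=1$, $|\ul f|=|\ul g|=0$, $|\ul h|=-1$, and the homotopy reads $\ul g\,\ul f-id=\ul{d_B}\,\ul h+\ul h\,\ul{d_B}$—makes every sign a Koszul sign and is what I would rely on to keep the computation honest. As a cross-check, and an acceptable alternative, one may instead recognize this recursion as the output of the homological perturbation lemma applied to the contraction $(f,g,h)$ extended to the cofree coalgebras with perturbation $\sum_{\ell\geq 2}\ul{\nu_\ell}$, which produces $D_\mu^2=0$ and the coalgebra morphism $\wh k$ formally; this is the content of \cite{K,Kad82,M} and \cite{LV}.
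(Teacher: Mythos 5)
The paper offers no proof of Theorem \ref{THM:transfer-theorem}: it is quoted from the literature \cite{K, Kad82, M}, with only the explicit formulas recorded in Setup \ref{SETUP:transfer-theorem} (following \cite{M}), so there is no internal argument to compare your proposal against; the relevant comparison is with the cited sources, and your sketch reconstructs essentially their proof. Its mechanisms are sound: encoding \eqref{EQU:ULmu_ULmu} and \eqref{EQU:f_f} as $D_\mu^2=0$ and $\wh{k}\circ D_\mu=D_\nu\circ\wh{k}$ on tensor coalgebras, then inducting on arity by letting $\ul{d_B}$ propagate through the trees defining $\ul{p_n}$, with the three-way splitting $\ul{d_B}\ul{h}=\ul{g}\ul{f}-id-\ul{h}\ul{d_B}$ doing the work — the $id$-terms cancel against the quadratic terms produced by $\nu^2=0$, the $\ul{g}\ul{f}$-terms reassemble into the decomposable terms $\ul{\mu_k}(\dots,\ul{\mu_\ell},\dots)$ and $\ul{\nu_k}(\ul{k_{\ell_1}}\ot\dots\ot\ul{k_{\ell_k}})$, the $\ul{h}\ul{d_B}$-terms feed the induction, and the chain-map identities convert surviving differentials into the $\mu_1$- and $k_1$-terms. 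I checked this pattern explicitly in arities $2$ and $3$ for the algebra relation and arity $2$ for the morphism relation, and it goes through exactly as you describe; your observation that only the one-sided homotopy $g\circ f\simeq id_B$ is ever used, matching Setup \ref{SETUP:transfer-theorem}, is also correct. Two caveats, neither fatal. First, the ``master identity'' driving the induction is never stated; in a written-out proof you would need to formulate it precisely (e.g.\ as a closed expression for $\ul{d_B}\circ\ul{p_n}\circ\ul{g}^{\ot n}$ in terms of decomposable and lower-arity terms) rather than defer to ``bookkeeping,'' since that identity is where all the sign and multiplicity matching actually happens. Second, the perturbation-lemma cross-check is glossed: the basic perturbation lemma requires side conditions ($h\circ h=0$, $f\circ h=0$, $h\circ g=0$, achievable only after modifying $h$) and the tensor-trick extension of the contraction to the cofree coalgebra, none of which are assumed in Setup \ref{SETUP:transfer-theorem}; your main line, the direct tree induction, needs no such hypotheses and is the argument consistent with the generality of the Setup.
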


Now, $A$ is an $A_\infty$ module over $A$, and $B$ is an $A_\infty$ module over $B$, and by Example \ref{EXA:Morph-of-Ainfty-modules}\eqref{ITEM:f-gives-F} $k$ gives an induced morphism of $A_\infty$ modules $K:A\to B$ over the $A_\infty$ algebra map $k:A\to B$. Then, we have the following theorem.
\begin{theorem} \label{thm;pullbacknat}
Using the above notation, if $I$ is an $A_\infty$ inner product on $B$, then $I$ has the same Massey inner product sets as the pull back $A_\infty$ inner product $K^\sharp(I)$ on $A$. In other words, for fixed $s,t$, and for any $(a_{1,1},\dots, a_{s+t+2,s+t+2})\in A^{\times s}\times A\times A^{\times t}\times A$, we have equality of $(s,t)^{th}$ Massey inner product sets
\begin{equation}\label{EQU:transfer-pullback-equal}
 \la a_{1,1},\dots, a_{s+t+2,s+t+2}\ra_{K^\sharp(I)}
 \\ = 
 \la g(a_{1,1}),\dots, g(a_{s+t+2,s+t+2}) \ra_{I}
\end{equation}
\end{theorem}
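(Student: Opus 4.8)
The plan is to prove the two inclusions of \eqref{EQU:transfer-pullback-equal} separately, obtaining ``$\subseteq$'' for free from the naturality already established and reserving the real work for ``$\supseteq$''. First I would note that the $A_\infty$ algebra morphism $k:A\to B$ of Theorem \ref{THM:transfer-theorem} together with its associated $A_\infty$ module morphism $K:A\to B$ from Example \ref{EXA:Morph-of-Ainfty-modules}\eqref{ITEM:f-gives-F} assemble into a morphism of $A_\infty$ algebras with $A_\infty$ modules and inner products $(k,K):(A,A,K^\sharp(I))\to(B,B,I)$ in the sense of Definition \ref{DEF:morphism-of-Ainf+AIP}: the exactness condition required there is that $K^\sharp(I)-K^\sharp(I)$ be exact, which holds trivially since $0=\delta(0)$. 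Since the linear parts are $k_1=g$ and $K_{0,0}=g$, Proposition \ref{PROP:MIP-under-morphism} applied to this morphism yields at once the inclusion $\la a_{1,1},\dots,a_{s+t+2,s+t+2}\ra_{K^\sharp(I)}\subseteq\la g(a_{1,1}),\dots,g(a_{s+t+2,s+t+2})\ra_I$.

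For the reverse inclusion I would lift cyclic defining systems through the transfer. Given a cyclic defining system $\ybb,\ybp$ in $B$ whose diagonal elements are the $g(a_{i,i})$, I would build a cyclic defining system $\xbb,\xbp$ in $A$ with diagonal $a_{i,i}$ by induction on the width $j-i$: set $x_{i,i}=a_{i,i}$, and define each off-diagonal entry by applying $f$ to $y_{i,j}$ together with correction terms assembled from the homotopy $h$ and the $p$-kernels $p_n$ of Setup \ref{SETUP:transfer-theorem}. The construction is engineered so that $\xbb,\xbp$ satisfies the defining-system equations \eqref{EQU:dx=-Sx,dx'=-Sx'} for the transferred operations $\mu_n=f\circ p_n\circ g^{\ot n}$, and so that the transferred pair $K(\xbb),K(\xbp)$ agrees with $\ybb,\ybp$ up to coboundaries at each spot. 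The crucial observation is that $k_1\circ f=g\circ f$ equals $id_B$ only up to the homotopy $h$, so the sole hypothesis of Setup \ref{SETUP:transfer-theorem}, namely $g\circ f-id_B=d_B\circ h+h\circ d_B$, is precisely what makes $K$ of the lift reproduce the original system modulo exact terms; notably no hypothesis on $f\circ g$ is needed, since the comparison always takes place in $B$.

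Once the lift is in hand the conclusion is formal. The explicit computation in the proof of Proposition \ref{PROP:MIP-under-morphism} gives the \emph{exact} identity $\mc I_{K^\sharp(I)}(\xbb,\xbp)=\mc I_I\big(K(\xbb),K(\xbp)\big)$, because the cyclic staircases of $I$ evaluated on $K(\xbb),K(\xbp)$ are term-by-term the cyclic staircases of $K^\sharp(I)$ evaluated on $\xbb,\xbp$. Since $K(\xbb),K(\xbp)$ differs from $\ybb,\ybp$ only by coboundaries, the well-definedness of the staircase inner product on cohomology (Lemma \ref{lem;MIPwelldef}) gives $\mc I_I\big(K(\xbb),K(\xbp)\big)=\StIP{\ybb,\ybp}$. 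Thus the value $\StIP{\ybb,\ybp}$ is realized by the defining system $\xbb,\xbp$ on $A$, which is exactly ``$\supseteq$'', and the equality \eqref{EQU:transfer-pullback-equal} follows.

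The main obstacle is the inductive lifting of the second paragraph. One must pin down the correction terms explicitly and verify, by a cancellation (telescoping) argument driven by $\nu^2=0$ together with the homotopy identity for $h$, that the resulting $\xbb,\xbp$ is genuinely a cyclic defining system for the transferred structure; one must also check that the two triangular systems stay compatible on the shared algebra entries $a_{i,j}$ demanded by the cyclic arrangement. All of the combinatorial substance of the transfer theorem is concentrated here; everything downstream of the lift is formal, using only Proposition \ref{PROP:MIP-under-morphism} and Lemma \ref{lem;MIPwelldef}.
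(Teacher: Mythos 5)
Your ``$\subseteq$'' argument coincides with the paper's: $(k,K)$ is a morphism in the sense of Definition \ref{DEF:morphism-of-Ainf+AIP} because $K^\sharp(I)-K^\sharp(I)=\delta(0)$ is exact, and Proposition \ref{PROP:MIP-under-morphism} with $k_1=K_{0,0}=g$ gives the inclusion. The gap is in ``$\supseteq$''. The inductive lifting that you describe and then explicitly defer (``one must pin down the correction terms explicitly and verify\dots'') is not a routine verification to be outsourced: it is the entire content of the theorem, and your sketch points at a harder construction than the one that actually works. The paper (Steps 1--3 of its proof) does \emph{not} decorate $f(y_{i,j})$ with correction terms built from $h$ and the p-kernels inside $A$. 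Instead it first replaces the given defining system in $B$ by a correlatedly modified one, $y_{i,j}:=\wt y_{i,j}+d_B(h(\wt y_{i,j}))$ --- a modification that preserves the value $\mc I$ by \eqref{EQU:F(x,x')=F(wt-x,wt-x')} --- and then takes the completely naive lift $x_{i,j}:=f(\wt y_{i,j})$, with no corrections at all. The payoff of this normalization is the exact identity $g(x_{i,j})=y_{i,j}+h\circ d_B(y_{i,j})$ of \eqref{EQU:inductive-x-y-relation}; substituting it into the transferred operations $\mu_n=f\circ p_n\circ g^{\ot n}$ and using that $\ybb,\ybp$ is a defining system (Equation \eqref{EQU:y+hby}) forces every tree in the p-kernel expansion with more than one internal vertex to cancel in pairs. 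That single cancellation simultaneously proves that $\xbb,\xbp$ is a cyclic defining system for the transferred structure (Step 2) \emph{and} that $\mc I_{K^\sharp(I)}(\xbb,\xbp)=\mc I_{I}(\ybb,\ybp)=r$ (Step 3). Your proposal names a ``telescoping argument driven by $\nu^2=0$'' but supplies neither the normalization nor the cancellation, so the theorem is not proved.

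Moreover, your concluding step is invalid even if the lift were constructed. You argue that since $K(\xbb),K(\xbp)$ agrees with $\ybb,\ybp$ up to coboundaries at each spot, Lemma \ref{lem;MIPwelldef} yields $\mc I_I\big(K(\xbb),K(\xbp)\big)=\StIP{\ybb,\ybp}$. Lemma \ref{lem;MIPwelldef} does not say this: it proves invariance of the staircase inner product only under the \emph{correlated} modifications of Lemma \ref{LEM:Zs}, where adding $dz$ at one spot is accompanied by the induced corrections $Z_{i,j}$ at all spots above and to the left, and the total change then cancels via \eqref{EQU:F=inner-product}. Two cyclic defining systems whose entries merely agree entrywise up to exact terms need not produce the same element of $R$: changing a single entry $x_{i,j}\mapsto x_{i,j}+dz$ (when this happens to preserve the defining-system conditions) changes the value by terms of the form $\pm I_{0,0}(dz)(x'_{j+1,i-1})=\mp I_{0,0}(z)\big(\St\xbp{j+1,i-1}\big)$, which lie in $R$, are not ``exact'' in any sense, and have no reason to vanish. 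Repairing your argument would require exhibiting the discrepancy between $K(\xbb),K(\xbp)$ and $\ybb,\ybp$ as a chain of correlated modifications --- work of the same magnitude as the paper's Step 3 --- whereas the paper sidesteps the comparison entirely by proving the equality of values $\mc I(\xbb,\xbp)=\mc I(\ybb,\ybp)$ directly.
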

\begin{proof}\quad

``$\subseteq$": Since $(k,K):(A,A,K^\sharp(I))\to (B,B,I)$ is a morphism (and recall that $k_1=K_{0,0}=g$), Equation \eqref{EQU:MIP-morph-inclusion}  in Proposition \ref{PROP:MIP-under-morphism} shows that there is an inclusion from left to right of \eqref{EQU:transfer-pullback-equal}.

``$\supseteq$": To check the inclusion from right to left of \eqref{EQU:transfer-pullback-equal}, assume that $r\in R$ is an element in the right-hand side of \eqref{EQU:transfer-pullback-equal}.

\noindent{\bf Step 1:} There exists a cyclic defining system for $B$ with $\mc I(\ybb,\ybp)=r$ and a cyclic triangular system $\xbb,\xbp$ for $A$ such that for all $i,j$ we have $d_A(x_{i,j})=f(d_B(y_{i,j}))$ and
\begin{equation}\label{EQU:inductive-x-y-relation}
y_{i,j}+h\circ d_B(y_{i,j})=g(x_{i,j})\quad \text{ and } \quad y'_{i,j}+h\circ d_B(y'_{i,j})=g(x'_{i,j})
\end{equation}
\begin{itemize}
\item[\it Proof.\hspace{0.9mm}]
We start with any cyclic defining system $\{\wt {y_{i,j}}\},\{\wt{y'_{i,j}}\}$ for $B$ with $\mc I(\{\wt {y_{i,j}}\},\{\wt{y'_{i,j}}\})=r$. From the homotopy relation, we have $g(f(\wt{y_{i,j}}))-\wt{y_{i,j}}=d_B(h(\wt {y_{i,j}}))+h(d_B(\wt {y_{i,j}}))$. We will inductively change the defining system by adding the exact terms $y_{i,j}:=\wt{y_{i,j}}+d_B(h(\wt {y_{i,j}}))$ at the $(i,j)^{th}$ spot, which changes the whole defining system above and to the left of the $(i,j)^{th}$ spot, but in such a way that the $\mc I$ output stays the same. (This can be done according to Equation \eqref{EQU:F(x,x')=F(wt-x,wt-x')}). Moreover, we define $x_{i,j}=f(\wt{y_{i,j}})$, so that $d_A(x_{i,j})=f(d_B(y_{i,j}))$. Note, that then \eqref{EQU:inductive-x-y-relation} also holds, since $y_{i,j}=\wt{y_{i,j}}+d_B(h(\wt {y_{i,j}}))=g(f(\wt{y_{i,j}}))-h(d_B(\wt {y_{i,j}}))=g(x_{i,j})-h\circ d_B(y_{i,j})$ and similarly for $y'_{i,j}$.

We keep changing the cyclic defining system (and definions of $x_{i,j}$) starting from the diagonal elements $\wt{y_{i,i}}$ and $\wt{y'_{i,i}}$, and keep moving away from the diagonal, so that we end up with a cyclic defining system $\ybb,\ybp$ for $B$ with $\mc I(\ybb,\ybp)=r$, and a cyclic triangular system $\xbb, \xbp$ for $A$ such that \eqref{EQU:inductive-x-y-relation} holds.
\end{itemize}

\noindent{\bf Step 2:} $\xbb,\xbp$ from Step 1 is a cyclic defining system for $A$.
\begin{itemize}
\item[\it Proof.\hspace{0.9mm}]
We will check that $d_A(x_{i,j})=-\St{\xbb}{i,j}$. (The proof for $x'_{i,j}$ is similar.)

Since $\ybb$ in Step 1 is a defining system, we get for $d_A(x_{i,j})$ (c.f. \eqref{EQU:x**}):
\begin{equation}\label{EQU:d(xij)-in-transfer}
\ul{d_A}(\ul{x_{i,j}})=\ul{f}(\ul{d_B}( \ul{y_{i,j}}))=-\ul{f}\Big(\ul{\St{\ybb}{i,j}}\Big)
=-\ul{f}\Big(\sum_{\ell,j_0,\dots,j_\ell} \ul{\nu_\ell}(\ul{y_{j_0,j_1}},\dots,\ul{y_{j_{\ell-1}+1,j_\ell}}) \Big)
\end{equation}
Next we compute $\St{\xbb}{i,j}$. From the definition of $\mu_\ell$ and $p_\ell$ in Setup \ref{SETUP:transfer-theorem}, we get
\begin{multline*}
\ul{\St{\xbb}{i,j}}=\sum_{\ell,j_0,\dots,j_\ell} \ul{\mu_\ell}(\ul{x_{j_0,j_1}},\dots,\ul{x_{j_{\ell-1}+1,j_\ell}}) 
=\sum_{\ell,j_0,\dots,j_\ell} \ul{f}\circ \ul{p_\ell}\Big(\ul{g}(\ul{x_{j_0,j_1}}),\dots,\ul{g}(\ul{x_{j_{\ell-1}+1,j_\ell}})\Big) \\
=\ul{f}\circ \big( \sum_{\tiny\begin{matrix}\ell,j_0,\dots,j_\ell\\ m,r_1,\dots,r_m\end{matrix}}  \ul{\nu_m}\big((\ul{h}\circ \ul{p_{r_1}})\ot \dots \ot (\ul{h}\circ \ul{p_{r_m}})\big)\Big) \big(\ul{y_{j_0,j_1}}+\ul{h}\ul{d_B}(\ul{y_{j_0,j_1}}),\dots,\ul{y_{j_{\ell-1}+1,j_\ell}}+\ul{h}\ul{d_B}(\ul{y_{j_{\ell-1}+1,j_\ell}})\big)
\end{multline*}
Now, for the $q^{th}$ input on the right, we use $d_B({y_{j_{q-1}+1,j_q}})=-\St{\ybb}{j_{q-1}+1,j_q}$ to get
\begin{equation}\label{EQU:y+hby}
\ul{y_{j_{q-1}+1,j_q}}+\ul{h}\ul{d_B}(\ul{y_{j_{q-1}+1,j_q}})
=\ul{y_{j_{q-1}+1,j_q}}-\ul{h}\Big(\sum \ul{\nu_t}(\ul{y_{i_0,i_1}},\dots,\ul{y_{i_{t-1}+1,i_t}})\Big)
\end{equation}
Thus, we either apply $\ul{y_{j_{q-1}+1,j_q}}$ or $-\ul{h}\big(\sum \ul{\nu_t}(\dots)\big)$, which is just the output of a different term in the sum for $\ul{\St{\xbb}{i,j}}$.

An example may be illuminating here. Recall that the p-kernel are given by planar rooted trees which apply some $h\circ \nu_i$ at some internal vertex except at the one closest to the root where we only apply some $\nu_i$. For example:
\begin{equation}\label{EQU:p-kernel-of-y}
\scalebox{0.9}[1]{\begin{tikzpicture}
\draw (0,2) node {\scalebox{.8}{$y_{1,2}$}};
\draw (1,2) node {\scalebox{.8}{$y_{3,3}$}};
\draw (2,2) node {\scalebox{.8}{$y_{4,6}$}};
\draw (3,2) node {\scalebox{.8}{$y_{7,8}$}};
\draw (4,2) node {\scalebox{.8}{$y_{9,11}$}};
\draw (5,2) node {\scalebox{.8}{$y_{12,14}$}};
\draw (6,2) node {\scalebox{.8}{$y_{15,15}$}};
\draw (7,2) node {\scalebox{.8}{$y_{16,18}$}};
\draw (8,2) node {\scalebox{.8}{$y_{19,22}$}};

\draw (5,1.7)--(5,-.7); \fill (5,0) circle (.1); \draw (4.7,-.2) node {\scalebox{0.8}{$\nu_{3}$}};
\draw (4,1.7)--(5,1.2)--(6,1.7); \fill (5,1.2) circle (.1); \draw (5.5,1.1) node {\scalebox{0.8}{$h\nu_{3}$}};
\draw (3,1.7)--(5,0.6); \fill (5,0.6) circle (.1); \draw (5.4,0.6) node {\scalebox{0.8}{$h\nu_{2}$}};
\draw (1,1.7)--(1.5,1.2)--(5,0); \draw (0,1.7)--(1.5,1.2)--(2,1.7);
\fill (1.5,1.2) circle (.1); \draw (1.1,1.1) node {\scalebox{0.8}{$h\nu_{3}$}};
\draw (7,1.7)--(7.4,1.2)--(8,1.7); \draw (7.4,1.2)--(5,0); 
\fill (7.4,1.2) circle (.1); \draw (7.8,1.1) node {\scalebox{0.8}{$h\nu_{2}$}};
\end{tikzpicture}}
\end{equation}
Then, the term corresponding to the above tree can either come from an expression
\[
\nu_3\circ ((h \circ p_3)\ot (h\circ p_4)\ot (h \circ p_2))\Big(y_{1,2},y_{3,3},y_{4,6},y_{7,8},y_{9,11},y_{12,14},
y_{15,15},y_{16,18},y_{19,22}\Big),
\]
or it could also come from
\[
\nu_3\circ ((h \circ p_3)\ot (h\circ p_2)\ot (h \circ p_2))\Big(y_{1,2},y_{3,3},y_{4,6},y_{7,8},(h\circ \nu_3)(y_{9,11},y_{12,14},y_{15,15}),y_{16,18},y_{19,22}\Big).
\]
Note that the latter comes with a minus sign, since it came from $-\ul{h}\big(\sum \ul{\nu_t}(\dots)\big)$ in the input.

Thus, for each term corresponding to a tree with at least one internal vertex other than the one close to the root, we always get this precise term canceling, since we either apply the $y_{*,*}$ all the way at the end, or we apply some of the inputs $y_{*,*}$ at exactly one level lower with $-\ul{h}\big(\sum \ul{\nu_t}(y_{*,*},\dots, y_{*,*})\big)$. When this can be done in several places, we get the same term $2^{\text{\# of places}}$ many times, each time canceling in pairs by signs. It follows that all terms in the sum for $\ul{\St{\xbb}{i,j}}$ cancel, except those which correspond to tress with only one internal vertex, for which we apply only $y_{*,*}$; the corresponding $\ul{h}\big(\sum \ul{\nu_t}(\dots)\big)$ terms cancel with higher level trees. The remaining terms are precisely the ones with all $h\circ p_1=id_B$ inside the $\nu_\ell$, and so we get $\ul{\St{\xbb}{i,j}}=\ul{f}\Big(\sum_{\ell,j_0,\dots,j_\ell} \ul{\nu_\ell}(\ul{y_{j_0,j_1}},\dots,\ul{y_{j_{\ell-1}+1,j_\ell}}) \Big)$. Putting this together with \eqref{EQU:d(xij)-in-transfer} gives the claimed identity $d_A(x_{i,j})=-\St{\xbb}{i,j}$.
\end{itemize}

\noindent{\bf Step 3:} We have the equality $\mc I(\xbb,\xbp)=\mc I(\ybb,\ybp)=r$.
\begin{itemize}
\item[\it Proof.\hspace{0.9mm}]
The argument is similar to the one in Step 2. We compute $\mc I(\xbb,\xbp)$ from \eqref{EQU:F(x**,x'**)} applied to the pull back $A_\infty$ inner product $K^\sharp(G)$ (c.f. \eqref{EQU:Fsharp(G)}) as follows:
\begin{multline*}
\mc I(\xbb,\xbp)=\sum_{u,v,i_0,\dots,j_0,\dots} (-1)^\epsilon\cdot  \ul{(K^\sharp(I))_{u,v}}(\ul{x_{i_0,i_1}},\dots, \ul{x_{j_{v-1}+1,j_v}})(\ul{x'_{j_{v}+1,i_0-1}})
\\
=\sum_{\tiny\begin{matrix}u,v\\ m,m_1,\dots,m_u\\ n,n_1,\dots,n_v\\ i_0,\dots,j_0,\dots\end{matrix}} (-1)^\epsilon  \ul{I_{u,v}}\Big(\ul{k_{m_1}}(\dots),\dots,\ul{K_{m,n}}(\dots),\dots, \ul{k_{m_v}}(\dots)\Big)\big(\ul{K_{m',n'}}(\dots,\ul{x'_{j_{s}+1,i_0-1}},\dots)\big),
\end{multline*}
where we apply appropriate $x_{i,j}$ or $x'_{i,j}$ from the cyclic defining system in the dotted spaces. Now, the $k_n$ for $n>1$ (and similarly the $K_{n,m}=k_{n+m+1}$ for $n+m>0$) are given by the p-kernel $k_\ell=h\circ p_\ell\circ g^{\ot \ell}$, and so, applying them to the $x_{i,j}$ or $x'_{i,j}$ gives
\begin{multline*}
\ul{k_\ell}(\ul{x_{i'_0,i'_1}},\dots,\ul{x_{i'_{\ell-1}+1,i'_\ell}})
=\ul{h}\circ \ul{p_\ell}\Big(\ul{g}(\ul{x_{i'_0,i'_1}}),\dots,\ul{g}(\ul{x_{i'_{\ell-1}+1,i'_\ell}})\Big) \\
= \Big( \sum \ul{h}\circ\ul{\nu_m}\big((\ul{h}\circ \ul{p_{r_1}})\ot \dots \ot (\ul{h}\circ \ul{p_{r_m}})\big)\Big) \big(\ul{y_{i'_0,i'_1}}+\ul{h}\ul{d_B}(\ul{y_{i'_0,i'_1}}),\dots,\ul{y_{i'_{\ell-1}+1,i'_\ell}}+\ul{h}\ul{d_B}(\ul{y_{i'_{\ell-1}+1,i'_\ell}})\big).
\end{multline*}
Using \eqref{EQU:y+hby} from Step 2, we see that we apply either some $\ul{y_{*,*}}$ or $\ul{h}\ul{d_B}(\ul{y_{*,*}})=-\ul{h}\big(\sum \ul{\nu_t}(\ul{y_{*,*}},\dots,\ul{y_{*,*}})\big)$, which is just the output of a different term in the sum for $\mc I(\xbb,\xbp)$.

Thus, just as in Step 2, all terms corresponding to trees with more than one internal vertex cancel, and what remains are the terms $y_{*,*}$ from $k_1(x_{*,*})$ or $K_{0,0}(x_{*,*})$, i.e.,
\[
\mc I(\xbb,\xbp)
=\sum (-1)^\varepsilon \ul{I_{u,v}}(\ul{y_{i_0,i_1}},\dots, \ul{y_{j_{v-1}+1,j_v}})(\ul{y'_{j_{v}+1,i_0-1}})
=\mc I(\ybb,\ybp)=r
\]
\end{itemize}
Step 3 shows that $r$ is in the left side of \eqref{EQU:transfer-pullback-equal}, which completes the proof of the theorem.
\end{proof}

\section{Cyclic Massey products for commutative dgas}\label{SEC:Massey-cdga}

In this section, we consider graded commutative dgas with an inner product given by multiplication and evaluation on a dual element, which is an example of an $A_\infty$ inner product as described in the previous section (Example \ref{EXA:cdga=>IP}\eqref{ITM:cdga-example}). However, in this section, we will mainly use the notation from Section \ref{SEC:Massey-A-infty}, and only refer back to Sections \ref{SEC:Massey-modules} and  \ref{SEC:Massey-inner-product} to avoid duplication of figures, etc.

\subsection{Definition and naturality of cyclic Massey products}

Let $A$ be graded commutative dga, regarded as a dg bimodule over itself. Recall from Definition \ref{defn:triangsys}, that for a defining system $\xbb=\{x_{i,j}\}_{p\leq i\leq j\leq q}$, the staircase product is
\begin{equation}\label{EQU:S-expanded-for-cyc}
 \St\xbb{p,q}= -(-1)^{|x_{p,p}|}x_{p,p}x_{p+1,q}-(-1)^{|x_{p,p+1}|}x_{p,p+1}x_{p+2,q}-\dots-(-1)^{|x_{p,q-1}|}x_{p,q-1}x_{q,q}
\end{equation}
\begin{definition}Fix two non-negative integers $k$ and $\ell$.
\begin{enumerate}
\item
A \emph{cyclic defining system} for the $k+\ell+2$ elements $(a_{1,1},\dots, a_{k+\ell+2,k+\ell+2})\in A^{\times k+\ell+2}$ consists of two triangular systems (c.f. Definition \ref{defn:triangsys}), $\xbb$ for $(a_{1,1},\dots, a_{k+\ell+1,k+\ell+1})$, and $\xbp$ for $(a_{k+2,k+2},\dots,a_{k+\ell+2,k+\ell+2},a_{1,1},\dots, a_{k,k})$ both of which \emph{include} the top left corner points, and which coincide on their intersection triangular points, satisfying
\begin{equation}\label{EQU:cyclic:dx=-Sx,dx'=-Sx'}
d(x_{i,j})=- \St\xbb{i,j}\quad\text{and}\quad d(x'_{i,j})=- \St\xbp{i,j}\quad\text{for all $i,j$}
\end{equation}
It is convenient to use cyclic indexing notation for $\xbp$, so that after the index $k+\ell+2$ we continue with the index $1$ (see \eqref{EQU:1-7-triangular-system} and \eqref{EQU:11-staircaseIP}, for example, but now all $a_{i,j}$ and $m_{i,j}$ are in $A$). We note that $a_{k+1,k+1}$ and $a_{k+\ell+2,k+\ell+2}$ play a special role as the first is missing $\xbp$ and the second is missing in $\xbb$.
\item
Each cyclic defining system $\xbb,\xbp$ associated to elements $(a_{1,1},\dots, a_{k+\ell+2,k+\ell+2})$ determines an element in $A$ via the \emph{cyclic staircase product}, which is defined to be the element $\mc C=\StC{\xbb,\xbp}\in A$ given by
\begin{align}\label{EQU:C(x**,x'**)}
\mc C=& \sum_{1\leq i\leq k+1\leq j\leq k+\ell+1} \quad (-1)^{|{x_{i,j}}|+\varepsilon_i} \cdot x_{i,j}\cdot x'_{j+1,i-1},
\\ \nonumber
&\text{where }(-1)^{\varepsilon_i}=(-1)^{1+(|\ul{a_{1,1}}|+\dots+|\ul{a_{i-1,i-1}}|)\cdot (|\ul{a_{i,i}}|+\dots+|\ul{a_{k+\ell+2,k+\ell+2}}|)}
\end{align}
Here, if $i=1$ we interpret $i-1$ to mean $k+\ell+2$, and we recall that the underline means shifting down by $1$, i.e., $|\ul{a_{i,j}}|=|a_{i,j}|-1$.
\item
For $(a_{1,1},\dots,a_{k+\ell+2,k+\ell+2})\in A^{\times k+\ell+2}$, we define the \emph{$(k,\ell)^{th}$ cyclic Massey product} (or \emph{$(k,\ell)^{th}$ cyclic Massey product set}) to be the subset of $A$ given by 
\begin{equation*} 
\la\la a_{1,1},\dots,a_{k+\ell+2,x+\ell+2} \ra\ra 
 :=\Big\{ \StC{\xbb,\xbp} \Big| \xbb, \xbp \text{ is a cyclic defining system} \Big\} \subseteq A
\end{equation*}
We say that the cyclic Massey product is trivial, if $\la\la a_{1,1},\dots,a_{k+\ell+2,k+\ell+2} \ra\ra$ is either the empty set or contains $0\in A$.
\end{enumerate}
\end{definition}

\begin{example} \label{ex;cyclicstrair11}
For example, if $k=\ell=1$, a staircase inner product for $(a_{1,1},m_{2,2},a_{3,3},m_{4,4})\in A^{\times 4}$ with the cyclic defining system $\xbb, \xbp$ shown in \eqref{EQU:11-staircaseIP} (where all $a_{i,j}$ and $m_{i,j}$ are in $A$) is given by the following sum (where $(-1)^\varepsilon=(-1)^{1+(|{a_{1,1}}|-1)\cdot (|{m_{2,2}}|+|{a_{3,3}}|+|{m_{4,4}}|-3)}$):
\begin{align*}
\mc C (\xbb,\xbp)=&
(-1)^{|{m_{1,3}}|+1} \cdot m_{1,3}\cdot m_{4,4}+(-1)^{|{m_{2,2}}|+\varepsilon}\cdot m_{2,2}\cdot m_{3,1}
\\ &
+(-1)^{|{m_{1,2}}|+1}\cdot m_{1,2}\cdot m_{3,4}+(-1)^{|{m_{2,3}}|+\varepsilon}\cdot m_{2,3}\cdot m_{4,1}
\end{align*}
\end{example}

\begin{proposition}
The cyclic staircase product is always closed, i.e., $d(\StC{\xbb,\xbp})=0$. Moreover, the cyclic Massey product induces a well-defined map from the cohomologies $H(A)$ into the power set of $H(A)$,
\begin{align}\label{EQU:cyclic-Massey-on-H(A)}
 H(A)^{\times k+\ell+2}&\to \mathcal P(H(A)),\\ \nonumber
\la \la [a_{1,1}],\dots,[a_{k+\ell+2,x+\ell+2}] \ra\ra 
 &:=\Big\{ \big[\StC{\xbb,\xbp}\big] \Big| \xbb, \xbp \text{ is a cyclic defining system} \Big\}
\end{align}
\end{proposition}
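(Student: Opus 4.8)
The plan is to prove the two assertions separately, in each case adapting an argument already carried out for ordinary Massey products. For the closedness $d(\StC{\xbb,\xbp})=0$ I would proceed exactly as in Proposition \ref{PROP:x**-closed}: since $A$ is an honest (graded commutative) dga, the only operations are $d$ and the product, so I can expand $d(\StC{\xbb,\xbp})$ by the graded Leibniz rule applied to each summand $\pm\, x_{i,j}\cdot x'_{j+1,i-1}$ of \eqref{EQU:C(x**,x'**)}, obtaining $d(x_{i,j})\cdot x'_{j+1,i-1}+(-1)^{|x_{i,j}|}x_{i,j}\cdot d(x'_{j+1,i-1})$. Into these two terms I substitute the cyclic defining system relations \eqref{EQU:cyclic:dx=-Sx,dx'=-Sx'}, namely $d(x_{i,j})=-\St\xbb{i,j}$ and $d(x'_{j+1,i-1})=-\St\xbp{j+1,i-1}$, and then use the expanded staircase form \eqref{EQU:S-expanded-for-cyc}. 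The essential structural point is that, in contrast to an ordinary defining system, the two corner entries are \emph{included} in the relations \eqref{EQU:cyclic:dx=-Sx,dx'=-Sx'}; this is precisely what guarantees that no uncancelled boundary term survives when the corner factors are differentiated. What remains is a sum of ``triple products'' of the shapes $x_{i,m}\cdot x_{m+1,j}\cdot x'_{j+1,i-1}$ and $x_{i,j}\cdot x'_{j+1,m'}\cdot x'_{m'+1,i-1}$, each of whose three index arcs partition the cyclic index set $\{1,\dots,k+\ell+2\}$ into three consecutive arcs.

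I would then observe that every triple product has exactly one arc containing the first distinguished index $k+1$, one arc containing the second distinguished index $k+\ell+2$, and one ``pure algebra'' arc containing neither, on which the entries of $\xbb$ and $\xbp$ coincide. The goal is to show each triple product occurs exactly twice and that the two occurrences cancel. When the pure arc is cyclically adjacent, in the same order, to the factor that was split, the two occurrences are literally the same monomial and cancel at once; when instead the pure arc lies between the two distinguished arcs, I use the identification $x_{a,b}=x'_{a,b}$ on the pure arc together with the graded commutativity of $A$ to rotate that factor cyclically past the others, matching it with its partner. A direct check in the smallest case $k=\ell=1$ already exhibits the four cancelling pairs, two of which require graded commutativity. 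The single genuinely delicate point, and the step I expect to be the main obstacle, is verifying that the explicit signs $(-1)^{|x_{i,j}|+\varepsilon_i}$ of \eqref{EQU:C(x**,x'**)}, the Leibniz signs, and the Koszul signs produced by the commutativity rotation combine so that paired occurrences appear with opposite sign. This is routine in spirit but it is where all the sign conventions must be pinned down precisely.

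For the second assertion, closedness makes $[\StC{\xbb,\xbp}]\in H(A)$ well-defined for each cyclic defining system, so \eqref{EQU:cyclic-Massey-on-H(A)} is a well-defined subset of $H(A)$ for each choice of representatives; it remains to show this subset depends only on the classes $[a_{i,i}]$. Here I would adapt Lemma \ref{LEM:Zs} and Lemma \ref{lem;MIPwelldef} to the cyclic, $A$-valued setting: replacing an entry by $x_{p,q}+dz$ (in $\xbb$ or $\xbp$) and correcting the entries in the cone above and to the left by the terms $Z_{i,j}$ of \eqref{EQU:Def-of-Z-explicit} again yields a cyclic defining system $\xbbt,\xbpt$ differing from $\xbb,\xbp$ only in that cone. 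Repeating the cancellation analysis of Lemma \ref{lem;MIPwelldef} (there carried out for the scalar staircase inner product, c.f. \eqref{EQU:F(x,x')=F(wt-x,wt-x')}) but tracking the $A$-valued remainder rather than evaluating, the terms involving $z$ that do not cancel among themselves assemble, via the Leibniz rule and the defining system relations, into $d$ of an explicit element; hence $\StC{\xbbt,\xbpt}-\StC{\xbb,\xbp}$ is exact and $[\StC{\cdot}]$ is unchanged. Applying this with $z$ placed on a diagonal entry, $a_{\ell,\ell}\rightsquigarrow a_{\ell,\ell}+dz$, shows that cohomologous diagonal inputs produce the same set of classes, and running the same argument with $-z$ gives the reverse inclusion. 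Thus \eqref{EQU:cyclic-Massey-on-H(A)} descends to a well-defined map on $H(A)^{\times k+\ell+2}$, as claimed.
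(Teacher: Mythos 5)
Your proposal is correct and follows essentially the same route as the paper's proof: for closedness, the Leibniz expansion plus the defining-system relations \eqref{EQU:cyclic:dx=-Sx,dx'=-Sx'} produce triple products that cancel in pairs, split into exactly the two cases you describe (the paper's Case 1, where the partner term is literally the same monomial, and Case 2, where graded commutativity and the sign check $(-1)^{\varepsilon}=(-1)^{\varepsilon_i+|x_{\ell+1,j}|}$ are needed); and for well-definedness, the paper likewise modifies the system by the $Z$-terms of Lemma \ref{LEM:Zs}, finding that $\mc C$ is literally unchanged when $dz$ is added at a diagonal spot $r\neq k+1,k+\ell+2$ and changes by the exact terms $d(\pm z\cdot x_{k+2,k})$ or $d(\pm x_{1,k+\ell+1}\cdot z)$ at the two distinguished spots, which your ``difference is exact'' claim subsumes. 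The only work you defer, the sign bookkeeping in Case 2, is exactly the computation the paper carries out explicitly.
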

\begin{proof}
We first show that $d(\mc C)=0$. We compute for some $1\leq i\leq k+1\leq j\leq k+\ell+1$:
$$d((-1)^{|{x_{i,j}}|+\varepsilon_i} \cdot x_{i,j}\cdot x'_{j+1,i-1})=(-1)^{|{x_{i,j}}|+\varepsilon_i} \cdot d(x_{i,j})\cdot x'_{j+1,i-1}+(-1)^{\varepsilon_i}\cdot x_{i,j}\cdot d(x'_{j+1,i-1})$$ where $d(x_{i,j})$ and $d(x'_{j+1,i-1})$ can be expanded via \eqref{EQU:cyclic:dx=-Sx,dx'=-Sx'} and \eqref{EQU:S-expanded-for-cyc}. The first term becomes a sum $d(x_{i,j})=\dots+(-1)^{|x_{i,\ell}|} \cdot x_{i,\ell}\cdot x_{\ell+1,j}+\dots$, so that we get a sum with a term
\begin{equation}\label{EQU:proof-d(C)=0}
(-1)^{|\ul{x_{\ell+1,j}}|+\varepsilon_i} \cdot x_{i,\ell}\cdot x_{\ell+1,j}\cdot x'_{j+1,i-1} \text{ for some } i\leq \ell\leq j-1
\end{equation}
where we used that $(-1)^{|{x_{i,j}}|+|x_{i,\ell}|}=(-1)^{|\ul{x_{i,j}}|+|\ul{x_{i,\ell}}|}=(-1)^{|\ul{x_{i,j}}|+\dots+|\ul{x_{i,\ell}}|}=(-1)^{|\ul{x_{\ell+1,j}}|}$ due to $|\ul{x_{p,q}}|=|\ul{x_{p,p}}|+|\ul{x_{p+1,p+1}}|+\dots +|\ul{x_{q,q}}|$. This will cancel with the other term, but we need to distinguish two cases, namely, whether $k+1\leq \ell$ or $\ell<k+1$.
\begin{enumerate}
\item[Case 1:] $k+1\leq \ell$. We then consider the term $(-1)^{\varepsilon_i}\cdot x_{i,\ell}\cdot d(x'_{\ell+1,i-1})=\dots+(-1)^{\varepsilon_i +|x_{\ell+1,j}|}\cdot  x_{i,\ell}\cdot x_{\ell+1,j}\cdot x'_{j+1,i-1}+\dots$, which after adding it to \eqref{EQU:proof-d(C)=0} becomes zero. 
\item[Case 2:] $\ell<k+1$. We consider the term  $(-1)^{\varepsilon_{\ell+1}}\cdot x_{\ell+1,j}\cdot d(x'_{j+1,\ell})=\dots+(-1)^{\varepsilon_{\ell+1} +|x'_{j+1,i-1}|}\cdot  x_{\ell+1,j}\cdot x'_{j+1,i-1}\cdot x_{i,\ell}+\dots=\dots+(-1)^{\varepsilon}\cdot x_{i,\ell}\cdot x_{\ell+1,j}\cdot x'_{j+1,i-1} +\dots$, where
\begin{align*}
(-1)^{\varepsilon}
=&(-1)^{\varepsilon_{\ell+1} +|x'_{j+1,i-1}|+|x_{i,\ell}|\cdot (|x_{\ell+1,j}|+|x'_{j+1,i-1} |)}
\\
=&(-1)^{1+|\ul{x_{1,\ell}}|\cdot |\ul{x_{\ell+1,k+\ell+2}}|+|x'_{j+1,i-1}|+(|\ul{x_{i,\ell}}|+1)\cdot (|\ul{x_{\ell+1,j}}|+|\ul{x'_{j+1,i-1} }|)}
\\
=&(-1)^{1+|\ul{x_{1,i-1}}|\cdot |\ul{x_{\ell+1,k+\ell+2}}|+|\ul{x_{i,\ell}}|\cdot |\ul{x_{\ell+1,k+\ell+2}}|+|\ul{x_{i,\ell}}|\cdot (|\ul{x_{\ell+1,j}}|+|\ul{x'_{j+1,i-1} }|)+|x_{\ell+1,j}|}
\\
=&(-1)^{(\varepsilon_i-|\ul{x_{1,i-1}}|\cdot |\ul{x_{i,\ell}}|) +|\ul{x_{i,\ell}}|\cdot |\ul{x_{\ell+1,k+\ell+2}}|+|\ul{x_{i,\ell}}|\cdot (|\ul{x_{\ell+1,\ell+1}}|+\dots+|\ul{x_{i-1,i-1} }|)+|x_{\ell+1,j}|}
\\
=&(-1)^{\varepsilon_i+|x_{\ell+1,j}|}
\end{align*}
Thus, adding this term to \eqref{EQU:proof-d(C)=0}, we get zero.
\end{enumerate}

It remains to show that the cyclic Massey product does not change when adding an exact term $dz$ to some diagonal element $a_{r,r}$ (of the same degree) in $(a_{1,1},\dots,a_{k+\ell+2,k+\ell+2})$. Let $\xbb, \xbp$ be a cyclic defining system for these $a_{j,j}$. To simplify notation, we will adopt the convention, that any $x_{i,j}$ with $i>j$ is meant to be $x'_{i,j}$. Then from Equations \eqref{EQU:Def-of-Z-explicit} and \eqref{EQU:dxtilde=-sxtilde} in Lemma \ref{LEM:Zs} we know that adding $Z_{p,r}=x_{p,r-1}\cdot z$ to $x_{p,r}$ and $Z_{r,q}=(-1)^{|dz|}\cdot z\cdot x_{r+1,q}$ to $x_{r,q}$ for $p<r<q$, since $Z_{i,j}=0$ unless $i=r$ or $j=r$, gives a new cyclic defining system, now for $(a_{1,1},\dots,a_{r,r}+dz,\dots,a_{k+\ell+2,k+\ell+2})$. Below, we list all differences between the staircase products \eqref{EQU:C(x**,x'**)} of the old cyclic defining system $\xbb, \xbp$ and new cyclic defining systems, coming from adding these new terms $Z_{i,j}$ to $x_{i,j}$.

If we added $dz$ to any $a_{r,r}, r\neq k+1,k+\ell+2$, then $\mc C$ does not change:
\begin{itemize}
\item
For $1\leq r\leq k$, the extra term $(-1)^{|Z_{r,q}|+\varepsilon_r} \cdot Z_{r,q}\cdot x_{q+1,r-1}$ in the cyclic staircase product \eqref{EQU:C(x**,x'**)} cancels with the extra term $(-1)^{|x_{r+1,q}|+\varepsilon_{r+1}}\cdot x_{r+1,q}\cdot Z_{q+1,r}$.
\item
For $k+2\leq r\leq k+\ell+1$, the extra term $(-1)^{|Z_{p,r}|+\varepsilon_p}\cdot Z_{p,r}\cdot x_{r+1,p-1}$ cancels with the extra term $(-1)^{|x_{p,r-1}|+\varepsilon_p}\cdot x_{p,r-1}\cdot Z_{r,p-1}$.
\end{itemize}

If we added $dz$ to $a_{r,r}$ for $r=k+1$ or $r=k+\ell+2$, then $\mc C$ changes by an exact term:
\begin{itemize}
\item
For $r=k+1$, all extra terms are given by $d((-1)^{|dz|+\varepsilon_r}\cdot z\cdot x_{k+2,k})=(-1)^{|dz|+\varepsilon_{k+1}}\cdot dz\cdot x_{k+2,k}+ z\cdot (\pm x_{k+2,k+2}\cdot x_{k+3,k}\pm\dots\pm x_{k+2,k-1}\cdot x_{k,k})$, where the term $z\cdot (\dots)$ is
\begin{align*}
&(-1)^{|Z_{r,k+2}|+\varepsilon_{r}}\cdot Z_{r,k+2}\cdot x_{k+3,k}
+\dots
+(-1)^{|Z_{r,k++\ell+1}|+\varepsilon_{r}}\cdot Z_{r,k+\ell+1}\cdot x_{k+\ell+2,k}
\\
& +(-1)^{|Z_{1,r}|+\varepsilon_{1}}\cdot Z_{1,r}\cdot x_{k+2,k+\ell+2}
+\dots
+(-1)^{|Z_{k,r}|+\varepsilon_{k}}\cdot Z_{k,r}\cdot x_{k+2,k-1}
\end{align*}
\item
For $r=k+\ell+2$, all extra terms are given by $d((-1)\cdot x_{1,k+\ell+1}\cdot z)=(\pm x_{1,1}\cdot x_{2,k+\ell+1}\pm\dots\pm x_{1,k+\ell}\cdot x_{k+\ell+1,k+\ell+1})\cdot z +(-1)^{|x_{1,k+\ell+1}|+1}\cdot x_{1,k+\ell+1}\cdot dz$, and $(\dots)\cdot z$ equals
\begin{align*}
&(-1)^{|x_{2,k+\ell+1}|+\varepsilon_{2}}\cdot x_{2,k+\ell+1}\cdot Z_{r,1}
+\dots
+(-1)^{|x_{k,k+\ell+1}|+\varepsilon_{k}}\cdot x_{k,k+\ell+1}\cdot Z_{r,k-1}
\\
& +(-1)^{|x_{1,k}|+\varepsilon_{1}}\cdot x_{1,k}\cdot Z_{k+1,r}
+\dots
+(-1)^{|x_{1,k+\ell}|+\varepsilon_{1}}\cdot x_{1,k+\ell}\cdot Z_{k+\ell+1,r}
\end{align*}
\end{itemize}

This completes the proof of the proposition.
\end{proof}

\begin{proposition}\label{PROP:cyclic-Massey-naturality}
If $f:A\to B$ is a map of graded commutative dgas, then there is an induced inclusion of cyclic Massey products
\begin{equation}
f_*\big(\,\,\la \la [a_{1,1}],\dots,[a_{k+\ell+2,x+\ell+2}] \ra\ra\,\,\big)
\subseteq
\la \la \,\,[f(a_{1,1})],\dots,[f(a_{k+\ell+2,x+\ell+2})] \,\,\ra\ra
\end{equation}
\end{proposition}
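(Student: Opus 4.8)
The plan is to mirror the proof of naturality for ordinary Massey products (Proposition \ref{prop;MPnatural} together with Lemma \ref{LEM:f(defining-system)}), exploiting the fact that a map of graded commutative dgas is a \emph{strict} $A_\infty$ morphism, i.e. $f=f_1$ with $f_r=0$ for $r\geq 2$. Given a cyclic defining system $\xbb,\xbp$ for $(a_{1,1},\dots,a_{k+\ell+2,k+\ell+2})$, I form the induced pair $\ybb,\ybp$ via Definition \ref{DEF:f(x**)}. Because all higher components of $f$ vanish, the sum in \eqref{EQU:y**=f(x**)} collapses to its $r=1$ term, so that $y_{i,j}=f(x_{i,j})$ and $y'_{i,j}=f(x'_{i,j})$; in particular the diagonal entries are $f(a_{m,m})$. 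The structural requirements of a cyclic triangular system—agreement of $\xbb$ and $\xbp$ on their overlap and inclusion of the top-left corners—are inherited immediately, since they are conditions on which indices occur and on equalities of elements, both preserved by applying $f$.

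First I would check that $\ybb,\ybp$ is in fact a cyclic defining system for $(f(a_{1,1}),\dots,f(a_{k+\ell+2,k+\ell+2}))$, i.e. that \eqref{EQU:cyclic:dx=-Sx,dx'=-Sx'} holds. Since the staircase product is the signed sum of products displayed in \eqref{EQU:S-expanded-for-cyc}, and $f$ is multiplicative and degree-preserving, applying $f$ to each summand gives $f(\St\xbb{i,j})=\St\ybb{i,j}$ (the signs are functions of the degrees alone, which $f$ preserves). Using that $f$ is a chain map and that $\xbb$ is a defining system, I then obtain
\[
d_B(y_{i,j})=d_B(f(x_{i,j}))=f(d_A(x_{i,j}))=f(-\St\xbb{i,j})=-\St\ybb{i,j},
\]
and likewise for the primed entries. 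This is exactly the cyclic analogue of Lemma \ref{LEM:f(defining-system)}(1).

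Next I would compare the two cyclic staircase products directly. Applying $f$ to each summand of \eqref{EQU:C(x**,x'**)} and using multiplicativity and degree-preservation of $f$ once more—each sign $(-1)^{|x_{i,j}|+\varepsilon_i}$ depends only on the degrees of the diagonal classes, which are unchanged under $f$—yields the on-the-nose equality
\[
f\big(\StC{\xbb,\xbp}\big)=\StC{\ybb,\ybp}.
\]
Passing to cohomology gives $f_*[\StC{\xbb,\xbp}]=[\StC{\ybb,\ybp}]$. Hence any element of $f_*(\la\la[a_{1,1}],\dots,[a_{k+\ell+2,k+\ell+2}]\ra\ra)$, being of the form $f_*[\StC{\xbb,\xbp}]$ for some cyclic defining system, equals $[\StC{\ybb,\ybp}]$ for the cyclic defining system $\ybb,\ybp$ on $B$, and therefore lies in $\la\la[f(a_{1,1})],\dots,[f(a_{k+\ell+2,k+\ell+2})]\ra\ra$, establishing the claimed inclusion.

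I expect no serious obstacle here: the only point requiring care is confirming that $f$ respects every Koszul sign appearing in \eqref{EQU:S-expanded-for-cyc} and \eqref{EQU:C(x**,x'**)}, which holds because those signs are determined entirely by the (shifted) degrees of the diagonal inputs and $f$ is degree-preserving. It is worth emphasizing that strictness of $f$ is what upgrades the general inclusion ``up to exactness'' of Proposition \ref{PROP:MIP-under-morphism} to the exact identity $f(\StC{\xbb,\xbp})=\StC{\ybb,\ybp}$ in this setting; the inclusion (rather than equality) of cyclic Massey product sets persists only because a cyclic defining system for the image tuple $(f(a_{\bu\bu}))$ in $B$ need not be of the form $f(\xbb),f(\xbp)$.
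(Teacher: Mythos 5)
Your proposal is correct and follows essentially the same route as the paper's proof: apply the strict dga map $f$ entrywise to the cyclic defining system, observe (via Definition \ref{DEF:f(x**)} with all higher $f_r=0$, as in Lemma \ref{LEM:f(defining-system)}) that the result is a cyclic defining system in $B$, and conclude $f(\StC{\xbb,\xbp})=\StC{\ybb,\ybp}$, hence the inclusion. Your additional verification of the signs and the chain-map/multiplicativity steps simply makes explicit what the paper leaves to "direct inspection."
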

\begin{proof}
If $\xbb$, $\xbp$ is a cyclic defining system for $A$, then $\ybb$, $\ybp$ given by $y_{i,j}=f(x_{i,j})$ and $y'_{i,j}=f(x'_{i,j})$ is a cyclic defining system for $B$. This follows by direct inspection or from Definition \ref{DEF:f(x**)}, since \eqref{EQU:y**=f(x**)} is a defining system and all higher $f_r=0$ for $r>1$ vanish. Then, applying the dga map $f$ to \eqref{EQU:C(x**,x'**)} shows that $f(\StC{\xbb,\xbp})=\StC{\ybb,\ybp}$. Thus, every element in a cyclic Massey product in $A$ maps to the corresponding cyclic Massey product in $B$, which implies the claim.
\end{proof}

\begin{lemma}
Fix a closed element $x\in A^*$. We can compose the evaluation map $ev_x:H(A)\to R, [a]\mapsto x(a)$ with the cyclic Massey product \eqref{EQU:cyclic-Massey-on-H(A)} to give a map
$$\la\dots\ra_x:=ev_x\circ \la\la\dots\ra\ra: H(A)^{\times k+\ell+2}\to \mathcal P(H(A))\to \mc P(R)$$
Then, $\la\dots \ra_x$ is the Massey inner product $\la\dots \ra_{I_x}$ from \eqref{EQU:Massey-IP-DEF} applied to the $A_\infty$ inner product $I_x:A\to A^*, (I_x(a))(\wt{a})=x(a\cdot \wt{a})$, from Example \ref{EXA:cdga=>IP}\eqref{ITM:cdga-example}.
\end{lemma}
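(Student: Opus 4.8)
The plan is to fix a single cyclic defining system $\xbb,\xbp$ for the elements $(a_{1,1},\dots,a_{k+\ell+2,k+\ell+2})$ and prove the pointwise identity $ev_x\big(\StC{\xbb,\xbp}\big)=\StIP{\xbb,\xbp}$, where on the right $\mc I=\mc I_{I_x}$ is formed with the inner product $I_x$; the statement about sets then follows at once, because both constructions range over the \emph{same} index set of cyclic defining systems. To justify that last point I would first observe that when $M=A$ is regarded as the dg bimodule over itself, the module staircase product of Section~\ref{SEC:Massey-modules} collapses to the two–factor expression \eqref{EQU:S-expanded-for-cyc}, so the defining condition \eqref{EQU:dx=-Sx,dx'=-Sx'} used for $\mc I$ is literally the cyclic condition \eqref{EQU:cyclic:dx=-Sx,dx'=-Sx'} used for $\mc C$. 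It also remains to note that $ev_x$ descends to $H(A)$: since $x\in A^*$ is closed we have $x(da)=0$, so $ev_x([a])=x(a)$ is independent of the representative and $ev_x(\StC{\xbb,\xbp})=x(\StC{\xbb,\xbp})$ is legitimate.

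Next I would unwind the right-hand side. By Example~\ref{EXA:cdga=>IP}\eqref{ITM:cdga-example} (a special case of the cyclic situation) the inner product $I_x$ is \emph{strict}, i.e. $I_{k,\ell}=0$ for $k+\ell>0$ while $I_{0,0}(a)(\wt a)=x(a\cdot\wt a)$. Substituting this into the staircase inner product \eqref{EQU:F(x**,x'**)} kills every summand except those with $r=s=0$, whose two module arguments are a pair of ``complementary'' staircase entries $x_{i,j}$ and $x'_{j+1,i-1}$ with $1\le i\le k+1\le j\le k+\ell+1$, giving
\[
\StIP{\xbb,\xbp}=\sum_{1\le i\le k+1\le j\le k+\ell+1}(-1)^{\varepsilon}\,\ul{I_{0,0}}(\ul{x_{i,j}})(\ul{x'_{j+1,i-1}}).
\]
Applying $x$ to \eqref{EQU:C(x**,x'**)} produces the visibly identical list of products $x(x_{i,j}\cdot x'_{j+1,i-1})=I_{0,0}(x_{i,j})(x'_{j+1,i-1})$, indexed by the same pairs $(i,j)$ via the bijection $(i,j)\leftrightarrow(i_0,j_0)=(i,j)$. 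So the two expressions already agree as sums of products, and only the signs remain.

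The main obstacle is precisely this sign bookkeeping. For the $(i,j)$-term I would split the comparison into two independent pieces. First, the combinatorial factor $(-1)^{\varepsilon_i}$ of \eqref{EQU:C(x**,x'**)}, coming from cyclically rotating $\ul{a_{1,1}},\dots,\ul{a_{i-1,i-1}}$ past the remaining diagonal elements, must be matched with the Koszul sign $(-1)^{\varepsilon}$ of \eqref{EQU:F(x**,x'**)}, which is declared to be the sign of moving $\ul{a_{1,1}},\dots,\ul{a_{i_0-1,i_0-1}}$ to the end with $i_0=i$; these are the same permutation sign and cancel against one another. Second, the remaining factor $(-1)^{|x_{i,j}|}$ in \eqref{EQU:C(x**,x'**)}, together with the leading ``$1+$'' in $\varepsilon_i$, must reproduce the shift sign converting $\ul{I_{0,0}}(\ul a)(\ul b)$ into $x(a\cdot b)$: the extra minus is exactly the one built into the shifted dual module structure \eqref{EQU:dual-module-structure} (``the extra minus needed for $\be^2=0$ to imply $\be'^2=0$''), while the degree-dependent sign records $\mathfrak{s}_{M^*}\circ I_{0,0}\circ\mathfrak{s}_M^{-1}$ on the $(0,0)$-component.

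I expect the cleanest route is to compute $\ul{I_{0,0}}(\ul a)(\ul b)$ once, directly from \eqref{EQU:dual-module-structure}, record the resulting sign as an explicit function of $|a|$, and then check term-by-term that it absorbs the residual factor from \eqref{EQU:C(x**,x'**)}. As a consistency check before the general computation I would confirm the match when $k=\ell=1$, comparing the four $I_{0,0}$-summands of \eqref{EQU:staircase-product-for-11} with the four terms of $\mc C$ displayed in Example~\ref{ex;cyclicstrair11}; once the sign of $\ul{I_{0,0}}(\ul a)(\ul b)$ is pinned down there, the general case is routine bookkeeping over the single bijection above, yielding $ev_x\big(\StC{\xbb,\xbp}\big)=\StIP{\xbb,\xbp}$ and hence the asserted equality of Massey (inner) product sets in \eqref{EQU:Massey-IP-DEF}.
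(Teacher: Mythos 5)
Your proposal is correct, and it is exactly the argument the paper intends: the lemma is stated there without proof, as a direct unwinding of the definitions, and your three reductions (the cyclic defining systems coincide because the module staircase product of a cdga over itself collapses to \eqref{EQU:S-expanded-for-cyc}; strictness of $I_x$ kills every summand of \eqref{EQU:F(x**,x'**)} with $(r,s)\neq(0,0)$; the surviving terms biject with those of \eqref{EQU:C(x**,x'**)} via $(i,j)\mapsto(x_{i,j},x'_{j+1,i-1})$) are precisely that unwinding. Your sign analysis is also set up correctly: comparing the two general formulas, the residual factor on the $(i,j)$-term is uniformly $(-1)^{|x_{i,j}|+1}=(-1)^{|\ul{x_{i,j}}|}$, which is exactly the Koszul shift sign supplied by the convention $\ul{I_{0,0}}(\ul{a})(\ul{b})=(-1)^{|\ul{a}|}\,x(a\cdot b)$, so the matching closes. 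One practical warning about your proposed $k=\ell=1$ cross-check: Example \ref{ex;cyclicstrair11} is internally inconsistent with \eqref{EQU:C(x**,x'**)} (its $i=1$ terms drop the leading $1$ in $\varepsilon_1$), so comparing that example against \eqref{EQU:staircase-product-for-11} will appear to fail by a sign on those two terms; carry out the check from the general formulas rather than from the displayed examples.
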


\begin{corollary}
Let $f:A\to B$ be a map of graded commutative dgas, and let $y\in B^*$ so that $f^*(y)\in A^*$. Then, for all $[a_{1,1}],\dots,[a_{k+\ell+2,k+\ell+2}]\in H(A)$, we have the inclusion:
\begin{equation}\label{EQU:cdga-IP-inclusion}
\la [a_{1,1}],\dots,[a_{k+\ell+2,k+\ell+2}]\ra_{f^*(y)}\subseteq \la [f(a_{1,1})],\dots,[f(a_{k+\ell+2,k+\ell+2)}]\ra_{y}
\end{equation}
\end{corollary}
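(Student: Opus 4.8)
The plan is to chain together three ingredients already in place: the elementary compatibility $(f^*(y))(a)=y(f(a))$ that defines the pullback dual element, the naturality of the cyclic staircase product under dga maps from Proposition \ref{PROP:cyclic-Massey-naturality}, and the identification $\la\cdots\ra_x = ev_x\circ\la\la\cdots\ra\ra$ from the preceding Lemma (applied once in $A$ with $x=f^*(y)$ and once in $B$ with $x=y$). Note that $y$, and hence $f^*(y)=y\circ f$, is closed, so both evaluation maps are defined on cohomology as in that Lemma.

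First I would unwind the left-hand side. Let $r\in\la [a_{1,1}],\dots,[a_{k+\ell+2,k+\ell+2}]\ra_{f^*(y)}$. By the preceding Lemma this set is $ev_{f^*(y)}\big(\la\la [a_{1,1}],\dots,[a_{k+\ell+2,k+\ell+2}]\ra\ra\big)$, so $r = ev_{f^*(y)}\big([\StC{\xbb,\xbp}]\big) = (f^*(y))\big(\StC{\xbb,\xbp}\big)$ for some cyclic defining system $\xbb,\xbp$ for $a_{1,1},\dots,a_{k+\ell+2,k+\ell+2}$ in $A$.

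Next I would transport this across $f$. Using $(f^*(y))(a)=y(f(a))$ together with the identity $f(\StC{\xbb,\xbp})=\StC{\ybb,\ybp}$ proved inside Proposition \ref{PROP:cyclic-Massey-naturality}---where $\ybb,\ybp$ denotes the image cyclic defining system with entries $y_{i,j}=f(x_{i,j})$, $y'_{i,j}=f(x'_{i,j})$ and diagonal $f(a_{1,1}),\dots,f(a_{k+\ell+2,k+\ell+2})$---I get $r = y\big(\StC{\ybb,\ybp}\big) = ev_y\big([\StC{\ybb,\ybp}]\big)$. Since $[\StC{\ybb,\ybp}]$ lies in $\la\la [f(a_{1,1})],\dots,[f(a_{k+\ell+2,k+\ell+2})]\ra\ra$, a second application of the Lemma (now in $B$) places $r$ in $\la [f(a_{1,1})],\dots,[f(a_{k+\ell+2,k+\ell+2})]\ra_y$, which is precisely the asserted inclusion.

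I do not anticipate a genuine obstacle, as all the real content already resides in Proposition \ref{PROP:cyclic-Massey-naturality} and the preceding Lemma. The only delicate point is purely notational---the clash between the dual element $y\in B^*$ and the image defining system written $\ybb,\ybp$ (whose entries are also called $y_{i,j}$)---which I would keep straight, writing the image system as $f(\xbb),f(\xbp)$ if confusion threatens.
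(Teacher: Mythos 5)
Your proposal is correct and follows essentially the same route as the paper's own (very terse) proof: unwind an element of the left-hand side as $f^*(y)$ evaluated on a cyclic staircase product, rewrite this as $y$ evaluated on $f$ of that staircase product, and invoke Proposition \ref{PROP:cyclic-Massey-naturality} to recognize $f(\StC{\xbb,\xbp})=\StC{\ybb,\ybp}$ as the cyclic staircase product of the image cyclic defining system, hence an element of the right-hand side. The only difference is presentational: you make explicit the two applications of the lemma identifying $\la\cdots\ra_x$ with $ev_x\circ\la\la\cdots\ra\ra$, which the paper leaves implicit.
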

\begin{proof}
If $r$ is in the set on the left side of \eqref{EQU:cdga-IP-inclusion}, then, for some cyclic defining system  $\xbb,\xbp$ for the $a_{j,j}$, we have $r=f^*(y)(\mc C(\xbb,\xbp))=y(f(\mc C(\xbb,\xbp))$, which is in the set on the right side of \eqref{EQU:cdga-IP-inclusion} by Proposition \ref{PROP:cyclic-Massey-naturality}.
\end{proof}

\subsection{Computations}

In this section we give some illustrative examples of nontrivial Massey inner product sets. Although the constructions up to here apply to the cochains of any space, we're primarily interested in manifolds. One knows that for closed manifolds, over a field of characteristic zero, Massey products landing in top degree are all trivial; see \cite{CFM}. Similar arguments (adding an appropriate class to a defining system to kill the resulting top degree form) show that there are no non trivial Massey inner products (in characteristic zero) coming from the top degree Poincar\'e duality pairing. On the other hand, this condition is an obstruction to a pseudo-manifold being a Poincar\'e duality space.

Here we have our first nontrivial Massey inner product set, defined on a $4$-manifold, with pairing supported in degree $2$.

\begin{example}(Filiform example)\label{EXA:filiform-example}
Consider the any compact $4$-manifold obtained by the quotient $M= \Gamma \backslash G$ of the simply connected nilpotent Lie group $G$ by some lattice $\Gamma$, where the Lie algebra of $G$ has generators $v_1,v_2,v_3,v_4$ and nontrivial brackets $[v_1,v_2 ]=v_3$ and $[v_1,v_3]=v_4$. 

This is the so-called filiform Lie algebra $\mathfrak g$ over $\R$ whose dual commutative dga is the minimal model of $M$, whose dual generators are  $e_1, e_2, e_3, e_4$ with relations $d e_3 = e_2 e_1$ and $d e_4 = e_3 e_1$.

One can compute $H^1 =\R.[e_1]\oplus \R.[e_2]$ and $H^2 = \R.[e_1e_4]\oplus \R.[e_2e_3]$. 
There is a nontrivial $(1,1)$ cyclic Massey product set
\[
\la\la e_1,e_1,e_1,e_2\ra\ra=\{2\cdot [e_1e_4]+c\cdot [e_2 e_3]:c \in \R\}\subseteq H(A),
\]
according to \ref{ex;cyclicstrair11}. To see this, note (using notation from \eqref{EQU:11-staircaseIP} with $a_{1,1}=m_{2,2}=a_{3,3}=e_1$ and $m_{4,4}=e_2$) there is a cyclic defining system using
$m_{3,4} = -m_{4,1} = -e_3$, since $d e_3 = e_2 e_1$, and $m_{3,1} = 2 e_4$ since
\[
d(2 e_4) = e_1 e_3 + (-e_3)e_1,
\]
while any other cyclic defining system differs only by adding multiples of the closed elements $e_1$ and $e_2$ at admissible entries, from which it is easy to check this gives the cyclic Massey product set above.

In particular, using the pairing which evaluates on  $[v_1v_4]$, we see there is a nontrivial Massey inner product set, which is a line that does not contain zero.
\end{example}

In our final example, we compute a non-trivial $(1,1)$ Massey inner product for a 4-component link complement in $S^3$. We compute this via two different approaches. The first one follows as in computations given by Massey \cite{Mas68} and others in the literature, e.g. \cite{On}.

\begin{example}[4-component link; approach \#1]\label{EXA:4-component-link}
Let $R=\Z$. Consider the 4-component link $L=L_1\cup L_2\cup L_3\cup L_4\subseteq S^3$ depicted below:
\[
\scalebox{0.4}{
\begin{tikzpicture}
\begin{knot}[clip width=8pt,clip radius=8pt, 
]
\strand [line width=2.5pt,blue]
(-.5,3) to [out=up, in=left] (1,6) to [out=right, in=left] (3,3) to [out=right, in=left] (5,6)
 to [out=right, in=left] (7,3)  to [out=right, in=left] (9,6)  to [out=right, in=left] (11,3)
 to [out=right, in=left] (13,6) to [out=right, in=up] (14.5,3) to [out=down, in=right] (7.5,-.8)
 to [out=left, in=down] (-.5,3);
 \strand [line width=2.5pt,red]
(-.5,7.5) to [out=down, in=left] (1,4.5) to [out=right, in=left] (3,7.5) to [out=right, in=left] (5,4.5)
 to [out=right, in=left] (7,7.5)  to [out=right, in=left] (9,4.5)  to [out=right, in=left] (11,7.5)
 to [out=right, in=left] (13,4.5) to [out=right, in=down] (14.5,7.5) to [out=up, in=right] (7.5,10.5)
  to [out=left, in=up] (-.5,7.5);
 \strand [line width=2.5pt,Green]
(7,6.5) to [out=right, in=right] (7,11.5) to [out=left, in=left] (7,6.5) ;
 \strand [line width=2.5pt,Yellow!70!black]
(3.2,3.8) to [out=right, in=left] (7,2) to [out=right, in=left] (10.8,4) to [out=right, in=up] (11.6,2.8)
to [out=down, in=right] (7,.4) to [out=left, in=down] (2.4,2.8) to [out=up, in=left] (3.2,3.8);
\flipcrossings{2,4, 6, 8, 9, 12, 13, 16}
\end{knot}
\end{tikzpicture}}
\]

We'll compute using relative chains and intersection, which we recall is related to cohomology with cup product in the following way. For each $i$, let $U_i\subseteq U'_i\subseteq S^3$ be open regular neighborhoods of $L_i$ with $\overline{U}_i\subseteq U'_i$, and assume these $U_i$ are sufficiently small so that their closures are disjoint (and similarly for the $U'_i$). Set $U=U_1\cup U_2\cup U_3\cup U_4$ (and similarly for $U'$) and let $X=S^3\setminus U$ be the complement, which is a manifold with boundary $\del X=\del U =\overline U\setminus U$. Then,
\[
H^k(X)\cong H_{3-k}(X,\del X)\cong H_{3-k}(S^3\setminus U,\overline{U'}\setminus U)\cong H_{3-k}(S^3,\overline {U'})\cong H_{3-k}(S^3,L)
\]
by Lefschetz duality, homotopy invariance, and excision. Under this isomorphism, the cup product is identified with the intersection product.  Finally, using the long exact sequence of $(S^3,L)$, we see that $H_1(S^3,L)\cong \Z^{\oplus 3}$ and $H_2(S^3,L)\cong \Z^{\oplus 4}$. 

Now, consider the closed relative chains $a,m,b,n\in C_2(S^3,L)$ and $r,s,t,u,v,w\in C_1(S^3,L)$ and also two more chains $p, q\in C_2(S^3,L)$ as shown below.
\[
\scalebox{0.4}{
\begin{tikzpicture}
 \filldraw [line width=0, fill=blue!60, opacity=0.6, path fading=fade out]
(-.5,3) to [out=up, in=left] (1,6) to [out=right, in=left] (3,3) to [out=right, in=left] (5,6) 
 to [out=right, in=left] (7,3)  to [out=right, in=left] (9,6)  to [out=right, in=left] (11,3)
 to [out=right, in=left] (13,6) to [out=right, in=up] (14.5,3) to [out=down, in=right] (7.5,-.8)
 to [out=left, in=down] (-.5,3);
 \filldraw [line width=0, fill=red!40, opacity=0.5, path fading=fade out]
(-.5,7.5) to [out=down, in=left] (1,4.5) to [out=right, in=left] (3,7.5) to [out=right, in=left] (5,4.5) 
 to [out=right, in=left] (7,7.5)  to [out=right, in=left] (9,4.5)  to [out=right, in=left] (11,7.5)
 to [out=right, in=left] (13,4.5) to [out=right, in=down] (14.5,7.5) to [out=up, in=right] (7.5,10.5)
  to [out=left, in=up] (-.5,7.5);
 \filldraw [line width=0, fill=Green!40, opacity=0.8, path fading=east]
(7,6.5) to [out=right, in=right] (7,11.5) to [out=left, in=left] (7,6.5) ;
 \filldraw [line width=0, fill=Yellow!80!black, opacity=0.4, path fading=south]
(3.2,3.8) to [out=right, in=left] (7,2) to [out=right, in=left] (10.8,4) to [out=right, in=up] (11.6,2.8)
to [out=down, in=right] (7,.4) to [out=left, in=down] (2.4,2.8) to [out=up, in=left] (3.2,3.8);

 \filldraw [draw=none, pattern color=brown, pattern=north east lines]
(-.5,7.5) to [out=down, in=left] (1,4.5) to [out=right, in=left] (3,7.5) to [out=right, in=left] (5,4.5) 
 to [out=right, in=left] (7,7.5) to [out=up, in=down] (7,10.5)
  to [out=left, in=up] (-.5,7.5);
 \filldraw [draw=none, pattern color=Orange, opacity=0.5, pattern=north west lines]
(-.5,3) to [out=up, in=left] (1,6) to [out=right, in=left] (3,3) to [out=-40, in=left] (7,1.2)
 to [out=right, in=220] (11,3) to [out=right, in=left] (13,6) to [out=right, in=up] (14.5,3) 
 to [out=down, in=right] (7.5,-.8) to [out=left, in=down] (-.5,3);

\draw [line width=2.5pt, dotted] (7,7.5) to [out=up, in=down] (7,10.5);
\draw [line width=2.5pt, dotted](3,3) to [out=-40, in=left] (7,1.2) to [out=right, in=220] (11,3);
\draw [line width=2.5pt, dotted] (.9,4.5) to [out=up, in=down] (.9,6);
\draw [line width=2.5pt, dotted] (5,4.5) to [out=up, in=down] (5,6);
\draw [line width=2.5pt, dotted] (9,4.5) to [out=up, in=down] (9,6);
\draw [line width=2.5pt, dotted] (13.1,4.5) to [out=up, in=down] (13.1,6);

\draw (7.3,9) node {\scalebox{2}{$r$}}; \draw (9.6,2.3) node {\scalebox{2}{$s$}};
\draw (1.2,5.3) node {\scalebox{2}{$t$}}; \draw (5.3,5.3) node {\scalebox{2}{$u$}};
\draw (9.3,5.3) node {\scalebox{2}{$v$}}; \draw (13.4,5.3) node {\scalebox{2}{$w$}};

\filldraw[pattern color=Orange, opacity=0.5, draw=none, pattern=north west lines] (0,-.3) circle (.5);
\draw [->, line width=1.5pt, Orange] (.4,0)--(1.4,.5);

\filldraw[pattern color=brown, opacity=0.5, draw=none, pattern=north east lines] (2.3,11) circle (.5);
\draw [->, line width=1.5pt, brown] (2.7,10.7)--(3.2,10);

\draw [Green] (7.4,11) node {\scalebox{2}{$a$}};
\draw [red] (11.6,9) node {\scalebox{2}{$m$}};
\draw [Yellow!50!black] (11,3.5) node {\scalebox{2}{$b$}}; 
\draw [blue] (8.5,3.4) node {\scalebox{2}{$n$}};
\draw [brown] (2.3,11) node {\scalebox{2}{$p$}}; 
\draw [Orange] (0,-.3) node {\scalebox{2}{$q$}};

\begin{knot}[clip width=8pt,clip radius=0pt, 
]
\strand [line width=2.5pt,blue]
(-.5,3) to [out=up, in=left] (1,6) to [out=right, in=left] (3,3) to [out=right, in=left] (5,6) 
 to [out=right, in=left] (7,3)  to [out=right, in=left] (9,6)  to [out=right, in=left] (11,3)
 to [out=right, in=left] (13,6) to [out=right, in=up] (14.5,3) to [out=down, in=right] (7.5,-.8)
 to [out=left, in=down] (-.5,3);
 \strand [line width=2.5pt,red]
(-.5,7.5) to [out=down, in=left] (1,4.5) to [out=right, in=left] (3,7.5) to [out=right, in=left] (5,4.5) 
 to [out=right, in=left] (7,7.5)  to [out=right, in=left] (9,4.5)  to [out=right, in=left] (11,7.5)
 to [out=right, in=left] (13,4.5) to [out=right, in=down] (14.5,7.5) to [out=up, in=right] (7.5,10.5)
  to [out=left, in=up] (-.5,7.5);
 \strand [line width=2.5pt,Green]
(7,6.5) to [out=right, in=right] (7,11.5) to [out=left, in=left] (7,6.5) ;
 \strand [line width=2.5pt,Yellow!70!black]
(3.2,3.8) to [out=right, in=left] (7,2) to [out=right, in=left] (10.8,4) to [out=right, in=up] (11.6,2.8)
to [out=down, in=right] (7,.4) to [out=left, in=down] (2.4,2.8) to [out=up, in=left] (3.2,3.8);
\flipcrossings{2,4, 6, 8, 9, 12, 13, 16}
\end{knot}
\end{tikzpicture}}
\]
Then, we have the following transversal intersection products $\pitchfork$ in $C_\bu(S^3,L)$:
\[
a\pitchfork m=r= \pm\del p,\quad b\pitchfork n=s=\pm\del q,\quad\text{ and }\quad m\pitchfork b=n\pitchfork a =p\pitchfork b=q\pitchfork a=0,
\]
which gives the cyclic defining system, and using $p\pitchfork q=\pm t$, the cyclic staircase product $\mc C$:
\[
\begin{matrix}
&  & \ceco 0 & \ceco 0 &  a\\
&  & \ceco \pm q &\ceco n \\
 \ceco 0 & \ceco 0 &   b\\
 \ceco \pm p  & \ceco m \\
 a  
\end{matrix}
\quad \implies\quad  \mc C=\pm 0\pitchfork n\pm m\pitchfork 0\pm p\pitchfork q\pm 0\pitchfork 0=\pm t
\]

To get the general form for a cyclic defining system we add linear combinations of cycles $a,m,b,n$ which are representatives of a basis of $H_2(S^3,L)$, and keep solving the conditions of the defining system. Since the only non-trivial intersection products on homology come from $[p\pitchfork q]=\pm[t]$, $[p\pitchfork n]=\pm[t+u]=\pm2[t]$, $[m\pitchfork q]=\pm[t+w]=\pm 2[t]$, and $[m\pitchfork n]=\pm [t+u+v+w]=\pm 4[t]$, this gives the following non-trivial cyclic Massey product:
\[
\la\la a,m,b,n \ra\ra=\{\quad (2c+1)\cdot [t]\quad |\quad c\in \Z\quad \}
\]

Note that the quadruple Massey product $\la a,m,b,n\ra$ for this link is the empty set: there is no defining system because there is no trivialization of the triple Massey product $\la m, b, n\ra=\{(4c+2)\cdot [t] \,\, |\,\, c\in \Z \}$).

\end{example}

The intersection of chains in a manifold has been formalized as a \emph{partial algebra}, and shown to be functorially equivalent to a (fully defined) $A_\infty$ algebra (over $\R$) and to an $E_\infty$ algebra (over $\Z$), \cite{SW1}.
In short, intersection yields a structure defined on the subcomplex of chains in general position, and one may replace this by a fully defined structure on an equivalent complex, using the bar construction. 

It is reasonable to expect that the entire setup of defining systems and Massey-type products can be extended to the category of partial algebras, though we do not pursue it here. On the other hand, the intersection product on homology is identified with the cup product in cohomology via the Lefschetz duality map described above. So, one expects an equivalence between the infinity structures on chains  with that of cochains, using local homotopy arguments as in \cite{TZS07}, both for the product and inner product structures. Completing this sketch, together with naturality, would establish that the previous computation with chains yields the Massey inner product sets cohomologically as well.

In our second approach, we compute using the transferred $A_\infty$ structure over $\Z_2$ (see Theorem \ref{THM:transfer-theorem}, \ref{thm;pullbacknat}) and the induced Massey inner product, again using representative chains that are in general position. It turns out that the higher $(1,1)$-part of the inner product contains the non-trivial Massey inner product information.

\begin{example}[4-component link; approach \#2]
Let $R=\Z_2$, and use the notation from the previous approach to this example. In the current approach, we will restrict our computations  to the chains and classes relevant for determining the specific Massey inner product set.

Let $B=C_\bu(S^3,L)$ with the intersection product and let $A=H_\bu(S^3,L)$ with generators $[a], [m], [b], [n]$ in degree $2$ and $[t]$ in degree $1$. We use the formulas in Setup \ref{SETUP:transfer-theorem} to get an induced $A_\infty$ algebra structure on $A$, where we take 
$f: B\to A$ to be a projection onto homology with $f(a)=[a]$, $f(r)=0$, and similarly for the other generators from the previous approach,
 we take $g:A\to B$ to be a map that chooses the given representative from the previous approach, i.e., $g([a])=a$, and similarly for the other generators, and we take
$h: B \to B$ to be a chain homotopy with $h(r)=p$, $h(s)=q$, and $h(t)=0$, which is consistent with $\del p=r$ and  $\del q=s$.

We want to find a cyclic defining system of $[a],[m],[b],[n]$. Note, that all relevant $\mu_2$ and $\mu_3$ products vanish; for example, $\mu_2([a],[m])=f\circ \nu_2(g[a],g[m])=f(a\pitchfork m)=f(r)=0$ (since $r$ is exact, and so it vanishes in homology), or 
\begin{multline*}
\mu_3([a],[m],[b])=f\circ (\nu_2(h\circ \nu_2\otimes id)+\nu_2(id\otimes h\circ \nu_2))(g[a],g[m],g[b]) \\
= f(h(a\pitchfork m)\pitchfork b+a\pitchfork h(m\pitchfork b)) = f(h(r)\pitchfork b+a\pitchfork h(0))=f(p\pitchfork b)=0.
\end{multline*}
Thus, we have a cyclic defining system for $[a],[m],[b],[n]$ given by all zeros off the diagonal.

We use the inner product $I_{t^*}:B\to B^*$ on $B$ (see Example \ref{EXA:cdga=>IP}\eqref{ITM:cdga-example}) given by the intersection product composed with evaluating by $t^*$. The cyclic staircase product $\mc I$ in $A$ is given by \eqref{EQU:staircase-product-for-11} where the $A_\infty$ inner product is the pull back inner product $K^\sharp(I_{t^*})$ for the module morphism $K$ coming from the transfer theorem \ref{THM:transfer-theorem}:
\begin{multline*}
\mc I=K^\sharp(I_{t^*})_{1,1}([a],[m],[b])([n])=I_{t^*}(k_2([a],[m]))(k_2([b],[n]))+I_{t^*}(k_2([m],[b])(k_2([n], [a]))
\\ +I_{t^*}(k_3([a],[m],[b]))(k_1([n]))+I_{t^*}(k_1([m]))(k_3([b],[n],[a]))=t^*(p \pitchfork q)+t^*(0\pitchfork 0)=t^*(t)=1,
\end{multline*}
where we used that all off diagonal terms in the cyclic defining system are zero in the first equality, and $k_3$ vanishes on the terms in the third equality. Any other cyclic staircase produces the same result over $\Z_2$, by the same reasoning as in the previous approach of this example, so the $(1,1)$ Massey inner product set is simply 
\[
\la[a],[m],[b],[n]\ra_{K^\sharp(I_{t^*})}=\{1\}.
\]

To sum up, the intersection product and inner product $I_{t^*}$ were transferred to give vanishing multiplications on the relevant inputs, but the non-zero $(1,1)$-part of the pull back inner product $K^\sharp(I_{t^*})$ gives a non-trivial Massey inner product set.
\end{example}

\appendix 

\section{A minimal $A_\infty$ model for the lens space $L(p,q)$ over $\Z_p$}\label{APP:L(p,q)-mimimal}

In order to perform computations, we express the $A_\infty$ conditions in the dual language of $A_\infty$ coalgebras.

\subsection{$A_\infty$ coalgebras}\label{SEC:A-infty-coalg}

Assume that $R$ is a (possibly finite) field. We want $A=\bigoplus_{k\geq 0}A_k$ to come from a dual $A_\infty$ coalgebra, i.e., we assume that each $A_k=C_k^*=Hom(C_k,R)$, and that $C=\bigoplus_{k\geq 0} C_k$ is an $A_\infty$ coalgebra.

This means that there are maps $\Delta_k:C\to C^{\otimes k}$ for $k=1, 2, 3, \dots$ of degree $|\Delta_k|=k-2$, which after shifting down by one become maps $\ul{\Delta_k}:\ul{C}\to \ul{C}^{\otimes k}$ of degree $|\ul{\Delta_k}|=-1$, 
satisfying
\begin{equation}\label{EQU:ULDelta_ULDelta}
 \sum_{k+\ell-1=n}\,\, \sum_{i+j+1=k} (\underbrace{id\otimes \dots \otimes id}_{i\text{ many}}  \otimes\ul{\Delta_\ell}\otimes \underbrace{id\otimes \dots \otimes id}_{j\text{ many}})\circ \ul{\Delta_k}=0, \quad \forall n\geq 1
\end{equation}
Note, that the sign of each term is always ``$+$'', indicating that in the shifted setting, all signs only come from the Koszul rule. We will sometimes simply refer to this equation as $\Delta^2=0$ (when lifting $\Delta$ as a derivation of the tensor algebra).

An $A_\infty$ coalgebra morphism $r:G\to C$ between $A_\infty$ coalgebras $(G,\dDel)$ and $(C,\Delta)$ consists of maps $r_k:G\to C^{\otimes k}$ for $k=1,2,3, \dots$ of degree $|r_k|=k-1$, such that the shifted maps $\ul{r_k}:\ul{G}\to \ul{C}^{\otimes k}$  (of degree $|\ul{r_k}|=0$) satisfy
\begin{multline}\label{EQU:r_r}
\sum_{k+\ell-1=n}\,\, \sum_{i+j+1=k}  (\underbrace{id\otimes \dots \otimes id}_{i\text{ many}}  \otimes\ul{\Delta_\ell}\otimes \underbrace{id\otimes \dots \otimes id}_{j\text{ many}})\circ \ul{r_k}
\\
=\sum_{\ell_1+\dots+\ell_k=n}   (\ul{r_{\ell_1}},\dots,\ul{r_{\ell_k}})\circ \ul{\dDel_k}=0, \quad \forall n\geq 1
\end{multline}
In short, we may state this equation as $\Delta\circ r=r\circ \dDel$ (where $r$ is lifted to the tensor algebra as an algebra map). We note, that we may determine the signs in \eqref{EQU:ULDelta_ULDelta} and \eqref{EQU:r_r} for the unshifted maps via the shift operators just as we did in Section \ref{SEC:background-A-infty}.

\begin{lemma}
Let $(C=\bigoplus_{k\geq 0} C_k,\Delta_1,\Delta_2, \dots)$ be an $A_\infty$ coalgebra such that each $C_k$ is finite dimensional. Then $(A,\mu_1,\mu_2,\dots)$ defined by setting $A_k:=C_k^*$ and $\mu_k:=\Delta_k^*$ for each $k$, is an $A_\infty$ algebra. Moreover, dualizing an $A_\infty$ coalgebra map $r:G\to C$ by setting $f_k:=r_k^*$ for each $k$ gives an $A_\infty$ algebra map $f=r^*:C^*\to G^*$.
\end{lemma}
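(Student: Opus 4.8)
The plan is to prove both statements by direct dualization of the defining relations, exploiting the fact that the $A_\infty$ coalgebra axioms \eqref{EQU:ULDelta_ULDelta} and \eqref{EQU:r_r} are term-by-term transposes of the $A_\infty$ algebra axioms \eqref{EQU:ULmu_ULmu} and \eqref{EQU:f_f}. The starting observation is that, because each $C_k$ is finite dimensional and all structure maps are homogeneous, the graded dual satisfies the canonical isomorphism $(C^{\ot k})^* \cong (C^*)^{\ot k} = A^{\ot k}$; this is the one and only place where finite dimensionality enters. Under this identification $\mu_k := \Delta_k^*$ is genuinely a map $A^{\ot k}\to A$, and since transposition negates the degree (the dual of the piece in internal degree $m$ again sits in degree $m$), we get $|\mu_k| = -(k-2) = 2-k$, exactly the required degree, with $\ul{\mu_k} = \ul{\Delta_k}^*$ of shifted degree $+1$.

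For the algebra relation I would work entirely in the shifted setting, where the coalgebra axiom reads
\[
\sum_{k+\ell-1=n}\,\,\sum_{i+j+1=k}(\underbrace{id\ot\dots\ot id}_{i}\ot \ul{\Delta_\ell}\ot \underbrace{id\ot\dots\ot id}_{j})\circ \ul{\Delta_k}=0.
\]
Applying $(-)^*$ and using $(g\circ h)^*=h^*\circ g^*$ together with the graded identity $(\phi\ot\psi)^* = \phi^*\ot\psi^*$ (with its Koszul sign), each summand transposes to $\ul{\mu_k}\circ(id^{\ot i}\ot \ul{\mu_\ell}\ot id^{\ot j})$, so the dualized identity is precisely \eqref{EQU:ULmu_ULmu}. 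Applying the first part to $G$ as well endows $G^*$ with the $A_\infty$ algebra structure $\nu := \dDel^*$. The same mechanism then handles the morphism: dualizing \eqref{EQU:r_r} with $f_k := r_k^*$ sends the left-hand side to $\sum \ul{f_k}\circ(id^{\ot i}\ot\ul{\mu_\ell}\ot id^{\ot j})$ and the right-hand side to $\sum \ul{\nu_k}\circ(\ul{f_{\ell_1}}\ot\dots\ot\ul{f_{\ell_k}})$, which is exactly \eqref{EQU:f_f} for $f=r^*:(C^*,\mu)\to (G^*,\nu)$. The degree check $|f_k|=-(k-1)=1-k$ is immediate.

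The hard part will be the sign bookkeeping, i.e.\ verifying that transposition commutes with the Koszul sign rule for tensor products of graded maps, so that the transposed summands of \eqref{EQU:ULDelta_ULDelta} and \eqref{EQU:r_r} reproduce the summands of \eqref{EQU:ULmu_ULmu} and \eqref{EQU:f_f} \emph{including} their signs. The reason to pass to shifted degrees first is precisely that there all signs in the coalgebra axioms arise solely from the Koszul rule, and the shift operators $\mathfrak{s}$ relating the shifted and unshifted maps are normalized (as in Section \ref{SEC:background-A-infty}) so that dualization and shifting are compatible. Concretely, I would fix one convention for the transpose of a homogeneous graded map, verify the two-factor identity $(\phi\ot\psi)^*=\pm\,\phi^*\ot\psi^*$ once with its explicit Koszul sign, and then invoke it uniformly across all summands.

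An alternative, more conceptual route worth recording is the following. An $A_\infty$ algebra structure on $A$ is the same as a square-zero degree $+1$ coderivation on the tensor coalgebra $T^c(\ul A)$, while an $A_\infty$ coalgebra structure on $C$ is a square-zero degree $-1$ derivation $D$ on the tensor algebra $T(\ul C)$. Under the finiteness hypothesis the graded dual of $T(\ul C)$ is $T^c(\ul C^*)=T^c(\ul A)$, so $D^*$ is automatically a coderivation and $(D^2)^*=(D^*)^2=0$ yields the $A_\infty$ algebra at once; the morphism statement then follows by dualizing the induced algebra map of tensor algebras. I expect the term-by-term argument to be the cleanest to write, with this coderivation reformulation serving as a conceptual cross-check.
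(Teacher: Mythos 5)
Your proposal is correct and takes essentially the same approach as the paper, whose entire proof consists of the observation that dualizing \eqref{EQU:ULDelta_ULDelta} gives \eqref{EQU:ULmu_ULmu} and dualizing \eqref{EQU:r_r} gives \eqref{EQU:f_f}. The points you elaborate — finite dimensionality giving $(C^{\ot k})^*\cong (C^*)^{\ot k}$, the degree count, and the compatibility of transposition with the Koszul signs in the shifted setting — are exactly the details the paper leaves implicit.
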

Indeed, dualizing Equation \eqref{EQU:ULDelta_ULDelta} gives Equation  \eqref{EQU:ULmu_ULmu}, while dualizing Equation \eqref{EQU:r_r} gives \eqref{EQU:f_f}.

\subsection{Setup and main statements}
We fix integers $p>2$ and $1<q<p$. We now review a well-known presentation of the lens space $L(p,q)$ for these $p$ and $q$; see for example \cite[p.133, Sec. 2.1 Ex. 8]{H}.

\begin{setup}
We use the space $C=C_\bu(L(p,q),\Z_p)$ with the following generators:
\begin{equation*}
C= \Z_p<
\unbr{\cA,\cB}_{\text{degree }0}, 
\unbr{\ca, \cb, \cc_0, \dots, \cc_{p-1}}_{\text{degree }1} ,
\unbr{\cal_0, \dots, \cal_{p-1}, \cbe_0, \dots, \cbe_{p-1}}_{\text{degree }2},
\unbr{\cs_0, \dots, \cs_{p-1}}_{\text{degree }3} >
\end{equation*}

In the following we will write indices that always need to be interpreted mod $p$. For example, if $p=5, q=3, j=2$, then $\cc_{j-q}=\cc_{-1}=\cc_4$, etc. We picture these generators as follows:
\[
\scalebox{0.95}{\begin{tikzpicture}
\draw (4,0)--(7,4)--(4,6)--(1,4)--(4,0); 
\draw (7,4)--(5.5,3)--(3.3,3)--(1,4);
\draw (5.5,3)--(4,0)--(3.3,3)--(4,6)--(5.5,3);
\draw[dashed] (1,4)--(2.5,4.2)--(7,4);
\draw[dashed] (4,0)--(2.5,4.2)--(4,6)--(4,0);

 \node at (0.8,4) {\scb{$\cA$}};  \node at (7.15,4) {\scb{$\cA$}}; 
 \node at (3.5,3.2) {\scb{$\cA$}};  \node at (5.5,2.8) {\scb{$\cA$}}; 
 \node at (4.3,0) {\scb{$\cB$}};  \node at (4.4,6) {\scb{$\cB$}}; 
 \node at (1.5,5.7) {\scb{$\cal_{p-1}$}};  \draw[->] (1.5,5.5) to[out=270,in=180] (2,4.5);
 \node at (5.6,5.7) {\scb{$\cal_0$}};       \draw[->] (5.3,5.7) to[out=180,in=0] (4.5,4.6);
 \node at (6.5,5.7) {\scb{$\cal_1$}};       \draw[->] (6.5,5.5) to[out=270,in=0] (6,4.5);
 \node at (1.5,.7) {\scb{$\cal_{p-1-q}$}};  \draw[->] (1.5,.9) to[out=90,in=180] (3,1.5);
 \node at (5.65,.7) {\scb{$\cal_{-q}$}};       \draw[->] (5.3,.7) to[out=180,in=270] (4.5,1.5);
 \node at (6.5,.7) {\scb{$\cal_{1-q}$}};       \draw[->] (6.5,.9) to[out=90,in=0] (5,1.5);
 \node at (4.5,3.5) {{$\cs_0$}};   \node at (5.5,3.9) {{$\cs_1$}};  
 \node at (2.6,3.9) {{$\cs_{p-1}$}}; 
  \node at (2.4,3.3) {\scb{$\ca$}};    \node at (4.5,2.9) {\scb{$\ca$}};
 \node at (6.1,3.3) {\scb{$\ca$}};   \node at (4,1.3) {\scb{$\cb$}}; 
  \node at (2.5,5.2) {\scb{$\cc_{p-1}$}};   \node at (3.8,5.1) {\scb{$\cc_{0}$}}; 
  \node at (4.4,5.1) {\scb{$\cc_{1}$}};   \node at (5.4,5.2) {\scb{$\cc_{2}$}}; 
  \node at (2,2.2) {\scb{$\cc_{p-1-q}$}};   \node at (3.6,2.1) {\scb{$\cc_{-q}$}}; 
  \node at (5.1,2.1) {\scb{$\cc_{1-q}$}};   \node at (6,2.2) {\scb{$\cc_{2-q}$}}; 

 \node at (1,2.3) {\scb{$\cbe_{p-1}$}};  
 \draw (1,2.5) to[out=90,in=180] (2,2.7);  \draw[dotted, ->] (2,2.7) to[out=0,in=180] (2.3,2.7);
 \node at (1,1.7) {\scb{$\cbe_{0}$}};  
 \draw (1.2,1.7) to[out=0,in=180] (3,1.7);  \draw[dotted, ->] (3,1.7) to[out=0,in=180] (3.8,1.7);
 \node at (7,1.7) {\scb{$\cbe_{1}$}};      
 \draw (6.8,1.7) to[out=180,in=0] (5,1.7);  \draw[dotted, ->] (5,1.7) to[out=180,in=0] (4.5,1.7);
 \node at (7,2.3) {\scb{$\cbe_{2}$}};       
 \draw (7,2.5) to[out=90,in=0] (6,2.7);  \draw[dotted, ->] (6,2.7) to[out=180,in=0] (5.7,2.7);
  \end{tikzpicture} }
 \hspace{.8cm}
 \scalebox{0.85}{\begin{tikzpicture}
\draw (11,3.5)--(9.3,0)--(8,3)--(9.3,6)--(11,3.5)--(8,3);
\draw[dashed] (9.3,0)--(9.3,6);
 \node at (7.8,2.8) {$\cA$};  \node at (11,3.2) {$\cA$}; 
  \node at (9.6,0) {$\cB$};  \node at (9.7,6) {$\cB$}; 
 \node at (10,3.5) {$\ca$};  \node at (9.1,2) {$\cb$}; 
 \node at (8.3,4.5) {$\cc_j$};  \node at (11,4.5) {$\cc_{j+1}$}; 
 \node at (8.1,1.5) {$\cc_{j-q}$};  \node at (10.8,1.5) {$\cc_{j+1-q}$}; 
 \node at (12,5.5) {$\cal_j$};  \node at (12,2.5) {$\cal_{j-q}$}; 
 \draw[->] (11.5,5.5) to[out=190,in=80] (9.7,4.8);
 \draw[->] (11.5,2.5) to[out=190,in=30] (10,2.3);
 \node at (7.2,4.5) {$\cbe_j$};  \node at (12.5,4.5) {$\cbe_{j+1}$}; 
 \draw[] (7.5,4.5) to[out=-20,in=150] (8.3,3.7);
  \draw[dotted, ->] (8.3,3.7) to[out=-30,in=150] (8.8,3.6);
 \draw[] (12.2,4.3) to[out=210,in=0] (10.7,3.9);
 \draw[dotted, ->] (10.7,3.9) to[out=180,in=-10] (10.2,4);
 \node at (8.2,6) {\scalebox{1.4}{$\cs_j:$}};  
 \draw (9.5,3.4)--(9.7,3.28)--(9.5,3.1);   \draw (9.2,1.6)--(9.3,1.4)--(9.4,1.6);
 \draw (8.92,1.18)--(8.9,.9)--(8.7,1.1);   \draw (9.75,1.18)--(9.74,.9)--(9.95,1.1);  
 \draw (8.89,4.7)--(8.86,4.98)--(8.64,4.78);   \draw (10.04,4.68)--(9.99,4.98)--(10.25,4.86);  
 \end{tikzpicture} }
\]
Here, the $\cal_j$ are the $2$-simplicies that are on the boundary of the $3$-ball, which identify the top with the bottom of its boundary $2$-sphere, whereas the $\cbe_j$ are the $2$-simplicies in the interior of the $3$-ball, whose interior face $\cb$ connects the north- and south-poles $\cB$.

The differential $\del=\Delta_1:C\to C$ is given by:
\begin{align*}
& \del(\cA)=0, &&\del(\cB)=0, \\
& \del(\ca)=0,&& \del(\cb)=0, \\
&&& \del(\cc_j)=\cB-\cA, \\
& \del(\cal_j)=\cc_{j+1}-\cc_j+\ca, \hspace{-1cm}&&\del(\cbe_j)= \cb-\cc_{j-q}+\cc_{j},\\
& \del(\cs_j)=\cbe_{j+1}-\cbe_j+\cal_{j-q}-\cal_j\hspace{-2cm}
\end{align*}
The usual Alexander-Whitney product gives the following coproducts $\Del_2:C\to C^{\ot 2}$:
\begin{align*}
& \Del_2(\cA)=\cA\ot\cA, &&\Del_2(\cB)=\cB\ot\cB, \\
& \Del_2(\ca)=\cA\ot\ca+\ca\ot \cA, &&\Del_2(\cb)=\cB\ot\cb+\cB\ot\cb, \\
&&& \Del_2(\cc_j)=\cA\ot\cc_j+\cc_j\ot \cB, \\
& \Del_2(\cal_j)=\cA\ot\cal_j+\ca\ot \cc_{j+1}+\cal_j\ot\cB,\hspace{-3cm} \\
& \Del_2(\cbe_j)= \cA\ot\cbe_j+\cc_j\ot \cb+\cbe_j\ot\cB,\hspace{-3cm}\\
& \Del_2(\cs_j)=\cA\ot\cs_j+\ca\ot \cbe_{j+1}+\cal_j\ot \cb+\cs_j\ot\cB \hspace{-4cm}
\end{align*}

We next combine $\del$ and $\Del_2$ and interpret them as an $A_\infty$ coalgebra structure. In order to get a more convenient sign, we first apply the co-dga isomorphism $(C,\del,\Del_2)\to (C,-\del,\Del_2)$, $c\mapsto (-1)^{|c|}\cdot c$ for all $c\in C$. Next, we shift the degrees of $-\del$ and $\Del_2$ down by one, so that we obtain the following total $A_\infty$ coalgebra structure $\unDel:\ul C\to \ul C\oplus \ul C^{\ot 2}$, where we suppress the ``$\otimes$''-symbols:\begin{align}
\label{EQU:Del-A-B}
& \unDel(\uA)=-\uA\uA, 
&&\unDel(\uB)=-\uB\uB, \\
& \unDel(\ua)=-\uA\ua+\ua \uA, 
&&\unDel(\ub)=-\uB\ub+\ub\uB, \\
&&& \unDel(\uc_j)=\uA-\uB-\uA\uc_j+\uc_j \uB, \\
& \unDel(\ual_j)=-\uc_{j+1}+\uc_j-\ua-\uA\ual_j+\ua \uc_{j+1}-\ual_j\uB,\hspace{-3cm} \\
& \unDel(\ube_j)=  -\ub+\uc_{j-q}-\uc_{j}-\uA\ube_j+\uc_j \ub-\ube_j\uB,\hspace{-3cm}\\
\label{EQU:Del-sj}
& \unDel(\us_j)=-\ube_{j+1}+\ube_j-\ual_{j-q}+\ual_j-\uA\us_j+\ua \ube_{j+1}-\ual_j \ub+\us_j\uB \hspace{-8cm}
\end{align}
\end{setup}

\[
\textit{From here on we assume that $p$ is a prime $\geq 3$.}
\]
 It is well-known that the homology of $L(p,q)$ with coefficients in $\Z_p$ is $H_\bu(L(p,q),\Z_p)=\Z_p\oplus \Z_p\oplus \Z_p\oplus \Z_p$ with one generator in degrees $0, 1, 2$, and $3$, respectively; see \cite{H}. We give this the following $A_\infty$ coalgebra structure, which we claim describes a minimal $A_\infty$ coalgebra structure of $C$.
\begin{definition}\label{DEF:L(p,q)-minimal-model}
Let 
\[
H=H_\bu(L(p,q),\Z_p)\cong \Z_p<\cE,\cX,\cY,\cZ>
\]
with degrees given by $|\cE|=0, |\cX|=1, |\cY|=2, |\cZ|=3$. Denote by $\Delta$ the $A_\infty$ coalgebra structure given by
\begin{align*}
&\unDel(\uE)=-\uE\uE,\\
&\unDel(\uX)=\uX\uE-\uE\uX, \\
&\unDel(\uY)=-\uY\uE-\uE\uY-\uX^{\otimes p}\\
&\unDel(\uZ)=\uZ\uE-\uE\uZ+\uX\uY-\uY\uX
\end{align*}
Note, that the shifted degrees are $|\uE|=-1, |\uX|=0, |\uY|=1, |\uZ|=2$. Moreover, we note that $\Delta^2=0$ in the sense of Equation \eqref{EQU:ULDelta_ULDelta}.
\end{definition}

We will define an $A_\infty$ coalgebra quasi-isomorphism $C\to H$  which is given as a composition of two $A_\infty$ coalgebra quasi-isomorphisms 
\begin{equation}\label{EQU:C-H-H}
C\stackrel r\to \dH\stackrel {s}\to H
\end{equation} in the following two Sections \ref{SEC:dH->H} and \ref{SEC:C->dH}. Here, $\dH$ is a variation of $H$ with a slightly more general version of the $A_\infty$ coalgebra structure of $H$ (see Proposition \ref{PROP:H'-to-H}). The map $s$ is defined in Section \ref{SEC:dH->H}, while $r$ is defined in Section \ref{SEC:C->dH}. We then obtain the following.
\begin{corollary}
$C$ is quasi-isomorphic as an $A_\infty$ coalgebra to $H_\bu(L(p,q),\Z_p)$ from Definition \ref{DEF:L(p,q)-minimal-model}. Dually, this means that $C^*$ is quasi-isomorphic as an $A_\infty$ algebra to $H^\bu(L(p,q),\Z_p)$ with the $A_\infty$ structure described in Proposition \ref{PROP:HL(p,q)-minimal-model}.
\end{corollary}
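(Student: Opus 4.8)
The plan is to assemble the desired map as the composite of the two quasi-isomorphisms built in the next two subsections, and then dualize. First I would take the $A_\infty$ coalgebra quasi-isomorphism $s\colon\dH\to H$ from Section \ref{SEC:dH->H} (whose existence is Proposition \ref{PROP:H'-to-H}) together with the $A_\infty$ coalgebra quasi-isomorphism $r\colon C\to\dH$ from Section \ref{SEC:C->dH}. Their composite $s\circ r\colon C\to H$ is again an $A_\infty$ coalgebra morphism: lifting $r$ and $s$ to algebra maps of the tensor algebras, the relation \eqref{EQU:r_r} is preserved under composition, so the components of $s\circ r$ satisfy \eqref{EQU:r_r}, and its linear part is $(s\circ r)_1=s_1\circ r_1$.

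To see that $s\circ r$ is a quasi-isomorphism it suffices to check that its linear part induces an isomorphism on homology. Both $\dH$ and $H$ carry minimal (zero-differential) structures, so $s_1$ is an isomorphism on homology; and $r_1$ is a classical quasi-isomorphism of chain complexes by construction. Hence $(s\circ r)_1=s_1\circ r_1$ is an isomorphism on homology, establishing that $C$ is quasi-isomorphic as an $A_\infty$ coalgebra to $H$ of Definition \ref{DEF:L(p,q)-minimal-model}.

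For the dual statement I would apply the dualization Lemma of Section \ref{SEC:A-infty-coalg}: since $R=\Z_p$ is a field and each $C_k$ is finite-dimensional, setting $f_k:=(s\circ r)_k^*$ yields an $A_\infty$ algebra morphism $f\colon H^*\to C^*$ with $f_1=((s\circ r)_1)^*$. As the transpose of an isomorphism on homology, $f_1$ is again an isomorphism, so $f$ is an $A_\infty$ algebra quasi-isomorphism. One then checks that $H^*$ with the dualized structure is precisely the $A_\infty$ algebra $H^\bu(L(p,q),\Z_p)$ of Proposition \ref{PROP:HL(p,q)-minimal-model}: the summand $-\uX^{\otimes p}$ of $\unDel(\uY)$ dualizes to $\mu_p(x,\dots,x)=y$, the summands $\uX\uY-\uY\uX$ of $\unDel(\uZ)$ dualize to $\mu_2(x,y)=\mu_2(y,x)=z$, and the remaining terms recover the unital $\mu_2$.

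The real content of the statement lies not in this assembly but in the construction of $r$ and $s$ carried out in the two subsequent subsections, so the main obstacle is deferred there. Concretely, the hard part will be verifying the morphism equations \eqref{EQU:r_r} for $r$ in all arities: the nonvanishing higher cooperation producing the $\uX^{\otimes p}$ term forces the components $r_k$ to be specified and checked up to and beyond $k=p$, and because every generator $\cc_j,\cal_j,\cbe_j,\cs_j$ is indexed modulo $p$, the combinatorial and Koszul-sign bookkeeping needed to show that $r$ reproduces this characteristic-$p$ operation on $\uY$ is the principal technical difficulty.
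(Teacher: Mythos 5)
Your proposal is correct and follows the paper's own route exactly: the corollary is obtained by composing the quasi-isomorphisms $r\colon C\to\dH$ and $s\colon\dH\to H$ of Propositions \ref{PROP:C-to-H'} and \ref{PROP:H'-to-H} (noting $s_1$ is an isomorphism since $c=q\neq 0$ and $r_1$ is a quasi-isomorphism), and then dualizing via the lemma of Section \ref{SEC:A-infty-coalg}, with all the substantive work deferred to the construction of $r$ and $s$ in the two subsections, just as you say.
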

We remark, that dualizing the $A_\infty$ coalgebra over $\Z_p$ as we did in the above corollary (as described in Section \ref{SEC:A-infty-coalg}) gives the usual cochain model over $\Z_p$, since, for any $\Z$ module $C$, there is an isomorphism of $\Z_p$ modules $K:Hom_\Z(C,\Z_p)\to Hom_{\Z_p}(C\ot_\Z \Z_p,\Z_p)$, given by $(K(u))(c\ot z)=u(c)\cdot z$ for $c\in C, z\in \Z_p$, and whose inverse is given by $(K^{-1}(v))(c)=v(c\ot 1)$. The map $K$ is natural with respect to $\Z$ module maps $g:C\to C'$ in the sense that there is a commutative diagram
\[\begin{tikzcd}
  Hom_\Z(C,\Z_p) \arrow[r, "K"]  
& Hom_{\Z_p}(C\ot_\Z \Z_p,\Z_p)
\\
Hom_\Z(C',\Z_p) \arrow[u, "g_\Z"] \arrow[r, "K'"]
& Hom_{\Z_p}(C'\ot_\Z \Z_p,\Z_p) \arrow[u, "g_{\Z_p}", swap] 
\end{tikzcd}\]
where $(g_\Z(u'))(c)=u'(g(c))$ and $(g_{\Z_p}(v'))(c\ot z)=v'(g(c)\ot z)$.

\subsection{An $A_\infty$ coalgebra quasi-isomorphism $\dH\stackrel {s}\to H$}\label{SEC:dH->H}

\begin{proposition}\label{PROP:H'-to-H}
Let  $\dH=\Z_p<\dE,\dX,\dY,\dZ>$ with degrees $|\dE|=0, |\dX|=1, |\dY|=2, |\dZ|=3$ and $\dH$ be an $A_\infty$ coalgebra whose $A_\infty$ coalgebra structure $\dDel$ is of the form
\begin{align*}
&\undDel(\vE)=-\vE\vE,\\
&\undDel(\vX)=\vX\vE-\vE\vX, \\
&\undDel(\vY)=-\vY\vE-\vE\vY-\vX^{\otimes p}+\sum_{j\geq p+1}y_j\cdot \vX^{\ot j}\\
&\undDel(\vZ)=\vZ\vE-\vE\vZ+c(\vX\vY-\vY\vX)+\sum_{i+j\geq 2} z_{i,j}\cdot \vX^{\ot i}\vY \vX^{\ot j}
\end{align*}
Here, $c$ is some non-zero constant and $\sum_{i+j=n}z_{i,j}=0$ for each $n\geq 2$. 

Then, there is an $A_\infty$ coalgebra quasi-isomorphism $s:\dH\to H$, where $H$ is the $A_\infty$ coalgebra from Definition \ref{DEF:L(p,q)-minimal-model}
\end{proposition}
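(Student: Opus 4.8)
The plan is to construct $s=s_1+s_2+\cdots$ componentwise, with $\ul{s_k}:\ul\dH\to\ul H^{\ot k}$, by solving the morphism equations \eqref{EQU:r_r} order by order in $n$. Throughout I use that both $\dH$ and $H$ are minimal, so $\dDel_1=\Del_1=0$, the $n=1$ equation is vacuous, and a morphism whose linear term $s_1$ is a linear isomorphism automatically induces an isomorphism on homology (which, for a minimal coalgebra, is the underlying module), hence is a quasi-isomorphism. First I would pin down $s_1$ from the $n=2$ equation $\Del_2\circ s_1=(s_1\ot s_1)\circ\dDel_2$. By the degree conventions $|\uE|=-1,\ |\uX|=0,\ |\uY|=1,\ |\uZ|=2$ each graded piece is one-dimensional, so $\ul{s_1}$ is diagonal, $\ul{s_1}(\vE)=a\uE,\ \ul{s_1}(\vX)=b\uX,\ \ul{s_1}(\vY)=d\uY,\ \ul{s_1}(\vZ)=e\uZ$. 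Matching the $\vE$-, $\vX$- and $\vY$-coproducts forces $a=1$ and leaves $b,d$ free, while matching the coefficient of $\vX\vY-\vY\vX$ in $\dDel_2(\vZ)$ against that in $\Del_2(\uZ)$ gives the single relation $e=c\,b\,d$. Since $c\neq 0$ I may take $b=d=1,\ e=c$, so $s_1$ is an isomorphism and the quasi-isomorphism property is secured once $s$ is completed.

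Next I would build the higher components under the ansatz that $s_k$ vanishes on $\vE,\vX$ for $k\geq 2$ (the sub-coalgebra on $E,X$ coincides in $\dH$ and $H$, so the strict choice solves those equations), that $\ul{s_k}(\vY)$ is a combination of the degree-matching tensors $\uX^{\ot a}\uY\uX^{\ot b}$ with $a+b=k-1$, and that $\ul{s_k}(\vZ)$ is a combination of $\uX^{\ot a}\uZ\uX^{\ot b}$ with $a+b=k-1$. I would treat the $\vY$-equations first: projecting the order-$n$ equation onto the pure power $\uX^{\ot n}$, the only contributions are $\Del_p(\uY)=-\uX^{\ot p}$ applied to the $\uY$-factor of $s_{n-p+1}(\vY)$ on the left, and $\dDel_n(\vY)=y_n\vX^{\ot n}$ carried over by $s_1^{\ot n}$ on the right, so the equation collapses to the single scalar condition that the total coefficient of $\ul{s_{n-p+1}}(\vY)$ equals $-y_n$. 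Only this sum is constrained, so it is solvable, e.g.\ by $\ul{s_{k}}(\vY)=-\,y_{k+p-1}\,\uX^{\ot(k-1)}\uY$ for $k\geq 2$; the remaining degree-$0$ components of the $\vY$-equation, which involve an extra $\uE$, are then checked using the $A_\infty$ relations \eqref{EQU:ULDelta_ULDelta} for $\dH$ and $H$.

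The main obstacle is the $\vZ$-level, where the corrections $\dDel_{i+j+1}(\vZ)=\sum z_{i,j}\,\vX^{\ot i}\vY\vX^{\ot j}$, the constant $c$, and the already-built maps $s_\bullet(\vY)$ all interact. I would project the order-$n$ equation onto the tensors $\uX^{\ot a}\uY\uX^{\ot b}$ with $a+b=n-1$. On the left the unknown coefficients $q_{a,b}$ of $\ul{s_{n-1}}(\vZ)$ enter through $\Del_2(\uZ)$ as the telescoping differences $q_{a,b}\big(\uX^{\ot(a+1)}\uY\uX^{\ot b}-\uX^{\ot a}\uY\uX^{\ot(b+1)}\big)$; on the right the leading term is $\sum_{i+j=n-1}z_{i,j}\,\uX^{\ot i}\uY\uX^{\ot j}$, together with the $c$-multiples $c\,\uX\ot s_{n-1}(\vY)-c\,s_{n-1}(\vY)\ot\uX$ and lower-order $s_\bullet(\vY)$ contributions from $\dDel_k(\vZ)$, $k<n$. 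Summing these scalar equations over all positions $(a,b)$, the contributions of the $q_{a,b}$ and of the $c\,s_\bullet(\vY)$ terms cancel telescopically, and what survives is exactly $\sum_{i+j=n-1}z_{i,j}=0$, which is the standing hypothesis (for $n-1\geq 2$). Equivalently, the difference operator sending $\{q_{a,b}\}$ to these expressions has image precisely the subspace of total-coefficient-zero vectors, so the hypothesis $\sum_{i+j=n}z_{i,j}=0$ is exactly the solvability condition, and the individual $q_{a,b}$ can then be chosen.

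Once every component is solved in this way, $s=\sum_k s_k$ is a genuine $A_\infty$ coalgebra morphism whose linear term $s_1$ is an isomorphism, hence the asserted quasi-isomorphism $\dH\to H$. I expect the bulk of the effort, and the only place where the precise hypothesis on the $z_{i,j}$ is needed, to be the $\vZ$-bookkeeping of the previous paragraph, together with the sign verifications forced by the shift conventions; the $\vE$-, $\vX$- and $\vY$-parts are comparatively routine.
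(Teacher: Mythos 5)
Your proposal is correct and essentially reproduces the paper's own proof: the same ansatz ($s$ strict on $\vE,\vX$, higher corrections of the form $\uX^{\ot a}\uY\uX^{\ot b}$ resp.\ $\uX^{\ot a}\uZ\uX^{\ot b}$, leading term $c\cdot\uZ$), the same reduction of the $\vY$-equations to prescribing the coefficient sums $-y_n$, and the same solvability mechanism at the $\vZ$-level, where the telescoping difference operator in the unknowns $q_{a,b}$ has image exactly the total-coefficient-zero vectors and the right-hand side has total coefficient zero by the hypothesis on the $z_{i,j}$ (noting that the cross terms $z_{i,j}v_{k,\ell}$ also vanish by that hypothesis after grouping by $i+j$, as the paper spells out). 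The only cosmetic differences are that the paper derives the residual $\uE$-components by the direct telescoping computation rather than an appeal to the $A_\infty$ relations, and writes all tensor orders at once instead of inducting on $n$.
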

\begin{proof}
We make the Ansatz $\ul{s}(\vE)=\uE$, $\ul{s}(\vX)=\uX$, and
\begin{align*}
\ul{s}(\vY)=& \uY+\sum_{i+j\geq 1}v_{i,j} \cdot \uX^{\ot i}\uY \uX^{\ot j}
\\
\ul{s}(\vZ)=&c\cdot \uZ+\sum_{i+j\geq 1}w_{i,j} \cdot \uX^{\ot i}\uZ \uX^{\ot j}
\end{align*}
Note that such an $s$ is certainly a quasi-isomorphism since $s_1:\dH\to H$ is an isomorphism. We need to check that there are numbers $v_{i,j}$ and $w_{i,j}$ such that $\Del\circ s=s\circ \undDel$ in the sense of  Equation \eqref{EQU:r_r}. First, we check what conditions are necessary for $\unDel\circ \un{s}(\vY)=\un{s}\circ \undDel(\vY)$ to hold. We have:
\begin{multline*}
\un{s}\circ \undDel(\vY)
= \un{s}\Big(-\vY\vE-\vE\vY-\vX^{\otimes p}+\sum_{j\geq p+1}y_j\cdot \vX^{\ot j}\Big)\\
=-\Big(\uY+\sum_{i+j\geq 1}v_{i,j} \cdot \uX^{\ot i}\uY \uX^{\ot j}\Big)\uE-\uE\Big(\uY+\sum_{i+j\geq 1}v_{i,j} \cdot \uX^{\ot i}\uY \uX^{\ot j}\Big)
 -\uX^{\otimes p}+\sum_{j\geq p+1}y_j\cdot \uX^{\ot j}
\end{multline*}
whereas (using a telescopic sum argument), we get:
\begin{multline*}
\unDel\circ \un{s}(\vY)
= \unDel\Big( \uY+\sum_{i+j\geq 1}v_{i,j} \cdot \uX^{\ot i}\uY \uX^{\ot j}\Big)\\
=\Big(-\uY\uE-\uE\uY-\uX^{\otimes p}\Big)+\sum_{i+j\geq 1}v_{i,j} \cdot \Big(-\uE \uX^{\ot i}\uY \uX^{\ot j}-\uX^{\ot i}\uY \uX^{\ot j}\uE\Big)
-\sum_{i+j\geq 1} v_{i,j}\cdot \uX^{i+p+j}
\end{multline*}
Comparing the two expressions, we see that the terms are equal as long as $\sum_{j\geq p+1}y_j\cdot \uX^{\ot j}=-\sum_{i+j\geq 1} v_{i,j}\cdot \uX^{i+p+j}$, or $y_k=-\sum_{i+j=k}v_{i,j}$ for all $k>p$. Thus, in order for  $\unDel\circ \un{s}(\vY)=\un{s}\circ \undDel(\vY)$ to hold, we may choose any $v_{i,j}$ such that $y_k=-\sum_{i+j=k}v_{i,j}$ (which is clearly always possible).

Next we check the conditions for $\unDel\circ \un{s}(\vZ)=\un{s}\circ \undDel(\vZ)$ to hold. We compute:
\begin{multline*}
\unDel\circ \un{s}(\vZ)
= \unDel\Big( c\cdot \uZ+\sum_{i+j\geq 1}w_{i,j} \cdot \uX^{\ot i}\uZ \uX^{\ot j}\Big)\\
=  c\cdot \Big(\uZ\uE-\uE\uZ+\uX\uY-\uY\uX\Big)+\sum_{i+j\geq 1}w_{i,j} \cdot \Big(-\uE\uX^{\ot i}\uZ \uX^{\ot j}+\uX^{\ot i}\uZ \uX^{\ot j}\uE\Big)\\
+\sum_{i+j\geq 1}w_{i,j} \cdot \uX^{\ot i}\Big(\uX\uY-\uY\uX\Big) \uX^{\ot j}
\end{multline*}
and
\begin{multline*}
\un{s}\circ \undDel(\vZ)
= \un{s}\Big(\vZ\vE-\vE\vZ+c(\vX\vY-\vY\vX)+\sum_{i+j\geq 2} z_{i,j}\cdot \vX^{\ot i}\vY \vX^{\ot j}\Big)\\
= \Big(c\cdot \uZ+\sum_{i+j\geq 1}w_{i,j} \cdot \uX^{\ot i}\uZ \uX^{\ot j}\Big)\uE-\uE\Big(c\cdot \uZ+\sum_{i+j\geq 1}w_{i,j} \cdot \uX^{\ot i}\uZ \uX^{\ot j}\Big)\\
+c\cdot \Big[\uX\Big( \uY+\sum_{i+j\geq 1}v_{i,j} \cdot \uX^{\ot i}\uY \uX^{\ot j}\Big)-\Big( \uY+\sum_{i+j\geq 1}v_{i,j} \cdot \uX^{\ot i}\uY \uX^{\ot j}\Big)\uX\Big]\\
+\sum_{i+j\geq 2} z_{i,j}\cdot \uX^{\ot i} \Big(\uY+\sum_{k+\ell\geq 1}v_{k,\ell} \cdot \uX^{\ot k}\uY \uX^{\ot \ell}\Big)\uX^{\ot j}
\end{multline*}
Thus for the two equations to be equal, we need that
\begin{multline*}
\sum_{i+j\geq 1}w_{i,j} \cdot \uX^{\ot i}\Big(\uX\uY-\uY\uX\Big) \uX^{\ot j}
=c\cdot\sum_{i+j\geq 1} v_{i,j} \cdot \uX^{\ot i}\Big(\uX\uY-\uY\uX\Big) \uX^{\ot j}
\\+\sum_{i+j\geq 2} z_{i,j}\cdot \uX^{\ot i} \uY\uX^{\ot j}
+\sum_{i+j\geq 2} \sum_{k+\ell\geq 1} z_{i,j}\cdot v_{k,\ell}\cdot \uX^{\ot i+k}\uY\uX^{\ot j+\ell}
\end{multline*}
Notice that the terms on the right are such that for a fixed number of total tensors, the coefficients add to $0$, i.e., when looking at exactly $n$ $\uX$'s, we get a sum of coefficients $c\cdot \sum_{i+1+j=n}v_{i,j}-c\cdot \sum_{i+j+1=n}v_{i,j}=0$, and by assumption $\sum_{i+j=n}z_{i,j}=0$, and also $\sum_{i+k+j+\ell=n}z_{i,j}\cdot v_{k,\ell}=\sum_{m+m'=n} \unbr{\sum_{i+j=m} z_{i,j}}_{=0}\cdot \sum_{k+\ell =m'} v_{k,\ell}=0$. Similarly, the left hand side has coefficients that sum to $\sum_{i+1+j=n}w_{i,j}- \sum_{i+j+1=n}w_{i,j}=0$. Therefore, we can pick $w_{0,n-1}$ to match the coefficient of $\uY\uX^{\ot n}$ on the right hand side, then similarly for $w_{1,n-2}, w_{2,n-3},\dots$ all the way up to $w_{n-1,0}$ for the coefficient of $\uX^{\ot n-1}\uY\uX$. The coefficient of $\uX^{\ot n}\uY$ then matches both sides, since the coefficients add to $0$ on both sides (i.e., we have $n$ variables $w_{0,n-1},\dots, w_{n-1,0}$ with which we can solve $n$ linear equations).
\end{proof}

\subsection{An $A_\infty$ coalgebra quasi-isomorphism $C\stackrel {r}\to \dH$}\label{SEC:C->dH}

\begin{proposition}\label{PROP:C-to-H'}
There is an $A_\infty$ coalgebra structure on $\dH$ which is of the form of Proposition \ref{PROP:H'-to-H}, and an $A_\infty$ coalgebra quasi-isomorphism $r:C\to \dH$. The constant $c$ in the definition of $\dH$ stated in Proposition \ref{PROP:H'-to-H} is $c=q$.
\end{proposition}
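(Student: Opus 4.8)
The plan is to realize $r\colon C\to\dH$ as a dualized homotopy transfer. Since each $C_k$ is finite dimensional and we work over the field $\Z_p$, the dg coalgebra $(C,\del,\Del_2)$ --- which is genuinely coassociative, so that $\Delta_k=0$ for $k\geq 3$ --- admits a contraction onto its homology: an inclusion $\iota\colon\dH\to C$ of chosen cycle representatives, a projection $\pi\colon C\to\dH$, and a homotopy $h\colon C\to C$ with $\iota\pi-\mathrm{id}_C=\del h+h\del$ (cf.\ the discussion after Setup \ref{SETUP:transfer-theorem}). Dualizing the transfer theorem (Theorem \ref{THM:transfer-theorem}) through the correspondence of Section \ref{SEC:A-infty-coalg} then produces a transferred $A_\infty$ coalgebra structure $\dDel$ on $\dH$ together with an $A_\infty$ coalgebra quasi-isomorphism $r\colon C\to\dH$ whose linear part is $\pi$. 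Because $C$ carries only $\Del_2$, every structure map $\undDel_n$ is a sum over planar trees with $\Del_2$ at each vertex and $h$ inserted on internal edges; in particular the binary part is simply $\undDel_2=(\pi\ot\pi)\circ\Del_2\circ\iota$.

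Next I would pin down the shape of $\dDel$. A direct shifted-degree count on the generators $|\vE|=-1$, $|\vX|=0$, $|\vY|=1$, $|\vZ|=2$ shows that the only tensors available in $\undDel(\vE),\undDel(\vX),\undDel(\vY),\undDel(\vZ)$ are exactly those appearing in Proposition \ref{PROP:H'-to-H}; after normalizing $\vE$ to be strictly counital (achievable by a standard adjustment of $\iota$ and $h$) one is left with $\undDel(\vY)=-\vY\vE-\vE\vY+\sum_{j\geq 2}c_j\,\vX^{\ot j}$ and $\undDel(\vZ)=\vZ\vE-\vE\vZ+\sum_{i+j\geq 1}d_{i,j}\,\vX^{\ot i}\vY\vX^{\ot j}$. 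Since the transferred structure automatically satisfies $\dDel^2=0$, the coassociativity relations impose in particular the linear conditions listed as hypotheses in Proposition \ref{PROP:H'-to-H}, namely $d_{0,1}=-d_{1,0}$ and $\sum_{i+j=n}d_{i,j}=0$ for $n\geq2$. This reduces the problem to two computations: (i) the binary coefficient $d_{1,0}=c$, and (ii) the coefficient of $\vX^{\ot p}$ in $\undDel(\vY)$, which must be nonzero while those of $\vX^{\ot j}$ for $2\le j<p$ must vanish.

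The value $c=q$ emerges from the binary part together with three homology facts about $C$. Taking $\iota(\vX)=\ca$, $\iota(\vY)=\sum_j\cal_j$ and $\iota(\vZ)=\sum_j\cs_j$ (all genuine $\Z_p$-cycles), the coproduct formulas give $\Del_2(\sum_j\cs_j)=\cA\ot(\sum_j\cs_j)+\ca\ot(\sum_j\cbe_j)+(\sum_j\cal_j)\ot\cb+(\sum_j\cs_j)\ot\cB$. The telescoping identity $\cc_{j+1}-\cc_j\equiv-\ca$ modulo boundaries (from $\del\cal_j=\cc_{j+1}-\cc_j+\ca$) yields $[\cb]=q[\ca]$ through $\del\cbe_j=\cb-\cc_{j-q}+\cc_j$, while the weighted primitive $\del(\sum_j(c_0-j)\cs_j)=\sum_j\cbe_j-q\sum_j\cal_j$ yields $[\sum_j\cbe_j]=q[\sum_j\cal_j]$; one also checks $[\sum_j\cc_j]=0$ using that $p\mid\tfrac{p(p-1)}{2}$ for odd $p$. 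Applying $\pi\ot\pi$ therefore sends the two middle terms to $q\,\vX\vY$ and $q\,\vY\vX$, and the Koszul signs introduced by the degree shift (and the twist $c\mapsto(-1)^{|c|}c$ used to define $\undDel$) supply the relative minus, so that $\undDel_2(\vZ)$ contributes $q(\vX\vY-\vY\vX)$ and $c=q\neq 0$ in $\Z_p$. The same inputs show that $\undDel_2(\vY)$ has no $\vX^{\ot 2}$ term, since the middle factor $\ca\ot\sum_j\cc_{j+1}$ dies under $\pi$.

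The main obstacle is part (ii): the nonvanishing of the $\vX^{\ot p}$ coefficient of $\undDel(\vY)$. This term lives in $\undDel_p$ and is produced by the length-$p$ tree in which $\Del_2$ and $h$ are iterated $p$ times, starting from $\iota(\vY)=\sum_j\cal_j$: the factor $\ca\ot\cc_{j+1}$ in $\Del_2(\cal_j)$ feeds $h(\cc_{j+1})$ back into another $\Del_2$, and the cyclic family $\cc_0,\dots,\cc_{p-1}$ closes up only after $p$ steps because of the $\Z_p$ indexing. I would compute this explicitly for a convenient $h$ (e.g.\ with $h(\cc_j)$ and $h(\cal_j)$ chosen to undo the telescopes above), verifying both that the coefficients of $\vX^{\ot j}$ for $2\le j<p$ vanish (the telescope has not yet closed) and that the coefficient of $\vX^{\ot p}$ is a nonzero element of $\Z_p$ --- this nonvanishing is precisely the algebraic shadow of $\pi_1(L(p,q))=\Z_p$. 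A rescaling of $\vY$ then normalizes it to $-1$. The remaining work is careful sign bookkeeping through the shift to match the exact signs of Definition \ref{DEF:L(p,q)-minimal-model}, after which Proposition \ref{PROP:H'-to-H} applies with $c=q$, and composing $r$ with the map $s$ of that proposition completes the factorization \eqref{EQU:C-H-H}.
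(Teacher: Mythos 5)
You take a genuinely different route from the paper: the paper never invokes homotopy transfer, but instead writes an explicit Ansatz for every component of $r$ and for the unknown structure constants $y_k$, $c$, $z_{k,\ell}$, and solves $r\circ\Del=\dDel\circ r$ generator by generator. Your degree-two computations are correct and do recover the paper's binary constant: with $\iota(\vX)=\ca$, $\iota(\vY)=\sum_j\cal_j$, $\iota(\vZ)=\sum_j\cs_j$, one indeed has $\del\big(\sum_j(-j)\cs_j\big)=\sum_j\cbe_j-q\sum_j\cal_j$, $[\cb]=q[\ca]$, and $[\sum_j\cc_j]=0$ (using $p$ odd), so $(\pi\ot\pi)\circ\Del_2\circ\iota$ gives $\undDel_2(\vZ)=\vZ\vE-\vE\vZ+q(\vX\vY-\vY\vX)$ and no $\vX^{\ot 2}$-term in $\undDel_2(\vY)$.

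However, the proposal has a genuine gap at its central step. Everything above degree two --- the assertion that the $\vX^{\ot n}$-coefficient of $\undDel_n(\vY)$ vanishes for $2\le n<p$ and equals $-1$ for $n=p$ --- is deferred to ``I would compute this explicitly for a convenient $h$.'' That computation is the mathematical heart of the proposition: it is where primality of $p$ enters (in the paper's proof via $(1-x)^p\equiv 1-x^p \bmod p$; in the transfer picture it would enter via the power-sum congruences $\sum_{k=0}^{p-1}k^{m}\equiv 0$ for $0<m<p-1$ and $\equiv -1$ for $m=p-1$), and it requires identifying which trees survive the side conditions and tracking all shift signs. For instance, with $h(\cc_j)=-\sum_{i<j}\cal_i$ the right-comb tree already contributes, up to sign, $\sum_{k=1}^{p}k^{2}$ to the $\vX^{\ot 3}$-coefficient of $\undDel_3(\vY)$, which is $\equiv 0 \bmod p$ only because $p\geq 5$ (for $p=3$ it is, up to sign, the sought nonzero value). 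None of this is carried out. Worse, ``nonzero, then rescale $\vY$'' does not suffice for the statement you must prove: if the raw $\vX^{\ot p}$-coefficient is $a$, the rescaling $\vY\mapsto\lambda\vY$ that normalizes it to $-1$ forces $\lambda=-a^{-1}$ and replaces the binary constant by $-aq$; so the claim $c=q$ --- which is exactly what the lens-space corollary consumes --- requires showing $a=-1$ on the nose, not merely $a\neq 0$.

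A secondary gap: to be in the situation of Proposition \ref{PROP:H'-to-H} at all, you need the transferred coproducts $\undDel_n$, $n\geq 3$, to contain no $\vE$-factors. Generic transfer does produce such terms (e.g.\ through $h(\cB)\neq 0$), and strict counitality of a transferred structure is not automatic; it needs the side conditions $h\iota=0$, $\pi h=0$, counit-adapted choices such as $\iota(\vE)=\cA$, $h(\cB)=-\cc_0$, $\pi(\cc_0)=0$, and an actual verification (or a citation to a strictly counital transfer theorem). As written, this is asserted rather than proved.
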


Recall that the $A_\infty$ coalgebra $\Del$ on $C$ is described by Equations \eqref{EQU:Del-A-B}--\eqref{EQU:Del-sj}. We need to construct an $A_\infty$ coalgebra map $r$ from $C$ to $\dH$ together with the $A_\infty$ coalgebra $\dDel$ on $\dH$ which is of the following form $\dDel:\dH\to \prod_{k\geq 1}\dH^{\ot k}$ (for some to-be-specified numbers $y_k$, $c$, and $z_{k,\ell}\in \Z_p$):
\begin{align*}
&\undDel(\vE)=-\vE\vE,\\
&\undDel(\vX)=\vX\vE-\vE\vX, \\
&\undDel(\vY)=-\vY\vE-\vE\vY-\vX^{\otimes p}+\sum_{k\geq p+1}y_k\cdot \vX^{\ot k}\\
&\undDel(\vZ)=\vZ\vE-\vE\vZ+c(\vX\vY-\vY\vX)+\sum_{k+\ell\geq 2} z_{k,\ell}\cdot \vX^{\ot k}\vY \vX^{\ot \ell}
\end{align*}
The construction of these number will be given in the next four subsections.
\subsubsection{An Ansatz for $r$}\quad

$\bu$
To construct the $A_\infty$ coalgebra map $r$ from $C$ to $\dH$, we make the following Ansatz for the map $r:C\to \prod_{k\geq 1}\dH^{\ot k}$, where the numbers $a_k, b_k, c_{j,k}, \al_{j,(k,\ell)}\in \Z_p$ below will still have to be specified.
\begin{align*}
 \ur(\uA)&=\vE, \\
\ur(\uB)&=\vE, \\ \cline{1-3}
 \ur(\ua)&=\vX+\sum_{k\geq 2}a_k\cdot \vX^{\otimes k}, \\  
\ur(\ub)&=q\cdot \vX+\sum_{k\geq 2}b_k\cdot \vX^{\otimes k}, \\
 \ur(\uc_0)&=0, \\
 \ur(\uc_j)&=(p-j)\cdot \vX+\sum_{k\geq 2}c_{j,k} \cdot\vX^{\otimes k} & \text{for }j=1,\dots, p-2\\
 \ur(\uc_{p-1})&=\vX, \\ \cline{1-3}
 \ur(\ual_0)&=\vY, \\
 \ur(\ual_j)&=\al_{j,(1,0)}\cdot \vX\vY+\sum_{k+\ell\geq 2} \al_{j,(k,\ell)}\cdot \vX^{\ot k}\vY\vX^{\ot \ell}\hspace{-8mm}& \text{for } j=1,\dots, p-1\\
 \ur(\ube_0)&=0,\\
 \ur(\ube_j)&=\vY& \text{for } j=1,\dots, q\\
 \ur(\ube_j)&=0& \text{for } j=q+1,\dots, p-1\\ \cline{1-3}
 \ur(\us_0)&=\vZ \\
 \ur(\us_j)&=0 & \text{for } j=1,\dots, p-1 
\end{align*}
We will sometimes simplify notation by writing expressions such as $\ur(\ub)=\sum_{k\geq 1} b_k \vX^{\ot k}$ where the index set includes $k=1$ by implicitly assuming that $b_1=q$ (compare with the above); similarly we take $\al_{j,(0,1)}=0$, etc.

$\bu$
We need to show that we can find constants $y_k,c, z_{k,\ell},a_k, b_k,c_{j,k}, \al_{j,(k,\ell)}\in \Z_p$ such that $r\circ \Del=\dDel\circ r$ in the sense of Equation \eqref{EQU:r_r}. Clearly, we have $\ur(\unDel(\uA))=-\vE\vE=\undDel(\ur(\uA))$ and $\ur(\unDel(\uB))=-\vE\vE=\undDel(\ur(\uB))$. Moreover, if $\ue$ is any of the generators $\ua,\ub,\uc_0,\dots, \uc_{p-1}$, say we have $\ur(\ue)=\sum_{k\geq 1}e_k\vX^{\ot k}$, then, using a telescopic sum argument, we get:
\begin{align*}
\undDel(\ur(\ue))=&\undDel\big(\sum_{k\geq 1}e_k\vX^{\ot k}\big)=\sum_{k\geq 1}e_k\cdot \big(-\vE\vX^{\ot k}+\vX^{\ot k}\vE\big)
\\
=&-\vE\ur(\ue)+\ur(\ue)\vE=\ur(\unDel(\ue))
\end{align*}

\subsubsection{Solving for $\al_{j,(k,\ell)}$, $c$, and $z_{k,\ell}$}\quad

$\bu$
For $j=1,\dots, p-1$, we apply $r\circ \Del=\dDel\circ r$ to $\us_j$ which gives
\begin{align}\label{EQU:Th-r-sj=r-De-sj}
0=&\undDel(\ur(\us_j))\stackrel !=\ur(\unDel(\us_j))\\ \nonumber
=&\ur\big(-\ube_{j+1}+\ube_j-\ual_{j-q}+\ual_j-\uA\us_j+\ua \ube_{j+1}-\ual_j \ub+\us_j\uB\big)
\end{align}
Thus, we have to fulfill the following conditions for the cases $j=1,\dots, p-1$ (the case $j=0$ will be considered separately below):
\begin{align*}
\scalebox{0.7}{$j=1:$}&&0&=-\vY+\vY&&-\ur(\ual_{1-q})&&+\ur(\ual_1)&&+\ur(\ua)\vY&&-\ur(\ual_1)\ur(\ub) \\
&&&&&\vdots \\
\scalebox{0.7}{$j=q-1:$}&&0&=-\vY+\vY&&-\ur(\ual_{q-1-q})&&+\ur(\ual_{q-1})&&+\ur(\ua)\vY&&-\ur(\ual_{q-1})\ur(\ub) \\
\scalebox{0.7}{$j=q:$}&&0&= \quad\quad+\vY &&-\vY&&+\ur(\ual_q)&&&&-\ur(\ual_q)\ur(\ub) \\
\scalebox{0.7}{$j=q+1:$}&&0&=&&-\ur(\ual_{q+1-q})&&+\ur(\ual_{q+1})&&&&-\ur(\ual_{q+1})\ur(\ub) \\
&&&&&\vdots \\
\scalebox{0.7}{$j=p-1:$}&&0&=&&-\ur(\ual_{p-1-q})&&+\ur(\ual_{p-1})&&&&-\ur(\ual_{p-1})\ur(\ub) 
\end{align*}
Clearly, all of the individual terms of $\pm\vY$ cancel.

$\bu$
If we keep the variables $a_k$ in $\ur(\ua)$ and $b_k$ in $\ur(\ub)$ as unknowns, then we can solve the above equations for the $\ur(\ual_j)$ depending on the $a_k$ and $b_k$. More precisely, starting from $j=q$, we can solve $0=\ur(\ual_q)-\ur(\ual_q)\cdot \ur(\ub)$ by setting $\ur(\ual_q)=0$.

$\bu$
Next, we use $\ur(\ual_q)=0$ to solve for $\ur(\ual_{j})$ with $j\equiv 2q($mod $p)$. It is worth going through this next step in detail as all of the following arguments are of similar nature. There are two possibilities: First, if $j>q$, then we solve again $0=-\underbrace{\ur(\ual_{j-q})}_{=0}+\ur(\ual_{j})-\ur(\ual_{j})\ur(\ub)$ with $\ur(\ual_{j})=0$. Second, if $j<q$, we cannot set $\ur(\ual_{j})$ to be $0$ since we need to solve an equation with a non-zero term $\ur(\ua)\vY$:
\begin{align*}
0=&-\underbrace{\ur(\ual_{j-q})}_{=0}+\ur(\ual_{j})+\ur(\ua)\vY-\ur(\ual_{j})\ur(\ub)\\
=&\sum_{k+\ell\geq 1} \al_{j,(k,\ell)}\cdot \vX^{\ot k}\vY\vX^{\ot \ell}+\Big(\vX+a_2 \vX\vX+a_3 \vX\vX\vX+\dots\Big)\vY\\
&-\sum_{k+\ell\geq 1} \al_{j,(k,\ell)}\cdot \vX^{\ot k}\vY\vX^{\ot \ell}\cdot \sum_{k\geq 1} b_k \vX^{\ot k}
\end{align*}
In the lowest tensor powers, where we have a total of $k+\ell=1$ tensors of $\vX$, we obtain $\al_{j,(1,0)}=-1$ and $\al_{j,(0,1)}=0$, and then we may use the equation to solve for higher $\al_{j,(k,\ell)}$, which are dependent on the yet unknown numbers $a_k$ and $b_k$. Note that solving for $\al_{j,(k,\ell)}$ only depends on $a_i$ and $b_i$ for which $i\leq k+\ell$, since there are $k+\ell$ tensors of $\vX$ in the equation that determines $\al_{j,(k,\ell)}$. We write this in short as $\al_j=\al_j[a,b]$, i.e., we implicitly mean that we get a (polynomial) formula for $\al_{j,(k,\ell)}$ depending only on the numbers $a_i$ and $b_i$ with $i\leq k+\ell$.

$\bu$
Knowing $\ur(\ual_{2q})$, we proceed to solve for $\ur(\ual_{j})$ with $j\equiv 3q($mod $p)$:
\[
0=-\ur(\ual_{2q})+\ur(\ual_{3q})+\varepsilon \cdot \ur(\ua)\vY-\ur(\ual_{3q})\ur(\ub),
\]
where $\varepsilon \in \{0,1\}$ depends on $j<q$ or $j>q$. Knowing $\ur(\ual_{2q})$, we can solve for $\ur(\ual_{3q})$, where we get that $\al_{j,(1,0)}$ is equal to $\al_{2q,(1,0)}$ if $\varepsilon=0$ (no extra $\ur(\ua)\vY$ term) or is equal to $\al_{2q,(1,0)}-1$ if $\varepsilon=1$ (there is an extra $\ur(\ua)\vY$ term). The other terms solve as $\al_{j,(0,1)}=0$, and again $\al_{j}=\al_j[a,b]$ is a polynomial in the numbers $a_i$ and $b_i$ of lower or equal tensor order.

$\bu$
Continuing this way we keep solving for  $\ur(\ual_{2q}), \ur(\ual_{3q}), \dots, \ur(\ual_{(p-1)q})$, each time getting $\al_{n\cdot q}=\al_{n\cdot q}[a,b]$, and $\al_{n\cdot q,(0,1)}=0$ while $\al_{n\cdot q,(1,0)}$ subtracts one each time $j\equiv n\cdot q($mod $p)$ is less than $q$. In particular, for $j\equiv (p-1)q($mod $p)$, we have picked up all $q-1$ terms less than $q$, so that $\al_{(p-1)q,(1,0)}=-(q-1)$. We now use this in the missing condition $\ur\circ \unDel(\us_0)=\undDel\circ \ur(\us_0)$ that we have not yet used above:
\begin{align*}
\undDel\circ \ur(\us_0)=&\undDel(\vZ)=\vZ\vE-\vE\vZ+c(\vX\vY-\vY\vX)+\sum_{k+\ell\geq 2} z_{k,\ell}\cdot \vX^{\ot k}\vY \vX^{\ot \ell}\\
\ur\circ \unDel(\us_0)=&\ur\Big(-\ube_{1}+\ube_0-\ual_{-q}+\ual_0-\uA\us_0+\ua \ube_{1}-\ual_0 \ub+\us_0\uB \Big)\\
=&-\vY+0-\ur(\ual_{(p-1)q})+\vY-\vE\vZ+\ur(\ua)\vY-\vY\ur(\ub)+\vZ\vE\\
=&\vZ\vE-\vE\vZ-\Big(-(q-1)\vX\vY+\sum_{k+\ell\geq 2}\al_{(p-1)q,(k,\ell)}\vX^{\ot k}\vY\vX^{\ot \ell}\Big)\\
&+\Big(\vX+\sum_{k\geq 2}a_k\cdot \vX^{\otimes k}\Big)\vY-\vY\Big(q\cdot \vX+\sum_{k\geq 2}b_k\cdot \vX^{\otimes k}\Big)\\
=& \vZ\vE-\vE\vZ+q(\vX\vY-\vY\vX)+\sum_{k+\ell\geq 2}\Big(
\begin{matrix}
\text{\small some function of}\\
\text{\small $a_i$, $b_i$, $\al_{(p-1)\cdot q,(i,j)}$}
\end{matrix}
\Big)\cdot \vX^{\otimes k}\vY \vX^{\otimes \ell}
\end{align*}
Thus, we see that we can set $c=q$, and that we can solve for $z_{k,\ell}$. We will check later that the sum $\sum_{k+\ell=n}z_{k,\ell}$ vanishes, as required, which is a consequence of $\undDel^2(\vZ)=0$.

$\bu$
Thus, we have solved for $\al_j=\al_j[a,b]$ and $z=z[a,b]$ (depending on the coefficients $a_k$ and $b_k$), as well as $c=q$, which implied that we get $\undDel(\ur(\us_j))= \ur(\unDel(\us_j))$ for all $j=0,\dots, p-1$.

\subsubsection{Solving for $c_{k,\ell}$, $y_k$, and $a_k$}\quad

$\bu$
We now look at the conditions $\undDel(\ur(\ual_j))= \ur(\unDel(\ual_j))$. Using the unifying notation for the indicies of $\ur(\ual_j)$, written as $\ur(\ual_j)=\sum_{k+\ell\geq 0} \al_{j,(k,\ell)}\cdot \vX^{\ot k}\vY\vX^{\ot \ell}$,  as well as $y_p=-1$, we get:
\begin{align}\label{EQU:The-r-alj}
\undDel(\ur(\ual_j))=&\undDel\Big(\sum_{k+\ell\geq 0} \al_{j,(k,\ell)}\cdot \vX^{\ot k}\vY\vX^{\ot \ell}\Big)
\\ \nonumber
=& \sum_{k+\ell\geq 0} \al_{j,(k,\ell)}\cdot\Big(-\vE \vX^{\ot k}\vY\vX^{\ot \ell}- \vX^{\ot k}\vY\vX^{\ot \ell}\vE\Big)\\ \nonumber
&+\sum_{k+\ell\geq 0}\sum_{j\geq p} \al_{j,(k,\ell)}\cdot y_i\cdot \vX^{\ot k}\vX^{\ot i}\vX^{\ell}
\\ \label{EQU:r-Del-alj}
 \ur(\unDel(\ual_j))=&\ur\big(-\uc_{j+1}+\uc_j-\ua-\uA\ual_j+\ua \uc_{j+1}-\ual_j\uB\big)
 \\ \nonumber
 =& -\ur(\uc_{j+1})+\ur(\uc_j)-\ur(\ua)
 -\vE\sum_{k+\ell\geq 0} \al_{j,(k,\ell)}\cdot \vX^{\ot k}\vY\vX^{\ot \ell}
 \\ \nonumber
 & +\ur(\ua)\ur(\uc_{j+1}) -\sum_{k+\ell\geq 0} \al_{j,(k,\ell)}\cdot \vX^{\ot k}\vY\vX^{\ot \ell}\vE
 \end{align}
Thus, Equations \eqref{EQU:The-r-alj} and \eqref{EQU:r-Del-alj} agree as long as
\begin{equation}\label{EQU:Theta-R-al=r-Del-al}
\sum_{k+\ell\geq 0}\sum_{i\geq p} \al_{j,(k,\ell)}\cdot y_i\cdot \vX^{\ot k}\vX^{\ot i}\vX^{\ell} 
= -\ur(\uc_{j+1})+\ur(\uc_j)-\ur(\ua)+\ur(\ua)\ur(\uc_{j+1})
\end{equation}

$\bu$
Now, for $j\neq 0$, we have $\al_{j,(0,0)}=0$, and we use the abbreviation 
\begin{equation}\label{EQU:al-hat-y-hat}
\wh{\ual_j}:=\sum\limits_{k+\ell\geq 1} \al_{j,(k,\ell)} \vX^{\ot k+\ell} \quad\text{  and }\quad  \wuy:=\sum\limits_{i\geq p}  y_i\vX^{\ot i}
\end{equation}
we can write the condition $\undDel(\ur(\ual_j))= \ur(\unDel(\ual_j))$ as
\begin{equation}\label{EQU:c_j-induct}
\ur(\uc_j)=\wh{\ual_j}\wuy+\ur(\ua)+\ur(\uc_{j+1})-\ur(\ua)\ur(\uc_{j+1})
\end{equation}

$\bu$
For $j=p-1$, we have $\ur(\uc_{j})=\ur(\uc_{p-1})=\vX$ and $\ur(\uc_{j+1})=\ur(\uc_{p})=0$, this means that
$\vX=\wh{\ual_{p-1}}\wuy+\ur(\ua)$, thus we have $\ur(\ua)=\vX-\wh{\ual_{p-1}}\wuy$.
Plugging this back into \eqref{EQU:c_j-induct} gives
\begin{equation}\label{EQU:c_j-up-to-y}
\ur(\uc_j)=\vX +\ur(\uc_{j+1})-\vX\ur(\uc_{j+1})+\wuy\cdot \big(\wh{\ual_j}-\wh{\ual_{p-1}}+\ur(\uc_{j+1})\wh{\ual_{p-1}}\big)
\end{equation}
Proceeding now by applying this for $j=p-2$, then $j=p-3, p-4, \dots$, we can solve \eqref{EQU:c_j-up-to-y} for $\ur(\uc_{j})$. For $j=p-2$, use that $\ur(\uc_{j+1})=\ur(\uc_{p-1})=\vX$:
\begin{align*}
\scalebox{0.7}{$j=p-2:$}&&\ur(\uc_{p-2})&=2\vX-\vX\vX&&+\wuy\cdot (\wh{\ual_{p-2}}-\wh{\ual_{p-1}}+\vX\wh{\ual_{p-1}}) \\
\scalebox{0.7}{$j=p-3:$}&&\ur(\uc_{p-3})&=3\vX-3\vX\vX+\vX\vX\vX &&+\wuy\cdot\Big(\begin{matrix}\text{some polynomial in}\\ \vX,\wuy, \wh{\ual_{p-3}},\wh{\ual_{p-2}}, \wh{\ual_{p-1}}\end{matrix}\Big)\\
&& \vdots\\
\scalebox{0.7}{$j=k:$}&&\ur(\uc_{k})&=g_k(\vX) &&+\wuy\cdot h_k\big( \vX,\wuy, \wh{\ual_{k}}, \dots, \wh{\ual_{p-1}}\big)\\
&& \vdots\\
\scalebox{0.7}{$j=1:$}&&\ur(\uc_{1})&=g_1(\vX) &&+\wuy\cdot h_1\big( \vX,\wuy, \wh{\ual_{1}}, \dots, \wh{\ual_{p-1}}\big)
\end{align*}
Here, $h_k$ is some polynomial in the indicated variables, whose constant term is zero; thus $h_k\big( \vX,\wuy, \wh{\ual_{k}}, \dots, \wh{\ual_{p-1}}\big)$ starts with at least one $\vX$. Moreover $g_k$ is a polynomial in $\vX$, and an induction shows that this polynomial is given by $g_k(x)=1-(1-x)^{p-k}$.

$\bu$
Now, for $j=0$, our Ansatz $\ur(\ual_0)=\vY$ shows that $\al_{0,(0,0)}=1$, but $\al_{0,(k,\ell)}=0$ for $k+\ell\geq 1$. Moreover, we have that $\ur(\uc_0)=0$, so that Equation \eqref{EQU:Theta-R-al=r-Del-al} for $j=0$ becomes
\begin{align*}
\wuy=&-\ur(\uc_{1})-\ur(\ua)+\ur(\ua)\ur(\uc_{1})
\\
=&-g_1(\vX)-\vX+\vX g_1(\vX)\\
&+\wuy\cdot (-h_1+\wh{\ual_{p-1}}+\vX\cdot h_1- \wh{\ual_{p-1}}\cdot g_1(\vX)-\wh{\ual_{p-1}}\cdot \wuy\cdot h_1)
\end{align*}
where on the right side we simply wrote $h_1$ for $h_1( \vX,\wuy, \wh{\ual_{1}}, \dots, \wh{\ual_{p-1}})$. Now, using $g_1(x)=1-(1-x)^{p-1}$, we compute $-g_1(x)-x+xg_1(x)=-1+(1-x)^{p}\equiv -1+(1-x^p)=-x^p($mod $p)$.

\noindent
{\bf Note:} Here, we used the assumption that we are working over $\Z_p$ with $p$ prime, i.e., we used that $(1-x)^{p}\equiv 1-x^p($mod $p)$.

\noindent
Thus, the above becomes
\[
\wuy\cdot (1+\xi_1 \vX+\xi_2 \vX\vX+\dots) = -\vX^{\ot p}
\]
This can now inductively be solved for $\wuy:=\sum\limits_{i\geq p}  y_i\vX^{\ot i}$ (even though the $\xi_j$ depend on $y_i$ but only of lower indices). Note, in particular, that $y_{p}=-1$, and the higher $y_j$ depend on the number of $a_i$ and $b_i$ but only of lower indices, i.e., in our notation, $y=y[a,b]$.

$\bu$
With this, we can also solve for the coefficients in $\ur(\ua)$ using $\ur(\ua)=\vX-\wh{\ual_{p-1}}\wuy=\vX-\sum\limits_{k+\ell\geq 1}\sum\limits_{i\geq p} \al_{p-1,(k,\ell)}\cdot y_i\cdot \vX^{\ot k+i+\ell}$. Indeed, since $\al_{p-1}=\al_{p-1}[a,b]$ and $y=y[a,b]$ depends on the $a_i$ and $b_i$ of lower indices, we can solve this for $a_j$ depending again on $b_i$ of equal or lower indices.

$\bu$
Thus, we have solved $a=a[b]$, $y=y[a[b],b]=y[b]$, $c_j=c_j[a[b],b]=c_j[b]$, and from the previous subsection, we also get $\al_j=\al_j[a[b],b]=\al_j[b]$ and $z=z[a[b],b]=z[b]$ depend only on the $b_i$. With the exception of the $b_k$, we now have solved for all other variables in terms of these: $y=y[b],c=q, z=z[b],a=a[b],c_j=c_j[b], \al_j=\al_j[b]$.

\subsubsection{Solving for $b_k$, and checking the remaining conditions}\quad

$\bu$
We next look at the condition $\undDel(\ur(\ube_j))= \ur(\unDel(\ube_j))$ for $j=0$. We get $0=\undDel(\ur(\ube_0))=\ur(\unDel(\ube_0))=\ur( -\ub+\uc_{-q}-\uc_{0}-\uA\ube_0+\uc_0 \ub-\ube_0\uB)=-\ur( \ub)+\ur( \uc_{-q})$, so in order to satisfy $\undDel(\ur(\ube_0))= \ur(\unDel(\ube_0))$ we need to have that $\ur( \ub)=\ur( \uc_{-q})=\ur( \uc_{p-q})$. If $q=1$, this is obviously solved due to our Ansatz of $\ur( \uc_{p-1})=\vX$. If $q>1$, we recall how $\ur( \uc_{p-q})$ was defined above (which was dependent on $\ur(\ub)$):
\[
\ur(\uc_{p-q})=g_{p-q}(\vX) +\wuy\cdot h_{p-q}\big( \vX,\wuy, \wh{\ual_{p-q}}, \dots, \wh{\ual_{p-1}}\big)
\]
Here, $g_{p-q}$ is the polynomial $g_{p-q}(x)=1-(1-x)^{p-q}$, and $\wuy=-\vX^{\ot p}+\sum_{k\geq p+1} y_k \vX^{\ot k}$, and $\wh{\ual_j}=\sum_{k+\ell\geq 1} \al_{j,(k,\ell)} \vX^{\ot k+\ell}$ where the coefficients $y=y[b]$ and $\al_j=\al_j[b]$ depend on the $b_i$ (of lower indices). Setting the left hand-side equal to $\ur(\ub)$ immediately gives the first $p$ coefficients $b_1,\dots, b_p$ from $\ur(\ub)=g_{p-q}(\vX)+\sum_{k\geq p+1}b_k \vX^{\ot k}$ and we can keep solving for $b_{p+1}, b_{p+2}, \dots$ since the right hand side only depends on $b_i$ of lower indices. We thus can determine $\ur(\ub)$ by solving $\undDel(\ur(\ube_0))= \ur(\unDel(\ube_0))$.

In particular, we note that the coefficients $b_k=c_{p-q,k}$ for all $k$, and thus also $b_1=c_{p-q,1}=p-(p-q)= q$ as we had assumed in our Ansatz. Using the numbers $b_k$ for this solution of $\ur( \ub)$ we now have completely determined all of the variables  $y_k,c, z_{k,\ell},a_k, b_k,c_{j,k}, \al_{j,(k,\ell)}$.

$\bu$
To fully check that $\undDel\circ\ur= \ur\circ\unDel$, we still need to check the conditions $\undDel(\ur(\ube_j))= \ur(\unDel(\ube_j))$ for $j=1,\dots, p-1$, which follows from the above. We first recall the condition for $\undDel(\ur(\ual_j))= \ur(\unDel(\ual_j))$ for any $j=0,\dots, p-1$ from from \eqref{EQU:Theta-R-al=r-Del-al}. Using the slightly more general notation than \eqref{EQU:al-hat-y-hat}
\begin{equation}\label{EQU:Th-r-al=r-De-al}
\wh{\ual_j}:=\sum\limits_{k+\ell\geq 0} \al_{j,(k,\ell)} \vX^{\ot k+\ell} \quad\text{  and }\quad  \wuy:=\sum\limits_{i\geq p}  y_i\vX^{\ot i}
\end{equation}
(which includes the case $j=0$, i.e., from our Ansatz $\ur(\ual_0)=\vY$, we use $\al_{0,(0,0)}=1$ and $\al_{0,(k,\ell)}=0$ for $k+\ell>0$), the condition $\undDel(\ur(\ual_j))= \ur(\unDel(\ual_j))$ from \eqref{EQU:Theta-R-al=r-Del-al} (which we have already estabished) becomes:
\begin{equation}\label{EQU:condition-for-al_j}
\wh{\ual_j}\wuy+\ur(\ua)-\ur(\uc_j)+\big(1-\ur(\ua)\big)\cdot \ur(\uc_{j+1})=0, \quad \forall j=0,\dots, p-1
\end{equation}

We repeat the calculation from \eqref{EQU:The-r-alj} and \eqref{EQU:r-Del-alj} but for $\ube_j$ instead of $\ual_j$, where we now use $\unDel(\ube_j)=  -\ub+\uc_{j-q}-\uc_{j}-\uA\ube_j+\uc_j \ub-\ube_j\uB$ and the abbreviation
\begin{equation*}
\wh{\ube_j}:=\sum\limits_{k+\ell\geq 0} \be_{j,(k,\ell)} \vX^{\ot k+\ell} 
\end{equation*}
(which, by our Ansatz, is just $\be_{j,(0,0)}=1$ for $1\leq j\leq q$, and $\be_{j,(k,\ell)}=0$ in all other cases). Then, a similar calculation to \eqref{EQU:The-r-alj} and \eqref{EQU:r-Del-alj} shows that $\undDel(\ur(\ube_j))=\ur(\unDel(\ube_j))$ is equivalent to 
\begin{equation}\label{EQU:Th-r-be=r-De-be}
\wh{\ube_j}\wuy+\ur(\ub)-\ur(\uc_{j-q})+ \ur(\uc_{j})\cdot \big(1-\ur(\ub)\big)=0, \quad \forall j=0,\dots, p-1
\end{equation}

We will prove \eqref{EQU:Th-r-be=r-De-be} by using $\undDel(\ur(\us_j))=\ur(\unDel(\us_j))$ for $j=1,\dots, p-1$ and applying \eqref{EQU:Th-r-al=r-De-al}. Applying $\dDel$ to \eqref{EQU:Th-r-sj=r-De-sj}, we get
\begin{align*}
0=& \dDel\Big(\ur\big(-\ube_{j+1}+\ube_j-\ual_{j-q}+\ual_j-\uA\us_j+\ua \ube_{j+1}-\ual_j \ub+\us_j\uB\big)\Big)
\\
=&-\vE\ur\big(-\ube_{j+1}+\ube_j-\ual_{j-q}+\ual_j+\ua \ube_{j+1}-\ual_j \ub\big)
\\
&-\ur\big(-\ube_{j+1}+\ube_j-\ual_{j-q}+\ual_j+\ua \ube_{j+1}-\ual_j \ub\big)\vE
\\
&-\wh{\ube_{j+1}}\wuy+\wh{\ube_j}\wuy-\wh{\ual_{j-q}}\wuy+\wh{\ual_j}\wuy+\ur(\ua) \wh{\ube_{j+1}}\wuy-\wh{\ual_j}\wuy \ur(\ub)
\\
=&-\wh{\ube_{j+1}}\wuy+\wh{\ube_j}\wuy-\wh{\ual_{j-q}}\wuy+\wh{\ual_j}\wuy+\ur(\ua) \wh{\ube_{j+1}}\wuy-\wh{\ual_j}\wuy \ur(\ub)
\end{align*}
Here, we used that $\ur(\us_j)=0$, and $\dDel(\ur(\ual_j))=-\vE\ur(\ual_j)-\ur(\ual_j)\vE+\wh{\ual_j}\wuy$, and similarly $\dDel(\ur(\ube_j))=-\vE\ur(\ube_j)-\ur(\ube_j)\vE+\wh{\ube_j}\wuy$, while $\dDel(\ur(\ua))=-\vE\ur(\ua)+\ur(\ua)\vE$ and $\dDel(\ur(\ub))=-\vE\ur(\ub)+\ur(\ub)\vE$, and the terms involving $\vE$ vanish according to \eqref{EQU:Th-r-sj=r-De-sj}. We rewrite the above equation as
\begin{align*}
&\wh{\ube_{j}}\wuy-\wh{\ube_{j+1}}\wuy\cdot \big(1-\ur(\ua)\big)
=
\wh{\ual_{j-q}}\wuy-\wh{\ual_j}\wuy \cdot \big(1-\ur(\ub)\big)
\\
&\stackrel{\eqref{EQU:condition-for-al_j}}=
-\Big(\ur(\ua)-\ur(\uc_{j-q})+\big(1-\ur(\ua)\big)\cdot \ur(\uc_{j-q+1})\Big)\\
&\hspace{1cm} +\Big(\ur(\ua)-\ur(\uc_j)+\big(1-\ur(\ua)\big)\cdot \ur(\uc_{j+1})\Big)\cdot \big(1-\ur(\ub)\big)\\
&\hspace{3mm}=\ur(\uc_{j-q})-\big(1-\ur(\ua)\big)\cdot \ur(\uc_{j-q+1})-\ur(\uc_j)\cdot \big(1-\ur(\ub)\big)\\
&\hspace{1cm}+\ur(\uc_{j+1})\cdot \big(1-\ur(\ua)\big)\cdot \big(1-\ur(\ub)\big)-\ur(\ua)\cdot \ur(\ub)
\end{align*}
which is equivalent to
\begin{multline}\label{EQU:bej-vs-be(j+1)}
\wh{\ube_j}\wuy+\ur(\ub)-\ur(\uc_{j-q})+ \ur(\uc_{j})\cdot \big(1-\ur(\ub)\big)
\\=
\Big(\wh{\ube_{j+1}}\wuy+\ur(\ub)-\ur(\uc_{j+1-q})+ \ur(\uc_{j+1})\cdot \big(1-\ur(\ub)\big)\Big)\cdot \big(1-\ur(\ua)\big)
\end{multline}
Now, according to \eqref{EQU:Th-r-be=r-De-be}, the left hand side of \eqref{EQU:bej-vs-be(j+1)} vanishes iff $\undDel(\ur(\ube_j))=\ur(\unDel(\ube_j))$, while the right hand side of \eqref{EQU:bej-vs-be(j+1)} vanishes when $\undDel(\ur(\ube_{j+1}))=\ur(\unDel(\ube_{j+1}))$ (in fact it vanishes iff this is true, since $(1-\ur(\ua))$ is invertible).

Now, when we determined $\ur(\ub)$ above, we already established $\undDel(\ur(\ube_0))= \ur(\unDel(\ube_0))$. Thus, \eqref{EQU:bej-vs-be(j+1)} for $j=p-1$ shows that we also have $\undDel(\ur(\ube_{p-1}))= \ur(\unDel(\ube_{p-1}))$. Then, using \eqref{EQU:bej-vs-be(j+1)} for $j=p-2,p-3,\dots,1$ shows that we also have all of the remaining conditions $\undDel(\ur(\ube_j))=\ur(\unDel(\ube_j))$ for $j=p-2,p-3, \dots,1$.

Thus we have showed that $\dDel\circ r=r\circ \Del$ by checking it on all generators.

$\bu$ Since the $r_1$ component of $r$ is surjective, the fact that $\dDel\circ r=r\circ \Del$ then implies that $\dDel$ is indeed an $A_\infty$ coalgebra structure: $\undDel^2(\vE)=\undDel^2(\ur(\uA))=\ur(\unDel^2(\uA))=0$, and $\undDel^2(\vX)=\ur(\unDel^2(\uc_{p-1}))=0$, $\undDel^2(\vY)=\ur(\unDel^2(\ual_{0}))=0$, and $\undDel^2(\vZ)=\ur(\unDel^2(\us_{0}))=0$.

$\bu$
The last missing assertion that we need to check is that for the $z_{k,\ell}$ appearing in $\undDel(\vZ)$, we have the property that $\sum_{k+\ell=n}z_{k+\ell}=0$ for $n\geq 2$, which follows from $\undDel^2(\vZ)=0$, as we show now. Using the notation $y_p=-1$ and $z_{1,0}=-z_{0,1}=c$, we compute that
\begin{multline*}
0=\undDel^2(\vZ)=\undDel\Big(\vZ\vE-\vE\vZ+\sum_{k+\ell\geq 1} z_{k,\ell}\cdot \vX^{\ot k}\vY \vX^{\ot \ell}\Big)\\
=\sum_{i+j\geq 1}z_{i,j} \vX^{\ot i}\Big(\sum_{k\geq p}y_k \vX^{\ot k}\Big)\vX^{\ot j}
=\sum_{k\geq p} \,\,\sum_{i+j\geq 1}z_{i,j}\cdot y_k\cdot \vX^{\ot i+j+k}
\end{multline*}
For a given fixed $n\geq p+1$, this means that the coefficients $\sum_{i+j+k=n} z_{i,j}\cdot y_k$ must be zero, which we can expand as
\begin{equation}\label{EQU:yp-zij}
0=\Big(y_p\cdot \sum_{i+j=n-p} z_{i,j}\Big)+\Big(y_{p+1}\cdot \sum_{i+j=n-p-1} z_{i,j}\Big)+\dots +\Big(y_{n-1}\cdot \sum_{i+j=1} z_{i,j}\Big)
\end{equation}
We already know that $\sum_{i+j=1} z_{i,j}=c-c=0$, and so for $n=p+2$, we get
\[
0=\Big(y_p\cdot \sum_{i+j=2} z_{i,j}\Big)+\Big(y_{p+1}\cdot \sum_{i+j=1} z_{i,j}\Big)=-1\cdot  \sum_{i+j=2} z_{i,j}
\]
Thus, $\sum_{i+j=2} z_{i,j}=0$. Assuming by induction that $\sum_{i+j=m} z_{i,j}=0$ for $m=1,\dots,n-p-1$, Equation \eqref{EQU:yp-zij} and $y_p=-1$ implies that $\sum_{i+j=n-p} z_{i,j}=0$ as well, which completes the inductive step that the sum of the $z_{i,j}$ is zero.

\bibliographystyle{alpha}
\bibliography{biblio}

\begin{thebibliography}{BFMnT16}

\bibitem[AT15]{ATo}
Daniele Angella and Adriano Tomassini.
\newblock On {B}ott-{C}hern cohomology and formality.
\newblock {\em J. Geom. Phys.}, 93:52--61, 2015.

\bibitem[BFMnT16]{BFMT}
Indranil Biswas, Marisa Fern\'{a}ndez, Vicente Mu\~{n}oz, and Aleksy Tralle.
\newblock On formality of {S}asakian manifolds.
\newblock {\em J. Topol.}, 9(1):161--180, 2016.

\bibitem[BT00]{BT}
I.~K. Babenko and I.~A. Ta\u{i}manov.
\newblock Massey products in symplectic manifolds.
\newblock {\em Mat. Sb.}, 191(8):3--44, 2000.

\bibitem[CFM08]{CFM}
Gil~R. Cavalcanti, Marisa Fernández, and Vicente Muñoz.
\newblock On non-formality of a simply-connected symplectic 8-manifold.
\newblock In {\em AIP Conference Proceedings}, volume 1023, pages 82--92, 2008.

\bibitem[DGMS75]{DGMS}
Pierre Deligne, Phillip Griffiths, John Morgan, and Dennis Sullivan.
\newblock Real homotopy theory of {K}\"ahler manifolds.
\newblock {\em Invent. Math.}, 29(3):245--274, 1975.

\bibitem[FCMF23]{FM}
Ois\'in Flynn-Connolly and Jos\'e~M. Moreno-Fern\'andez.
\newblock Higher-order {M}assey products for algebras over algebraic operads.
\newblock {\em Preprint arXiv:2304.13411}, 2023.

\bibitem[FMn04]{FM04}
Marisa Fern\'andez and Vicente Mu\~noz.
\newblock On non-formal simply connected manifolds.
\newblock {\em Topology Appl.}, 135(1-3):111--117, 2004.

\bibitem[FRT24]{FRT}
Luis Fernandez, Manuel Rivera, and Thomas Tradler.
\newblock An algebraic model for the constant loops map.
\newblock {\em Preprint arXiv:2411.18726}, 2024.

\bibitem[Hat02]{H}
Allen Hatcher.
\newblock {\em Algebraic topology}.
\newblock Cambridge University Press, Cambridge, 2002.

\bibitem[Kad80]{K}
T.~V. Kadeishvili.
\newblock On the theory of homology of fiber spaces.
\newblock {\em Uspekhi Mat. Nauk}, 35(3(213)):183--188, 1980.

\bibitem[Kad82]{Kad82}
T.~V. Kadeishvili.
\newblock The algebraic structure in the homology of an {$A(\infty )$}-algebra.
\newblock {\em Soobshch. Akad. Nauk Gruzin. SSR}, 108(2):249--252, 1982.

\bibitem[Kaj07]{Kaj07}
Hiroshige Kajiura.
\newblock Noncommutative homotopy algebras associated with open strings.
\newblock {\em Rev. Math. Phys.}, 19(1):1--99, 2007.

\bibitem[Kon93]{Kon93}
Maxim Kontsevich.
\newblock Formal (non)commutative symplectic geometry.
\newblock In {\em The {G}elfand {M}athematical {S}eminars, 1990--1992}, pages
  173--187. Birkh\"auser Boston, Boston, MA, 1993.

\bibitem[KS09]{KS09}
M.~Kontsevich and Y.~Soibelman.
\newblock Notes on {$A_\infty$}-algebras, {$A_\infty$}-categories and
  non-commutative geometry.
\newblock In {\em Homological mirror symmetry}, volume 757 of {\em Lecture
  Notes in Phys.}, pages 153--219. Springer, Berlin, 2009.

\bibitem[LS05]{LS}
Riccardo Longoni and Paolo Salvatore.
\newblock Configuration spaces are not homotopy invariant.
\newblock {\em Topology}, 44(2):375--380, 2005.

\bibitem[LV12]{LV}
Jean-Louis Loday and Bruno Vallette.
\newblock {\em Algebraic operads}, volume 346 of {\em Grundlehren der
  mathematischen Wissenschaften [Fundamental Principles of Mathematical
  Sciences]}.
\newblock Springer, Heidelberg, 2012.

\bibitem[Mar06]{M}
Martin Markl.
\newblock Transferring {$A_\infty$} (strongly homotopy associative) structures.
\newblock {\em Rend. Circ. Mat. Palermo (2) Suppl.}, (79):139--151, 2006.

\bibitem[Mas58]{Mas58}
W.~S. Massey.
\newblock Some higher order cohomology operations.
\newblock In {\em Symposium internacional de topolog\'ia algebraica
  {I}nternational symposium on algebraic topology}, pages 145--154. Universidad
  Nacional Aut\'onoma de M\'exico and UNESCO, M\'exico, 1958.

\bibitem[Mas69]{Mas68}
W.~S. Massey.
\newblock Higher order linking numbers.
\newblock In {\em Conf. on {A}lgebraic {T}opology ({U}niv. of {I}llinois at
  {C}hicago {C}ircle, {C}hicago, {I}ll., 1968)}, pages 174--205. University of
  Illinois at Chicago Circle, Chicago, IL, 1969.

\bibitem[Men09]{Men}
Luc Menichi.
\newblock String topology for spheres.
\newblock {\em Comment. Math. Helv.}, 84(1):135--157, 2009.
\newblock With an appendix by Gerald Gaudens and Menichi.

\bibitem[Mil24]{Mil}
Aleksandar Milivojevi\'c.
\newblock On the behavior of massey products under field extension.
\newblock {\em J. Pure Appl. Algebra}, 228(7):Paper No. 107645, 11, 2024.

\bibitem[MM22]{MM1}
Luc\'ia Mart\'in-Merch\'an.
\newblock A compact non-formal closed {$\rm G_2$} manifold with {$b_1=1$}.
\newblock {\em Math. Nachr.}, 295(8):1562--1590, 2022.

\bibitem[MM24]{MM2}
Luc\'ia Mart\'in-Merch\'an.
\newblock Compact holonomy $\mathrm{G}_2$ manifolds need not be formal.
\newblock {\em Preprint arXiv:2409.04362}, 2024.

\bibitem[MS24]{SM}
Aleksandar Milivojevi\'c and Jonas Stelzig.
\newblock Bigraded notions of formality and {A}eppli-{B}ott-{C}hern-{M}assey
  products.
\newblock {\em Comm. Anal. Geom.}, 32(10):2901--2933, 2024.

\bibitem[MSZ23]{MSZ}
Aleksandar Milivojevi\'{c}, Jonas Stelzig, and Leopold Zoller.
\newblock Formality is preserved under domination.
\newblock {\em Preprint arXiv:2306.12364}, 2023.

\bibitem[Nae24]{Naef}
Florian Naef.
\newblock The string coproduct ``knows'' {R}eidemeister/{W}hitehead torsion.
\newblock {\em Geom. Topol.}, 28(8):3643--3659, 2024.

\bibitem[O'N79]{On}
Edward~J. O'Neill.
\newblock Higher order {M}assey products and links.
\newblock {\em Trans. Amer. Math. Soc.}, 248(1):37--66, 1979.

\bibitem[OT97]{TO}
John Oprea and Aleksy Tralle.
\newblock {\em Symplectic manifolds with no {K}\"ahler structure}, volume 1661
  of {\em Lecture Notes in Mathematics}.
\newblock Springer-Verlag, Berlin, 1997.

\bibitem[PT23]{PT}
Kate Poirier and Thomas Tradler.
\newblock A note on the string topology {BV}-algebra for {$S^2$} with
  {$\Bbb{Z}_2$} coefficients.
\newblock {\em Bull. Belg. Math. Soc. Simon Stevin}, 30(4):482--509, 2023.

\bibitem[Sta93]{S}
Jim Stasheff.
\newblock The intrinsic bracket on the deformation complex of an associative
  algebra.
\newblock {\em J. Pure Appl. Algebra}, 89(1-2):231--235, 1993.

\bibitem[SW24]{SW}
Jonas Stelzig and Scott~O. Wilson.
\newblock A {$dd^c$}-type condition beyond the {K}\"ahler realm.
\newblock {\em J. Inst. Math. Jussieu}, 23(4):1651--1704, 2024.

\bibitem[Tay10]{Ta}
Laurence~R. Taylor.
\newblock Controlling indeterminacy in {M}assey triple products.
\newblock {\em Geom. Dedicata}, 148:371--389, 2010.

\bibitem[Tra08]{T}
Thomas Tradler.
\newblock Infinity-inner-products on {$A$}-infinity-algebras.
\newblock {\em J. Homotopy Relat. Struct.}, 3(1):245--271, 2008.

\bibitem[TZ07]{TZS07}
Thomas Tradler and Mahmoud Zeinalian.
\newblock Infinity structure of {P}oincar{\'e} duality spaces.
\newblock {\em Algebr. Geom. Topol.}, 7:233--260, 2007.

\bibitem[Wil10]{SW1}
Scott~O. Wilson.
\newblock Rectifying partial algebras over operads of complexes.
\newblock {\em Topology Appl.}, 157(18):2880--2888, 2010.

\end{thebibliography}

\end{document}